\newtheorem{definition}{Definition}
\newtheorem{thm}{Theorem}[section]
\newtheorem{rem}{Remark}
\newtheorem{prop}[thm]{Proposition}
\newtheorem{lem}[thm]{Lemma} 
\newtheorem{cor}[thm]{Corollary}
\newcommand{\argmin}{\mathop{\mathrm{argmin}}}
\newcommand{\argmax}{\mathop{\mathrm{argmax}}}
\def\cA{\mathcal{A}}
\def\cB{\mathcal{B}}
\def\cC{\mathcal{C}}
\def\cG{\mathcal{G}}
\def\cL{\mathcal{L}}
\def\cN{\mathcal{N}}
\def\cP{\mathcal{P}}
\def\cR{\mathcal{R}}
\def\cS{\mathcal{S}}
\def\cT{\mathcal{T}}
\def\cV{\mathcal{V}}
\def\cW{\mathcal{W}}
\def\bW{\boldsymbol{A}}
\def\bG{\boldsymbol{G}}
\def\bR{\boldsymbol{R}}
\def\bS{\boldsymbol{S}}
\def\bV{\boldsymbol{V}}
\def\bW{\boldsymbol{W}}
\def\bbM{\mathbb{M}}
\def\bbR{\mathbb{R}}
\newcommand{\E}{\operatorname{\mathbb{E}}}
\renewcommand{\P}{\operatorname{\mathbb{P}}}
\newcommand{\tr}{\mathrm{Tr}}
\newcommand{\pa}[1]{\left(#1\right)}
\newcommand{\ac}[1]{\left\{#1\right\}}
\newcommand{\cro}[1]{\left[#1\right]}
\newcommand{\<}{\langle}
\renewcommand{\>}{\rangle}
\newcommand{\eps}{\varepsilon}
\newcommand{\R}{\mathbb{R}}
\newcommand{\1}{\mathbf{1}}
\newcommand{\N}{\mathbb{N}}
\newcommand{\cumul}{\mathop{cumul}}
\title{Computational barriers for permutation-based problems, and cumulants of weakly dependent random variables}
\author{Bertrand Even\footnote{Laboratoire de Math\'ematiques d'Orsay, Universit\'e Paris-Saclay, CNRS, France. bertrand.even@universite-paris-saclay.fr} \  and Christophe Giraud\footnote{Laboratoire de Math\'ematiques d'Orsay, Universit\'e Paris-Saclay, CNRS, France. Christophe.Giraud@universite-paris-saclay.fr} \ and Nicolas Verzelen\footnote{INRAE, Montpellier SupAgro, MISTEA, Univ. Montpellier, France. Nicolas.Verzelen@inrae.fr} }
\date{}
\begin{document}

\maketitle

\begin{abstract}
In many high-dimensional problems, polynomial-time algorithms fall short of achieving the statistical limits attainable without computational constraints. A powerful approach to probe the limits of polynomial-time algorithms is to study the performance of low-degree polynomials. The seminal work of \cite{SchrammWein22} connects low-degree lower bounds to multivariate cumulants. Prior works \cite{luo2023computational, Even25a} leverage independence among latent variables to bound cumulants. However, such approaches break down for problems with latent structure lacking independence, such as those involving random permutations. To address this important restriction, we develop a technique to upper-bound cumulants under weak dependencies -- such as those arising from sampling without replacement or random permutations. To show-case the effectiveness of our approach, we uncover evidence of statistical--computational gaps in multiple feature matching and in seriation problems. 

\end{abstract}

\section{Introduction}
In high-dimensional statistics, the main objective is to design computationally efficient estimation procedures with optimal statistical performance. However, in problems like sparse PCA, planted clique, and clustering, the best known polynomial-time algorithms often fall short of the performance achievable by   estimators computationally unconstrained. This has led to conjectures about statistical--computational gap discrepancies between statistically optimal solutions, and those achievable in polynomial time.
To fairly evaluate a computationally efficient algorithm, its performance should be compared not to the unconstrained optimum, but to the best performance achievable in polynomial time. This shift motivates the study of lower bounds for polynomial-time algorithms across various problems. Since high-dimensional problems are inherently probabilistic, classical worst-case complexity, such as  P vs NP, is less relevant. Instead, researchers focus on specific computational models such as Sum-of-Squares (SoS)  \cite{HopkinsFOCS17,Barak19}, statistical queries~\cite{kearns1998efficient,brennan2020statistical}, approximate message passing~\cite{MezardMontanari09,DMM09,montanari2024equivalence}, and low-degree polynomials~\cite{hopkins2018statistical,KuniskyWeinBandeira,SchrammWein22,SurveyWein2025}.

Among these, low-degree polynomial (LD) bounds have gained attention for establishing state-of-the-art lower bounds in tasks like community detection~\cite{Hopkins17},  spiked tensor models~\cite{Hopkins17,KuniskyWeinBandeira}, and sparse PCA~\cite{ding2024subexponential}. In this model, only estimators that are multivariate polynomials of degree at most $D$ are considered. The premise is that for many problems, degree $D = O(\log n)$ polynomials are as powerful as any polynomial-time algorithm~\cite{KuniskyWeinBandeira}. Thus, failure of such polynomials suggests no efficient algorithm exists for the task. We refer to~\cite{SurveyWein2025} for a comprehensive discussion of this conjecture.
The LD framework connects to other models like statistical queries~\cite{brennan2020statistical}, approximate message passing~\cite{montanari2024equivalence}, and free-energy landscapes~\cite{bandeira2022franz}. 
It was originally developed for detection problems, but this perspective fails to characterize the complexity of estimation problems when detection-recovery gaps exist -- see e.g.~\cite{mao2023detection}. 
To overcome this limitation, the framework has been extended to  estimation tasks, notably in the work of Schramm and Wein~\cite{SchrammWein22}. Their framework has since been applied to submatrix estimation~\cite{SchrammWein22}, stochastic block models~\cite{luo2023computational, SohnWein25}, Gaussian mixture models~\cite{Even24, Even25a}, dense cycles recovery~\cite{mao2023detection}, and graph coloring~\cite{kothari2023planted}.

Let $Y \in \mathbb{R}^{n \times p}$ be the observed matrix, assumed to satisfy the model $Y = X + E$, where $E$ is independent of $X$ and has entries drawn i.i.d. from $\mathcal{N}(0, \sigma^2)$. We consider a  recovery problem consisting in optimally predicting a real-valued random variable $x$, which is $\sigma(X)$-measurable, using degree-$D$ polynomials in $Y$. This is formalized via the minimum mean squared error
\begin{equation}\label{eq:MMSE}
\mathrm{MMSE}_{\leq D} := \inf_{f\in \R_{D}[Y]} \E\cro{\pa{f(Y)-x}^2},
\end{equation}
where $\R_{D}[Y]$ denotes the space of real-valued multivariate polynomials in $Y$ of degree at most $D$.

A key result of Schramm and Wein~\cite{SchrammWein22} is to lower-bound the $\mathrm{MMSE}_{\leq D}$ in terms  of multivariate cumulants
\begin{equation}\label{eq:SW22}
\mathrm{MMSE}_{\leq D} \geq \E[x^2]- \sum_{\alpha\in\N^{n \times p},\ |\alpha|\leq D}\frac{\kappa_{x,\alpha}^{2}}{\alpha!}\enspace,
 \end{equation}
 where $\alpha!=\prod_{(i,j)\in [n] \times [p] }\alpha_{ij}!$, and  for each multi-index $\alpha\in \N^{n \times p}$, the cumulant $\kappa_{x,\alpha}$ is defined as 
\begin{equation}\label{eq:def:kappa}
    \kappa_{x,\alpha}:=\cumul\pa{x, X_{\alpha}}=\cumul\pa{x, \ac{X_{ij}}_{(i,j)\in \alpha}}\enspace,
\end{equation}
 with  $X_{\alpha}=\ac{X_{ij}}_{(ij)\in \alpha}$ denoting the multiset containing $\alpha_{ij}$ copies of each $X_{ij}$ for $(i,j)\in [n]  \times [p]$. Definitions and properties of cumulants are provided in Appendix~\ref{sec:cumulant:background}.

 In certain latent variable models, the signal matrix $X$ depends on some latent variables $\pi^* \in \Pi\subset [K]^n$, and  $x = x(\pi^*)$ is $\sigma(\pi^*)$-measurable. Theorem 2.5 in~\cite{Even25a} shows that bounding $\kappa_{x,\alpha}$ can be reduced to bounding cumulants of the form
 \begin{equation}\label{eq:generalformcumulant}
\cumul\pa{\pa{\delta\pa{\pi^*(i_s), \pi^*(i'_s)}}_{s\in [0,l]}}\enspace,
\end{equation}
for some function $\delta:[K]^2\to \ac{0,1}$.  
When the latent variables $\pi^*(1),\ldots,\pi^*(n)$ are independent, such cumulants can be easily upper-bounded 
 using this independence~\cite{Even25a}. 
 However, the proof techniques of~\cite{Even25a} and~\cite{SchrammWein22} break down under weak dependence, such as when $\pi^*$ is a uniform random permutation of $[n]$, or is generated by sampling without replacement mechanisms.
 Nonetheless, this limitation can potentially be overcome, since multivariate cumulants play a central role in probability theory for deriving central limit theorems (CLTs) in weakly dependent settings -- such as Markov chains, random permutations, and models in statistical physics.
 A key framework for proving such CLTs is the weighted dependency graph theory developed by F\'eray~\cite{feray2018}.
\medskip

\noindent {\bf Our main contributions}\smallskip

1- We develop a quantitative and non-asymptotic extension of F\'eray's theory to bound cumulants of weakly dependent variables. We establish a new upper-bound on the cumulants of certain classes of weakly dependent random variables.
Then, we explain how to derive from this result some bounds on the cumulant $\kappa_{x,\alpha}$ in scenarios where the latent variable $\pi^*$  exhibits weak dependences.\smallskip

2- We instantiate our methodology to derive LD bounds for three problems: multiple feature matching, seriation and clustering with prescribed group sizes. Our LD bounds are tight up to poly-logarithmic factors, and match the performance of state-of-the-art polynomial-time algorithms for these problems, except in a few narrow parameter regimes.\smallskip

3- We derive the corresponding information-theoretic limits, revealing a statistical-computational gap for these three problems.
\smallskip

\noindent Before delving into the technical details, we summarize our key results for these problems.
\medskip

\noindent{\bf Multiple feature matching.}
Feature matching problems arise in many fields, ranging from image analysis \cite{lowe2004distinctive}, to computational biology~\cite{singh2008global}.  
We observe $M$ datasets, each of them gathering (noisy) observations of $K$ feature vectors randomly permuted. 
In the specific case where $M=2$, a probabilistic formulation of this model was introduced by Collier and Dalalyan~\cite{Collier16}. We extend this model as follows. The dataset indexed by $m\in[M]$ stores $K$ feature vectors $Y^{(m)}_{1},\ldots,Y^{(m)}_{K}\in\R^p$. 
For some unknown random permutations 
  $\pi^*_1,\ldots,\pi^*_m$ on $[K]$, and unknown means $\mu_{1},\ldots,\mu_{K}\in\R^{p}$,   the $p$-dimensional feature vectors $Y^{(m)}_{k}$ are assumed to be generated as follow
  \begin{equation}\label{eq:MFM}
  Y_1^{(m)}=\mu_{\pi^*_{m}(1)}+\eps_{1}^{(m)},\ \ldots \ ,Y_K^{(m)}=\mu_{\pi^*_{m}(K)}+\eps_{K}^{(m)}\, ,\quad \text{for $m\in [M]$,} 
  \end{equation}
with $\eps_{k}^{(m)}\stackrel{\text{i.i.d.}}{\sim}\mathcal{N}\pa{0, \sigma^2 I_p}$. 
The goal is to recover the permutations $\pi^*_1,\ldots,\pi^*_m$, i.e.\ to match the features across the datasets. This problem is closely related to the problems of database alignement~\cite{dai2019database,dai2023gaussian,tamir2022correlation}, geometric planted matching~\cite{kunisky2022strong}, particle tracking~\cite{chertkov2010inference} and fair clustering~\cite{FairClustering19,FairClustering21}.  
In particular, our LD bounds have implications in these frameworks.
 All those works are restricted to $M=2$, to the notable exceptions of~\cite{chertkov2010inference}, which considers the specific case where the $M$ datasets are snapshots of a time series.
We provide a complete characterization of the problem in terms of the separation
$\Delta^2=\min_{k\neq l}\frac{\|\mu_k-\mu_l\|^2}{2\sigma^2}$, 
identifying both the information-theoretic threshold and a computational barrier within the LD framework.  We also identify when it is beneficial to match the $M$ datasets simultaneously, rather than matching them pairwisely. 
Exact recovery is information-theoretic  possible, with high probability, as soon as $\Delta^2\gtrsim \log(KM)+\sqrt{\frac{p\log(KM)}{M}}$, which shows benefit of the multiplicity of datasets as soon as $p\gtrsim \log(KM)$. We prove a LD lower bound suggesting that the separation needed for recovering $\pi^*$ in polynomial time is $\Delta^2\geq_{\log} 1+\min\pa{\sqrt{p}, \sqrt{\frac{pK}{K}}}$, and so,  there is a benefit of having access to multiple datasets as soon as $M\geq_{\log} K$. This computational barrier is matched, up to logarithmic factors, in a wide range of regimes by applying a clustering procedure on the set $\big\{Y_{k}^{(m)}:(k,m)\in [K]\times [M]\big\}$. All those results provide evidences of a statistical-computational gap for large $K,M$ and for $p\geq_{\log} M/K$.

\medskip

\noindent{\bf Seriation.}  The seriation problem consists in ordering $n$ items from noisy pairwise  observations. Originally introduced for chronological dating of  graves in archeology~\cite{Robinson51,petrie1899sequences}, it has since found a wide range of applications, including genome alignement~\cite{garriga2011banded,bioinfo17}, time synchronization in distributed networks~\cite{Clock-Synchro04,Clock-Synchro06}, and envelope reduction for sparse matrices~\cite{barnard1995spectral}. This problem has  attracted a lot of attention both in the noiseless case~\cite{atkins1998spectral,fogel2013convex} 
and in the noisy case. The optimal statistical rates have been derived for various loss functions in~\cite{flammarion2019optimal,cai2023matrix,berenfeld2024seriation}. However, the best known polynomial-time algorithmes either achieve much slower rates~\cite{cai2023matrix,berenfeld2024seriation}, or require strong additional assumptions on the model~\cite{janssen2022reconstruction,issartel2024minimax}. For this reason, 
statistical-computational gaps have been conjectured e.g. in~\cite{cai2023matrix,berenfeld2024seriation}. In this work, we provide strong evidence of this gap 
by establishing a LD lower bound for the most emblematic seriation problem. 

For some random permutation $\pi^*=(\pi^*(1),\ldots, \pi^*(n))$, the signal matrix $X\in \mathbb{R}^{n\times n}$ is defined by $X_{ij}= \lambda \1\{|\pi^*(i)- \pi^*(j)|\leq \rho\}$ where the positive integer $\rho$ is a scale parameter, and $\lambda>0$ is the signal strength. 
Given $Y= X +E$, where $E$ is made of iid standard Gaussian variables, we focus on the problem of reconstructing the matrix $X$. We establish in Section~\ref{sec:seriation} the low-degree hardness of weak reconstruction in the regime where $\lambda\preccurlyeq_{\log} 1 \wedge  \frac{\sqrt{n}}{\rho}$, thereby matching the performances of polynomial-time procedures available in the literature. As a comparison, weak reconstruction of $X$ is statistically possible in the much wider regime $\lambda \succcurlyeq \log(n)/\sqrt{\rho}$. As a straightforward consequence, we also confirm the existence of statistical-computational gap for related seriation problems--see Appendix~\ref{sec:seriation:complementary}.
Although seriation shares similarities with one-dimensional random geometric graphs~\cite{penrose2003random}, and in particular to planted dense cycles problems~\cite{mao2023detection,mao2024information}, an important difference is that, in our case, $\pi^*$ is a permutation of $[n]$, whereas in related works~\cite{mao2023detection,berenfeld2024seriation}, $\pi^*$ is a random element of $[n]^n$, so that the $\pi^*(i)$ are independent. The latter property  considerably simplifies the arguments as explained in Section~\ref{sec:globalmethod}.

\medskip

\noindent{\bf Clustering with prescribed group sizes.} Gaussian mixture clustering is a central problem in unsupervised learning~\cite{Dasgupta99,VEMPALA2004,LuZhou2016,Regev2017,giraud2019partial,LiuLi2022,diakonikolasCOLT23b}.  For some unknown vectors $\mu_1,\ldots,\mu_K\in\R^p$, noise level $\sigma>0$, and partition $G^*=\ac{G^*_1,\ldots,G^*_K}$ of $[n]$, the observations $Y_{ij}$ are sampled independently with distribution
$Y_{ij} \sim \mathcal{N}(\mu_{kj},\sigma^2),\quad \text{for}\ i\in G^*_k\quad \text{and}\ j\in[p]$.
When  the clusters  are made of independent assignments, i.e.  $G_{k}^*=\ac{i:k^*_{i}=k}$ with $k^*_{1},\ldots, k^*_{n}$ i.i.d. uniform on $[K]$,
 Even {\it et al.}~\cite{Even25a} provide the computational threshold for poly-time clustering
 in terms of the  separation 
$\Delta^2=\min_{k\neq l}\frac{\|\mu_k-\mu_l\|^2}{2\sigma^2}$,
 except in a narrow parameter regime.
 As a simple example for explaining our theory, we focus here on the case of perfectly balanced clusters, where $G^*$ is sampled uniformly among all partitions fulfilling $|G^*_{k}|=n/K$ for all $k\in[K]$. While being close to the independent assignment case, the perfectly balanced case cannot be handled with the technique of~\cite{Even25a}, due to weak dependencies in the assignment process. Our results prove that the perfect knowledge of the cluster sizes does not help (significantly) for cluster recovery, in the sense that we recover the same  computational threshold as in~\cite{Even25a}.
\medskip

 \noindent{\bf Limitations and perspectives.}
 As in the original work of~\cite{SchrammWein22},  our techniques only allow to characterize hardness regimes up to poly-logarithmic factors. This limitation has been recently tackled by Sohn and Wein~\cite{SohnWein25}, who provide an approach to derive sharp thresholds. However, the latter requires to solve  large overcomplete linear systems, which limits its applicability in complex models. Let us also mention that, for some permutation-based models with no detection-estimation gap -- such as matching correlated 
 Erd\H{o}s-R\'enyi graphs -- one can establish LD bounds by reduction to a detection problem~\cite{ding2023low}.   

Still, we believe that our techniques can be instantiated to provide LD lower bounds in other permutation-based models, for which both statistical-computational, and detection-estimation gaps are conjectured, such as rectangular seriation problems~\cite{flammarion2019optimal}, tensor estimation with unknown permutations~\cite{lee2024statistical}, or constrained clustering~\cite{FairClustering19,FairClustering21,ConstrainedClustering25}.

\paragraph{Notation and organisation of the manuscript.}
Given a vector $v$, we write $\|v\|$ for its Euclidean norm. For a matrix $A$, we denote $\|A\|_F$  for its Frobenius norm, and $\|A\|_{op}$ for its operator norm. For an integer $n$, we write $\cS_n$ as the set of permutation on $[n]$. 
For two functions $u$ and $v$, we write $u\lesssim v$ if there a exists a numerical constant such that $u\leq c v$. For two functions that may depend on some parameters $n,p,K$ or $M$, we write $u\leq_{\log} v$, if there exist numerical constants $c$ and $c'$ such that $u\leq c \log^{c'}(npKM) v$. For a finite subset $S$, we write $|S|$ for its cardinality. 

We identify a matrix $\alpha\in \N^{n\times p}$ (resp. a tensor $\alpha\in \N^{K\times M\times p}$) with the multiset of $[n]\times[p]$ (resp. $[K]\times [M]\times [p]$) containing $\alpha_{ij}$ (resp. $\alpha_{kmj}$) copies of $(i,j)$ (resp. $kmj$). For $i\in [n]$, we write $\alpha_{i:}$ the $i$-th row of $\alpha$. Similarly, for $j\in [p]$, we write $\alpha_{:j}$ the $j$-th column of $\alpha$. We shall write $|\alpha|$ the $l_1$-norm of $\alpha$, which is the cardinality of $\alpha$ viewed as a multiset. Finally, $\alpha!$ stands for $\prod_{ij}\alpha_{ij}!$ (resp. $\prod_{kmj} \alpha_{kmj}$). 
For $W_1, \ldots, W_l$ random variables on the same space, we write $\cumul\pa{W_1,\ldots,W_l}$ for their joint cumulant.

First, we show-case in Section~\ref{sec:multiple_feature} the benefit of our general approach, by characterizing the hardness regimes in multiple feature matching. In Section~\ref{sec:globalmethod}, we explain our general strategy for bounding cumulants of the form~\eqref{eq:generalformcumulant}, and underline the key steps of the approach. We then describe another application of this method in Section~\ref{sec:seriation} by proving a nearly tight LD lower bounds for seriation problems. In Section~\ref{sec:clustering_constant}, we state a tight LD bound  for Gaussian mixture models with group sizes known to be eaxactly $n/K$. Although less significant than the other applications, this result is interesting for pedagogical purpose as this is one of the simplest models that requires our new proof techniques, and its proof is therefore more transparent. All the proofs and related results are postponed to the appendices.

\section{Multiple Feature Matching}\label{sec:multiple_feature}

A first problem that we investigate is the problem of Multiple Feature Matching, where one seeks to estimate multiple permutations from the observation of multiple datasets. Using our general techniques for dealing with weak dependencies -- see Section \ref{sec:globalmethod} -- we characterize a computational barrier for this problem in the LD framework. This barrier is matched in almost all regimes with a clustering procedure. In Appendix~\ref{sec:informationalfeatureMatching}, we also characterize the information-theoretic thresholds for this problem, giving evidence of a statistical-computational gap for Multiple Feature Matching.

\paragraph{Setup:} We observe $M\geq 2$ datasets, and for $m\in [M]$, the dataset indexed by $m$ is made of $Y^{(m)}_{1},\ldots, Y^{(m)}_{K}\in \R^{p}$ generated according to~\eqref{eq:MFM}.
Our aim is to determine the minimum separation $\Delta^2=\min_{k\neq l\in [K]}\frac{\|\mu_k-\mu_l\|^2}{2\sigma^2}$ required for recovering $\pi^*_1,\ldots, \pi^*_M$, up to some global permutation of $[K]$. To quantify the error, we consider the proportion of mismatched points
\begin{equation}\label{eq:errorperm}
    err(\pi,\pi^{*})= \min_{\psi \in\mathcal{S}_{K}}\frac{1}{KM}\sum_{k=1}^{K}\sum_{m=1}^{M}\1_{\psi(\pi_{m}(k))\neq \pi^{*}_{m}(k)}\enspace ,
\end{equation} 
where
$\mathcal{S}_K$ stands for the set of all permutations on $[K]$. The infimum over $\psi \in\mathcal{S}_{K}$ accounts for the fact that $\pi^*$ is identifiable only up to a global permutation $\psi$.
This model is a generalization of the feature matching model considered in~\cite{Collier16} to the case where the number $M$ of datasets  can be larger than $2$. We wonder to what extent the presence of multiple datasets can help to recover the permutations : Is it easier to match $M>2$ datasets than to match two datasets? In principle, the presence of several datasets could be helpful to estimate the unknown $\mu_k$'s, and thereby to improve the accuracy of the matchings. 
We emphasize that our model differs from that of \cite{kunisky2022strong}, where the second dataset is a noisy and permuted version of the first dataset. In the latter setup, having access to multiple datasets would indeed not be helpful for matching two specific datasets.

 In this section, our main contribution is a LD lower-bound for multiple feature matching. Alongside, we also exhibit polynomial-time procedures that nearly achieve this bound in most regimes. Finally, the information-theoretical threshold of the problem is also characterized in Appendix~\ref{sec:informationalfeatureMatching}, so that we provide a full characterization of the problem. To give a glimpse of our results, let us recall that, for feature matching with $M=2$, Collier and Dalalyan~\cite{Collier16} have established  exact recovery of the matching, i.e. $err(\hat{\pi},\pi^*)=0$, as soon as $\Delta^2 \gtrsim \log(K)+ \sqrt{p\log(K)}$. 
 This recovery can be efficiently achieved by an 
 Hungarian algorithm. For multiple feature matching, we prove that exact recovery is statistically possible as soon as $\Delta^2 \gtrsim \log(KM) + \sqrt{p\log(KM)/M}$ -- see Proposition~\ref{prop:upperboundinformationalfeature} and Theorem~\ref{thm:lowerboundinformationalfeature}. This result shows the benefit of permuting simultaneously multiple  samples, when $p$ is large compared to $\log(KM)$. If we restrict our attention to polynomial-time algorithms, we prove in Theorem~\ref{thm:lowdegreefeature} that $\Delta^2\succcurlyeq_{\log} 1+ \max(\sqrt{p}, \sqrt{\frac{pK}{M}})$ is required for low-degree polynomials.
 Corollary~\ref{cor:featurematchingupperbound} shows that successful recovery is possible in polynomial-time above this separation, in most regime. 
 Hence, for computationally efficient procedures, the benefit of multiple permuted samples only arises for $M\geq_{\log} K$, but this benefit is quite strong for large $M$.

\subsection{LD lower-bound for Multiple feature matching}

Applying the general techniques of Section \ref{sec:globalmethod} to the particular case of Multiple Feature Matching, we are able to characterize the computational barrier in the LD framework. Low-degree polynomials are not well suited for directly outputting permutations $\hat{\pi}_1,\ldots, \hat{\pi}_M$. Instead, we focus on the problem of estimating the matrix $\Gamma^*\in \R^{(K\times M) \times (K\times M)}$ defined by $\Gamma^*_{(k,m), (k',m')}=\1\ac{\pi^*_m(k)=\pi^*_{m'}(k')}$. 
Next proposition shows that proving computational hardness for estimating $\Gamma^*$ implies computational hardness for estimating $\pi^*_1,\ldots, \pi^*_M$. We refer to Appendix \ref{prf:featurematchinghardness} for a proof. 

\begin{prop}\label{prop:featurematchinghardness}
Suppose we are in a regime where $K,M$ (and possibly $p$) go to infinity. For any prior on the choice of $\pi^*$ and $\mu_1,\ldots, \mu_K$, if $$MMSE_{poly}:=\inf_{\hat{\Gamma}\ poly-time} {1\over M(M-1)K^2}\E\cro{\|\hat \Gamma-\Gamma^*\|^2_{F}}=K^{-1}(1+o(1))\enspace,$$ then, all polynomial-time estimators $\hat \pi: \R^{K\times M\times p}\to (\mathcal{S}_K)^M$ satisfy $\E\cro{err(\hat{\pi}, \pi^*)}=1+o(1)$. 
\end{prop}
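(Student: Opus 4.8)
The plan is to prove the contrapositive in a quantitative form: show that any polynomial-time permutation estimator $\hat\pi$ with non-trivial error, say $\E[err(\hat\pi,\pi^*)] \leq 1 - \eta$ for some $\eta$ bounded away from $0$, can be converted into a polynomial-time estimator $\hat\Gamma$ of $\Gamma^*$ whose normalized squared Frobenius error is strictly below $K^{-1}(1+o(1))$, contradicting the hypothesis on $\mathrm{MMSE}_{poly}$. The natural reduction is to feed $\hat\pi_1,\ldots,\hat\pi_M$ (which is a polynomial-time function of $Y$) into the map $\pi \mapsto \Gamma(\pi)$ defined by $\Gamma(\pi)_{(k,m),(k',m')} = \1\{\pi_m(k)=\pi_{m'}(k')\}$; since $\hat\pi$ is poly-time and this map is a fixed combinatorial function, $\hat\Gamma := \Gamma(\hat\pi)$ is a valid poly-time competitor in the infimum defining $\mathrm{MMSE}_{poly}$.

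The core of the argument is therefore a deterministic (or high-probability) comparison between $err(\hat\pi,\pi^*)$ and $\|\Gamma(\hat\pi) - \Gamma^*\|_F^2 / (M(M-1)K^2)$. First I would fix the optimal global relabeling $\psi \in \mathcal S_K$ achieving the minimum in $err(\hat\pi,\pi^*)$, and replace $\hat\pi$ by $\psi\circ\hat\pi$, which leaves $\Gamma(\hat\pi)$ unchanged (the indicator $\1\{\pi_m(k)=\pi_{m'}(k')\}$ is invariant under a common relabeling $\psi$ applied to all blocks). So we may assume $err(\hat\pi,\pi^*) = \frac{1}{KM}\sum_{k,m}\1\{\hat\pi_m(k)\neq\pi^*_m(k)\}$. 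Now I would bound $\|\Gamma(\hat\pi)-\Gamma^*\|_F^2$ from above by counting the off-diagonal-block entries $(m\neq m')$ on which $\Gamma(\hat\pi)$ and $\Gamma^*$ disagree. A disagreement at $(k,m),(k',m')$ forces at least one of the four events $\hat\pi_m(k)\neq\pi^*_m(k)$ or $\hat\pi_{m'}(k')\neq\pi^*_{m'}(k')$ (more carefully: if both $\hat\pi_m(k)=\pi^*_m(k)$ and $\hat\pi_{m'}(k')=\pi^*_{m'}(k')$ then the two indicators agree). Hence each ``bad'' pair $(k,m)$ with $\hat\pi_m(k)\neq\pi^*_m(k)$ contributes to at most $2(M-1)K$ disagreeing entries (choosing $(k',m')$ with $m'\neq m$, and the symmetric count), giving
\begin{equation}
\|\Gamma(\hat\pi)-\Gamma^*\|_F^2 \;\leq\; 2\,(M-1)K \cdot \#\{(k,m): \hat\pi_m(k)\neq\pi^*_m(k)\} \;=\; 2(M-1)K^2 M \cdot err(\hat\pi,\pi^*).
\end{equation}
Dividing by $M(M-1)K^2$ yields $\|\Gamma(\hat\pi)-\Gamma^*\|_F^2/(M(M-1)K^2) \leq 2\,err(\hat\pi,\pi^*)$. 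This is not yet below $K^{-1}$; the point is that $err$ small is far too strong a conclusion to chase directly, so instead I would turn the logic around: assume for contradiction $\E[err(\hat\pi,\pi^*)]\not\to 1$, i.e.\ along a subsequence $\E[err]\leq 1-\eta$.

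To exploit $\E[err]\leq 1-\eta$ I would lower-bound $\|\Gamma(\hat\pi)-\Gamma^*\|_F^2$ in expectation, relating the off-diagonal agreement of $\Gamma(\hat\pi)$ with $\Gamma^*$ to the fraction of correctly matched features. The key identity is that $\|\Gamma(\hat\pi)-\Gamma^*\|_F^2$ on off-diagonal blocks equals (number of off-diagonal-block pairs where exactly one of the two indicators is $1$), and one can compute $\E\|\hat\Gamma-\Gamma^*\|_F^2 = \E\|\hat\Gamma\|_F^2 + \E\|\Gamma^*\|_F^2 - 2\E\langle\hat\Gamma,\Gamma^*\rangle$. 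On each pair of distinct datasets $m\neq m'$, $\Gamma^*$ restricted to that block is a permutation matrix (exactly one $1$ per row/column), so $\|\Gamma^*\|_F^2$ on off-diagonal blocks is exactly $M(M-1)K$, matching the target $K^{-1}\cdot M(M-1)K^2$. The cross term $\langle\hat\Gamma,\Gamma^*\rangle$ counts pairs $(k,m),(k',m')$, $m\neq m'$, with $\hat\pi_m(k)=\hat\pi_{m'}(k')=\pi^*_m(k)=\pi^*_{m'}(k')$; writing $c_{k,m} := \1\{\hat\pi_m(k)=\pi^*_m(k)\}$ (after the optimal relabeling), a correctly-matched pair requires both $c_{k,m}=1$ and $c_{k',m'}=1$ together with $\hat\pi_m(k)=\hat\pi_{m'}(k')$ — and when both are correct the latter is automatic since $\pi^*_m(k)=\pi^*_{m'}(k')$ is then equivalent. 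So $\langle\hat\Gamma,\Gamma^*\rangle \geq \sum_{m\neq m'}\#\{k,k': c_{k,m}=c_{k',m'}=1, \pi^*_m(k)=\pi^*_{m'}(k')\}$, which by a Cauchy–Schwarz / counting argument over the $K$ values of $\pi^*$ is at least something like $\frac{1}{K}\big(\sum_{k,m}c_{k,m}\big)^2 / M$ minus lower-order terms, i.e.\ of order $KM^2(1-err)^2$ when $err$ is the mismatch fraction. Combining, $\E\|\hat\Gamma-\Gamma^*\|_F^2 \leq M(M-1)K + \E\|\hat\Gamma\|_F^2 - cKM^2(1-err)^2$, and since $\|\hat\Gamma\|_F^2$ on off-diagonal blocks is also comparable to $M(M-1)K$ when $\hat\pi$ is a genuine permutation tuple, the $(1-err)^2$ gain — which is at least $\eta^2$ in expectation along the subsequence, after a Markov/reverse-Markov argument to pass from $\E[err]\leq 1-\eta$ to $\E[(1-err)^2]\gtrsim\eta^2$ — pushes the normalized error strictly below $K^{-1}$, contradicting the hypothesis. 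I expect the main obstacle to be exactly this last step: carefully handling the relabeling $\psi$ (which depends on the data) so that the ``correct-match'' bookkeeping is consistent across all $M$ datasets simultaneously, and controlling the off-diagonal contribution of $\|\hat\Gamma\|_F^2$ — because $\hat\pi_m$ need not be injective a priori if $\hat\pi$ is an arbitrary map, so one may first need to project $\hat\pi_m$ onto $\mathcal S_K$ (still poly-time) to ensure each off-diagonal block of $\hat\Gamma$ is a permutation matrix with Frobenius norm exactly $K$. Once these combinatorial accountings are pinned down, the $1+o(1)$ bookkeeping is routine.
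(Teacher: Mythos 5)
Your reduction is pointed in the right direction -- you pass from $\hat\pi$ to $\hat\Gamma := \Gamma(\hat\pi)$, expand $\|\hat\Gamma-\Gamma^*\|_F^2$ via the polarization identity, and use a Cauchy--Schwarz/pigeonhole count over the $K$ labels to lower-bound $\langle\hat\Gamma,\Gamma^*\rangle$ by roughly $KM^2(1-err)^2$. This counting step is exactly Lemma~\ref{lem:featurematchinghardness} in the paper. But the plan as written does not close: without rescaling $\hat\Gamma$, the resulting upper bound on the normalized risk is of order $\tfrac{2}{K}\bigl(1-\E[(1-err)^2]\bigr)$, because $\|\Gamma(\hat\pi)\|_F^2 = KM^2$ is ``too large'' relative to the target norm. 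To contradict $MMSE_{poly}=K^{-1}(1+o(1))$ you need this quantity strictly below $K^{-1}$, which forces $\E[(1-err)^2] > 1/2$, i.e.\ $\eta > 1/\sqrt 2$. For a general $\eta$ bounded away from $0$ (but possibly small) the contradiction does not appear.

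The missing ingredient is to optimize over the scale $\alpha$ in the competitor $\alpha\,\Gamma(\hat\pi)$ (still poly-time). That is what the paper's $corr_{poly}$ reformulation does implicitly: it rewrites
\[
M(M-1)K^2\, MMSE_{poly} \;=\; \E\|\Gamma^*\|_F^2 \;-\; \sup_{\E\|\hat\Gamma\|_F^2=1}\E[\langle\Gamma^*,\hat\Gamma\rangle]^2 \;=\; KM^2 - corr_{poly}^2 \enspace,
\]
which corresponds to the optimally rescaled projection onto the span of $\hat\Gamma$. From $MMSE_{poly}=K^{-1}(1+o(1))$ one gets $corr_{poly}^2=o(KM^2)$, hence $\E[\langle\Gamma^{\hat\pi},\Gamma^*\rangle]\le \sqrt{KM^2}\,corr_{poly}=o(KM^2)$. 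Combined with the lower bound $\langle\Gamma^{\hat\pi},\Gamma^*\rangle \geq KM^2[1-err(\hat\pi,\pi^*)]^2$, this gives $\E[(1-err)^2]=o(1)$ directly, for \emph{every} $\eta$, without the $1/\sqrt2$ threshold. Everything else in your argument (the invariance of $\Gamma(\cdot)$ under a global relabeling $\psi$, the Cauchy--Schwarz counting over labels, the passage from $\E[err]$ to $\E[(1-err)^2]$ by Jensen) is consistent with the paper's proof; the sole gap is not optimizing the scale, which is the crux of the bound.

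Two minor remarks. First, the worry about projecting $\hat\pi_m$ onto $\mathcal S_K$ is moot: the statement already takes $\hat\pi$ with values in $(\mathcal S_K)^M$, so each block of $\Gamma(\hat\pi)$ is a genuine permutation matrix and $\|\Gamma(\hat\pi)\|_F^2=KM^2$ holds deterministically. Second, you write ``lower-bound $\|\Gamma(\hat\pi)-\Gamma^*\|_F^2$'' where you clearly mean upper-bound (you are lower-bounding the cross term); worth fixing if you write this up.
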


In other words, it is impossible to reconstruct in polynomial time the permutations $\pi^*_1,\ldots, \pi^*_M$ better than random guessing in the regime where $MMSE_{poly}=K^{-1}(1+o(1))$. Thus, we focus in the following on the problem of estimating $\Gamma^*$. For proving a LD lower-bound, we 
consider a Bayesian setting with the following prior distribution.

\begin{definition}\label{def:priorfeaturematching}
We draw $\pi^*_1,\ldots,\pi^*_m$ uniformly and independently over $\mathcal{S}_{K}$. For a given $\bar{\Delta}>0$, independently from $(\pi_m)_{m\in[M]}$,  we draw $\mu_{1},\ldots,\mu_{K}\in\R^{p}$ iid with law  $\mathcal{N}\pa{0,\lambda^2I_p}$, with $\lambda^{2}=\frac{1}{p}\bar{\Delta}^{2}\sigma^2$. Conditionally on $\pi^*_1,\ldots,\pi^*_M$ and $\mu_1,\ldots,\mu_K$, we draw independently, for $k\in [K]$ and $m\in [M]$, $Y_k^{(m)}\sim \mathcal{N}\pa{\mu_{\pi^*_m(k)}, \sigma^2 I_p}$. The observed tensor is $Y\in \R^{K\times M\times p}$ defined by $Y_{k,m,d}=\pa{Y_k^{(m)}}_d$.
\end{definition}

We emphasize that,  with high probability under this prior, the separation satisfies $\Delta^2=\bar{\Delta}^2(1+o_p(1))$, when $p\gtrsim \log(K)$. 
As a proxy for $MMSE_{poly}$, we lower bound in Theorem~\ref{thm:lowdegreefeature} the low-degree mean square error $MMSE_{\leq D}:=\inf_{\hat{\Gamma}\in \R^{D}(Y)} {1\over M(M-1)K^2}\E\cro{\|\hat \Gamma-\Gamma^*\|^2_{F}}$.
The proof of Theorem \ref{thm:lowdegreefeature}, postponed to Appendix~\ref{prf:lowdegreefeature}, builds on the general technique of Section \ref{sec:globalmethod}.
\begin{thm}\label{thm:lowdegreefeature}
Let $D\in \N$ such that $K\geq 2\pa{D+2}^2$, and suppose $\zeta:=64D^{40}\lambda^4p\max\pa{1, \frac{M}{K}}<1$. Then, $$MMSE_{\leq D}\geq \frac{1}{K}-\frac{1}{K^2}\pa{1+64(D+1)D^{36}\frac{\zeta}{1-\sqrt{\zeta}}}\enspace.$$ 
\end{thm}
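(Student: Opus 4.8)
The starting point is the Schramm--Wein inequality~\eqref{eq:SW22}, applied here coordinate-wise to each entry of $\Gamma^*$. Writing $x = \Gamma^*_{(k,m),(k',m')} = \mathbf{1}\{\pi^*_m(k) = \pi^*_{m'}(k')\}$ for a fixed off-diagonal pair (so $m \neq m'$), we have $\E[x^2] = \E[x] = 1/K$, since $\pi^*_m$ and $\pi^*_{m'}$ are independent uniform permutations. Summing~\eqref{eq:SW22} over the $M(M-1)K^2$ off-diagonal pairs and dividing by $M(M-1)K^2$ gives
\begin{equation*}
MMSE_{\leq D} \geq \frac{1}{K} - \frac{1}{M(M-1)K^2}\sum_{(k,m)\neq(k',m')}\ \sum_{\substack{\alpha \in \N^{K\times M\times p}\\ |\alpha|\leq D}} \frac{\kappa_{x,\alpha}^2}{\alpha!}.
\end{equation*}
So the whole task reduces to bounding $\sum_\alpha \kappa_{x,\alpha}^2/\alpha!$ for a single entry $x$, and showing that the bulk of this sum (the $\alpha = 0$ term) contributes exactly $1/K^2$, while everything else is controlled by $\tfrac{1}{K^2}\cdot 64(D+1)D^{36}\tfrac{\zeta}{1-\sqrt\zeta}$.

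\textbf{Reducing to permutation cumulants.} Next I would invoke the structural reduction from Theorem~2.5 of~\cite{Even25a} (or its analogue developed in Section~\ref{sec:globalmethod}) to express $\kappa_{x,\alpha}$ in terms of cumulants of the form~\eqref{eq:generalformcumulant}. Concretely, the Gaussian structure $Y_k^{(m)} \sim \mathcal N(\mu_{\pi^*_m(k)},\sigma^2 I_p)$ with $\mu$'s themselves Gaussian means that, after integrating out the Gaussian noise and the Gaussian means, $\kappa_{x,\alpha}$ becomes a (signed) combination of joint cumulants of indicator variables $\delta(\pi^*_{m}(k), \pi^*_{m'}(k'))$ attached to the pairs appearing in $\alpha$, weighted by powers of $\lambda^2 = \bar\Delta^2\sigma^2/p$ and combinatorial factors. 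The degree constraint $|\alpha| \le D$ limits the number of indicator variables entering each cumulant to $O(D)$, and the factor $D^{40}$ in $\zeta$ is the price for the various combinatorial multiplicities (choices of which coordinates, multiset accounting, the $\alpha!$ normalization) accumulated in this step.

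\textbf{The weak-dependence cumulant bound.} The heart of the argument — and the main obstacle — is bounding the permutation cumulants $\cumul\big((\delta(\pi^*_{m_s}(k_s),\pi^*_{m'_s}(k'_s)))_{s}\big)$ themselves. This is exactly where the quantitative weighted-dependency-graph machinery (contribution 1, developed in Section~\ref{sec:globalmethod}) enters: since $\pi^*_1,\dots,\pi^*_M$ are uniform permutations, the indicator variables are only weakly dependent (via sampling-without-replacement constraints within each $\pi^*_m$), and one shows the cumulant of $\ell+1$ such indicators is bounded by roughly $(\ell!)^{c}\cdot K^{-\ell}$ times a factor recording the "connectivity pattern" of the indices, provided the underlying dependency structure is connected (disconnected patterns give cumulant zero by the vanishing property of joint cumulants). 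I would feed this bound, term by term, into the sum over $\alpha$; the connectedness requirement forces the indices in $\alpha$ to form a linked structure, which is what makes the geometric series in $\zeta$ converge: each additional "unit" of $|\alpha|$ contributes a multiplicative factor $\lesssim D^{40}\lambda^4 p\max(1,M/K) = \zeta$ (the $\lambda^4 p$ arising from the Gaussian-mean variance paired with the $p$ coordinates, the $\max(1,M/K)$ from whether the limiting combinatorics is driven by the number of feature-blocks $K$ or datasets $M$). Summing the geometric series $\sum_{\ell\geq 1}\zeta^{\ell/2} = \sqrt\zeta/(1-\sqrt\zeta)$ and bookkeeping the leading constant yields the stated bound. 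The condition $K \ge 2(D+2)^2$ is needed so that the sampling-without-replacement corrections (ratios like $K/(K-1)\cdots$) in the cumulant estimates stay bounded by absolute constants, and $\zeta < 1$ guarantees convergence. I expect the delicate part to be the precise constant tracking through the reduction and the verification that the weighted-dependency-graph bound applies with the claimed polynomial-in-$D$ loss rather than something worse.
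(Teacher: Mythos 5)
Your plan is correct and follows essentially the same route as the paper: reduce via exchangeability and separability to a single entry $\Gamma^*_{(1,1),(1,2)}$, invoke the Schramm--Wein inequality, pass to permutation cumulants through the latent-variable reduction of~\cite{Even25a}, bound those cumulants via the quantitative weighted-dependency-graph machinery of Section~\ref{sec:globalmethod}, and sum the resulting geometric series in $\sqrt\zeta$. The only slip is in the bookkeeping at the end: because the column-parity constraint forces $|\alpha|\ge 2$, the geometric series starts at $d=2$, giving $\zeta/(1-\sqrt\zeta)$ rather than your $\sqrt\zeta/(1-\sqrt\zeta)$.
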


Taking $D=\pa{\log(n)}^{1+\eta}$, whenever $K\geq 2\pa{\log(n)^{1+\eta}+2}^2$, we deduce from Theorem \ref{thm:lowdegreefeature}, $\pa{\log(n)}^{1+\eta}$-degree hardness when $\bar{\Delta}^2\leq_{\log} \min(\sqrt{p}, \sqrt{\frac{pK}{M}})$. 
Combining this result with the information-theoretic threshold (Theorem~\ref{thm:lowerboundinformationalfeature}) for partial recovery, we get hardness of Multiple Feature Matching when 
\begin{equation}\label{eq:LD-MFM}
\Delta^2\leq_{\log}1+\min\pa{\sqrt{p}, \sqrt{\frac{pK}{M}}}\enspace.
\end{equation}

\subsection{Matching the LD lower-bound with clustering procedures}

 Let $n=KM$, and let us identify $(k,m)\in [K]\times [M]$ to the integer $q=K(m-1)+k$. Define $G^*$ the partition of $[n]$ induced by $\pi^*_1,\ldots, \pi^*_M$ with $G_k^*=\{\pa{\pi^*_m}^{-1}(k), \enspace m\in [M]\}$. There is a one-to-one correspondance between 
 $G^*$ and the partitions $\pi^*_1,\ldots, \pi^*_M$, up to a global permutation of $[K]$. 
 This correspondance suggests the following procedure to estimate $\pi^*$: first apply a clustering procedure $\hat G$ on the $n$ data points $\big\{Y_{k}^{(m)}:(k,m)\in [K]\times [M]\big\}$, and then construct $\hat\pi$ from $\hat G$. If the clustering procedure $\hat{G}$ outputs a partition that does not represent a $M$-tuple of permutations, we can  round it greedily  to output a $M$-tuple of permutations.

When the partition $\hat G$ recovers exactly $G^*$, the estimator $\hat \pi$  recovers exactly  the $M$-tuple of permutations $\pi^*=\pi^*_M,\ldots,\pi^*_M$, up to a global permutation.  
 Combining  Proposition 3.2 of \cite{Even25a}, Theorem 1 of \cite{giraud2019partial}, and Proposition 4 of \cite{Even24}, which provide guarantees for exact recovery in clustering, we obtain the following result, proved in Appendix~\ref{prf:featurematchingupperbound}.

\begin{cor}\label{cor:featurematchingupperbound}
There exists a numerical constant $c>0$ such that the following holds whenever $M\notin [K, K^c]$ or $p\notin [\log(KM), \frac{p}{KM}]$. Suppose that $\Delta^2\geq_{\log} 1+\min\pa{\sqrt{p}, \sqrt{\frac{pK}{M}}}$, then, there exists an estimator $\hat{\pi}=\hat{\pi}_1, \ldots, \hat{\pi}_M$ such that, with high probability, $err(\hat{\pi}, \pi^*)=0$.
\end{cor}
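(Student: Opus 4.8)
The plan is to reduce the corollary to known exact-recovery guarantees for Gaussian mixture clustering, applied to the point cloud $\{Y_k^{(m)}\}$ viewed as $n=KM$ samples drawn from a $K$-component mixture. First I would set up the correspondence carefully: under Definition~\ref{def:priorfeaturematching}, conditionally on $\mu_1,\dots,\mu_K$ and the permutations, the points $Y_k^{(m)}$ are independent with $Y_k^{(m)}\sim\mathcal N(\mu_{\pi^*_m(k)},\sigma^2 I_p)$, so they form exactly a balanced Gaussian mixture with $n/K=M$ points per cluster and cluster assignment given by $G^*$. The separation of this mixture is $\Delta^2=\min_{k\neq l}\|\mu_k-\mu_l\|^2/(2\sigma^2)$, which by the remark after Definition~\ref{def:priorfeaturematching} concentrates as $\bar\Delta^2(1+o_p(1))$ when $p\gtrsim\log K$ (and one must also handle the complementary regime $p\lesssim\log K$, where a direct sub-Gaussian tail bound on $\min_{k\ne l}\|\mu_k-\mu_l\|^2$ gives the same conclusion up to constants). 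If instead one is given an arbitrary prior on $(\pi^*,\mu)$ with $\Delta^2\geq_{\log}1+\min(\sqrt p,\sqrt{pK/M})$ deterministically, the argument is even more direct.

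Next I would invoke the three cited exact-recovery results — Proposition~3.2 of~\cite{Even25a}, Theorem~1 of~\cite{giraud2019partial}, and Proposition~4 of~\cite{Even24} — each of which states that, for a balanced (or nearly balanced) Gaussian mixture with $n$ points, $K$ clusters and ambient dimension $p$, some polynomial-time clustering algorithm $\hat G$ satisfies $\hat G=G^*$ with high probability provided $\Delta^2\geq_{\log} T(n,p,K)$ for the appropriate threshold $T$. The point is that these three results cover complementary ranges of the parameters $(p,K,M)$ (small $p$, moderate $p$, large $p$ relative to $KM$), and their union of hypotheses is exactly the condition ``$M\notin[K,K^c]$ or $p\notin[\log(KM),p/(KM)]$'' together with $\Delta^2\geq_{\log}1+\min(\sqrt p,\sqrt{pK/M})$; I would verify this bookkeeping regime by regime, substituting $n=KM$ and $n/K=M$ into each threshold. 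The excluded window $M\in[K,K^c]$ and $p\in[\log(KM),p/(KM)]$ is precisely where none of the three off-the-shelf guarantees reaches the low-degree threshold~\eqref{eq:LD-MFM}, hence the hypothesis of the corollary.

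Finally I would translate ``$\hat G=G^*$'' into ``$err(\hat\pi,\pi^*)=0$''. By construction, $\hat\pi$ is obtained from $\hat G$ via the one-to-one correspondence between partitions into $K$ balanced parts and $M$-tuples of permutations of $[K]$ (up to a global relabelling $\psi\in\mathcal S_K$); when $\hat G=G^*$ this correspondence returns $\pi^*_1,\dots,\pi^*_M$ up to exactly such a $\psi$, so the minimum over $\psi$ in~\eqref{eq:errorperm} vanishes. One subtlety: if the raw clustering output is not of the ``one point per $(k,m)$ block'' form it must be rounded to a genuine $M$-tuple of permutations, but on the high-probability event $\hat G=G^*$ no rounding is triggered, so this step is cosmetic. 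The main obstacle is the regime bookkeeping in the middle step — correctly matching the hypotheses of the three clustering theorems (which are stated for different balancedness assumptions and different parametrizations) against the target threshold $1+\min(\sqrt p,\sqrt{pK/M})$ after the substitution $n=KM$, and checking that their union genuinely covers the complement of the stated exclusion set; everything else is routine concentration and the definitional translation.
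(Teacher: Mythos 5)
Your proposal follows the paper's proof essentially verbatim: identify the $KM$ vectors $Y_k^{(m)}$ as a balanced $K$-component Gaussian mixture with $M$ points per cluster, invoke Proposition~3.2 of \cite{Even25a}, Theorem~1 of \cite{giraud2019partial}, and Proposition~4 of \cite{Even24} in complementary parameter regimes, and read off $\hat{\pi}$ from the recovered partition $\hat G = G^*$. The ``regime bookkeeping'' you flag as the remaining obstacle is exactly what the paper spells out in its four-bullet case analysis, so the approach is the same and your sketch is correct as a plan.
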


Corollary~\ref{cor:featurematchingupperbound} shows that feature matching is possible in polynomial time above the low-degree threshold \eqref{eq:LD-MFM} in almost all parameter regimes. In the remaining regime where both $M\in [K,K^c]$ and $p\in [\log(KM),\frac{p}{KM}]$, we have the next result, that we suspect to be suboptimal. 

\begin{rem}
When both $M\in [K,K^c]$ and $p\in [\log(KM),\frac{p}{KM}]$,  
we can derive from  Proposition G.7 of \cite{Even25a} perfect recovery of $\pi^*$, with high probability, whenever $\Delta^2\geq_{\log}\sqrt{K}+\sqrt{\frac{pK}{M}}$. 
\end{rem}

\section{Bounding cumulants of weakly dependent random variables} \label{sec:globalmethod}

In the class of models we are considering, to establish LD lower bounds --
either directly, or after applying Theorem 2.5 of \cite{Even25a} -- 
the problem reduces to upper-bounding cumulants of the form $\cumul\pa{\pa{\delta\pa{\pi^*(i_s), \pi^*(i'_s)}}_{s\in [0,l]}}$, where $\delta:[K]^2\to \ac{0,1}$, and $\pi^*(1),\pi^*(2),\ldots$ are weakly dependent random variables. The techniques of \cite{SchrammWein22, luo2023computational, Even25a} break down in such a setting. 
We describe in this section a methodology for upper-bounding $\cumul\pa{\pa{\delta\pa{\pi^*(i_s), \pi^*(i'_s)}}_{s\in [0,l]}}$ in such cases. 
\smallskip

\noindent{\bf Ex: Multiple feature matching.} The labels $\pi^*(k,m)$, for $(k,m)\in [K]\times [M]$ are defined by $\pi^*(k,m)=\pi^*_m(k)$ with $\pi^*_1,\ldots, \pi^*_M$ being permutations drawn independently and uniformly over $\cS_{K}$. After applying Theorem 2.5 of \cite{Even25a}, we are reduced to upper-bounding cumulants of the form $\cumul\pa{\pa{\pi^*(k_s,m_s)= \pi^*(k'_s,m'_s)}_{s\in [0,l]}}$, see Proposition~\ref{thm:LTC}, p.\pageref{thm:LTC}. We use the techniques of this section for dealing with the fact that, for $k,k',m$, $\pi^*(k,m)$ and $\pi^*(k',m)$ are not independent.

\subsection{Core bound on cumulants}
The core result on which relies our analysis is an upper-bound on the cumulants of a class of weakly dependent variables.
Let us consider three families of non-random disjoint (within a family) subsets of $[n]$, denoted  $I_{1},\ldots,I_{\ell}$, $A_{1},\ldots,A_{q}$, and $B_{1},\ldots,B_{r}$. For $\Delta \subset [\ell]$, we define $I_{\Delta}=\cup_{t\in\Delta} I_{t}$. The next lemma provides an upper-bound on the joint cumulant of random variables $Z_{1},\ldots,Z_{\ell}$ with mixed moments given by
\begin{align}\label{eq:formula:general_momodent}
\E\cro{\prod_{t\in \Delta}Z_{t}}=&\eta^{|I_\Delta|}\prod_{j=1}^q \frac{1}{1-x_0}\times\ldots\times \frac{1}{1-(|I_\Delta\cap A_{j}|-1)x_0}\\
&\times\prod_{i=1}^r (1-y_0)\times\ldots\times \pa{1-\pa{|I_\Delta \cap B_{i}|-1}y_0}\ ,\enspace \text{ for any $\Delta\subset [\ell]$.} \nonumber
\end{align}
Such random variables appear when  sampling without replacement and/or random permutations occurs, see Equations \eqref{eq:leveragingclustering}, \eqref{eq:leveragingfeature} and \eqref{eq:leveragingseriation}.

\begin{lem}\label{lem:upperboundgeneralcumulant}
Assume that $Z_{1},\ldots,Z_{\ell}$ have mixed moments given by \eqref{eq:formula:general_momodent} and let $L=|I_{[\ell]}|$. \vspace{-0.1cm}
\begin{enumerate}
\item When $r\geq 1$, $2L^2 y_0\leq 1$, and $2x_0\leq y_0$, we have
\begin{equation}\label{eq:core-bound}
\left|\cumul\pa{Z_{1},\ldots,Z_{\ell}}\right|\leq 4\ell^{2\ell}\eta^{L}\pa{L^2y_0}^{\ell-1}\pa{\frac{x_0}{y_0}}^{cc(\cB)-1}\enspace,
\end{equation}
 where $cc(\cB)$ is the number of connected components of the graph  $\cB$ with node set $[\ell]$ and with edges between $t$ and $t'$  if and only if there exists $i \in [r]$ such that both $I_{t}$ and $I_{t'}$ intersect $B_{i}$.
\item When $r=0$ and $2L^2x_{0}\leq 1$, we have
\begin{equation}\label{eq:upperboundgeneralcumulant2}
\left|\cumul\pa{Z_{1},\ldots,Z_{\ell}}\right|\leq 2\ell^{2\ell}\eta^{L}\pa{L^2x_0}^{\ell-1}.
\end{equation}
\end{enumerate}
\end{lem}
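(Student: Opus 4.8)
The plan is to use the standard identity expressing joint cumulants in terms of moments over set partitions, namely $\cumul(Z_1,\dots,Z_\ell)=\sum_{\pi\in\mathcal P([\ell])}(|\pi|-1)!\,(-1)^{|\pi|-1}\prod_{B\in\pi}\E[\prod_{t\in B}Z_t]$, and then exploit the product structure of \eqref{eq:formula:general_momodent}. Substituting \eqref{eq:formula:general_momodent} into this identity, each block contributes a factor $\eta^{|I_B|}$, which multiplies to $\eta^L$ regardless of $\pi$ (since the $I_t$ are disjoint), and then a product over the $A_j$'s and $B_i$'s of ratios of the form $\prod_{a=0}^{|I_B\cap A_j|-1}\frac1{1-ax_0}$ and $\prod_{a=0}^{|I_B\cap B_i|-1}(1-ay_0)$. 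The first key step is to expand each such one-dimensional factor as $1$ plus a small correction: writing $\prod_{a=0}^{m-1}\frac1{1-ax_0}=1+g_{A}(m)$ and $\prod_{a=0}^{m-1}(1-ay_0)=1+g_{B}(m)$, one checks $|g_A(m)|\lesssim m^2x_0$ and $|g_B(m)|\lesssim m^2 y_0$ under the smallness hypotheses $2L^2y_0\le 1$, $2x_0\le y_0$ (using $L\ge m$ throughout). Thus $\prod_{B\in\pi}\E[\prod_{t\in B}Z_t]=\eta^L\prod_{j}(1+g_A(|I_B\cap A_j|))\prod_{i}(1+g_B(|I_B\cap B_i|))$; fully expanding the product over $j$ and $i$ writes this as $\eta^L$ times a sum over subsets $S_A$ of the "active" $A_j$-intersections and $S_B$ of the "active" $B_i$-intersections, weighted by $\prod g_A\prod g_B$.

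The second, and central, step is the cancellation. For a fixed choice of which $(A_j,B_i)$ are "active" — equivalently a fixed sub-multigraph $H$ on node set $[\ell]$ whose edges record which pairs of blocks/elements are tied together through a common $A_j$ or $B_i$ — the sum over partitions $\pi$ refining the connectivity structure compatible with $H$ telescopes via the Möbius function of the partition lattice. Concretely, any term in which the "active" sets $H$ leave two elements $t,t'$ in the \emph{same} $\pi$-block but connected by no active constraint, or more precisely any term whose active-constraint graph does not separate all the $\pi$-blocks, cancels when summing $(|\pi|-1)!(-1)^{|\pi|-1}$ over the relevant sub-lattice — this is the usual argument that $\cumul$ of a product of independent-ish factors vanishes unless the "dependency graph" is connected. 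After this cancellation, the surviving terms are indexed by connected spanning subgraphs of the graph $\cB$ defined in the statement (for the $B_i$-edges) together with possibly some $A_j$-edges: each surviving term needs at least $\ell-1$ edges to connect all $\ell$ nodes, and each $B_i$-edge costs a factor $O(L^2y_0)$ while each $A_j$-edge costs $O(L^2x_0)=O(L^2y_0\cdot x_0/y_0)$. To connect the $cc(\cB)$ connected components of $\cB$ one is \emph{forced} to use at least $cc(\cB)-1$ of the more expensive $A_j$-edges, and $\ell-1$ edges total; this yields the bound $\eta^L(L^2y_0)^{\ell-1}(x_0/y_0)^{cc(\cB)-1}$ up to the combinatorial prefactor. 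The prefactor $4\ell^{2\ell}$ comes from bounding the number of set partitions of $[\ell]$ (at most $\ell^\ell$), the $(|\pi|-1)!\le \ell^\ell$ factor, and the number of ways to choose the connecting subgraph, together with the geometric series $\sum_k (2L^2y_0)^k\le 2$ absorbing higher-order edges — here the hypothesis $2L^2y_0\le 1$ is exactly what makes this geometric series converge with a constant. Part 2 ($r=0$) is the degenerate case where $\cB$ has no edges, so $cc(\cB)=\ell$; the same argument with only $A_j$-edges gives $(L^2x_0)^{\ell-1}$ directly, needing just $2L^2x_0\le1$, and the constant improves to $2$ because one expansion is absent.

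The main obstacle I anticipate is making the cancellation argument fully rigorous while tracking the edge-count bookkeeping: one must organize the expansion so that the Möbius/partition sum is applied at the right granularity — grouping the sum over $\pi$ by the partition induced by the active constraints — and then argue that (i) the induced partition must be the full singleton-free connectivity structure for nonvanishing, and (ii) counting the minimum number of $x_0$-type edges genuinely requires $\cB$'s component structure rather than some finer graph. A secondary nuisance is uniformity of the $|g_A(m)|\lesssim m^2 x_0$, $|g_B(m)|\lesssim m^2y_0$ estimates: one needs $(m-1)x_0<1$ and $(m-1)y_0<1$ for all relevant $m\le L$, which is why the hypotheses are phrased with $L^2$, and one should double-check the constants propagate to the stated $4$ and $2$. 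I would handle the bookkeeping by first proving a clean lemma: for any finite collection of "small" multiplicative perturbations attached to pairs of an $\ell$-element index set, $\cumul$ of the perturbed product is bounded by $\sum$ over connected spanning subgraphs of $\prod$(edge weights), with the combinatorial constant $\ell^{2\ell}$; then specialize.
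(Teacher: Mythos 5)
Your proposal takes a genuinely different route from the paper, and it contains a real gap in the central cancellation step.

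The paper's proof does not work directly with the real numbers $\E[\prod_{t\in\Delta}Z_t]$; it promotes them to bivariate formal power series $u_\Delta(x,y)$, proves a lower bound on the \emph{order} (in a tailored partial order $\succcurlyeq$ on exponent pairs) of the cumulant series $\kappa_{[\ell]}(x,y)=\sum_G m(G)\prod_{\delta\in G}u_\delta$, and only afterwards bounds the coefficients. The order bound is obtained by controlling the alternating products $P_\Delta[u]=\prod_{\delta\subseteq\Delta}u_\delta^{(-1)^{|\Delta|-|\delta|}}$ (Lemma~\ref{lem:quazifactogeneral}) and feeding that quasi-factorization into a power-series adaptation of F\'eray's Proposition~5.8 (Lemma~\ref{lem:ferayseries}). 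Your proposal instead attempts a direct cluster-expansion: expand each moment as $\eta^{|I_B|}\prod_j(1+g_A)\prod_i(1+g_B)$, fully expand, and let the M\"obius weights cancel any term whose ``active'' perturbations fail to connect the blocks. This is a legitimate alternative strategy in principle, and your final edge-counting (at least $\ell-1$ edges, at least $cc(\cB)-1$ of them $x$-type) does recover the correct exponents.

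The gap is that the perturbations $g_A(|I_B\cap A_j|)$ and $g_B(|I_B\cap B_i|)$ are \emph{block-level}, not edge-level, quantities: $|I_B\cap A_j|=\sum_{t\in B}|I_t\cap A_j|$ depends on the whole block $B$ of the partition $\pi$ under consideration, so the factor $(1+g_A(|I_B\cap A_j|))$ does not decompose as a product of contributions attached to pairs $\{t,t'\}\subseteq B$. Consequently the ``clean lemma'' you propose at the end --- bounding $\cumul$ of a product of pairwise multiplicative perturbations by a sum over connected spanning subgraphs --- does not apply to the object at hand without a further reduction, and the claimed M\"obius cancellation (``terms whose active-constraint graph does not separate the $\pi$-blocks cancel'') is not the naive one: the weight $\prod_{B\in T}g_A(|I_B\cap A_j|)$ itself changes when you refine or coarsen $\pi$, so the sum over $\pi$ does not telescope against a fixed expansion in ``active constraints.'' What makes this rigorous in the paper is precisely the introduction of $P_\Delta[u]$: proving $\mathrm{ord}(P_\Delta[u]-1)\succcurlyeq\mathrm{ord}(\cL^*[\Delta])$ for all $\Delta$ is the inductive quasi-factorization certificate that replaces the pairwise structure you are implicitly assuming, and Lemma~\ref{lem:ferayseries} then converts it into the order bound on $\kappa_{[\ell]}$ by an induction over partitions that you would essentially have to rediscover. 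A secondary issue is uniformity: to absorb higher-order edges into a geometric series with constant $2$ you need the marginal cost of each additional edge to be $\le 2L^2y_0$ (respectively $2L^2x_0$) uniformly in the block structure; the paper sidesteps this by bounding the coefficients $|[d,d']_{\kappa_{[\ell]}}|\le\eta^L L^{2d+2d'}\ell^{2\ell}$ separately, after the order bound, rather than tracking error terms multiplicatively through the expansion.

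In short: your intuition about which terms survive and which exponents result is correct, but the cancellation mechanism as you have set it up does not close, and closing it would essentially require re-deriving Lemmas~\ref{lem:ferayseries}--\ref{lem:quazifactogeneral}. What the paper's power-series formulation buys is the separation of ``which monomials appear'' (the order statement) from ``how large are their coefficients,'' which turns a delicate multiplicative error analysis into two independent and manageable bounds.
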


Lemma~\ref{lem:upperboundgeneralcumulant} is proved in Appendix~\ref{sec:quazifactorizationseries}. 
The proof builts on ideas from Féray~\cite{feray2018}, which are revisited in order to provide quantitative and non-asymptotic controls of the cumulants.

Random variables $Z_{1},\ldots, Z_{\ell}$ having mixed moments \eqref{eq:formula:general_momodent} are weakly correlated when $x_{0}$ and $y_{0}$ are small. When $x_{0}$ and $y_{0}$ go to zero, they become uncorrelated, and their joint cumulant $\cumul\pa{Z_{1},\ldots,Z_{\ell}}$ is zero, see Lemma~\ref{lem:independentcumulant}. Lemma~\ref{lem:upperboundgeneralcumulant} shows that this cumulant remains small when $x_{0}$ and $y_{0}$ are positive but small enough. 
While the random variables $\pa{\delta\pa{\pi^*(i_s), \pi^*(i'_s)}}_{s\in [0,l]}$ in~\eqref{eq:generalformcumulant}
 do not fulfill the mixed moment condition~\eqref{eq:formula:general_momodent} in the models that we are considering, for some fixed $\pi$ and some $I_{1},\ldots,I_{\ell} \subset [n]$, the random variables $Z_{t}=\prod_{i\in I_{t}} \1\ac{\pi^*(i)=\pi(i)}$ do fulfill~\eqref{eq:formula:general_momodent}, as highlighted by Equations \eqref{eq:leveragingclustering}, \eqref{eq:leveragingfeature} and \eqref{eq:leveragingseriation}. For example, if $\pi^*$ is drawn uniformly over all permutations on $[n]$, then $\E\cro{\prod_{t\in \Delta}Z_t}=\pa{\frac{1}{n}}^{|I_{\Delta}|}\pa{1-\frac{1}{n}}^{-1}\times \ldots\times \pa{1-{|I_{\Delta}|-1\over n}}^{-1}$,  for all $\Delta\subseteq [\ell]$.
 We explain below how we can reduce the problem of controlling the cumulant~\eqref{eq:generalformcumulant} to the problem of controlling cumulant of random variables like $Z_{t}$. This reduction is made in 4 steps.

\subsection{From cumulant \eqref{eq:generalformcumulant} to cumulants of variables fulfilling \eqref{eq:formula:general_momodent}}

\paragraph{Step 1: Reduction.}
We observe that the random variables $Z_{1},Z_{2},\ldots$ in~\eqref{eq:formula:general_momodent} are weakly correlated, while the  variables 
$\pa{\delta\pa{\pi^*(i_s), \pi^*(i'_s)}}_{s\in [0,l]}$ can have strong dependencies (for example if $i_s=i_{s'}$ for some $s\neq s'$). The first step of our analysis is to reduce the control of cumulants like \eqref{eq:generalformcumulant}, to the control of cumulants of weakly dependent variables, by grouping together variables presenting (possibly) strong dependencies. This reduction is based on Proposition 5.2 of \cite{feray2018}, recalled in Proposition~\ref{prop:feray} page \pageref{prop:feray}. 

To perform the reduction, we introduce 
 a weighted graph  $\cW$ with vertex set $[0,l]$, and with edges $w_{s,s'}$ defined by\smallskip
 
- $w_{s,s'}=1$, if $\ac{i_s,i'_s}\cap \ac{i_{s'}, i'_{s'}}\neq 0$,

- $w_{s,s'}=w\in (0,1)$, otherwise, with $w$ to be chosen later on.\smallskip \\*
In this graph, we have assigned edge weight $w_{s,s'}=1$, when there exists some strong dependences within the variables  $\pi^*\pa{i_s},\pi^*\pa{i'_s},\pi^*\pa{i_{s'}},\pi^*\pa{i'_{s'}}$. 

Let $S^{(0)}\subseteq [0,l]$. Write $\cW[S^{(0)}]$ for the weighted graph reduced to $S^{(0)}$, and write $\cW\<1\>[S^{(0)}]$ for the graph obtained by only keeping the edges of $\cW[S^{(0)}]$ of weight $1$.  We also denote by $\bS$ the set of connected components of $\cW\<1\>[S^{(0)}]$.
To upper-bound the cumulant \eqref{eq:generalformcumulant}, Proposition~\ref{prop:feray} ensures 
that it is enough to upper bound 
\begin{equation}\label{eq:reducedcumulantgeneral}
C_{\mathbf{S}}:=\cumul\pa{\pa{\prod_{s\in S}\delta\pa{\pi^*\pa{i_s}, \pi^*\pa{i'_s}}}_{S\in\mathbf{S}}}\enspace,
\end{equation}
in terms of some super-multiplicative function $\psi(S^{(0)})$ times the weight 
$$\bbM\pa{\cW[S^{(0)}]} := \max_{\cT\in \text{span-tree}(\cW[S^{(0)}])}\quad \prod_{e\in \cT}w_e\ ,$$
where the maximum runs over all spanning trees $\cT$ of $\cW[S^{(0)}]$, and where  the product is over all edges $e$ of $\cT$.
The benefit of  this first step is to reduce the control of the cumulant  \eqref{eq:generalformcumulant} to the control of the cumulant \eqref{eq:reducedcumulantgeneral}, where strongly dependent variables (linked by weight one) are grouped together.
The next lemma expressed the weight $\bbM(\cW[S^{(0)}])$ in terms of
 $w$ and of the number of connected components of $\cW\<1\>[S^{(0)}]$.   
\begin{lem}\label{lem:weigthgraphreduction}
For all $S^{(0)}\subseteq [0,l]$, we have $\displaystyle{\bbM\pa{\cW[S^{(0)}]}=w^{cc(\cW\<1\>[S^{(0)}])-1}=w^{|\mathbf{S}|-1}\enspace.}$
\end{lem}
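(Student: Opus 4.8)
\textbf{Plan for the proof of Lemma~\ref{lem:weigthgraphreduction}.}
The identity to prove is $\bbM(\cW[S^{(0)}]) = w^{cc(\cW\<1\>[S^{(0)}])-1}$, where $\bbM(\cW[S^{(0)}]) = \max_{\cT} \prod_{e\in\cT} w_e$ over spanning trees $\cT$ of $\cW[S^{(0)}]$, and every edge weight is either $1$ or $w\in(0,1)$. The second equality $w^{cc(\cW\<1\>[S^{(0)}])-1} = w^{|\bS|-1}$ is just the definition of $\bS$ as the set of connected components of $\cW\<1\>[S^{(0)}]$, so there is nothing to prove there. Throughout, write $m := |S^{(0)}|$ for the number of vertices and $c := cc(\cW\<1\>[S^{(0)}])$ for the number of weight-$1$ connected components; note every spanning tree of $\cW[S^{(0)}]$ has exactly $m-1$ edges. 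The plan is to bound $\bbM$ from above and below separately.

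\emph{Upper bound.} I would argue that any spanning tree $\cT$ uses at least $c-1$ edges of weight $w$. Indeed, contracting each weight-$1$ connected component of $\cW\<1\>[S^{(0)}]$ to a single supernode, the tree $\cT$ must connect these $c$ supernodes, which requires at least $c-1$ edges that go between distinct supernodes; every such inter-component edge has weight $w$ (since by construction all weight-$1$ edges lie inside a single component). Hence $\prod_{e\in\cT} w_e \le w^{c-1}$ because $w<1$ means using more weight-$w$ edges only decreases the product. Taking the max over $\cT$ gives $\bbM(\cW[S^{(0)}]) \le w^{c-1}$.

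\emph{Lower bound.} I would exhibit a spanning tree achieving exactly $c-1$ weight-$w$ edges. First, inside each weight-$1$ connected component $S\in\bS$, pick a spanning tree of the induced weight-$1$ subgraph $\cW\<1\>[S^{(0)}]|_S$; this uses $|S|-1$ edges, all of weight $1$. This gives a spanning forest of $\cW[S^{(0)}]$ with $c$ components, using $\sum_{S}(|S|-1) = m-c$ edges of weight $1$. Then I need to connect these $c$ forest-components into one tree: since $\cW[S^{(0)}]$ is connected (a point that should be noted — it is, because $\cW$ restricted to any vertex set is a complete graph with all edges having positive weight $w$ or $1$), one can add $c-1$ edges, each necessarily of weight $w$, joining the components without creating a cycle. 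The resulting tree has $(m-c)+(c-1) = m-1$ edges and product $1^{m-c}\cdot w^{c-1} = w^{c-1}$. Therefore $\bbM(\cW[S^{(0)}]) \ge w^{c-1}$, and combining with the upper bound yields equality.

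\emph{Main obstacle.} There is no serious obstacle here; this is a clean matroid/graph argument. The only point requiring a little care is the supernode-contraction step in the upper bound: one must be sure that a spanning tree of the original graph, after contracting each weight-$1$ component, yields a connected spanning subgraph of the contracted multigraph, and that its inter-component edges number at least $c-1$ — this is immediate since contracting a connected subgraph of a tree keeps it connected, and a connected graph on $c$ nodes has at least $c-1$ edges. One should also explicitly note that $\cW[S^{(0)}]$ is connected so that spanning trees exist in the first place, which is where the definition $w_{s,s'}\in\{w,1\}$ with $w>0$ for \emph{all} pairs is used.
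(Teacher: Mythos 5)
Your proof is correct and follows the same contract-and-reconnect argument that the paper uses for the closely related Lemma~\ref{lem:minimumexposantgraph} (stated for polynomial graphs with weights $x$, $y$, $1$). The paper does not appear to give a standalone proof of Lemma~\ref{lem:weigthgraphreduction}, treating it as a direct observation; your write-up supplies exactly the argument one would expect, including the correct note that $\cW[S^{(0)}]$ is connected (indeed complete, since every pair $s,s'$ receives a weight in $\{w,1\}$) so that spanning trees exist, and the standard fact that a maximum-weight spanning tree over weights in $\{w,1\}$ with $w<1$ minimizes the number of weight-$w$ edges, which equals $|\bS|-1$ by the forest-then-connect construction.
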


The three next steps provide a path to upper bound $|C_{\mathbf{S}}|$ in terms of a super-multiplicative function $\psi(S^{(0)})$ times $w^{|\mathbf{S}|-1}$.

\paragraph{Step 2: Expansion.}

While the variables $\pa{\prod_{s\in S}\delta\pa{\pi^*\pa{i_s}, \pi^*\pa{i'_s}}}_{S\in\mathbf{S}}$ appearing in \eqref{eq:reducedcumulantgeneral} are  weakly dependent, their mixed moments are not of the form~\eqref{eq:formula:general_momodent}.  Yet, for a fixed $\pi\in\Pi$, writing $supp(S)=\cup_{s\in S}\ac{i_s, i'_s}$, the random variables $\pa{\prod_{i\in supp(S)}\1\ac{\pi^*(i)=\pi(i)}}_{S\in\mathbf{S}}$ typically have a distribution of the form~\eqref{eq:formula:general_momodent}, in the models  we are considering. In this second step, we leverage the multilinearity of cumulants in order to expand $C_{\mathbf{S}}$ as a sum of cumulants of such random variables.

For $S\in\mathbf{S}$, we have 
\begin{equation}\label{eq:expansion1}
\prod_{s\in S}\delta\pa{\pi^*\pa{i_s}, \pi^*\pa{i'_s}}=\sum_{\pi\in [K]^{supp(S)}}\ \prod_{i\in supp(S)} \1\ac{\pi^*(i)=\pi(i)} \prod_{s\in S} \delta\pa{\pi^*\pa{i_s}, \pi^*\pa{i'_s}}\enspace .
\end{equation}
For any $\pi=\pa{\pi(i)}_{i\in supp(S^{(0)})}\in [K]^{supp(S^{(0)})}$, we say that $\pi$ is active if and only if, for all $s\in S^{(0)}$, $\delta(\pi(i_s), \pi(i'_s))=1$ and if, for all $S\in \mathbf{S}$, the event $\forall i\in supp(S),\enspace \pi^*(i)=\pi(i)$ is of positive probability. 
By definition of $\mathbf{S}$, the subsets $supp(S)$, for $S\in \mathbf{S}$, form a partition of $supp(S^{(0)})$ so $[K]^{supp(S^{(0)})}=\prod_{S\in \mathbf{S}} [K]^{supp(S)}$. Hence from \eqref{eq:expansion1} and the multilinearity of cumulants, the cumulant \eqref{eq:reducedcumulantgeneral} can be expanded as
\begin{equation}\label{eq:pseudoconditionninggeneral}
C_{\mathbf{S}}=\sum_{\pi \enspace active} \underbrace{\cumul\pa{\pa{Z_{\pi,S}}_{S\in \mathbf{S}}}}_{=:C_{\mathbf{S}}(\pi)},\quad \text{where}\quad Z_{\pi,S}=\prod_{i\in supp(S)}\1\ac{\pi^*(i)=\pi(i)}\enspace .
\end{equation}
The cumulants $C_{\mathbf{S}}(\pi)$ involve some variables $Z_{\pi,S}$ having mixed moments like~\eqref{eq:formula:general_momodent} in the models we consider, unless some $Z_{\pi,S}$  are incompatible, in the sense that $\pi$ is not realizable and they cannot be jointly non-zero a.s.
In the latter case an additional pruning step is required before applying Lemma~\ref{lem:upperboundgeneralcumulant}. In the following, we write $\bS=S_1,\ldots, S_{|\bS|}$.

\paragraph{Step 3: Pruning incompatible configurations.}
This step is required when there exists disjoint subsets $U_1,\ldots, U_v$ of $[n]$ such that for $i,i'\in U_{v'}$, $\P\cro{\pi^*(i)=\pi^*(i')}=0$. This situation typically arises when the latent variable $\pi^*$ is related to some permutation.
For example,  this step will be implemented for feature matching and seriation, but not for clustering. 

Incompatible $Z_{\pi,S}$'s have some strong dependencies since they cannot be jointly non-zero. To cope with these dependencies we rely again on Proposition \ref{prop:feray}.
For some active $\pi\in [K]^{supp(S^{(0)})}$, we  define $\cV$ the weighted graph with vertex set $[|\mathbf{S}|]$ and with
\begin{itemize}
\item an edge of weight $1$ between $t,t'\in [|\mathbf{S}|]$ if and only if there exists $i\in supp(S_t)$ and $i'\in supp(S_{t'})$ such that $\pi(i)=\pi(i')$ and  $i,i'\in U_{v'}$ for some $v'\in [v]$,
\item edges of weight $m\in (0,1)$ to be chosen later on, otherwise. 
\end{itemize}
We underline that the weighted graph $\cV$ depends on the choice of $\pi$, and a weight $1$ between $t$ and $t'$ encodes incompatibility of $Z_{\pi,S_{t}}$ and $Z_{\pi,S_{t'}}$. 
Let us denote by $\bR$ the connected components of $\cV\<1\>[R]$. To apply Proposition~\ref{prop:feray}, we must upper bound the cumulant $\cumul\pa{\pa{\prod_{t\in R' }Z_{\pi,S_t}}_{R' \in \bR}}$ in terms of $\bbM\pa{\cV[R]}$.

A key feature of the construction of $\cV$ is that, when $R\subseteq [|\mathbf{S}|]$ is such that $\cV[R]$ has at least an edge of weight $1$, then  we have $\prod_{t\in R' }Z_{\pi,S_t}=0$ a.s. for  all $R' \in \bR$ of size at least $2$. As a consequence 
$\cumul\pa{\big(\prod_{t\in R' }Z_{\pi,S_t}\big)_{R' \in \bR}}=0$,
and Proposition~\ref{prop:feray} implicitely expurgates such incompatible $Z_{\pi,S}$.

When  $\cV[R]$ has no edge of weight $1$, then $\bR$ reduces to singletons, and $\cumul\pa{\pa{\prod_{t\in R' }Z_{\pi,S_t}}_{R' \in \bR}}$ is equal to $\cumul\pa{\pa{Z_{\pi, S_t}}_{t\in R}}$. Furthermore we have a simple formula for the weights of $\cV[R]$.
\begin{lem}\label{lem:weightedgraphimpossibleevents}
For  $R\subseteq [|\mathbf{S}|]$, we have 
$\bbM\pa{\cV[R]}=m^{cc\pa{\cV\<1\>[R]}-1}\enspace,$
where $cc\pa{\cV\<1\>[R]}$ stands for the number of connected components of $\cV\<1\>[R]$.
\end{lem}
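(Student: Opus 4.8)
The statement is structurally identical to Lemma~\ref{lem:weigthgraphreduction}, only with the weighted graph $\cW[S^{(0)}]$ replaced by $\cV[R]$ and the small weight $w$ replaced by $m$. So the plan is to reuse the same combinatorial argument. Recall $\bbM(\cV[R]) = \max_{\cT} \prod_{e\in\cT} w_e$, the maximum over spanning trees $\cT$ of $\cV[R]$ of the product of edge weights; of course this requires $\cV[R]$ to be connected for a spanning tree to exist, and otherwise one works component-by-component (or the convention makes the product over the appropriate spanning forest). The key observation is that every edge of $\cV[R]$ has weight either $1$ or $m\in(0,1)$, so a spanning tree maximizes its weight-product precisely by using as many weight-$1$ edges as possible and as few weight-$m$ edges as possible.

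First I would make this precise: contract all weight-$1$ edges of $\cV[R]$. The weight-$1$ edges, restricted to $R$, are exactly the edge set of $\cV\langle 1\rangle[R]$, whose connected components number $cc(\cV\langle 1\rangle[R])$; call this quantity $c$. Contracting each such component to a single super-node yields a multigraph on $c$ nodes whose edges all carry weight $m$. A spanning tree of $\cV[R]$ then decomposes into: a spanning tree inside each weight-$1$ component (all edges weight $1$, contributing factor $1$), together with a spanning tree of the contracted multigraph (exactly $c-1$ edges, each of weight $m$, contributing $m^{c-1}$). Any spanning tree of $\cV[R]$ must contain at least $c-1$ weight-$m$ edges to connect the $c$ components, and one achieving exactly $c-1$ of them exists; since $m<1$, using the minimum number of weight-$m$ edges is optimal. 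Hence $\bbM(\cV[R]) = m^{c-1} = m^{cc(\cV\langle 1\rangle[R])-1}$, which is the claim.

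I would keep the argument short by citing Lemma~\ref{lem:weigthgraphreduction} directly: the proof there establishes exactly the identity ``$\bbM$ of a graph with edge weights in $\{1,w\}$ equals $w^{(\#\text{components of the weight-}1\text{ subgraph})-1}$'', and nothing in it uses any property of $\cW$ beyond the dichotomy of edge weights. Applying that lemma verbatim with $w\leftarrow m$ and the weight-$1$ subgraph being $\cV\langle 1\rangle[R]$ gives the result. There is no real obstacle here; the only point requiring a word of care is the degenerate case where $\cV[R]$ is disconnected, in which case ``spanning tree'' should be read as ``spanning forest'' and the exponent $cc(\cV\langle 1\rangle[R])-1$ should be replaced by $cc(\cV\langle 1\rangle[R]) - cc(\cV[R])$ — but in the application $R$ is always a connected component of some larger graph or the bound is applied only to connected $\cV[R]$, matching the convention used for $\cW$ in Lemma~\ref{lem:weigthgraphreduction}, so the stated form is the one that is used. $\qed$
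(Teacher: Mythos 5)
Your proof is correct and is exactly the natural (and intended) argument. The paper in fact gives no explicit proof of Lemma~\ref{lem:weightedgraphimpossibleevents}, nor of its twin Lemma~\ref{lem:weigthgraphreduction}; both are treated as immediate, and the contraction-of-weight-$1$-edges argument you spell out is the same one the paper does write down, in the proof of the closely related Lemma~\ref{lem:minimumexposantgraph} for polynomial graphs. So you have reconstructed the right reasoning.

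One small point you can safely drop: your caveat about $\cV[R]$ being disconnected does not arise. By the definition of $\cV$ given in Step~3 of Section~\ref{sec:globalmethod}, \emph{every} pair $t\neq t'$ in $[|\bS|]$ is joined by an edge of weight either $1$ or $m$, so $\cV$ (and likewise $\cW$) is a complete graph and every restriction $\cV[R]$ is connected. Thus ``spanning tree'' is unambiguous and the exponent $cc(\cV\langle 1\rangle[R])-1$ is always the right one, with no need to pass to spanning forests. The step ``a spanning tree with exactly $cc(\cV\langle 1\rangle[R])-1$ weight-$m$ edges exists'' is also guaranteed by completeness: after contracting each weight-$1$ component to a super-node, the resulting multigraph is complete on $cc(\cV\langle 1\rangle[R])$ nodes and therefore has a spanning tree.
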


Let us define $\bS[R]:=\pa{S_t}_{t\in R}$.
The above results show that, for applying Proposition \ref{prop:feray}, it is sufficient to upper-bound   the cumulants 
\begin{equation}\label{eq:CSR}
\cumul\pa{\pa{Z_{\pi, S_t}}_{t\in R}}=:C_{\bS[R]}(\pi)\enspace,
\end{equation}
in terms of $\bbM\pa{\cV[R]}=m^{|R|-1}$, for all $R\subseteq [|\mathbf{S}|]$ for which $\cV[R]$ does not have edges of weight $1$. Once this is done,  Proposition~\ref{prop:feray} provides a bound on the the cumulant $C_{\mathbf{S}}(\pi)$ in terms of $\bbM\pa{\cV}$.

\paragraph{Step 4: leveraging Lemma \ref{lem:upperboundgeneralcumulant}}
We have now reduced the problem of controlling the cumulants $\kappa_{x,\alpha}$ to the problem of controlling the cumulants \eqref{eq:CSR}
for $R\subseteq [|\mathbf{S}|]$ for which $\cV[R]$ does not have edges of weight $1$. We define $I_{t}=supp(S_{t})$ and $I_\Delta=\cup_{t\in \Delta}supp(S_t)$  for $\Delta \subseteq R$. In the models that we consider, we observe that the variables $\overline{Z}_{t}:=Z_{\pi,S_t}$ have mixed moments of the form \eqref{eq:formula:general_momodent}. Hence, Lemma~\ref{lem:upperboundgeneralcumulant} 
ensures that the cumulants $C_{\bS[R]}(\pi)$ defined in \eqref{eq:CSR} fulfill
\begin{itemize}
\item When $r\geq 1$, $|I_{R}|^2 y_0\leq \frac{1}{2}$ and $x_0\leq \frac{1}{2}y_0$,
$$|C_{\bS[R]}(\pi)|\leq 4|R|^{2|R|}\eta^{|I_{R}|}\pa{|I_{R}|^2y_0}^{|R|-1}\pa{\frac{x_0}{y_0}}^{cc(\cB)-1}.$$ 
\item When $r=0$,  and $|I_{R}|^2x_0\leq \frac{1}{2}$, 
$$|C_{\bS[R]}(\pi)|\leq 2|R|^{2|R|}\eta^{|I_{R}|}\pa{|I_{R}|^2x_0}^{|R|-1}.$$
 \end{itemize}
For example, assume that we are in the case $r=0$ (the case $r\geq 1$ can be treated similarly). Setting $m=|supp(S^{(0)})|^2x_0$, upper-bounding  $|R|^{2|R|}\leq |\bS|^{2|\bS|}$, and noticing that $\psi(R)=\eta^{|I_{R}|}$ is super-multiplicative for $\eta\in(0,1]$,  Proposition~\ref{prop:feray} and  Lemma~\ref{lem:subexponential} ensure that $$C_{\mathbf{S}}(\pi) \leq 2|\bS|^{8|\bS|}\eta^{supp(S^{(0)})}\pa{|supp(S^{(0)})|^2x_0}^{cc\pa{\cV\<1\>}-1}\enspace,$$ where we recall that $cc\pa{\cV\<1\>}$ stands for the number of connected components of the graph induced by the edges of weight $1$ of $\cV$.
Plugging this bound in \eqref{eq:pseudoconditionninggeneral}, we end-up with an upper bound of the form
$$C_{\bS}\leq D_{\infty}\, \tilde\psi(S^{(0)})\, w^{|\bS|-1},$$
for some super-multiplicative function $\tilde \psi$ and some $w\in (0,1)$, both of them dependending on the set of active $\pi$. We can then come back to Step 1, and proceed to the pending application of Proposition~\ref{prop:feray} and  Lemma~\ref{lem:subexponential}, to get a bound on the cumulant of interest
$$\cumul\pa{\pa{\delta\pa{\pi^*(i_s), \pi^*(i'_s)}}_{s\in [0,l]}} \leq 
D_{\infty} (l+1)^{6(l+1)}\, \tilde \psi([0,l])\, w^{cc(\cW\langle 1\rangle)-1}.$$
In Appendix~\ref{app:clustering}, we present  a simple instantiation of the proof technique outlined in this section, applied to the problem of clustering with prescribed group sizes. The more involved proofs for  multiple feature matching  and  seriation are provided in Appendix~\ref{prf:lowdegreefeature}  and~\ref{prf:lowdegreeseriation} respectively.

\section{Seriation}\label{sec:seriation}

The seriation problem amounts to order $n$ objects from pairwise noisy measurements $(Y_{ij})$ of interaction between items $i$ and $j$. The purpose of this section is to highlight a statistical-computational gap  for seriation through the prism of low-degree polynomials. 

For this reason, we focus here on the arguably simplest model, which is defined as follows. Henceforth, $\rho$ is a non-negative integer corresponding to a scale parameter, and $\lambda$ is positive and corresponds  to a signal strength. For some unknown permutation $\pi^*$ of $[n]$, the signal matrix $X$  is defined by
\begin{equation}\label{eq:definition:model:seriation}
X_{ij}=\left\{\begin{array}{cc}
      \lambda& \text{ whenever }|\pi^*(i)- \pi^*(j)|\leq \rho\\      
      0 & \text{ otherwise.}
    \end{array}
      \right.
\end{equation}
Given an observation\footnote{Actually, the diagonal of $Y$ is irrelevant, but we define it for the purpose of keeping unified notation through this manuscript.} of the matrix $Y= X + E$ where the entries of $E$ are i.i.d.\ and follow a centered Gaussian distribution with variance $1$, our objective is recover the underlying permutation $\pi^*$. In this section, we focus on the reconstruction problem, which amounts to estimate the matrix $X$ in Frobenius norm. In Appendix~\ref{sec:seriation:complementary}, we discuss other loss functions, that are pertaining to the estimation of $\pi^*$. 

When  $\pi^*$ is sampled uniformly at random,  for some given $i\neq j$,  the probability to have $|\pi^*(i)-\pi^*(j)|\leq \rho$ is $\varphi:=  \frac{2\rho}{n-1}\pa{1- \frac{\rho+1}{2n}}$. Hence, the best constant estimator in terms of the $L^2$-norm is  $X_{0}=\E\cro{X}$, which is  the matrix whose diagonal is equal to $\lambda$, and whose non-diagonal terms are equal to $\lambda\varphi$.
An estimator $\hat{X}$ is said to achieve weak reconstruction when $\frac{1}{n^2}\mathbb{E}[\|\hat{X}-X\|_F^2]$ is small compared to the risk  $\frac{1}{n^2}\|X- X_0\|_F^2= \frac{n}{n-1}\varphi(1-\varphi)$ of the best constant estimator.

We focus on finding conditions $(\lambda,\rho)$ such that weak-construction is possible in polynomial-time. Our main result is the following LD lower bound.  
Considering a Bayesian setting where $\pi^*$ is sampled uniformly at random among all permutations. For a  degree $D>0$, we define the low-degree $MMSE_{\leq D}$ by 
\begin{equation}\label{def:MMSE:seriation}
	MMSE_{\leq D}:=\inf_{\hat{X}\in\R_{D}[Y]}\frac{1}{n^2}\E\cro{\|\hat{X}-X\|_{F}^{2}}\enspace  . 
\end{equation}
With the proof technique of Section~\ref{sec:globalmethod}, we prove 
the following  LD lower bound, which provides evidence of hardness for recovering $X$ (and thereby the permutation $\pi^*$).
    \begin{thm}\label{thm:lowdegreeseriation}
Let us suppose that $n\geq 2(2D+2)^2$ and $\zeta:=2^{12}\lambda^2\pa{D+1}^{42}\max\pa{1,\frac{4\rho^2}{n}}<1$. Then, 
$$MMSE_{\leq D}\geq \lambda^2\frac{n}{n-1}\left[\varphi(1-\varphi)-2^{18}\frac{\rho^2}{(n-1)^2}
\pa{D+1}^{44}\frac{\max(\zeta,1/n)}{1-\max(\zeta,1/n)}
\right]\enspace.
$$
\end{thm}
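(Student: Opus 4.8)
The plan is to decouple the Frobenius loss coordinatewise, apply the Schramm--Wein bound \eqref{eq:SW22} to each scalar entry, and then control the resulting cumulants with the four-step scheme of Section~\ref{sec:globalmethod}. Since $\frac1{n^2}\|\hat X-X\|_F^2=\frac1{n^2}\sum_{i,j\in[n]}(\hat X_{ij}-X_{ij})^2$ and the infimum over $\hat X\in\R_{D}[Y]$ is attained coordinate by coordinate, $MMSE_{\leq D}=\frac1{n^2}\sum_{i,j}MMSE_{\leq D}^{(ij)}$, where $MMSE_{\leq D}^{(ij)}:=\inf_{f\in\R_{D}[Y]}\E[(f(Y)-X_{ij})^2]$. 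The $n$ diagonal terms vanish, $X_{ii}=\lambda$ being deterministic. For $i\neq j$ write $\delta(a,b)=\1\{|a-b|\leq\rho\}$, so $X_{ij}=\lambda\,\delta(\pi^*(i),\pi^*(j))$; since the noise has unit variance here, \eqref{eq:SW22} applies directly, and isolating the term $\alpha=0$ ($\kappa_{X_{ij},0}=\E[X_{ij}]=\lambda\varphi$, $0!=1$) gives
\[
MMSE_{\leq D}^{(ij)}\ \geq\ \lambda^2\varphi(1-\varphi)-\sum_{1\leq|\alpha|\leq D}\frac{\kappa_{X_{ij},\alpha}^2}{\alpha!}\enspace .
\]
Summing over the $n(n-1)$ off-diagonal pairs, and using exchangeability of the uniform-permutation prior to make the bound below uniform in $(i,j)$, it remains to establish
\[
\sum_{1\leq|\alpha|\leq D}\frac{\kappa_{X_{ij},\alpha}^2}{\alpha!}\ \leq\ 2^{18}\,\lambda^2\,\frac{\rho^2}{(n-1)^2}\,(D+1)^{44}\,\frac{\max(\zeta,1/n)}{1-\max(\zeta,1/n)}\enspace ,
\]
the residual $\tfrac{n-1}{n}$ versus $\tfrac{n}{n-1}$ being harmless normalization.

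To control $\kappa_{X_{ij},\alpha}$, use that $X_{ab}=\lambda\,\delta(\pi^*(a),\pi^*(b))$ for all $a,b$, so by multilinearity of cumulants (equivalently, Theorem~2.5 of \cite{Even25a}) $\kappa_{X_{ij},\alpha}=\lambda^{|\alpha|+1}\,\cumul\big((\delta(\pi^*(i_s),\pi^*(i'_s)))_{s\in[0,l]}\big)$, where $l=|\alpha|$, $(i_0,i'_0)=(i,j)$, and $\{(i_s,i'_s)\}_{s=1}^{l}$ enumerates the multiset $\alpha$ (diagonal entries being discarded, as $X_{aa}$ is constant). We then run the four steps of Section~\ref{sec:globalmethod} with $\Pi=\cS_n$, $K=n$. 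The model-specific ingredients are: for a fixed $\pi$ and fixed disjoint $I_1,\dots,I_\ell\subset[n]$, the variables $Z_t=\prod_{i\in I_t}\1\{\pi^*(i)=\pi(i)\}$ obey \eqref{eq:formula:general_momodent} with $r=0$, a single block $A_1=[n]$ and $\eta=x_0=1/n$ (the uniform-permutation moment identity recalled just below Lemma~\ref{lem:upperboundgeneralcumulant}); the pruning Step~3 is needed with $U_1=[n]$, since $\P[\pi^*(i)=\pi^*(i')]=0$ whenever $i\neq i'$; and Lemma~\ref{lem:upperboundgeneralcumulant}(2), the $r=0$ case, bounds the innermost cumulants $C_{\bS[R]}(\pi)$.

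The bulk of the work is then the accounting that carries these estimates through the two nested applications of Proposition~\ref{prop:feray} (in Steps~1 and~3) and of Lemma~\ref{lem:subexponential}. Two effects must be balanced. On one side, each weight $\eta^{|I_\Delta|}=n^{-|I_\Delta|}$ produced by \eqref{eq:formula:general_momodent} is offset by the number of \emph{active} $\pi$ summed over in \eqref{eq:pseudoconditionninggeneral}: on a support of size $v$ carrying a connected constraint graph with $c$ components, the injective assignments satisfying all the inequality constraints $|\pi(i_s)-\pi(i'_s)|\leq\rho$ number $\lesssim n^{c}(2\rho+1)^{v-c}$ — each component a free factor $n$, each constraint edge beyond a spanning forest only $2\rho+1$. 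On the other side, each pair $(a,b)\in[n]^2$ that one grafts onto $\alpha$ while keeping the configuration ``index-connected'' can be chosen in $\lesssim v\cdot n$ ways. Combining, each additional pair contributes to $\sum_\alpha\kappa_{X_{ij},\alpha}^2/\alpha!$ an effective multiplicative factor of order $\lambda^2(D+1)^{O(1)}\max(1,\rho^2/n)$, i.e.\ of order $\zeta$, so that the sum behaves like a geometric series $\sum_{l\geq1}\zeta^{\,l}$ with a leading prefactor $\lesssim\rho^2/(n-1)^2$; this converges to the claimed bound exactly under $\zeta<1$, while $n\geq2(2D+2)^2$ guarantees that the hypotheses of Lemma~\ref{lem:upperboundgeneralcumulant} ($|I_R|^2x_0\leq\tfrac12$, etc.) hold at every stage.

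The main obstacle is this last accounting: one must ensure that, as the $l+1$ pairs are first regrouped by shared indices in Step~1 and then the $\pi$-incompatible blocks are expurgated in Step~3, every power of $2\rho+1$ stays attached to a genuinely redundant constraint edge, so that in the final bound the exponent of $\rho^2/n$ never exceeds that of $\lambda^2$ — otherwise the geometric series would fail to close in the target regime $\lambda\preccurlyeq_{\log}1\wedge\sqrt n/\rho$. A subsidiary technical check is that, after pruning, a surviving family $(Z_{\pi,S_t})_{t\in R}$ over an $R$ for which $\cV[R]$ has no weight-one edge is still governed by a uniform permutation on the coordinates not already fixed by $\pi$, so that \eqref{eq:formula:general_momodent} holds there with the stated parameters; this is exactly where compatibility (injectivity) of the active $\pi$ enters.
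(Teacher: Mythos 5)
Your roadmap matches the paper's proof step for step: coordinatewise decoupling of the Frobenius loss by exchangeability, the Schramm--Wein inequality \eqref{eq:SW22} with the $\alpha=0$ term giving $\varphi(1-\varphi)$, reduction by multilinearity to $\cumul\big((\1\{|\pi^*(i_s)-\pi^*(j_s)|\leq\rho\})_{s\in[0,l]}\big)$, and then the four-step machinery of Section~\ref{sec:globalmethod} with Step~1 grouping by shared indices, Step~3 pruning non-injective $\pi$'s, Lemma~\ref{lem:upperboundgeneralcumulant} in its $r=0$, $\eta=x_0=1/n$ form, and the final counting of active constraint-satisfying injections (Lemma~\ref{lem:numbersigma} in the paper) that yields the $(2\rho)^{v-c}n^{c}$ factor and closes the geometric series under $\zeta<1$. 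One small imprecision: for seriation the paper does \emph{not} route through Proposition~\ref{thm:LTC} (Theorem~2.5 of~\cite{Even25a}) — that result is specific to the Gaussian latent model $X_{ij}=\mu_{\pi^*(i)j}$, whereas here $X_{ij}$ is already $\lambda$ times an indicator, so the reduction to a cumulant of indicators is pure multilinearity (which is what you actually invoke), not a consequence of that theorem.
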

We focus on the case where $D\asymp \log(n)$, which, according to the low-degree conjecture, is a strong indication to rule out the performances of polynomial-time algorithms~\cite{KuniskyWeinBandeira}. If $\lambda\preccurlyeq_{\log(n)} [1 \wedge \frac{\sqrt{n}}{\rho}]$, then $MMSE_{\leq D}$ is close to the risk  $MMSE_{\leq 0}$ of $X_{0}$, and it is therefore impossible to achieve weak reconstruction with low-degree polynomials.

Whereas statistical-computational gaps in seriation have been conjectured e.g. in~\cite{cai2023matrix}, we are only aware of the computational lower bounds of~\cite{berenfeld2024seriation}, where they consider different models, with  $\pi^*$ a uniform element of $[n]^n$ instead of a random permutation. The difficulty in establishing lower bounds for seriation stems from the fact that, when $\pi^*$ is an uniform permutation, $\pi^*(i)$ and $\pi^*(j)$ are not independent.

Conversely, whenever $\lambda\succcurlyeq_{\log(n)} [1 \wedge \sqrt{n}/\rho]$, there exists a polynomial-time estimator of $X$ achieving weak reconstruction.

\begin{prop}\label{prp:upper:polynomial:seriation}
Consider the seriation model~\eqref{eq:definition:model:seriation} with an unknown partition $\pi^*$. \begin{itemize}
    \item  Let $\hat{\pi}$ be the polynomial-time estimator \texttt{PINES} from~\cite{berenfeld2024seriation}. Then, 
\begin{equation}\label{eq:frobenius:risk}
\frac{1}{n^2}\mathbb{E}\left[\|\hat{X}_{\hat{\pi}}-X\|_F^2\right]\leq \frac{5\lambda^2}{n} + \frac{\lambda}{n^{1/2} } \sqrt{\log(n)} \ , 
\end{equation}
    \item Define the thresholding estimator $\hat{X}_{th}$ defined by $(\hat{X}_{th})_{ij}= \lambda \mathbf{1}\{Y_{ij}> \lambda/2\}$. If $\lambda\geq 4\sqrt{\log(n)}$, then 
    \[
\frac{1}{n^2}\mathbb{E}\left[\|\hat{X}_{\hat{\pi}}-X\|_F^2\right]\leq \frac{\lambda^2}{n^2}\ . 
    \]
\end{itemize}
\end{prop}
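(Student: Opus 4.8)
The two items are independent, and I would treat them separately. For the first item, the statement is essentially an invocation of the performance guarantee for \texttt{PINES} proved in~\cite{berenfeld2024seriation}. The plan is to recall their theorem, which, fed with the noisy matrix $Y$ of model~\eqref{eq:definition:model:seriation}, produces in polynomial time a permutation $\hat\pi$ together with a control of an estimation loss. If that loss is stated directly as the Frobenius reconstruction error of the associated banded matrix $\hat X_{\hat\pi}$, with $(\hat X_{\hat\pi})_{ij}=\lambda\1\{|\hat\pi(i)-\hat\pi(j)|\le\rho\}$, then~\eqref{eq:frobenius:risk} is immediate. Otherwise one converts a displacement-type bound on $\hat\pi$ into the matrix bound: for a fixed row $i$, the entry $(\hat X_{\hat\pi})_{ij}$ differs from $X_{ij}$ only if $j$ lies within $\rho$ of $i$ under exactly one of the orderings $\hat\pi,\pi^*$, which forces $i$ or $j$ to have a large displacement; summing over $j$ bounds the number of discordant entries in row $i$ by a multiple of $\rho$ times the local displacement, and each discordant entry contributes $\lambda^2$ to $\|\hat X_{\hat\pi}-X\|_F^2$. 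One should also note that, while~\cite{berenfeld2024seriation} analyses hardness in a latent model with $\pi^*$ uniform over $[n]^n$, the analysis of the algorithm \texttt{PINES} goes through for a genuine uniform permutation $\pi^*$, which is the only regime needed here; collecting the estimates yields~\eqref{eq:frobenius:risk}.

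For the second item the argument is a direct computation. Because $X_{ij}\in\{0,\lambda\}$ and the threshold $\lambda/2$ is exactly half-way, for every pair $(i,j)$ one has $(\hat X_{th})_{ij}\neq X_{ij}$ if and only if $|E_{ij}|>\lambda/2$, and on that event $\big((\hat X_{th})_{ij}-X_{ij}\big)^2=\lambda^2$. Therefore
\[
\E\big[\|\hat X_{th}-X\|_F^2\big]=\lambda^2\sum_{i,j=1}^n\P\big(|E_{ij}|>\lambda/2\big),
\]
and a standard Gaussian tail bound, together with $\lambda\ge 4\sqrt{\log n}$, gives $\P(|E_{ij}|>\lambda/2)\le n^{-2}$, so that the right-hand side is at most $\lambda^2$; dividing by $n^2$ yields the claimed $\lambda^2/n^2$.

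The first item is where the only genuine work lies — namely matching the exact form of the \texttt{PINES} guarantee from~\cite{berenfeld2024seriation} to model~\eqref{eq:definition:model:seriation}, the Frobenius loss, and the uniform-permutation prior, via the row-counting translation above; the second item is routine. As a consequence, comparing the two displays with $\frac1{n^2}\|X-X_0\|_F^2=\lambda^2\tfrac{n}{n-1}\varphi(1-\varphi)\asymp \lambda^2\rho/n$ shows that weak reconstruction is achieved in polynomial time whenever $\lambda\succcurlyeq_{\log}1\wedge\sqrt n/\rho$.
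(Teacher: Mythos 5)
Your treatment of the second item is essentially the paper's: the per-entry mistake contributes $\lambda^2$ to the squared Frobenius error, and a Gaussian tail bound caps the mistake probability at $n^{-2}$. One small imprecision: the mistake event is not the two-sided event $\{|E_{ij}|>\lambda/2\}$; when $X_{ij}=\lambda$ the error occurs iff $E_{ij}<-\lambda/2$, and when $X_{ij}=0$ iff $E_{ij}>\lambda/2$, so it is a one-sided event contained in $\{|E_{ij}|>\lambda/2\}$. Your ``if and only if'' should be ``only if''; the upper bound survives either way, but the claimed equivalence is false.

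For the first item there is a genuine gap. You describe two contingencies (the \texttt{PINES} guarantee gives Frobenius loss directly, or a displacement bound needing a row-counting conversion), but neither explains where the constant $5$ in $5\lambda^2/n$ comes from. What the paper actually does is read off from the proof of Theorem~4 of~\cite{berenfeld2024seriation} a \emph{high-probability} bound: on an event $\cA$ of probability at least $1-4/n$, one has $\|\hat X_{\hat\pi}-X\|_F^2\le \lambda^2 n+\lambda n^{3/2}\sqrt{\log n}$. The stated bound in~\eqref{eq:frobenius:risk} is in \emph{expectation}, so one must also control the complement of $\cA$. This is handled by the deterministic bound $\|\hat X_{\hat\pi}-X\|_F^2\le \lambda^2 n^2$ (both matrices are $\{0,\lambda\}$-valued), which contributes at most $(4/n)\cdot\lambda^2 n^2=4\lambda^2 n$ and is exactly what promotes $\lambda^2 n$ to $5\lambda^2 n$. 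Your sketch says the result is ``immediate'' once the guarantee is matched to the Frobenius loss, but it is not: without the high-probability-to-expectation step and the worst-case bound on the bad event, the numerical constant in~\eqref{eq:frobenius:risk} cannot be obtained. The row-counting reduction you propose as a fallback is also not the mechanism used here and would not by itself address the bad-event contribution either.
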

In particular, when $\lambda\geq c\sqrt{\log(n)}\big[1\wedge \sqrt{n}/\rho]$, then the risk of those estimator is significantly smaller than $2\lambda^2 \rho/n\asymp \varphi(1-\varphi)$, and those estimators therefore achieve weak reconstruction.

In contrast, weak reconstruction of the matrix $X$ is information-theoretically possible in the much wider regime where
 $\lambda\geq \sqrt{\log(n)/\rho}$, as shown by the next proposition. 
\begin{prop}\label{prp:upper:IT:seriation}
Consider  $\hat{\pi} \in \arg\min_{\pi} \|Y-X_{\pi}\|_F^2$ be the least-square estimator. Then, 
\[
\frac{1}{n^2}\mathbb{E}\left[\|X_{\hat{\pi}}-X\|_F^2\right]\leq \frac{8\log(n)}{n} \ . 
\]
As a consequence, $X_{\hat{\pi}}$ achieves weak reconstruction as long as $\lambda$ is large compared to $\log(n)/\sqrt{\rho}$. 
\end{prop}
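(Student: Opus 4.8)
The plan is to run the textbook analysis of a least-squares estimator constrained to a finite model class; the only ingredient that needs a moment's thought is controlling the Gaussian process indexed by the super-exponentially large set of permutations, and I would handle it with nothing fancier than a union bound.

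First I would exploit the optimality of $\hat{\pi}$. Since $X=X_{\pi^*}$ lies in the feasible set $\{X_\pi:\pi\in\mathcal{S}_n\}$, we have $\|Y-X_{\hat{\pi}}\|_F^2\leq\|Y-X\|_F^2=\|E\|_F^2$; substituting $Y=X+E$ and rearranging gives the basic inequality $\|X_{\hat{\pi}}-X\|_F^2\leq 2\langle X_{\hat{\pi}}-X,E\rangle$. If $X_{\hat{\pi}}=X$ there is nothing to prove; otherwise, dividing by $\|X_{\hat{\pi}}-X\|_F>0$,
\[
\|X_{\hat{\pi}}-X\|_F\ \leq\ 2\,\frac{\langle X_{\hat{\pi}}-X,E\rangle}{\|X_{\hat{\pi}}-X\|_F}\ \leq\ 2Z,\qquad\text{where}\quad Z:=\max_{\pi\in\mathcal{S}_n:\,X_\pi\neq X}\ \frac{\langle X_\pi-X,E\rangle}{\|X_\pi-X\|_F}\ .
\]
A point worth stressing is that the right-hand side carries no $\lambda$: a least-squares estimator constrained to the model class cannot fluctuate past the level allowed by the noise, whatever the signal amplitude, which is exactly why the final bound is $\lambda$-free (and why, for $\lambda$ very large, this forces exact recovery).

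It remains to bound $\mathbb{E}[Z^2]$. For each fixed $\pi$ with $X_\pi\neq X$, the variable $\langle X_\pi-X,E\rangle/\|X_\pi-X\|_F$ is $\mathcal{N}(0,1)$, since $E$ has i.i.d.\ standard Gaussian entries; hence $Z$ is a maximum of at most $|\mathcal{S}_n|=n!$ standard Gaussians (in fact at most $n!/2$, since reversing a permutation leaves $X_\pi$ unchanged, but $n!$ already suffices). Applying the tail bound $\mathbb{P}(|g|>t)\leq 2e^{-t^2/2}$, a union bound, and integrating $\mathbb{P}(Z^2>s)\leq\min\{1,\,2\,n!\,e^{-s/2}\}$ over $s>0$ gives $\mathbb{E}[Z^2]\leq 2\log(2\,n!)+2$, and then Stirling's bound $\log(n!)\leq n\log n-n+O(\log n)$ lets the additive constants be absorbed (for $n$ not too small), yielding $\mathbb{E}[Z^2]\leq 2n\log n$. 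Combining with $\|X_{\hat{\pi}}-X\|_F^2\leq 4Z^2$ and dividing by $n^2$ produces $\frac1{n^2}\mathbb{E}[\|X_{\hat{\pi}}-X\|_F^2]\leq\frac{8\log n}{n}$. This is the crux of the argument, but a crude one: I would deliberately not attempt a chaining estimate, since the union bound over $n!$ permutations already does the job, and it is precisely this union bound that produces the $\log n$ factor — the mark of an information-theoretic, rather than algorithmic, guarantee.

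Finally, for the weak-reconstruction consequence I would simply compare $\frac1{n^2}\mathbb{E}[\|X_{\hat{\pi}}-X\|_F^2]\leq\frac{8\log n}{n}$ with the risk $\lambda^2\frac{n}{n-1}\varphi(1-\varphi)\asymp\lambda^2\rho/n$ (for $\rho$ small compared to $n$) of the best constant predictor $X_0$; the former is negligible relative to the latter as soon as $\lambda^2$ is large compared to $\log(n)/\rho$, so in particular $\lambda\succcurlyeq\log(n)/\sqrt{\rho}$ suffices, which is the claimed regime. Overall the only real obstacle is notational bookkeeping (and pinning down the constant $8$ via Stirling); there is no genuine difficulty.
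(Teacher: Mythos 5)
Your argument is essentially the same as the paper's: both start from the basic least-squares inequality $\|X_{\hat{\pi}}-X\|_F^2\leq 2\langle X_{\hat{\pi}}-X,E\rangle$, reduce to a maximum of at most $n!$ standard Gaussian variables $\langle X_\pi-X,E\rangle/\|X_\pi-X\|_F$, control that maximum by a union bound plus Stirling's $\log(n!)\leq n\log n$, and integrate the resulting tail to get the expectation. The only cosmetic difference is that the paper phrases the union bound as a high-probability event parametrized by $x$ and integrates over $x$, whereas you package it as $\mathbb{E}[Z^2]$ and integrate the tail of $Z^2$; this is the same computation.
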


In Appendix~\ref{sec:seriation:complementary}, we shall deduce from Theorem~\ref{thm:lowdegreeseriation} some hardness results for ordering Toeplitz-Robinson matrices, thereby answering an open question in~\cite{berenfeld2024seriation}.

\section{Clustering with constant size groups}\label{sec:clustering_constant}

The problem of clustering a mixture of isotropic Gaussians can be summarized as follows.  For some unknown vectors $\mu_1,\ldots,\mu_K\in\R^p$, noise level $\sigma>0$, and partition $G^*=\ac{G^*_1,\ldots,G^*_K}$ of $[n]$, we observe $Y\in \R^{n\times p}$ with entries  sampled independently with distribution
$$Y_{ij} \sim \mathcal{N}(\mu_{kj},\sigma^2),\quad \text{for}\ i\in G^*_k\quad \text{and}\ j\in[p].$$
The goal is to recover $G^*$.  
This problem  has received a lot of attention~\cite{Dasgupta99,VEMPALA2004,LuZhou2016,Regev2017,giraud2019partial,LiuLi2022,diakonikolasCOLT23b,Even24}.
When the partition originates from independent assignments,   i.e.  $G_{k}^*=\ac{i:k^*_{i}=k}$ with $k^*_{1},\ldots, k^*_{n}$ i.i.d. uniform on $[K]$, the
Information-Theoretic and Low-Degree thresholds are now well established~\cite{Even24,Even25a}, except in a narrow parameter regime.

We consider in this section the problem of clustering a  Gaussian mixture with groups $G^*_{k}$ of known size $n/K$. While close to the independent assignment setting, handling such a size constraint raises some  challenges due to the dependences inherited from the constraint.
Our main motivation for considering this problem is  to provide, in Appendix~\ref{app:clustering}, a simple instantiation of the proof technique of Section~\ref{sec:globalmethod}, before delving into  the more involved proofs of Theorems~\ref{thm:lowdegreefeature}  and~\ref{thm:lowdegreeseriation} in Appendices~\ref{prf:lowdegreefeature} and \ref{prf:lowdegreeseriation}.

To prove the LD bound, we consider a Bayesian setting with the following prior.

\begin{definition}\label{def:prior}
Assume that $K$ divides $n$. We draw $\pi^*(1),\ldots, \pi^*(n)$ uniformly among all $\pi \in[K]^n$ satisfying  $|\ac{i\in[1,n],\enspace \pi^*(i)=k}|=\frac{n}{K}$, for all $k\in[1,K]$. For a given $\bar{\Delta}>0$, independently from $(\pi^*(i))_{i\in[1,n]}$,  we draw $\mu_{1},\ldots,\mu_{K}\in\R^{p}$ i.i.d. with   $\mathcal{N}\pa{0,\lambda^2I_p}$ distribution, with $\lambda^{2}=\frac{1}{p}\bar{\Delta}^{2}\sigma^2$. 

Then, conditionally on $(\pi^*(i))_{i=1,\ldots,n}$ and $(\mu_k)_{k=1,\ldots,K}$, the $Y_i$'s are sampled independently with  $\mathcal{N}(\mu_{\pi^*(i)}, \sigma^2I_p)$ distribution. The partition $G^{*}$ is obtained from the $\pi^*(i)$'s with the canonical partitioning $G^{*}_{k}=\{i\in[1,n],\enspace \pi^*(i)=k\}$. 
\end{definition}

With high probability under this prior, the separation $\Delta^2=\min_{k\neq l}\frac{\|\mu_k-\mu_l\|^2}{2\sigma^2}$ satisfies $\Delta^2=\bar{\Delta}^2(1+o_p(1))$, when $p\gtrsim \log(K)$.

The partition $G^*$ and its estimator $\hat{G}$ are combinatorial objects and are not, as such, well suited to low-degree polynomials. As done previously (see e.g.~\cite{banks2018information}), we rather focus  on the problem of estimating the partnership matrix $\Gamma^*$ defined by $\Gamma^*_{ij}=\1\{\pi^*(i)=\pi^*(j) \}$, for $i,j\in [n]$. Given any partition $G=G_1,\ldots, G_k$, we define the partnership matrix $\Gamma^G$ by $\Gamma^G_{ij}=\sum_{k\in [K]}\1\{i,j\in G_k \}$. We know from~\cite{Even24} p.5 that 
\begin{equation}\label{eq:MtoG}
\frac{1}{n(n-1)}\|\Gamma^{G}-\Gamma^{*}\|_F^2 \leq \min_{\pi \in \mathcal{S}_{K}} {1\over n} \sum_{k=1}^K |G^*_{k}\triangle \hat G_{\pi(k)}|=:2\,err(\hat G,G^*)\ ,
\end{equation}
where $\triangle$ represents the symmetric difference,  $\mathcal{S}_{K}$ the permutation group on $[K]$, and where $err(\hat G,G^*)$ is the average proportion of misclassified points in $\hat G$. Hence, estimating $\Gamma^{*}$ in polynomial-time with small square-Frobenius distance is no harder than building a polynomial-time estimator $\hat{G}$ with a small error $err(\hat{G},G^*)$. Next proposition, proved in Appendix~\ref{prf:hardnessclustering}, shows that non trivial estimation of $\Gamma^*$ is as hard as non trivial recovery of $G^*$.

\begin{prop}\label{prop:hardnessclustering}
Suppose we are in a regime where $K,n$ (and possibly $p$) go to infinity, with $n/K=o(1)$. If $$\inf_{\hat{\Gamma}\ poly-time} {1\over n(n-1)}\E\cro{\|\hat \Gamma-\Gamma^*\|^2_{F}}=K^{-1}(1+o(1))\enspace,$$ then, all polynomial-time estimators $\hat G$ with prescribed group size $n/K$ satisfy $\E\cro{err(\hat{G}, G^*)}=1+o(1)$. 
\end{prop}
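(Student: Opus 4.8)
\textbf{Proof plan for Proposition~\ref{prop:hardnessclustering}.}

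The plan is to argue by contraposition: I will assume that there exists a polynomial-time estimator $\hat G$ (with prescribed group sizes $n/K$) such that $\E[err(\hat G,G^*)]$ does \emph{not} tend to $1$, and deduce that some polynomial-time estimator of $\Gamma^*$ beats the trivial error $K^{-1}(1+o(1))$. First I would recall that the trivial baseline $K^{-1}$ is exactly the normalized risk of the constant estimator $\hat\Gamma \equiv \E[\Gamma^*]$: since $\pi^*$ is a uniform balanced assignment, $\P[\pi^*(i)=\pi^*(j)] = (n/K-1)/(n-1) = K^{-1}(1+o(1))$ under the assumption $n/K = o(1)$, wait --- more carefully, $n/K\to\infty$ is what makes clustering meaningful, so I should read the hypothesis as $K/n\to 0$; in any case $\E[\Gamma^*_{ij}] = \frac{n/K - 1}{n-1}$ for $i\neq j$, which is $\frac1K(1+o(1))$, and the corresponding normalized Frobenius risk is $\frac{1}{n(n-1)}\sum_{i\neq j}\mathrm{Var}(\Gamma^*_{ij}) = \frac1K(1-\frac1K)(1+o(1)) = K^{-1}(1+o(1))$. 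So the hypothesis of the proposition says no poly-time estimator asymptotically improves on the constant estimator.

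The core step is then the reduction inequality~\eqref{eq:MtoG}, which gives $\frac{1}{n(n-1)}\|\Gamma^{\hat G}-\Gamma^*\|_F^2 \le 2\,err(\hat G,G^*)$ for the plug-in estimator $\hat\Gamma := \Gamma^{\hat G}$ built from $\hat G$. Taking expectations, $\inf_{\hat\Gamma\ \mathrm{poly}} \frac{1}{n(n-1)}\E\|\hat\Gamma-\Gamma^*\|_F^2 \le 2\,\E[err(\hat G,G^*)]$. Under the contraposition hypothesis there is a constant $c<1$ and a subsequence along which $\E[err(\hat G,G^*)]\le c$; along that subsequence the left-hand side is at most $2c$. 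But I also need a matching statement that the \emph{achievable} risk under the constant-estimator regime is at least, essentially, $K^{-1}$ — i.e., I must rule out that $2c \ge K^{-1}(1+o(1))$ is vacuously compatible. Since $K\to\infty$, $K^{-1}\to 0$, so $2c$ (a fixed positive constant) is \emph{larger} than $K^{-1}(1+o(1))$, and the naive inequality goes the wrong way. Hence the reduction must be run more quantitatively: I need to show that if $err$ is bounded away from $1$ then the normalized $\Gamma$-risk is bounded away from $K^{-1}$ by a term that does not vanish relative to $K^{-1}$. This requires sharpening~\eqref{eq:MtoG}: I expect one can show that whenever $err(\hat G,G^*)\le 1-\epsilon$, a constant fraction $\gtrsim \epsilon$ of the rows of $\hat G$ are correctly classified up to the optimal relabeling, and for each such correctly-classified pair the estimator $\Gamma^{\hat G}_{ij}$ matches $\Gamma^*_{ij}$ exactly, producing a squared-error reduction of order $\epsilon^2$ in the normalized Frobenius risk relative to the constant estimator — i.e. the improvement is $\Omega(\epsilon^2)$, an absolute constant, which dwarfs $K^{-1}$. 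That contradicts the hypothesis that no poly-time estimator of $\Gamma^*$ beats $K^{-1}(1+o(1))$.

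The main obstacle, therefore, is precisely this quantitative sharpening: turning ``$err$ bounded away from $1$'' into ``the partnership matrix $\Gamma^{\hat G}$ has normalized Frobenius risk bounded below $K^{-1}$ by an absolute constant.'' The subtlety is that misclassified points can still contribute the ``right'' entries to $\Gamma^{\hat G}$ by accident (two misclassified points sent to the same wrong cluster agree with $\Gamma^*$ when they were truly together), so I must carefully account for the sign of each error contribution; the clean way is to fix the optimal relabeling $\psi$ achieving $err$, let $C\subseteq[n]$ be the set of $i$ with $\psi(\hat G)$-label equal to the true label, note $|C|\ge (1-err)n \ge \epsilon n$, and restrict attention to the $|C|(|C|-1)$ pairs inside $C$, where $\Gamma^{\hat G}_{ij}=\Gamma^*_{ij}$ exactly, so those pairs contribute $0$ to $\|\Gamma^{\hat G}-\Gamma^*\|_F^2$ while contributing $\mathrm{Var}(\Gamma^*_{ij})\gtrsim K^{-1}$ each to the constant estimator's risk — yielding a risk gap of order $\epsilon^2 K^{-1}/K^{-1} = \epsilon^2$... hmm, the gap in \emph{normalized} risk is $\frac{|C|(|C|-1)}{n(n-1)}\cdot\frac1K(1+o(1)) \ge \epsilon^2 K^{-1}(1+o(1))$, which is only a constant \emph{multiple} of the baseline $K^{-1}$, not an additive absolute constant. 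That is in fact exactly what is needed: it shows $\inf_{\hat\Gamma}\frac{1}{n(n-1)}\E\|\hat\Gamma-\Gamma^*\|_F^2 \le K^{-1}(1-\epsilon^2 + o(1)) < K^{-1}(1+o(1))$, contradicting the hypothesis. So the crux is to make the bookkeeping on $C$ rigorous (including handling the expectation and the measurability of $\psi$), after which the proposition follows by contraposition. I would close by noting that the same argument applies verbatim to Proposition~\ref{prop:featurematchinghardness}, with $[n]=[K]\times[M]$ and the baseline $K^{-1}$ coming from $\P[\pi^*_m(k)=\pi^*_{m'}(k')]=K^{-1}$ for $(k,m)\neq(k',m')$.
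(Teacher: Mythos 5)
Your contraposition plan founders on a quantitative error in the accounting. You correctly observe that the pairs $(i,j)$ within the correctly-classified set $C$ contribute $0$ to $\|\Gamma^{\hat G}-\Gamma^*\|_F^2$, but you then compare this saving against the constant estimator's per-pair risk $\approx K^{-1}$, implicitly treating the pairs outside $C$ as if they also cost about $K^{-1}$. They do not. Since $\Gamma^{\hat G}$ is forced to be $\{0,1\}$-valued, a pair where $\hat G$ and $G^*$ are effectively decorrelated contributes $\P[\Gamma^{\hat G}_{ij}\neq\Gamma^*_{ij}]\approx 2K^{-1}$ (two near-independent Bernoulli$(K^{-1})$ variables disagree with probability $\approx 2K^{-1}$), which is \emph{worse} than the constant estimator's $\approx K^{-1}$. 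Concretely, $\|\Gamma^{\hat G}-\Gamma^*\|_F^2=2n^2/K-2\langle\Gamma^{\hat G},\Gamma^*\rangle_F$, and Lemma~\ref{lem:clusteringhardness} gives only the bound $\frac{1}{n(n-1)}\|\Gamma^{\hat G}-\Gamma^*\|_F^2\le\frac{2}{K}\bigl(1-(1-err)^2\bigr)(1+o(1))$, so the plug-in estimator has normalized risk roughly $\frac{2}{K}(1-\epsilon^2)$, \emph{not} the $\frac{1}{K}(1-\epsilon^2)$ you assert. This beats the $K^{-1}$ baseline only when $\epsilon>1/\sqrt 2$, i.e.\ when $err<1-1/\sqrt 2\approx 0.29$; for $err$ bounded away from $1$ but above $0.29$, $\Gamma^{\hat G}$ has \emph{higher} risk than the constant estimator and yields no contradiction. (A sanity check: with $err=1/2$ the plug-in risk is $\approx 3/(2K)>K^{-1}$.)

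The missing idea is to work with a quantity that is invariant under rescaling of the estimator — this is exactly what the paper does with $corr_{poly}$. Writing $n(n-1)\,\mathrm{MMSE}_{poly}=\E\|\Gamma^*\|_F^2-corr_{poly}^2=n^2/K-corr_{poly}^2$, the hypothesis $\mathrm{MMSE}_{poly}=K^{-1}(1+o(1))$ yields $corr_{poly}^2=o(n^2/K)$; Cauchy--Schwarz with the fixed norm $\|\Gamma^{\hat G}\|_F^2=n^2/K$ then bounds $\E\langle\Gamma^{\hat G},\Gamma^*\rangle_F$ by $o(n^2/K)$ for \emph{any} poly-time $\hat G$, which forces $\E\|\Gamma^{\hat G}-\Gamma^*\|_F^2\ge 2n^2/K(1+o(1))$, and Lemma~\ref{lem:clusteringhardness} finishes. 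Equivalently, your contraposition can be repaired by using the \emph{normalized} plug-in $\Gamma^{\hat G}/\|\Gamma^{\hat G}\|_F$ (or the best affine combination $c\Gamma^{\hat G}+d\mathbf{1}\mathbf{1}^T$) as the $\Gamma^*$-estimator: if $\E[(1-err)^2]\ge\epsilon^2$ then $\E\langle\Gamma^{\hat G},\Gamma^*\rangle_F\ge n^2\epsilon^2/K$ and so $corr_{poly}^2\gtrsim \epsilon^4\, n^2/K$, contradicting $corr_{poly}^2=o(n^2/K)$. The lesson is that the reduction from ``good $\hat G$'' to ``good $\hat\Gamma$'' must pass through a rescaling step, since the raw $\{0,1\}$ partnership matrix lives on the wrong scale relative to the $L^2$ Bayes baseline.
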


In other words, it is impossible to reconstruct in polynomial time the partition $G^*$ better than random guessing in the regime where $MMSE_{poly}=\frac{1}{K}(1+o(1))$. As a proxy for $MMSE_{poly}$, we analyse the Low-Degree $MMSE$ defined by 
\begin{equation}\label{def:MMSE}
	MMSE_{\leq D}:=\inf_{\hat{\Gamma}\in\R_{D}[Y]}\frac{1}{n(n-1)}\E\cro{\|\hat{\Gamma}-\Gamma^*\|_{F}^{2}}\enspace.
\end{equation}

\begin{thm}\label{thm:lowdegreeconstantsizeclustering}
We suppose $p\geq \frac{n}{K^2}$, $n\geq \max\pa{2(D+2)^2K, (D+2)^4}$, $K\geq D+2$ and $\zeta:= \frac{\overline{\Delta}^{4}}{p}D^{22}(D+2)^2\max\left(1, \frac{n}{K^2}\right)<1$. Then, under the prior of Definition \ref{def:prior}, we have 
$$MMSE_{\leq D}\geq \frac{1}{K}-\frac{1}{K^2}\pa{1+18D^{21}\frac{\zeta}{1-\sqrt{\zeta}}}\enspace.$$
\end{thm}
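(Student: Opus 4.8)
\textbf{Proof plan for Theorem~\ref{thm:lowdegreeconstantsizeclustering}.}
The plan is to apply the Schramm--Wein bound \eqref{eq:SW22} to the scalar recovery target $x$, where $x$ is a generic entry $\Gamma^*_{i_0 j_0} = \1\{\pi^*(i_0)=\pi^*(j_0)\}$ of the partnership matrix, and then sum over all $i_0\neq j_0$. Since $\E[\|\hat\Gamma - \Gamma^*\|_F^2] = \sum_{i\neq j}\E[(\hat\Gamma_{ij}-\Gamma^*_{ij})^2] + \sum_i \E[(\hat\Gamma_{ii}-1)^2]$, and each off-diagonal recovery problem has $\E[x] = 1/K \cdot(1+o(1))$ so $\E[x^2]=\E[x]$ as $x$ is $\{0,1\}$-valued, the leading term $\frac1{n(n-1)}\sum_{i\neq j}\E[x^2]$ already produces the claimed $1/K$. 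The whole game is therefore to show that the correction term
\[
\frac{1}{n(n-1)}\sum_{i_0\neq j_0}\ \sum_{\substack{\alpha\in\N^{n\times p}\\ 1\leq |\alpha|\leq D}}\frac{\kappa_{x,\alpha}^2}{\alpha!}
\]
is at most $\frac{1}{K^2}\cdot 18 D^{21}\frac{\zeta}{1-\sqrt\zeta}$. Here I must be careful that the $|\alpha|=0$ term is excluded (it would contribute $\E[x]^2$ and must be separated from $\E[x^2]$), which is exactly where the $1/K - 1/K^2(1+\cdots)$ shape comes from.

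\textbf{Bounding $\kappa_{x,\alpha}$.} The next step is to invoke Theorem~2.5 of \cite{Even25a} to reduce $\kappa_{x,\alpha}$ — a cumulant mixing the latent label indicator $x=\1\{\pi^*(i_0)=\pi^*(j_0)\}$ with monomials in the Gaussian observations $X_{ij}=\mu_{\pi^*(i),j}$ — to a combination of ``pure'' cumulants of the form \eqref{eq:generalformcumulant}, namely $\cumul\big((\delta(\pi^*(i_s),\pi^*(i'_s)))_{s\in[0,l]}\big)$ with $\delta$ the equality indicator on $[K]^2$ and $l+1 \lesssim |\alpha|$. The Gaussian structure contributes the factors of $\lambda$ (hence $\bar\Delta^2/p$) and the combinatorial bookkeeping of which indices are glued; this is the standard part already carried out in \cite{Even25a}, so I would cite it and focus the energy on the genuinely new estimate. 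Then I apply the machinery of Section~\ref{sec:globalmethod}: for the balanced-clustering prior of Definition~\ref{def:prior}, sampling without replacement of the labels means that for a fixed $\pi$, the variables $Z_t = \prod_{i\in I_t}\1\{\pi^*(i)=\pi(i)\}$ have mixed moments of the form \eqref{eq:formula:general_momodent} with $\eta = K/n$ (probability a fixed coordinate takes a fixed value), with the $A_j$-blocks corresponding to coordinates forced into the same cluster (giving $x_0 = K/n$ in the denominator-type factors, since after placing one point in cluster $k$ there are $n/K-1$ slots left out of $n-1$), and crucially with $r=0$ — there are no ``forbidden equalities'' in clustering, unlike in feature matching or seriation, which is precisely why this is the pedagogically simplest instance. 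So only part~(2) of Lemma~\ref{lem:upperboundgeneralcumulant} is needed, and Steps~3 (pruning) is vacuous.

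\textbf{Assembling the bound.} With $r=0$ in hand, I run Steps 1, 2, 4 of Section~\ref{sec:globalmethod}: expand $C_{\bS}$ over active $\pi$ via multilinearity, bound each $C_{\bS[R]}(\pi)$ by $2|R|^{2|R|}\eta^{|I_R|}(|I_R|^2 x_0)^{|R|-1}$, choose the intermediate weight $m = |\mathrm{supp}(S^{(0)})|^2 x_0$ and $w$ analogously, and apply Proposition~\ref{prop:feray} together with the subexponential-growth Lemma~\ref{lem:subexponential} to collapse the nested structure. The number of active $\pi$ for a given connected component is at most $K^{\#\text{components}}$-ish, which combines with $\eta^{|I_R|} = (K/n)^{|I_R|}$ to leave powers of $n/K^2$ — this is where the $\max(1,n/K^2)$ in $\zeta$ enters, paired with the $\bar\Delta^4/p$ from the two Gaussian legs and the $D^{\Theta(1)}$ from the $|R|^{2|R|}\leq |\bS|^{2|\bS|}$ crude bounds and from the number of index configurations $(i_s,i'_s)_{s}$ compatible with a given $\alpha$ and a given connected-component count. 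Summing the resulting geometric-type series in $\zeta$ over degrees $|\alpha|\le D$, and over the $n(n-1)$ choices of $(i_0,j_0)$ against the normalization, yields the stated $\frac{1}{K^2}(1+18D^{21}\frac{\zeta}{1-\sqrt\zeta})$; the hypotheses $n\geq 2(D+2)^2K$, $K\geq D+2$, $n\geq (D+2)^4$ are exactly what is needed to keep $L^2 x_0\leq 1/2$ throughout so that Lemma~\ref{lem:upperboundgeneralcumulant} applies, and $p\geq n/K^2$ keeps $\zeta$ governed by the $\bar\Delta^4 D^{O(1)}$ factor rather than by $n/(K^2 p)$.

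\textbf{Main obstacle.} The routine but delicate part is the counting in Step~4: tracking how the number of ``active'' label-assignments $\pi$ and the number of index-tuples $(i_s,i'_s)$ realizing a cumulant with a prescribed connected-component count interact with the powers $\eta^{|I_R|}(|I_R|^2 x_0)^{|R|-1}$, so that every term of degree $d$ that is not the trivial leading one carries at least one factor of $\zeta$ and the polynomial-in-$D$ prefactor comes out as $D^{21}$ rather than something larger. The conceptual content — that weak dependence from sampling without replacement still forces cumulants to decay like a high power of a small parameter — is entirely supplied by Lemma~\ref{lem:upperboundgeneralcumulant}; the work here is purely in propagating constants cleanly through the two-layer application of Proposition~\ref{prop:feray}, which is why the authors single this model out as the transparent warm-up.
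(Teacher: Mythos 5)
Your high-level plan --- Schramm--Wein bound, reduction via Theorem~2.5 of~\cite{Even25a} (Proposition~\ref{thm:LTC}) to cumulants of equality indicators of the latent labels, then the four-step machinery of Section~\ref{sec:globalmethod} --- is the right skeleton, and you are also right that Step~3 (pruning) is vacuous here because $\pi^*:[n]\to[K]$ is not injective. However, your identification of the parameters in~\eqref{eq:formula:general_momodent} is wrong in a way that is not cosmetic: you claim $r=0$ and that only part~(2) of Lemma~\ref{lem:upperboundgeneralcumulant} is needed, but the balanced-size prior actually gives $r=K\geq 1$. For a fixed active assignment $\pi$, writing $B_k=\cup_{S\in\bS:\pi(S)=k}\mathrm{supp}(S)$, the exact computation is Equation~\eqref{eq:leveragingclustering}: the mixed moment of $(Z_{\pi,S_t})_{t\in\Delta}$ equals
\[
\Bigl(\tfrac{1}{K}\Bigr)^{|I_\Delta|}\ \tfrac{1}{1-\tfrac1n}\cdots\tfrac{1}{1-\tfrac{|I_\Delta|-1}{n}}\ \prod_{k=1}^K\Bigl(1-\tfrac{K}{n}\Bigr)\cdots\Bigl(1-(|I_\Delta\cap B_k|-1)\tfrac{K}{n}\Bigr),
\]
which is~\eqref{eq:formula:general_momodent} with $\eta=1/K$ (not $K/n$), $q=1$, $A_1=[n]$, $x_0=1/n$ (not $K/n$), and $r=K$, $y_0=K/n$. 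You must therefore apply part~(1) of Lemma~\ref{lem:upperboundgeneralcumulant}, not part~(2).

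This matters because part~(1) produces the factor $(x_0/y_0)^{cc(\cB)-1}=(1/K)^{\#\pi-1}$, where $\#\pi$ is the number of distinct cluster values used by the candidate assignment. In Step~4 one must sum $|C_{\bS}(\pi)|$ over all $\pi\in[K]^{\bS}$; there are of order $K^{k}\,k^{|\bS|}$ assignments with $\#\pi=k$, and it is precisely the $(1/K)^{k-1}$ decay from part~(1) that makes this sum $O(K\,|\bS|^{|\bS|+1})$ rather than $O(K^{|\bS|})$. With your $r=0$ bound this compensating factor is absent and the sum over active $\pi$ diverges with $K$. In other words, you have conflated two distinct mechanisms: the absence of Step~3 pruning (which is about $\pi^*$ being non-injective and does make this example the simplest) and the value of $r$ in the core lemma (which encodes the hard balanced-size constraint and is $K>0$ here). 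The $r=0$ case is in fact the one that arises for feature matching and seriation, where each $\pi^*$ (or $\pi^*_m$) is a permutation and the only structure is sampling-without-replacement in the denominator --- there one pays for the injectivity via Step~3 instead.
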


Theorem~\ref{thm:lowdegreeconstantsizeclustering} entails that, when the size of the groups is known to be exactly $n/K$, the clustering problem is computationally hard when, 
\begin{equation}\label{eq:computationalclustering}
\Delta^2\leq_{\log} \min\pa{\sqrt{\frac{pK^2}{n}},\sqrt{p}}\enspace.
\end{equation}
This threshold matches, up to possible logarithmic terms, the LD lower bounds established in~\cite{Even25a} for random-size partitions. 
Hence, the perfect knowledge of the cluster sizes does not help (significantly) for cluster recovery. 

Conversely,  most state-of-the-art procedures based on spectral projection~\cite{VEMPALA2004}, hierarchical-clustering~\cite{Even24}, SDP~\cite{giraud2019partial} or high-order tensor projection~\cite{LiuLi2022} are valid for partitions with fixed constant size --see e.g. Proposition 3.2 in~\cite{Even25a}, so that~\eqref{eq:computationalclustering} turns out to be matched by polynomial-time procedures in most regimes.

\subsection*{Acknowledgments}

 This work has been partially supported by ANR-21-CE23-0035 (ASCAI, ANR) and ANR-19-CHIA-0021-01 (BiSCottE, ANR).

\bibliographystyle{alpha}
\bibliography{biblio}

\newcommand{\etalchar}[1]{$^{#1}$}
\begin{thebibliography}{HKKAP{\etalchar{+}}17}

\bibitem[ABH98]{atkins1998spectral}
Jonathan~E Atkins, Erik~G Boman, and Bruce Hendrickson.
\newblock A spectral algorithm for seriation and the consecutive ones problem.
\newblock {\em SIAM Journal on Computing}, 28(1):297--310, 1998.

\bibitem[BBH{\etalchar{+}}21]{brennan2020statistical}
Matthew Brennan, Guy Bresler, Samuel~B Hopkins, Jerry Li, and Tselil Schramm.
\newblock Statistical query algorithms and low-degree tests are almost
  equivalent.
\newblock In Mikhail Belkin and Samory Kpotufe, editors, {\em Proceedings of
  Thirty Fourth Conference on Learning Theory}, volume 134 of {\em Proceedings
  of Machine Learning Research}, pages 774--774. PMLR, 15--19 Aug 2021.

\bibitem[BCV24]{berenfeld2024seriation}
Cl{\'e}ment Berenfeld, Alexandra Carpentier, and Nicolas Verzelen.
\newblock Seriation of t{\oe}plitz and latent position matrices: optimal rates
  and computational trade-offs.
\newblock {\em arXiv preprint arXiv:2408.10004}, 2024.

\bibitem[BEAH{\etalchar{+}}22]{bandeira2022franz}
Afonso~S Bandeira, Ahmed El~Alaoui, Samuel Hopkins, Tselil Schramm, Alexander~S
  Wein, and Ilias Zadik.
\newblock The franz-parisi criterion and computational trade-offs in high
  dimensional statistics.
\newblock {\em Advances in Neural Information Processing Systems},
  35:33831--33844, 2022.

\bibitem[BHK{\etalchar{+}}19]{Barak19}
Boaz Barak, Samuel Hopkins, Jonathan Kelner, Pravesh~K. Kothari, Ankur Moitra,
  and Aaron Potechin.
\newblock A nearly tight sum-of-squares lower bound for the planted clique
  problem.
\newblock {\em SIAM Journal on Computing}, 48(2):687--735, 2019.

\bibitem[BMV{\etalchar{+}}18]{banks2018information}
Jess Banks, Cristopher Moore, Roman Vershynin, Nicolas Verzelen, and Jiaming
  Xu.
\newblock Information-theoretic bounds and phase transitions in clustering,
  sparse pca, and submatrix localization.
\newblock {\em IEEE Transactions on Information Theory}, 64(7):4872--4894,
  2018.

\bibitem[BPS95]{barnard1995spectral}
Stephen~T Barnard, Alex Pothen, and Horst Simon.
\newblock A spectral algorithm for envelope reduction of sparse matrices.
\newblock {\em Numerical linear algebra with applications}, 2(4):317--334,
  1995.

\bibitem[CD16]{Collier16}
Olivier Collier and Arnak~S. Dalayan.
\newblock Minimax rates in permutation estimation for feature matching.
\newblock {\em Journal of machine learning reasearch 17}, 2016.

\bibitem[CKK{\etalchar{+}}10]{chertkov2010inference}
Michael Chertkov, Lukas Kroc, F~Krzakala, M~Vergassola, and L~Zdeborov{\'a}.
\newblock Inference in particle tracking experiments by passing messages
  between images.
\newblock {\em Proceedings of the National Academy of Sciences},
  107(17):7663--7668, 2010.

\bibitem[CM23]{cai2023matrix}
Tony Cai and Rong Ma.
\newblock Matrix reordering for noisy disordered matrices: Optimality and
  computationally efficient algorithms.
\newblock {\em IEEE Transactions on Information Theory}, 2023.

\bibitem[Das99]{Dasgupta99}
Sanjoy Dasgupta.
\newblock Learning mixtures of gaussians.
\newblock In {\em 40th Annual Symposium on Foundations of Computer Science
  (Cat. No.99CB37039)}, pages 634--644, 1999.

\bibitem[DCK19]{dai2019database}
Osman~E Dai, Daniel Cullina, and Negar Kiyavash.
\newblock Database alignment with gaussian features.
\newblock In {\em The 22nd International Conference on Artificial Intelligence
  and Statistics}, pages 3225--3233. PMLR, 2019.

\bibitem[DCK23]{dai2023gaussian}
Osman~Emre Dai, Daniel Cullina, and Negar Kiyavash.
\newblock Gaussian database alignment and gaussian planted matching.
\newblock {\em arXiv preprint arXiv:2307.02459}, 2023.

\bibitem[DDL23]{ding2023low}
Jian Ding, Hang Du, and Zhangsong Li.
\newblock Low-degree hardness of detection for correlated erd{\H o}s-r{\'e}nyi
  graphs.
\newblock {\em arXiv preprint arXiv:2311.15931}, 2023.

\bibitem[DKPZ23]{diakonikolasCOLT23b}
Ilias Diakonikolas, Daniel~M. Kane, Thanasis Pittas, and Nikos Zarifis.
\newblock Sq lower bounds for learning mixtures of separated and bounded
  covariance gaussians.
\newblock In Gergely Neu and Lorenzo Rosasco, editors, {\em Proceedings of
  Thirty Sixth Conference on Learning Theory}, volume 195 of {\em Proceedings
  of Machine Learning Research}, pages 2319--2349. PMLR, 12--15 Jul 2023.

\bibitem[DKWB24]{ding2024subexponential}
Yunzi Ding, Dmitriy Kunisky, Alexander~S Wein, and Afonso~S Bandeira.
\newblock Subexponential-time algorithms for sparse pca.
\newblock {\em Foundations of Computational Mathematics}, 24(3):865--914, 2024.

\bibitem[DMM09]{DMM09}
David~L. Donoho, Arian Maleki, and Andrea Montanari.
\newblock Message-passing algorithms for compressed sensing.
\newblock {\em Proceedings of the National Academy of Sciences},
  106(45):18914--18919, 2009.

\bibitem[EGV24]{Even24}
Bertrand Even, Christophe Giraud, and Nicolas Verzelen.
\newblock Computation-information gap in high-dimensional clustering.
\newblock In Shipra Agrawal and Aaron Roth, editors, {\em Proceedings of Thirty
  Seventh Conference on Learning Theory}, volume 247 of {\em Proceedings of
  Machine Learning Research}, pages 1646--1712. PMLR, 30 Jun--03 Jul 2024.

\bibitem[EGV25]{Even25a}
Bertrand Even, Christophe Giraud, and Nicolas Verzelen.
\newblock Computational lower bounds in latent models: clustering,
  sparse-clustering, biclustering.
\newblock {\em arXiv preprint arXiv:2506.13647}, 2025.

\bibitem[EKPS04]{Clock-Synchro04}
Jeremy Elson, Richard~M. Karp, Christos~H. Papadimitriou, and Scott Shenker.
\newblock Global synchronization in sensornets.
\newblock In Mart{\'\i}n Farach-Colton, editor, {\em LATIN 2004: Theoretical
  Informatics}, pages 609--624, Berlin, Heidelberg, 2004. Springer Berlin
  Heidelberg.

\bibitem[Fer18]{feray2018}
Valentin Feray.
\newblock Weighted dependency graphs.
\newblock {\em Electronic Journal of Probability}, 23:1--65, 2018.

\bibitem[FJBd13]{fogel2013convex}
Fajwel Fogel, Rodolphe Jenatton, Francis Bach, and Alexandre d'Aspremont.
\newblock Convex relaxations for permutation problems.
\newblock In {\em Advances in Neural Information Processing Systems}, pages
  1016--1024, 2013.

\bibitem[FMR19]{flammarion2019optimal}
Nicolas Flammarion, Cheng Mao, and Philippe Rigollet.
\newblock Optimal rates of statistical seriation.
\newblock {\em Bernoulli}, 25(1):623--653, 2019.

\bibitem[Gir21]{HDS2}
Christophe Giraud.
\newblock {\em {Introduction to high-dimensional statistics}}, volume 168 of
  {\em {Monographs on Statistics and Applied Probability}}.
\newblock CRC Press, Boca Raton, FL, 2021.

\bibitem[GJM11]{garriga2011banded}
Gemma~C Garriga, Esa Junttila, and Heikki Mannila.
\newblock Banded structure in binary matrices.
\newblock {\em Knowledge and information systems}, 28(1):197--226, 2011.

\bibitem[GK06]{Clock-Synchro06}
A.~{Giridhar} and P.~R. {Kumar}.
\newblock Distributed clock synchronization over wireless networks: Algorithms
  and analysis.
\newblock In {\em Proceedings of the 45th IEEE Conference on Decision and
  Control}, pages 4915--4920, 2006.

\bibitem[GSV21]{FairClustering21}
Mehrdad Ghadiri, Samira Samadi, and Santosh Vempala.
\newblock Socially fair k-means clustering.
\newblock In {\em Proceedings of the 2021 ACM Conference on Fairness,
  Accountability, and Transparency}, FAccT '21, page 438–448, New York, NY,
  USA, 2021. Association for Computing Machinery.

\bibitem[GV19]{giraud2019partial}
Christophe Giraud and Nicolas Verzelen.
\newblock Partial recovery bounds for clustering with the relaxed $ k $-means.
\newblock {\em Mathematical Statistics and Learning}, 1(3):317--374, 2019.

\bibitem[HKKAP{\etalchar{+}}17]{HopkinsFOCS17}
Samuel~B. Hopkins, Pravesh K.~Kothari, Aaron A.~Potechin, Prasad Raghavendra,
  Tselil chramm, and David Steurer.
\newblock The power of sum-of-squares for detecting hidden structures.
\newblock In {\em 2017 IEEE 58th Annual Symposium on Foundations of Computer
  Science (FOCS)}, pages 720--731, Los Alamitos, CA, USA, oct 2017. IEEE
  Computer Society.

\bibitem[HLLW]{ConstrainedClustering25}
Lingxiao Huang, Jian Li, Pinyan Lu, and Xuan Wu.
\newblock {\em Coresets for Constrained Clustering: General Assignment
  Constraints and Improved Size Bounds}, pages 4732--4782.

\bibitem[Hop18]{hopkins2018statistical}
Samuel Hopkins.
\newblock {\em Statistical inference and the sum of squares method}.
\newblock PhD thesis, Cornell University, 2018.

\bibitem[HS17]{Hopkins17}
Samuel~B. Hopkins and David Steurer.
\newblock Efficient bayesian estimation from few samples: Community detection
  and related problems.
\newblock In {\em 2017 IEEE 58th Annual Symposium on Foundations of Computer
  Science (FOCS)}, pages 379--390, 2017.

\bibitem[IGV24]{issartel2024minimax}
Yann Issartel, Christophe Giraud, and Nicolas Verzelen.
\newblock Minimax optimal seriation in polynomial time.
\newblock {\em arXiv preprint arXiv:2405.08747}, 2024.

\bibitem[JS22]{janssen2022reconstruction}
Jeannette Janssen and Aaron Smith.
\newblock Reconstruction of line-embeddings of graphons.
\newblock {\em Electronic Journal of Statistics}, 16(1):331--407, 2022.

\bibitem[Kea98]{kearns1998efficient}
Michael Kearns.
\newblock Efficient noise-tolerant learning from statistical queries.
\newblock {\em Journal of the ACM (JACM)}, 45(6):983--1006, 1998.

\bibitem[KNW22]{kunisky2022strong}
Dmitriy Kunisky and Jonathan Niles-Weed.
\newblock Strong recovery of geometric planted matchings.
\newblock In {\em Proceedings of the 2022 Annual ACM-SIAM Symposium on Discrete
  Algorithms (SODA)}, pages 834--876. SIAM, 2022.

\bibitem[KSAM19]{FairClustering19}
Matth{\"a}us Kleindessner, Samira Samadi, Pranjal Awasthi, and Jamie
  Morgenstern.
\newblock Guarantees for spectral clustering with fairness constraints.
\newblock In Kamalika Chaudhuri and Ruslan Salakhutdinov, editors, {\em
  Proceedings of the 36th International Conference on Machine Learning},
  volume~97 of {\em Proceedings of Machine Learning Research}, pages
  3458--3467. PMLR, 09--15 Jun 2019.

\bibitem[KVWX23]{kothari2023planted}
Pravesh Kothari, Santosh~S Vempala, Alexander~S Wein, and Jeff Xu.
\newblock Is planted coloring easier than planted clique?
\newblock In {\em The Thirty Sixth Annual Conference on Learning Theory}, pages
  5343--5372. PMLR, 2023.

\bibitem[KWB19]{KuniskyWeinBandeira}
Dmitriy Kunisky, Alexander~S Wein, and Afonso~S Bandeira.
\newblock Notes on computational hardness of hypothesis testing: Predictions
  using the low-degree likelihood ratio.
\newblock In {\em ISAAC Congress (International Society for Analysis, its
  Applications and Computation)}, pages 1--50. Springer, 2019.

\bibitem[LG24]{luo2023computational}
Yuetian Luo and Chao Gao.
\newblock Computational lower bounds for graphon estimation via low-degree
  polynomials.
\newblock {\em The Annals of Statistics}, 52(5):2318--2348, 2024.

\bibitem[LL22]{LiuLi2022}
Allen Liu and Jerry Li.
\newblock Clustering mixtures with almost optimal separation in polynomial
  time.
\newblock In {\em Proceedings of the 54th Annual ACM SIGACT Symposium on Theory
  of Computing}, STOC 2022, pages 1248--1261, New York, NY, USA, 2022.
  Association for Computing Machinery.

\bibitem[Low04]{lowe2004distinctive}
David~G Lowe.
\newblock Distinctive image features from scale-invariant keypoints.
\newblock {\em International journal of computer vision}, 60:91--110, 2004.

\bibitem[LW24]{lee2024statistical}
Chanwoo Lee and Miaoyan Wang.
\newblock Statistical and computational efficiency for smooth tensor estimation
  with unknown permutations.
\newblock {\em Journal of the American Statistical Association}, pages 1--14,
  2024.

\bibitem[LZ16]{LuZhou2016}
Yu~Lu and Harrison~H Zhou.
\newblock Statistical and computational guarantees of lloyd's algorithm and its
  variants.
\newblock {\em arXiv preprint arXiv:1612.02099}, 2016.

\bibitem[MM09]{MezardMontanari09}
Marc M{\'e}zard and Andrea Montanari.
\newblock {\em Information, Physics, and Computation}.
\newblock Oxford University Press, 01 2009.

\bibitem[MW24]{montanari2024equivalence}
Andrea Montanari and Alexander~S Wein.
\newblock Equivalence of approximate message passing and low-degree polynomials
  in rank-one matrix estimation.
\newblock {\em Probability Theory and Related Fields}, 2024.

\bibitem[MWZ23]{mao2023detection}
Cheng Mao, Alexander~S Wein, and Shenduo Zhang.
\newblock Detection-recovery gap for planted dense cycles.
\newblock In {\em The Thirty Sixth Annual Conference on Learning Theory}, pages
  2440--2481. PMLR, 2023.

\bibitem[MWZ24]{mao2024information}
Cheng Mao, Alexander~S Wein, and Shenduo Zhang.
\newblock Information-theoretic thresholds for planted dense cycles.
\newblock {\em IEEE Transactions on Information Theory}, 2024.

\bibitem[Nov14]{novak2014three}
Jonathan Novak.
\newblock Three lectures on free probability.
\newblock {\em Random matrix theory, interacting particle systems, and
  integrable systems}, 65(309-383):13, 2014.

\bibitem[Pen03]{penrose2003random}
Mathew Penrose.
\newblock {\em Random geometric graphs}, volume~5.
\newblock Oxford university press, 2003.

\bibitem[Pet99]{petrie1899sequences}
WM~Flinders Petrie.
\newblock Sequences in prehistoric remains.
\newblock {\em Journal of the Anthropological Institute of Great Britain and
  Ireland}, pages 295--301, 1899.

\bibitem[PW07]{PengWei07}
Jiming Peng and Yu~Wei.
\newblock Approximating k-means-type clustering via semidefinite programming.
\newblock {\em SIAM journal on optimization}, 18(1):186--205, 2007.

\bibitem[RBd17]{bioinfo17}
Antoine Recanati, Thomas Br\"uls, and Alexandre d'Aspremont.
\newblock {A spectral algorithm for fast de novo layout of uncorrected long
  nanopore reads}.
\newblock {\em Bioinformatics}, 33(20):3188--3194, 06 2017.

\bibitem[Rob51]{Robinson51}
W.~S. Robinson.
\newblock A method for chronologically ordering archaeological deposits.
\newblock {\em American Antiquity}, 16(4):293--301, 1951.

\bibitem[RV17]{Regev2017}
Oded {Regev} and Aravindan {Vijayaraghavan}.
\newblock {On Learning Mixtures of Well-Separated Gaussians}.
\newblock In {\em {2017 IEEE 58th Annual Symposium on Foundations of Computer
  Science (FOCS)}}, pages 85--96, Oct 2017.

\bibitem[SW22]{SchrammWein22}
Tselil Schramm and Alexander~S Wein.
\newblock Computational barriers to estimation from low-degree polynomials.
\newblock {\em The Annals of Statistics}, 50(3):1833--1858, 2022.

\bibitem[SW25]{SohnWein25}
Youngtak Sohn and Alexander~S. Wein.
\newblock Sharp phase transitions in estimation with low-degree polynomials.
\newblock {\em Proceedings of the 57th Annual ACM Symposium on Theory of
  Computing}, pages 891--902, 2025.

\bibitem[SXB08]{singh2008global}
Rohit Singh, Jinbo Xu, and Bonnie Berger.
\newblock Global alignment of multiple protein interaction networks with
  application to functional orthology detection.
\newblock {\em Proceedings of the National Academy of Sciences},
  105(35):12763--12768, 2008.

\bibitem[Tam22]{tamir2022correlation}
Ran Tamir.
\newblock On correlation detection and alignment recovery of gaussian
  databases.
\newblock {\em arXiv preprint arXiv:2211.01069}, 2022.

\bibitem[VW04]{VEMPALA2004}
Santosh Vempala and Grant Wang.
\newblock {A spectral algorithm for learning mixture models}.
\newblock {\em Journal of Computer and System Sciences}, 68(4):841--860, 2004.
\newblock Special Issue on FOCS 2002.

\bibitem[Wei25]{SurveyWein2025}
Alexander~S. Wein.
\newblock Computational complexity of statistics: New insights from low-degree
  polynomials.
\newblock {\em arXiv preprint arXiv:2506.10748}, 2025.

\end{thebibliography}

\appendix

\section{Information-theoretic thresholds for multiple feature matching}\label{sec:informationalfeatureMatching}

As discussed in Section~\ref{sec:multiple_feature}, the Multiple Feature Matching problem can be seen as the problem of clustering the  $n=KM$ data points $\big\{Y_{k}^{(m)}:(k,m)\in [K]\times [M]\big\}$ into $K$ groups, with the constraint that,  for each $m\in[M]$, each cluster $G^*_{k}$ contains one, and only one, observation from $\ac{Y_{k}^{(m)}:k\in [K]}$. In this section, we build on this observation for deriving a minimax optimal matching procedure, and we discuss thoroughly the link between clustering and multiple feature matching.

\paragraph{IT thresholds for multiple feature matching.}
A minimax near optimal algorithm for clustering a Gaussian Mixture Model is the Exact $K$-means estimator~\cite{Even24}. We adapt below this estimator to the Multiple Feature Matching problem.  
Given a $M$-tuple of permutations $\pi=(\pi_{m})_{m\in[M]}\in (\mathcal{S}_{K})^{M}$, we define 
\begin{equation}\label{eq:critpi}
    Crit(\pi)=\sum_{k\in[K]}\sum_{m\in[M]}\Big\|Y^{(m)}_{\pi_{m}^{-1}(k)}-\frac{1}{M}\sum_{m\in[M]}Y^{(m)}_{\pi_{m}^{-1}(k)}\Big\|^{2}\enspace .
\end{equation}
This criterion corresponds to the exact $K$-means criterion applied to the partition of $[M]\times [K]$ induced by $\pi$. Our estimator $\hat{\pi}$ will be any minimizer of this criterion over all  $M$-tuples of permutations 
\begin{equation}\label{eq:CKmeans}
\hat{\pi}\in \argmin_{\pi\in (\mathcal{S}_{K})^{M}} Crit(\pi)\enspace.
\end{equation}
The partition of $[K]\times [M]$ induced by $\hat{\pi}$ corresponds to the minimizer of the $K$-means criterion over the partitions that verify  $|G_{k}\cap ([K]\times \{m\})|=1$ for every $m\in[M]$ and $k\in[K]$. Adapting  the analysis of the exact $K$-means Criterion made in \cite{Even24}, we readily derive the following property, the proof of which is sketched in Appendix~\ref{prf:upperboundinformationalfeature}.

\begin{prop}\label{prop:upperboundinformationalfeature}
There exist positive numerical constants $c,c_1,c_2,c_3$ such that the following holds. If $\Delta^2\geq c_1\pa{\log(K)+\sqrt{\frac{p\log(K)}{M}}}$, then, with probability at least $1-\frac{c_2}{n^2}$, the estimator \eqref{eq:CKmeans} fulfills $$err(\hat{\pi}, \pi^*)\leq \exp\pa{-c_3\pa{\Delta^2\wedge \frac{\Delta^4 M}{p}}}\enspace.$$
In particular, if $\Delta^2\geq c\pa{\log(KM)+\sqrt{\frac{p\log(KM)}{M}}}$, then, with the same probability we have $err(\hat{\pi}, \pi^*)=0$. 
\end{prop}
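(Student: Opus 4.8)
The plan is to recognise the estimator~\eqref{eq:CKmeans} as the exact $K$-means estimator for the Gaussian mixture in which the $n=KM$ vectors $\{Y^{(m)}_k:(k,m)\in[K]\times[M]\}$ are clustered into $K$ groups of common size $M$, subject to the extra constraint that each group meets every dataset exactly once. The true partition $G^*$ (equivalently $\pi^*$) satisfies this constraint and $\hat\pi$ minimises $Crit$ over the constrained feasible set; hence the optimality inequality $Crit(\hat\pi)\le Crit(\pi^*)$ holds verbatim, and every union bound over ``competing partitions that beat the truth'' now runs over a \emph{sub-collection} of the configurations appearing in the unconstrained problem. So I would import the analysis of exact $K$-means for balanced Gaussian mixtures from~\cite{Even24}, with the common cluster size equal to $M$, and only re-examine the combinatorial book-keeping --- which is exactly what downgrades $\log n$ to $\log K$ in the first separation regime (each misassigned point has at most $K-1$ possible targets).

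Writing $Y_i=\mu^{*}_i+\eps_i$ with $\mu^{*}_i=\mu_{\pi^{*}_m(k)}$ for $i=(k,m)$, and working conditionally on $(\mu_k)_k$ so that $\|\mu_k-\mu_l\|^2\ge 2\sigma^2\Delta^2$ for $k\neq l$, I would proceed in two steps. First, expanding $Crit(\hat\pi)\le Crit(\pi^*)$ through the empirical centres $\bar Y_{(k)}$ (using $Crit(\pi)=\sum_i\|Y_i\|^2-M\sum_k\|\bar Y_{(k)}\|^2$, so that only the $\sum_k\|\bar Y_{(k)}\|^2$ term matters) yields a deterministic inequality bounding the misclassification loss by a sum of Gaussian cross-terms of the types $\langle\eps_i,\mu_k-\mu_l\rangle$, $\|\frac1M\sum_{j\in S}\eps_j\|^2$ and $\langle\eps_i,\frac1M\sum_{j\in S}\eps_j\rangle$; a union bound over the $\le\binom{n}{t}K^{t}$ possible misassignments of $t$ points, together with Gaussian and $\chi^2$ tails, gives a crude bound $err(\hat\pi,\pi^*)\le\tau_0$ for a small absolute constant $\tau_0$, on an event $\mathcal E$ of probability at least $1-c_2/n^2$.

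Second, on $\mathcal E$ the crude bound controls the contamination of each empirical centre, $\bar Y_{(k)}=\mu_k+\frac1M\sum_{j\in G^*_k}\eps_j+O(\tau_0)$, whose noise part has size $\sigma\sqrt{p/M}$. Plugging this into the comparison $\|Y_i-\bar Y_{(\hat\pi(i))}\|^2\le\|Y_i-\bar Y_{(\pi^*(i))}\|^2$ for a point $i$ with true label $a$ reassigned to $b$, the signal term $-\|\mu_a-\mu_b\|^2=-2\sigma^2\Delta^2_{ab}$ must be beaten by $2\langle\eps_i,\mu_a-\mu_b\rangle$ (variance $4\sigma^2\|\mu_a-\mu_b\|^2$) plus lower-order corrections; a Gaussian tail for the first term and a Bernstein tail for the bilinear centre-noise term $\langle\eps_i,\bar\eps_a-\bar\eps_b\rangle$ give $\P(i\text{ reassigned to }b,\ \mathcal E)\le\exp(-c_3(\Delta^2\wedge\Delta^4 M/p))$, where the $\Delta^4 M/p$ branch is exactly the sub-exponential regime of that bilinear term. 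Summing over the $\le K-1$ targets $b$ and all $i$ bounds $\E[err\cdot\1_{\mathcal E}]$, and a concentration inequality for the number of misclassified points upgrades this to $err(\hat\pi,\pi^*)\le\exp(-c_3(\Delta^2\wedge\Delta^4 M/p))$ (after adjusting constants) on a sub-event $\mathcal E'\subseteq\mathcal E$ of probability still at least $1-c_2/n^2$; this is where the full strength $\Delta^2\gtrsim\log K+\sqrt{p\log K/M}$ enters, ensuring $\Delta^2\wedge\Delta^4 M/p\gtrsim\log K$ so that the per-point rate times $K$ is $<1$. The ``in particular'' statement is then immediate: when $\Delta^2\ge c(\log(KM)+\sqrt{p\log(KM)/M})$ the bound is at most $(KM)^{-c'}<1/(KM)$, and since $err(\hat\pi,\pi^*)$ is an integer multiple of $1/(KM)$ it must vanish on that event.

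The main obstacle is the circular dependence between the misclassified set and the empirical centres --- $\bar Y_{(k)}$ is a function of the (unknown) estimated partition --- which forces the two-step structure: one cannot run the per-point large-deviation argument before the crude constant-fraction bound has decoupled the centre noise from the individual $\eps_i$. Beyond that, the work is bookkeeping: tracking which union bounds cost $\log K$ versus $\log(KM)$, keeping the $\sqrt{p\log K/M}$ term (arising from the centre-noise contributions, averaged over groups of size $M$) sharp, and checking that restricting the feasible set to $M$-tuples of permutations leaves every estimate of~\cite{Even24} intact.
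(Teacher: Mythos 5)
Your overall strategy is correct and matches the paper's framing: the estimator~\eqref{eq:CKmeans} is exactly the constrained $K$-means criterion for the $n=KM$ data points, the feasible set $\{B(\pi):\pi\in(\mathcal S_K)^M\}$ is a sub-collection of all balanced partitions, and the result is obtained by carrying over the analysis of exact $K$-means from~\cite{Even24} with cluster size $M$. However, the technical route you describe is genuinely different from what the paper (and hence~\cite{Even24}) actually does. The paper works entirely at the level of \emph{partnership matrices}: it rewrites the minimization of $Crit$ as maximization of $\langle YY^T,B(\pi)\rangle$ over $B(\pi)\in\mathcal B\subset\mathcal C$, decomposes $Y=A\mu+E$, and obtains the optimality inequality $\langle A\mu(A\mu)^T,B^*-\hat B\rangle\le\langle A\mu E^T+E(A\mu)^T,\hat B-B^*\rangle+\langle EE^T-pI,\hat B-B^*\rangle$; the noise terms are then controlled \emph{uniformly over all $B\in\mathcal B$} by importing Lemma~11 and Lemma~12 of~\cite{Even24}, whose bounds are self-normalizing in $\|B^*-B^*B\|_1$ (the entropy factor $\log(MK^4/\|B^*-B^*B\|_1)$ shrinks with the misclassification, so no separate ``crude bound'' stage is needed), and the signal term is lower-bounded via Lemma~4 of~\cite{giraud2019partial}. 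Solving the resulting scalar inequality for $\hat\delta=\|B^*-B^*\hat B\|_1$ yields $\hat\delta\lesssim MK^4\exp(-c(\Delta^2\wedge\Delta^4M/p))$, which is then converted to $err(\hat\pi,\pi^*)$ via Lemma~9 of~\cite{Even24}. In contrast, you propose the classic Lu--Zhou style two-stage localization (crude constant-fraction bound on an event $\mathcal E$, then per-point large deviation conditional on $\mathcal E$ to decouple the centre noise from the individual $\eps_i$). Both routes give the same rate and both are legitimate, but the paper's uniform, self-normalizing bounds on the quadratic/bilinear forms dissolve the ``circular dependence'' you flag as the main obstacle without ever needing the two-stage structure, while your route is more elementary in each step but has to manage the crude-bound/sharp-bound bookkeeping and the dependence across misclassification events in the concentration step you invoke at the end. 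Your ``in particular'' step (integer multiple of $1/(KM)$, so the bound $<1/(KM)$ forces $err=0$) is correct and is implicitly what the paper uses as well.
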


Proposition \ref{prop:upperboundinformationalfeature} implies that partial recovery (i.e. a small $err(\hat{\pi},\pi^*)$) is possible when 
\begin{equation}\label{eq:IT-partial}
\Delta^2\gtrsim \log(K)+\sqrt{p\log(K)\over M}\ ,
\end{equation}
and perfect recovery (i.e. $err(\hat{\pi},\pi^*)=0$) is possible when 
\begin{equation}\label{eq:IT-exact}
\Delta^2\gtrsim \log(Km)+\sqrt{p\log(Km)\over M}\ .
\end{equation}
We now prove that these separation conditions are, up to some multiplicative constant, minimax optimal for multiple feature matching. 

Given $\bar{\Delta}>0$, we denote $\Theta_{\bar{\Delta}}$ the set of $K$-tuple $(\mu_{1},...,\mu_{K})\in (\R^{p})^{K}$ such that 
$$\frac{min_{k\neq l}\|\mu_{k}-\mu_{l}\|^2}{2\sigma^2}\geq \bar{\Delta}^2.$$
 Given a $K$-tuple $\mu=(\mu_{1},...,\mu_{K})\in (\R^{p})^{K}$, and a set of permutations $\pi=\ac{\pi_{m}:m\in[M]}$, we denote $\P_{\mu,\pi}$ the distribution of $Y=\pa{Y_k^{(m)}}_{k,m\in [K]\times [M]}$, for which all the $Y_{k}^{(m)}$'s are drawn independently and  $Y_{k}^{(m)}\sim \cN\pa{\mu_{\pi^*_m(k)}, \sigma^2I_p}$. Next theorem, proved in Appendices~\ref{prf:lowerboundexact} and~\ref{prf:lowerboundpartial}, provides minimax lower-bounds, matching the rates \eqref{eq:IT-partial} and \eqref{eq:IT-exact} derived for the constrained $K$-means estimator~\eqref{eq:CKmeans}.

\begin{thm}\label{thm:lowerboundinformationalfeature}
We have the following minimax lower bounds for the multiple feature matching problem.
\begin{enumerate}       
\item \textbf{Perfect recovery.} There exist positive numerical constants $c$, $C$, and $K_{0}$ such that the following holds. If $\bar{\Delta}^{2}\leq c \pa{\log(KM)+\sqrt{\frac{p}{M}\log(KM)}}$ and $K\geq K_{0}$, $$\inf_{\hat{\pi}:\R^{K\times M\times p}\to (\mathcal{S}_K)^{M}}\sup_{\mu\in\Theta_{\bar{\Delta}}}\sup_{\pi^*\in\pa{\mathcal{S}_K}^{M}}\P_{\mu,\pi^*}\pa{err(\hat{\pi}, \pi^*)\neq 0}\geq C\enspace.$$

\item \textbf{Partial recovery.} There exist positive numerical constants $c$, $c'$, $c''$, $C$, and $K_{0}$ such that the following holds. Suppose that $p\geq c\log(K)$, $K\geq K_0$ and $\bar{\Delta}^{2}\leq c' \pa{\log(K)+\sqrt{\frac{p}{M}\log(K)}}$, then, $$\inf_{\hat{\pi}:\R^{K\times M\times K}\to (\mathcal{S}_K)^{M}}\sup_{\mu\in\Theta_{\bar{\Delta}}}\sup_{\pi^*\in\pa{\mathcal{S}_K}^{M}}\E_{\mu,\pi^*}\pa{err(\hat{\pi}, \pi^*)}\geq C\enspace.$$
\end{enumerate}
\end{thm}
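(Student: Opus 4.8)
The plan is to lower bound each minimax quantity by the Bayes risk under a prior on $(\mu,\pi^*)$ of the type of Definition~\ref{def:priorfeaturematching}, and to exploit the correspondence of Section~\ref{sec:multiple_feature}: an $M$-tuple of permutations is the same as a balanced partition of the $n=KM$ points into $K$ groups of size $M$, so the problem is balanced Gaussian clustering subject to the extra ``one point per dataset per group'' constraint, and one can adapt the clustering lower bounds of~\cite{Even24,Even25a}, checking that the least-favorable instances can be chosen to respect that constraint. Since any valid estimator lies inside the set of all label assignments, it suffices to prove failure of unconstrained estimators, which is cleaner. The two error terms have distinct origins: $\log(K)$ (resp.\ $\log(KM)$) is a union effect over the $K-1$ rival centres of a given observation (resp.\ over the $\Theta(KM)$ locally confusable configurations), while $\sqrt{p/M}$ reflects the accuracy with which a centre $\mu_k$ can be reconstructed from its $M$ noisy copies; after integrating out $\mu$, the naive Gaussian overlap $e^{-c\bar\Delta^2}$ is replaced by $e^{-c\min(\bar\Delta^2,\,\bar\Delta^4 M/p)}$, and this combined quantity governs both statements (note that $\min(\bar\Delta^2,\bar\Delta^4M/p)\le c\log$ is equivalent, up to constants, to $\bar\Delta^2\le c'(\log+\sqrt{(p/M)\log})$).

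\textbf{Perfect recovery.} To make exact recovery fail it is enough to exhibit, with constant probability, a single misplaced observation. I would split $[K]$ into $K/2$ blocks of size two, draw the $\mu$ within each block independently, and let each $\pi^*_m$ independently fix or swap each block; the $\Theta(KM)$ block--dataset alignments are then the identifiable latents, a global relabelling reconciling only the $O(1)$ of them that happen to be flipped consistently across all $M\ge 2$ datasets, which is negligible under the prior. Because the prior factorizes over blocks and the $2M$ observations of a block depend only on that block's two centres, the posterior factorizes over blocks, so $\P_{\mathrm{Bayes}}(\text{no error})\le\prod_{i=1}^{K/2}\P_{\mathrm{Bayes}}(\text{block }i\text{ fully recovered})$. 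Each factor is at most $(1-q)^{cM}$: recovering a block is exact recovery of a two-component mixture with $2M$ points at separation $\bar\Delta^2$, which fails unless each of $\Omega(M)$ leave-one-out decisions is correct, and a Gaussian anti-concentration bound -- comparing an observation against leave-one-out averages of each putative group, whose fluctuations inflate the comparison variance by $\asymp\sigma^2 p/M$ -- gives per-decision error $q\gtrsim\exp(-c\min(\bar\Delta^2,\bar\Delta^4 M/p))$. Hence $\P(err\neq 0)\ge 1-\exp(-c'KM\,q)$, which stays bounded away from $0$ as soon as $\min(\bar\Delta^2,\bar\Delta^4 M/p)\le c''\log(KM)$, i.e.\ $\bar\Delta^2\le c(\log(KM)+\sqrt{(p/M)\log(KM)})$; if instead $M e^{-c\bar\Delta^2}$ is large, each block is individually wrong with constant probability and the bound is immediate.

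\textbf{Partial recovery.} Here a two-point argument per latent bit loses a factor $\log K$, so the construction must give each observation $\Omega(K)$ roughly equidistant rival centres; this is precisely what $p\ge c\log K$ allows, since one can then place $\mu_1,\dots,\mu_K$ in a ball of radius $O(\bar\Delta\sigma)$ with all pairwise distances in $[\sqrt 2\,\bar\Delta\sigma,\,3\bar\Delta\sigma]$, keeping $\mu\in\Theta_{\bar\Delta}$. Under a uniform prior on $\pi^*$, I would show that, conditionally on the data, the posterior label of a given observation is with constant probability (nearly) uniform over $\Omega(K)$ centres, because the event ``some wrong centre explains this observation at least as well as the true one'' is a union over $\Omega(K)$ rivals, each of probability $\gtrsim\exp(-c\min(\bar\Delta^2,\bar\Delta^4M/p))$, hence of constant probability exactly when $\min(\bar\Delta^2,\bar\Delta^4M/p)\le c\log K$. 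Quantifying this -- either by linearity of expectation over the per-point Bayes errors, or by Fano's inequality applied to reconstruction of the whole partition using a packing of $\exp(\Omega(KM\log K))$ configurations that are pairwise $\Omega(1)$-far in $err$ but have pairwise KL $\lesssim KM\min(\bar\Delta^2,\bar\Delta^4M/p)\lesssim KM\log K$ -- yields $\E[err(\hat\pi,\pi^*)]\ge C$ whenever $\bar\Delta^2\le c'(\log K+\sqrt{(p/M)\log K})$.

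\textbf{Main obstacle.} The delicate point is making the within-block (resp.\ per-point) union contribute its full multiplicative gain -- the factor $M$ in Part~1 and the $\log K$ in Part~2 -- \emph{while} retaining the centre-uncertainty penalty $\bar\Delta^4M/p$; the cleanest route is probably to run two constructions, one with well-separated centres that may even be revealed to the estimator (yielding the $\log$ term) and one with randomized centres in high dimension (yielding the $\sqrt{p/M}$ term), and to take the better of the two bounds, which combines into the stated sum since $a+b\asymp\max(a,b)$. The remaining difficulties are routine: controlling the dependence among the $\Theta(KM)$ elementary decisions (handled by the block-product structure of the prior together with leave-one-out centre estimates), carrying out the Gaussian anti-concentration estimate that produces $q\gtrsim\exp(-c\min(\bar\Delta^2,\bar\Delta^4M/p))$, and checking that the ``up to a global permutation $\psi$'' in the definition of $err$ costs only $O(1)$ errors because $M\ge2$ and the priors are symmetric enough.
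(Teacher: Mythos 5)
Your proposal lands on the right thresholds and correctly diagnoses the dual origin of the two terms (a union effect giving the $\log$, a centre-uncertainty effect giving the $\sqrt{p/M}$), but it takes a genuinely different route from the paper, and for the perfect-recovery part that route has a non-trivial gap that the paper's choice of technique is designed to sidestep.

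\paragraph{Comparison of approaches.} The paper proves both parts by Fano's lemma, in two sub-regimes each: when $\bar{\Delta}^2\lesssim\log(\cdot)$ it uses fixed, well-separated centres on a line (for perfect recovery: $\mu_k=\sqrt 2 k\bar\Delta e$ and $\Theta(KM)$ transposition alternatives $\pi^{(s,m)}$, KL $=2\bar\Delta^2$) or a Varshamov--Gilbert code (partial recovery, Lemma~\ref{lem:choicepoints}); when $\log(\cdot)\lesssim\bar\Delta^2\lesssim\sqrt{(p/M)\log(\cdot)}$ it puts a Rademacher hypercube prior on the centres and controls $KL(\P_{\rho,\pi},\P_{\rho,id})$ by a painstaking Taylor expansion of $\log\cosh$ (Lemmas~\ref{lem:calculcompliquéexactrecovery} and~\ref{lem:calculcompliquépartialrecovery}), passing from Bayes to minimax by the reduction Lemma~\ref{lem:reductionfano}/\ref{lem:reductionfano2}. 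Your partial-recovery sketch, in its ``Fano on a packing of $\exp(\Omega(KM\log K))$ configurations with pairwise KL $\lesssim KM\min(\bar\Delta^2,\bar\Delta^4M/p)$'' variant, is essentially the paper's Lemma~\ref{lem:numberpermutations}+\ref{lem:Fano2} route, so that part is fine modulo making the $\log\cosh$ estimate precise. Your perfect-recovery argument, by contrast, is a direct posterior computation on an independent-blocks prior, not Fano, which is a legitimate and arguably more intuitive plan.

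\paragraph{The gap in the block argument.} The step $\P(\text{no error})\le\prod_{i}(1-q)^{cM}$ assumes the $M$ per-dataset decisions inside a block behave (up to constants) as if independent with per-bit error $q\gtrsim\exp(-c\min(\bar\Delta^2,\bar\Delta^4M/p))$. But those $M$ decisions all depend on the \emph{same} two centre estimates; in the regime where the $\bar\Delta^4M/p$ term is binding (i.e.\ $\sqrt{p/M}\gg\bar\Delta$, the interesting one for the $\sqrt{(p/M)\log(KM)}$ scale), essentially all of the error budget comes from the centre uncertainty, and one must check that this shared randomness does not make the $M$ errors nearly coincident. A careful calculation shows it does not --- the dominant fluctuation is the cross term $\langle\varepsilon^{(m)},\,\eta_1-\eta_2\rangle$, which, conditionally on the centre-estimation noise $\eta_1-\eta_2$, is still independent across $m$ with conditional variance $\asymp\sigma^4\|\eta_1-\eta_2\|^2\asymp\sigma^4 p/M$, so the $(1-q)^{cM}$ product bound survives --- but this is exactly the kind of conditional anti-concentration argument you declare ``routine'' without giving, and it is not routine: had the dominant noise been the aligned component $\langle\mu_1-\mu_2,\eta_1-\eta_2\rangle$, the $M$ bits would have been nearly perfectly correlated and you would only have recovered $\min(\bar\Delta^2,\bar\Delta^4M/p)\le c\log K$, losing the $\log M$ in the statement. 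This is the substantive reason the paper retreats to Fano + an explicit KL computation: Fano needs only a bound on $KL(\P_{\rho,\pi},\P_{\rho,id})$ of the \emph{joint} law, which never requires disentangling the dependence of the per-decision events, at the price of the $\log\cosh$ expansion.

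\paragraph{Minor points.} You do flag, correctly, that the Bayes-to-minimax reduction is needed and that the ``up to global $\psi$'' quotienting only costs $O(1)$; both are handled explicitly in the paper (Lemma~\ref{lem:reductionfano2}, and the observation after Lemma~\ref{lem:Fano2} that $\psi$ can be pinned by the first $\lceil M/2\rceil$ datasets being the identity). So the remaining work in your plan is concentrated in the single leave-one-out anti-concentration lemma; if you prove it, your block route goes through and is, for perfect recovery, somewhat more elementary and more informative than the paper's Fano argument.
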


Combining Proposition \ref{prop:upperboundinformationalfeature} and Theorem \ref{thm:lowerboundinformationalfeature}, we conclude that the minimax rate for perfect recovery in multiple feature matching is  \eqref{eq:IT-partial} and \eqref{eq:IT-exact}, up to some multiplicative factor. Comparing these rates to  the LD rate \eqref{eq:LD-MFM} provides evidence for the existence of a statistical-computational gap for the multiple feature matching problem, when $p\geq_{\log}M/K$.

\paragraph{Multiple feature matching versus clustering.}
As mentioned before, multiple feature matching can be seen as the problem of clustering the  $n=KM$ data points $\big\{Y_{k}^{(m)}:(k,m)\in [K]\times [M]\big\}$ into $K$ groups, with the constraint that,  for each $m\in[M]$, each cluster $G^*_{k}$ contains one and only one observation from $\ac{Y_{k}^{(m)}:k\in [K]}$. Given this last constraint, we can expect that a lower separation $\Delta^2$ is required  for successful multiple feature matching, than for clustering $n=KM$ points in dimension $p$.
Yet, we observe that both the IT rates \eqref{eq:IT-partial}--\eqref{eq:IT-exact}, and the  LD rate \eqref{eq:LD-MFM} for multiple feature matching are the same than those for clustering $n=KM$ points in dimension $p$~\cite{Even24,Even25a}.  This means, that, in terms of rates (blind to constant factors), multiple feature matching is not easier than clustering.  Let us give some intuitions for explaining this observation.
 
For the IT threshold, the optimal algorithm for multiple feature matching and for clustering is, in both cases, a minimizer of the same $K$-means 
 criterion, the only difference is that the minimization is performed over $(\mathcal{S}_K)^{M}$ for multiple feature matching, and over the set $\cP_{K}(KM)$ of all partitions of $[KM]$ into $K$ groups for clustering. Since the performance of both minimizers of the $K$-means criterion is mainly driven by the concentration of the same quadratic functionals of $Y$, as a first approximation, only the size of the log-cardinalities 
 $\log\pa{|(\mathcal{S}_K)^{M}|}$  and  $\log\pa{|\cP_{K}(KM)|}$ can differentiate the rates.
 We observe that $\log\pa{|\cP_{K}(KM)|}\leq KM\log(K)$, while  $\log\pa{|(\mathcal{S}_K)^{M}|}\sim KM\log(K)$ for $K$ large, so both cardinalities are similar. While the constraint $|G_{k}\cap ([K]\times \{m\})|=1$ for every $m\in[M]$ and $k\in[K]$ reduces the number of admissible partitions, this reduction is not strong enough in order to significantly differentiate the minimax rates.

As for the LD threshold, we observe that the constraint $|G_{k}\cap ([K]\times \{m\})|=1$ for every $m\in[M]$ and $k\in[K]$ is complex and combinatorial in nature. \cite{Even25a} provides evidence that poly-time algorithms -- at least low-degree polynomials -- can struggle to fully exploit complex combinatorial structures, as for example in sparse clustering or biclustering. In light of this observation, it is not surprising that poly-time algorithms fail to exploit the  constraint $|G_{k}\cap ([K]\times \{m\})|=1$ for every $m\in[M]$ and $k\in[K]$, in order to perform significantly better than unconstrained clustering algorithms.
 
Finally, we mention that multiple feature matching can be viewed as a specific case of socially fair-clustering~\cite{FairClustering19,FairClustering21}. In fair-clustering, we observe $n$ data points $Y_{1},\ldots, Y_{n}$, each of them having an observed sensitive attribute (gender, social group, etc), say $m\in[M]$. One of the goal in fair-clustering is to provide a partition of  $Y_{1},\ldots, Y_{n}$ minimizing a criterion like $K$-means or $K$-median under the constraint that every demographic group is proportionally represented in each cluster~\cite{FairClustering19}.
Considering  $\big\{Y_{k}^{(m)}:(k,m)\in [K]\times [M]\big\}$ as data points to be clustered, and seeing the label $m$ as the sensitive attribute of $Y_{k}^{(m)}$, the multiple feature matching problem can be seen as a fair-clustering problem. Indeed, if we seek for a partition of $\big\{Y_{k}^{(m)}:(k,m)\in [K]\times [M]\big\}$ into $K$ clusters, each of them having an equal number of each label $m$, then, for each cluster, we are forced to peak one, and only one, observation from $\ac{Y_{k}^{(m)}:k\in [K]}$. Hence, a fair clustering of  $\big\{Y_{k}^{(m)}:(k,m)\in [K]\times [M]\big\}$  into $K$ clusters provide a multiple feature matching.

\section{Background on cumulants}\label{sec:cumulant:background}

In this section, we summarize results on mixed  cumulants of random variables that we shall use in our proof.

\subsection{Classical results on cumulants}

Let $Y_1,\ldots, Y_l$ be random variables on the same space $\mathcal{Y}$. Their cumulant generating function is $$K(t_1,\ldots, t_l):=\log\E\cro{\exp\pa{\sum_{l'=1}^lt_{l'}Y_{l'}}}\enspace,$$
and their joint cumulant is the value of the partial derivative of $K(t_1,\ldots, t_l)$ at 0
 $$\cumul\pa{Y_1,\ldots,Y_l}:=\pa{\frac{\partial^l\, K}{\partial t_{1}\cdots\partial t_{l}}(t_1,\ldots,t_l)}_{t_1,\ldots, t_l=0}\enspace.$$
In the proof, we use standard results on cumulants that we state in Lemma \ref{lem:mobiusformula} and \ref{lem:independentcumulant}.  We refer e.g. to \cite{novak2014three} for proofs and more details. The first lemma is the Möbius formula that expresses joint cumulant of random variables as a linear combination of their mixed moments.

\begin{lem}\label{lem:mobiusformula}
Let $Y_1,\ldots, Y_l$ be random variables on the same space $\mathcal{Y}$. For $G$ a partition of $[l]$, we write $m(G)=(-1)^{|G|-1}(|G|-1)!$ the Möbius function. Then $$\cumul\pa{Y_1,\ldots, Y_l}=\sum_{G\in\mathcal{P}([l])}m(G)\prod_{R\in G}\E\cro{\prod_{i\in R}Y_i} \ ,$$
where $\cP([l])$ is the collection of all partitions of $[l]$. 
In particular, cumulants are multilinear. 
\end{lem}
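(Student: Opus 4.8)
The plan is to read the identity off the exponential formula relating the moment and cumulant generating functions. Write $M(t_1,\dots,t_l)=\E[\exp(\sum_{l'}t_{l'}Y_{l'})]$, so $K=\log M$ and, by definition, $\cumul(Y_1,\dots,Y_l)$ is the coefficient of the squarefree monomial $t_1\cdots t_l$ in the Taylor expansion of $K$ at the origin: differentiating once in each of $t_1,\dots,t_l$ and evaluating at $0$ annihilates every monomial $t^\alpha$ with some $\alpha_i\neq 1$ and returns the coefficient of $t_1\cdots t_l$ otherwise. Since only that one coefficient matters, the whole computation can be carried out modulo the ideal generated by $\{t_i^2:i\in[l]\}$, i.e.\ in the ring of squarefree polynomials in $t_1,\dots,t_l$; note the resulting sum is finite, so no convergence issue arises. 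In that quotient, expanding $\exp$ and using linearity of expectation gives $M\equiv\sum_{S\subseteq[l]}\mu_S\,t^S$, where $t^S=\prod_{i\in S}t_i$, $\mu_S=\E[\prod_{i\in S}Y_i]$, and $\mu_\emptyset=1$.

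Next I would set $u:=M-1\equiv\sum_{\emptyset\ne S\subseteq[l]}\mu_S\,t^S$ and use $\log(1+u)=\sum_{k\ge1}\frac{(-1)^{k-1}}{k}u^k$ (finite modulo the squarefree ideal since $u$ has no constant term). Expanding $u^k=\sum\mu_{S_1}\cdots\mu_{S_k}\,t^{S_1}\cdots t^{S_k}$ over all $k$-tuples $(S_1,\dots,S_k)$ of nonempty subsets, the monomial $t^{S_1}\cdots t^{S_k}$ equals $t_1\cdots t_l$ in the quotient exactly when $S_1,\dots,S_k$ are pairwise disjoint with union $[l]$, i.e.\ when $(S_1,\dots,S_k)$ is an ordered set partition of $[l]$ into $k$ blocks; each unordered partition $G\in\mathcal P([l])$ with $|G|=k$ arises from precisely $k!$ such tuples. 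Hence the coefficient of $t_1\cdots t_l$ in $u^k$ is $k!\sum_{G:\,|G|=k}\prod_{R\in G}\mu_R$, and summing over $k$ yields $\cumul(Y_1,\dots,Y_l)=\sum_{k\ge1}\frac{(-1)^{k-1}}{k}k!\sum_{G:\,|G|=k}\prod_{R\in G}\mu_R=\sum_{G\in\mathcal P([l])}(-1)^{|G|-1}(|G|-1)!\prod_{R\in G}\E[\prod_{i\in R}Y_i]$, which is the claim with $m(G)=(-1)^{|G|-1}(|G|-1)!$.

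Finally, multilinearity is immediate from the formula: in any partition $G$ a fixed index $i$ lies in exactly one block $R$, so $\prod_{R\in G}\mu_R$ depends on $Y_i$ only through the single linear factor $\E[\prod_{j\in R}Y_j]$, making the whole sum linear in each argument. The manipulations are formal power-series identities requiring only that the mixed moments $\E[\prod_{i\in R}Y_i]$ exist for all $R\subseteq[l]$ (as assumed in the cited literature). The only genuinely delicate point is the bookkeeping of ordered versus unordered set partitions together with the squarefree truncation; once that is handled the argument is routine, which is why we simply cite \cite{novak2014three}. As a cross-check, the same identity follows by Möbius inversion over the partition lattice $\mathcal P([l])$ ordered by refinement, starting from the dual relation $\E[\prod_{i\in[l]}Y_i]=\sum_{G\in\mathcal P([l])}\prod_{R\in G}\cumul((Y_i)_{i\in R})$ and using that the lattice Möbius function from a $k$-block partition to the top element is $(-1)^{k-1}(k-1)!$.
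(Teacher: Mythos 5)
Your proof is correct. Note, however, that the paper does not actually prove Lemma~\ref{lem:mobiusformula}: it simply cites \cite{novak2014three} for it, so there is no in-paper argument to compare against. What you give is the standard derivation, and it is essentially what that reference presents: expand the moment generating function modulo the squarefree ideal, take the formal logarithm, and read the coefficient of $t_1\cdots t_l$ off the logarithm series; the bookkeeping between ordered $k$-tuples of disjoint blocks and unordered $k$-block set partitions contributes exactly the $k!$ that converts $\frac{(-1)^{k-1}}{k}$ into $(-1)^{k-1}(k-1)!$. The alternative route you sketch at the end, inverting the relation $\E\cro{\prod_i Y_i}=\sum_{G}\prod_{R\in G}\cumul((Y_i)_{i\in R})$ against the partition lattice and using that the Möbius function of $\mathcal{P}([l])$ from a $k$-block partition to $\hat 1$ is $(-1)^{k-1}(k-1)!$, is also standard and valid.

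Two minor points worth tightening if this were to replace the citation. First, the paper defines the joint cumulant as a genuine partial derivative of $K=\log M$ at $0$, which presupposes that $M$ is finite and $C^l$ in a neighborhood of the origin; your closing remark that only existence of the mixed moments $\E\cro{\prod_{i\in R}Y_i}$ is needed is accurate for the \emph{formal} power-series identity, but to match the paper's analytic definition one either assumes a locally finite MGF or replaces $\exp(\sum t_{l'}Y_{l'})$ by $\exp(i\sum t_{l'}Y_{l'})$ and argues via the characteristic function, as is usual. Second, when you say the cumulant is the coefficient of $t_1\cdots t_l$ in the Taylor expansion of $K$, it is worth stating explicitly that the $\alpha!$ normalization disappears because $\alpha=(1,\dots,1)$; you implicitly use this but the reader of the paper might not see it instantly given the multi-index conventions used elsewhere in the manuscript.
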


The next lemma gives a sufficient condition for the nullity of cumulants.

\begin{lem}\label{lem:independentcumulant}
Let $Y_1,\ldots, Y_l$ be random variables on the same space $\mathcal{Y}$. Suppose that there exists disjoint sets $I_1$ and $I_2$, non-empty and covering $[l]$, such that $(Y_i)_{i\in I_1}$ and $(Y_i)_{i\in I_2}$ are independent. Then, we have  $\cumul\pa{Y_1,\ldots, Y_l}=0$.
\end{lem}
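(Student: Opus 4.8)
The plan is to use the additive factorization of the cumulant generating function under independence. First I would observe that since $(Y_i)_{i\in I_1}$ and $(Y_i)_{i\in I_2}$ are independent, for any parameters $t_1,\ldots,t_l$ the variables $\exp\pa{\sum_{i\in I_1}t_iY_i}$ and $\exp\pa{\sum_{i\in I_2}t_iY_i}$ are independent, so their expectations multiply; taking logarithms gives
$$K(t_1,\ldots,t_l)=K_{I_1}\pa{(t_i)_{i\in I_1}}+K_{I_2}\pa{(t_i)_{i\in I_2}},$$
where $K_{I_j}$ denotes the cumulant generating function of $(Y_i)_{i\in I_j}$. The crucial structural fact is that $K$ splits as a sum of a term not depending on the variables indexed by $I_2$ and a term not depending on those indexed by $I_1$.

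Next, since $I_1$ and $I_2$ are both non-empty, I would fix $a\in I_1$ and $b\in I_2$ and note that $\partial^2 K/\partial t_a\partial t_b\equiv 0$: differentiating in $t_b$ annihilates $K_{I_1}$ and leaves a function of $(t_i)_{i\in I_2}$ only, which differentiating in $t_a$ then annihilates. Because mixed partial derivatives commute, the full derivative $\partial^l K/\partial t_1\cdots\partial t_l$ contains the annihilating pair $\partial_a\partial_b$ and hence vanishes identically. Evaluating at the origin yields $\cumul(Y_1,\ldots,Y_l)=0$, which is the claim. (For $l=1$ there is nothing to prove, as the hypothesis cannot be met.)

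The only subtle point -- the main obstacle -- is making sure that $K$ is well defined and smooth near the origin, i.e.\ that the joint moment generating function is finite in a neighborhood of $0$. If one prefers to avoid any moment assumption, the same conclusion follows from a purely algebraic route built on Lemma~\ref{lem:mobiusformula}: by M\"obius inversion over the partition lattice one has $\E\cro{\prod_{i\in S}Y_i}=\sum_{G\in\cP(S)}\prod_{R\in G}\cumul\pa{(Y_i)_{i\in R}}$ for every $S\subseteq[l]$; independence makes every joint moment factor across the $I_1$--$I_2$ split, which forces the partition sum for $S=[l]$ to be supported on partitions $G$ refining $\{I_1,I_2\}$; then an induction on $l$ finishes, since any straddling block of a non-refining partition other than $\{[l]\}$ is itself a strictly smaller tuple admitting an independent split, so its cumulant already vanishes by the inductive hypothesis, leaving only $\cumul(Y_1,\ldots,Y_l)=0$. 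Within the paper's conventions I would present the generating-function version, as it is the shortest; the bookkeeping I would be most careful about is the use of both $I_1\neq\emptyset$ and $I_2\neq\emptyset$, together with the commutation of the partial derivatives.
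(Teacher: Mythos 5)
The paper does not prove this lemma itself; it simply cites it as a standard fact from the reference \cite{novak2014three}. Your cumulant-generating-function argument is the textbook proof: independence gives $K=K_{I_1}+K_{I_2}$, and since both index sets are non-empty, the mixed partial $\partial_{t_1}\cdots\partial_{t_l}$ contains a pair $\partial_{t_a}\partial_{t_b}$ with $a\in I_1$, $b\in I_2$ that annihilates each summand, so the $l$-th mixed derivative vanishes identically and in particular at the origin. You are right to flag the regularity of $K$ near $0$ as the one point needing care; within this paper's conventions the cumulant is \emph{defined} via that mixed partial of $K$ at $0$, so well-definedness of $K$ in a neighbourhood of the origin (hence its analyticity there) is already built into the setting, and nothing more is needed. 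Your alternative via the moment--cumulant relation and induction on $l$ is also correct: independence makes every joint moment factor across the $\{I_1,I_2\}$ split, so the partition expansion of $\E[\prod_i Y_i]$ is matched term-by-term by partitions refining $\{I_1,I_2\}$; any non-refining partition other than $\{[l]\}$ has a strictly smaller straddling block whose cumulant vanishes by induction, and what remains forces $\cumul(Y_1,\ldots,Y_l)=0$. One small terminological note: the identity $\E[\prod_{i\in S}Y_i]=\sum_{G\in\cP(S)}\prod_{R\in G}\cumul((Y_i)_{i\in R})$ is the \emph{inverse} of the formula stated in Lemma~\ref{lem:mobiusformula}, not Lemma~\ref{lem:mobiusformula} itself, so if you present the second route you should either state this inverse relation explicitly or note that it follows from Lemma~\ref{lem:mobiusformula} by M\"obius inversion on the partition lattice.
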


\subsection{Expansion of cumulants in latent variable  models}\label{sec:law_total_cumulantce}

In some latent models, as  considered in Sections~\ref{sec:clustering_constant} and \ref{sec:multiple_feature},
the cumulants $\kappa_{x,\alpha}$ arising in the control~\eqref{eq:SW22} of the $MMSE_{\leq D}$ have intricate forms.  In this subsection, we recall some formulas derived in~\cite{Even25a}. Assume that the signal matrix $X$ is of the form
\begin{equation}\label{eq:latent-model}
X_{ij}=\mu_{\pi^*(i)j},\quad \textrm{for}\quad (i,j)\in [n]\times [p], 
\end{equation}
where $\pi^*: [n]\mapsto [K]$ is possibly randomly generated and the $\mu_{kj}$'s are  generated independently according to $\mathcal{N}(0,\lambda^2)$ with $\lambda>0$. 
In particular, this encompasses the prior distributions in Definitions~\ref{def:prior} and~\ref{def:priorfeaturematching}.

The following proposition is a corollary of Theorem 2.5 in~\cite{Even25a}. It shows that for bounding $\kappa_{x,\alpha}$, it  suffices to control mixed cumulants in terms of the latent variables $\pi^*(i)$. 
\begin{prop}\label{thm:LTC}[\cite{Even25a}]
Consider the model~\eqref{eq:latent-model}, and fix any $\alpha\in \N^{n\times p}$. If $|\alpha|$ is odd, then $\kappa_{x,\alpha}=0$. For $|\alpha|=2l$, suppose that, for all decomposition $\alpha=\beta_1+\ldots+\beta_l$, with $\beta_s$ of the form $\beta_s=\ac{(i_s, j_s), (i'_s, j_s)}$, we have $$\left|\cumul\left\{x,\1\ac{\pi^{*}(i_1)= \pi^{*}(i_2)},\ldots, \1\ac{\pi^{*}(i_l)= \pi^{*}(i'_l)} \right\}\right|\leq C_\alpha\enspace,$$
then, 
\begin{equation}\label{eq:LTCgeneral:thm}
|\kappa_{x,\alpha}|\leq \lambda^{|\alpha|}|\alpha|^{\frac{|\alpha|}{2}-1}C_\alpha\enspace.
\end{equation}
\end{prop}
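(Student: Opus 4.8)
\emph{The odd case.} I would first dispatch $|\alpha|$ odd by a sign-flip symmetry. Since the $\mu_{kj}$ are centred Gaussian and independent of $\pi^*$, the pair $(\pi^*,\mu)$ has the same law as $(\pi^*,-\mu)$; as $x=x(\pi^*)$ depends on $\pi^*$ only while each $X_{ij}=\mu_{\pi^*(i)j}$ is linear in $\mu$, this yields $(x,X)\stackrel{d}{=}(x,-X)$. Enumerating the multiset $\alpha$ as $(i_1,j_1),\dots,(i_{|\alpha|},j_{|\alpha|})$ with multiplicities and invoking multilinearity of cumulants (Lemma~\ref{lem:mobiusformula}),
\[
\kappa_{x,\alpha}=\cumul\pa{x,X_{i_1j_1},\dots,X_{i_{|\alpha|}j_{|\alpha|}}}=\cumul\pa{x,-X_{i_1j_1},\dots,-X_{i_{|\alpha|}j_{|\alpha|}}}=(-1)^{|\alpha|}\kappa_{x,\alpha},
\]
so $\kappa_{x,\alpha}=0$ when $|\alpha|$ is odd.

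\emph{The even case via conditioning.} For $|\alpha|=2l$, set $W_0:=x$ and $W_s:=X_{i_sj_s}$, $s\in[2l]$, and condition on $\mathcal{G}:=\sigma(\pi^*)$. I would apply the law of total cumulance (Brillinger's formula; see e.g.~\cite{novak2014three}),
\[
\kappa_{x,\alpha}=\sum_{G\in\cP([0,2l])}\cumul\pa{\pa{\cumul\pa{\ac{W_i:i\in B}\mid\mathcal{G}}}_{B\in G}},
\]
and then observe that most partitions drop out: conditionally on $\pi^*$, $x$ is a constant, so any block $B\ni 0$ with $|B|\ge 2$ has zero conditional cumulant; and conditionally on $\pi^*$ the $X_{ij}$ are jointly centred Gaussian, so any block $B\subseteq[2l]$ with $|B|\ne 2$ has zero conditional cumulant. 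Hence only the partitions consisting of $\ac{0}$ together with a perfect matching $P$ of $[2l]$ contribute.

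\emph{Evaluation and counting.} For a pair $\ac{s,s'}\in P$, I would use that $X_{i_sj_s}$ and $X_{i_{s'}j_{s'}}$ are the same Gaussian variable when $j_s=j_{s'}$ and $\pi^*(i_s)=\pi^*(i_{s'})$, and independent otherwise, so
\[
\cumul\pa{X_{i_sj_s},X_{i_{s'}j_{s'}}\mid\pi^*}=\lambda^2\,\1\ac{j_s=j_{s'}}\,\1\ac{\pi^*(i_s)=\pi^*(i_{s'})}.
\]
Pulling the deterministic factors $\lambda^2\1\ac{j_s=j_{s'}}$ out of the outer cumulant by multilinearity, the contribution of a matching $P$ is $\lambda^{2l}\prod_{\ac{s,s'}\in P}\1\ac{j_s=j_{s'}}\cdot\cumul\pa{x,\pa{\1\ac{\pi^*(i_s)=\pi^*(i_{s'})}}_{\ac{s,s'}\in P}}$. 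A matching whose column indicators are all equal to one is precisely a decomposition $\alpha=\beta_1+\dots+\beta_l$ with $\beta_s=\ac{(i_s,j_s),(i'_s,j_s)}$, so the attached latent cumulant is at most $C_\alpha$ in absolute value by hypothesis; the other matchings contribute $0$. Finally I would bound the number of perfect matchings of $2l$ elements by $(2l-1)!!=\prod_{k=1}^l(2k-1)\le l^l\le(2l)^{l-1}=|\alpha|^{|\alpha|/2-1}$ (the first step by AM--GM, since $\sum_{k=1}^l(2k-1)=l^2$, the second since $l\le 2^{l-1}$), and sum, giving $|\kappa_{x,\alpha}|\le\lambda^{|\alpha|}|\alpha|^{|\alpha|/2-1}C_\alpha$.

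\emph{Main obstacle.} The delicate part is the clean invocation of the law of total cumulance together with the argument pinning down exactly the surviving partitions — which hinges on $x$ being $\sigma(\pi^*)$-measurable (hence conditionally constant) and on the conditional Gaussianity of $X$ killing conditional cumulants of order $\ne 2$ among the $X$-slots — and the combinatorial bookkeeping matching surviving pairings to the column-respecting decompositions $\alpha=\sum_s\beta_s$ appearing in the statement, with due care when some $\alpha_{ij}\ge 2$ (two copies of a single entry paired together). Everything else is routine multilinearity and a crude count. Alternatively, since this statement is Theorem~2.5 of~\cite{Even25a}, one may simply cite that result.
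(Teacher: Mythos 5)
Your proof is correct, and it takes a genuinely different route from the paper's. The paper treats the odd case by the same sign-flip symmetry, but for the even case it simply cites Theorem~2.5 of \cite{Even25a} to obtain the identity
$\kappa_{x,\alpha}=\lambda^{|\alpha|}\sum_{G\in \cP_2(\alpha)}\cumul\big(x, \1_{\Omega_{\beta_1(G)}},\ldots, \1_{\Omega_{\beta_l(G)}}\big)$
as a black box, then bounds each term by $C_\alpha$ (or $0$ when columns disagree) and counts. You instead \emph{derive} that expansion from scratch: condition on $\sigma(\pi^*)$, apply the law of total cumulance, and use that $x$ is $\cG$-measurable (hence conditionally constant, killing any block containing $0$ of size $\ge 2$) and that $X$ is conditionally centred Gaussian (killing any block in $[2l]$ of size $\neq 2$), which pins down the surviving partitions to $\{0\}$ plus a perfect matching. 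This makes the proposition self-contained rather than a corollary of \cite{Even25a}, which is arguably more informative; the cost is the reliance on Brillinger's formula and the need to check the conditional-Gaussian/conditional-constant hypotheses. Your count is also slightly coarser than the paper's $|\cP_2(\alpha)|$ (you count all perfect matchings of $[2l]$, i.e.\ $(2l-1)!!$, rather than partitions of the multiset), but the over-count is harmless since you only want an upper bound, and the chain $(2l-1)!!\le l^{l}\le(2l)^{l-1}=|\alpha|^{|\alpha|/2-1}$ is correct. One small slip to flag: the proposition is stated in the paper as a \emph{corollary} of Theorem~2.5 of \cite{Even25a}, not identical to it, so your closing remark that ``one may simply cite that result'' is not quite right — a short additional step (exactly the bounding-and-counting you already carried out) is needed.
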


\begin{proof}[Proof of Proposition \ref{thm:LTC}]
Let us remark that, in the model \eqref{eq:latent-model},  $(x,X)$ has the same law as $(x,-X)$. Thus, if $|\alpha|$ is odd, we deduce from the multilinearity of cumulants $$\kappa_{x,\alpha}=\cumul\pa{x, \pa{X_{ij}}_{ij\in \alpha}}=\cumul\pa{x, \pa{-X_{ij}}_{ij\in \alpha}}=\pa{-1}^{|\alpha|}\cumul\pa{x, \pa{X_{ij}}_{ij\in \alpha}}=-\kappa_{x,\alpha}\enspace,$$
and we directly have $\kappa_{x,\alpha}=0$. 
Suppose that $|\alpha|=2l$. We recall that we consider $\alpha\in \N^{n\times p}$ as a multiset containing $\alpha_{ij}$ copies of $(i,j)$. 
 Let $\mathcal{P}_2(\alpha)$ denote the set of all partitions of $\alpha$ into groups of size $2$. Given $G=\ac{G_1,\ldots,G_l}\in \cP_2\pa{\alpha}$ and $s\in [l]$, we define $\beta_s(G)\in \N^{n\times p}$ with entries $\pa{\beta_s(G)}_{ij}$ recording the number of copies of $(i,j)$ in $G_s$. Then, Theorem~2.5  in~\cite{Even25a} states that 
$$\kappa_{x,\alpha}=\lambda^{\alpha}\sum_{G\in \cP_2(\alpha)}\cumul\pa{x, \1_{\Omega_{\beta_1(G)}},\ldots, \1_{\Omega_{\beta_l(G)}}}\enspace,$$
where, for $\beta=\ac{(i,j), (i',j')}$, we write $\1_{\Omega_{\beta}}=\1\ac{j=j'}\1\ac{\pi^*(i)=\pi^*(i')}$. So, for $G\in \cP_2(\alpha)$, writing $\beta_s(G)=\ac{(i_s,j_s),(i'_s,j'_s)}$, if there exists $s$ such that $j_s\neq j'_s$, then $$\cumul\pa{x, \1_{\Omega_{\beta_1(G)}},\ldots, \1_{\Omega_{\beta_l(G)}}}=0\enspace,$$ and, if not, the hypothesis of the proposition leads to $$|\cumul\pa{x, \1_{\Omega_{\beta_1(G)}},\ldots, \1_{\Omega_{\beta_l(G)}}}|\leq C_\alpha\enspace.$$
Hence, $|\kappa_{x,\alpha}|\leq |\cP_2(\alpha)|\lambda^{|\alpha|}C_\alpha\leq \lambda^{|\alpha|}|\alpha|^{\frac{|\alpha|}{2}-1}C_\alpha$, where we use $|\cP_2(\alpha)|\leq |\alpha|^{\frac{|\alpha|}{2}-1}$.
\end{proof}

\subsection{Weighted dependency graph and cumulants}\label{sec:weighted-graph}

Our method for bounding cumulants builds upon the theory of weighted dependency graphs from \cite{feray2018}, on which we provide an account here. This section will be useful when we want to reduce random variables into product of random variables, as for example in Steps 1 and 3 of Section~\ref{sec:globalmethod}. For a set $S$, let  $(Y_i)_{i\in S}$ be random variables on the same space $\mathcal{Y}$, that are indexed by $S$.

\begin{definition}[Weighted Graph]
A weighted graph $\cL=(S,E)$ is an undirected graph with vertex set $S$ such that any edge $e$ has a weight $w_e\in (0,1]$.
\end{definition}

\begin{definition}[Weight of a graph]
Let $\cL=(S,E)$ a weighted graph with vertex set $S$, and weights $w_e$ for $e\in E$. Given a spanning tree $\cT$ of $\cL$, its weight is the product of the weights of its edges, that is $weight(\cT)=\prod_{e\in \cT}w_e$. We define $\bbM(\cL)$ as the maximum weight over all  spanning tree of $\cL$. Here, we use the convention that $\bbM[\cL]=0$ if $\cL$ is not connected.
\end{definition}

For $\cL$ a weighted graph, we define $\cL\<1\>$ the graph induced by the edges of weight $1$. Given $S'\subseteq S$, we define $\cL[S]$ the restriction of $\cL$ to the vertex set $S'$.

In the following, we seek to bound cumulants of subfamilies of $(Y_i)_{i\in S}$ in terms of the maximal weight of a  weighted graph, that keeps track of the low-dependencies between the random variables $Y_i$. To do so, we need to consider super-multiplicative functions on subsets of $S$.

\begin{definition}
Consider a non-negative function  $\psi$ mapping any subset of $S$ to $\mathbb{R}_+$. Then, $\psi$ is said to be super-multiplicative if for any  $S_1$,   $S_2\subseteq S$, we have $\psi\pa{S_1\cup S_2}\geq \psi(S_1)\psi(S_2)$.
\end{definition}

We are now in position to state Proposition~5.2 from \cite{feray2018}, that is very useful to reduce the problem of computing cumulants of random variables, to the problem of computing cumulants of products of random variables.

\begin{prop}\label{prop:feray}[Proposition 5.2 of \cite{feray2018}]
Let $\cL$ a weighted graph with vertex set $S$ and $\psi$ a super-multiplicative function on multisets of elements of $S$. Let $(D_r)_{r\geq 1}$ a sequence of positive numbers. Assume that, for any subset $B\subseteq S$, one has 
$$\left|\cumul\pa{\prod_{i\in B_1}Y_i,\ldots, \prod_{i\in B_l}Y_i}\right|\leq D_{|B|}\,\psi(B)\,\bbM\pa{\cL[B]}\enspace,$$
where $B_1,\ldots,B_l$ are the vertex sets of the connected components of $\cL\<1\>[B]$. Then, for all $B\subseteq S$, one has 
$$|\cumul\pa{(Y_i)_{i\in B}}|\leq C_{|B|}\psi(B)\bbM\pa{\cL[B]}\enspace,$$
with $C_r$ a sequence defined recursively by 
$$C_r=D_r+\underset{G\neq \{[r]\}}{\sum_{G\in\mathcal{P}([r])}}\prod_{R\in G}C_{|R|}\enspace ,$$
where  $\cP([r])$ stands for the collection of all partitions of $[r]$. 
\end{prop}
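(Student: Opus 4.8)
# Proof proposal for Proposition~\ref{prop:feray}

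\textbf{Overview of the strategy.} The plan is to reproduce the argument of Féray~\cite{feray2018}, which proceeds by strong induction on $|B|$. The base case $|B|=1$ is trivial since $\cumul(Y_i)=\E[Y_i]$ and $\bbM(\cL[\{i\}])=1$ by convention (a single vertex has the empty spanning tree), so the hypothesis with $l=1$, $B_1=B$ gives exactly what is needed, and $C_1=D_1$. For the inductive step, I would fix $B$ with $|B|=r\geq 2$, assume the bound holds for all strict subsets, and analyze $\cumul\pa{(Y_i)_{i\in B}}$ by splitting according to the connected components of $\cL\<1\>[B]$.

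\textbf{Key steps.} First I would reduce to the case where $\cL\<1\>[B]$ is connected: if it has components $B_1,\ldots,B_l$ with $l\geq 2$, then the hypothesis applied directly gives $|\cumul\pa{\prod_{i\in B_1}Y_i,\ldots,\prod_{i\in B_l}Y_i}| \leq D_r\,\psi(B)\,\bbM(\cL[B])$. But this is \emph{not} the quantity we want — we want $\cumul\pa{(Y_i)_{i\in B}}$. So the real work is to express the latter in terms of cumulants of products over the $\cL\<1\>$-components, plus lower-order terms. The tool for this is the Möbius/Leonov--Shiryaev formula for iterated cumulants (the cumulant analogue of Lemma~\ref{lem:mobiusformula}): expanding $\cumul\pa{(Y_i)_{i\in B}}$ over partitions $G$ of $B$, one groups the blocks of $G$ and uses the identity expressing a joint cumulant of products as a sum over partitions that are ``connected'' relative to the block structure. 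Concretely, writing $\mathbf{B}=\{B_1,\ldots,B_l\}$ for the component partition, one has
\begin{equation*}
\cumul\pa{(Y_i)_{i\in B}} \;=\; \sum_{\substack{G\in\mathcal{P}(B)\\ G\vee \mathbf{B}=\{B\}}} \; \text{(product of joint cumulants over the blocks of $G$)},
\end{equation*}
where $G\vee\mathbf{B}=\{B\}$ means the join of $G$ and $\mathbf{B}$ in the partition lattice is the one-block partition, i.e. $G$ ``connects'' all the components $B_t$. The single term $G=\mathbf{B}$ contributes $\cumul\pa{\prod_{i\in B_1}Y_i,\ldots,\prod_{i\in B_l}Y_i}$, bounded by $D_r\,\psi(B)\,\bbM(\cL[B])$ via the hypothesis; every other admissible $G$ has all its blocks of size $<r$ (since $G$ must link at least two components, at least one block spans parts of two components, hence is not contained in any single $B_t$, forcing... actually one argues each block is a strict subset) so the induction hypothesis applies blockwise.

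\textbf{Controlling the error terms and collecting the recursion.} For a block $R\subsetneq B$ the inductive bound gives $|\cumul\pa{(Y_i)_{i\in R}}|\leq C_{|R|}\,\psi(R)\,\bbM(\cL[R])$. Two facts then close the argument: (i) super-multiplicativity of $\psi$ yields $\prod_{R\in G}\psi(R)\leq \psi(B)$; (ii) a spanning-tree/forest combinatorial lemma shows $\prod_{R\in G}\bbM(\cL[R]) \leq \bbM(\cL[B])$ whenever $G\vee\mathbf{B}=\{B\}$ — intuitively, picking a maximal spanning tree in each $\cL[R]$ together with weight-$1$ edges inside the components $B_t$ and then enough edges to connect everything produces a spanning tree of $\cL[B]$ of weight at least $\prod_R \bbM(\cL[R])$ (the connecting edges inside components have weight $1$ so cost nothing, and since $G$ already links the components across blocks, no extra low-weight edges are needed beyond those already counted). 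Combining, each non-trivial $G$ contributes at most $\pa{\prod_{R\in G}C_{|R|}}\psi(B)\,\bbM(\cL[B])$, and summing over all $G\neq\{[r]\}$ together with the $G=\mathbf{B}$ term (absorbed since $\mathbf{B}\neq\{[r]\}$ when $l\geq 2$, and when $l=1$ it is the sole ``trivial'' term handled by $D_r$) gives
\begin{equation*}
|\cumul\pa{(Y_i)_{i\in B}}| \;\leq\; \Big(D_r + \sum_{\substack{G\in\mathcal{P}([r])\\ G\neq\{[r]\}}}\prod_{R\in G}C_{|R|}\Big)\psi(B)\,\bbM(\cL[B]) \;=\; C_r\,\psi(B)\,\bbM(\cL[B]),
\end{equation*}
which is the claim.

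\textbf{Main obstacle.} The delicate point is step (ii): proving $\prod_{R\in G}\bbM(\cL[R])\leq \bbM(\cL[B])$ for $G$ connecting the weight-$1$ components. One must be careful that this can fail for arbitrary partitions $G$ — it is precisely the ``$G\vee\mathbf{B}=\{B\}$'' constraint (equivalently, that the $\cL\<1\>$-components are already merged by $G$) that makes the weight-$1$ edges available for free to stitch the per-block spanning trees into a global one. Getting this spanning-tree inequality right, and correctly matching it against the combinatorial identity for cumulants of products (the Leonov--Shiryaev formula, which is where the ``$G$ connects the components'' condition comes from in the first place), is the crux; the rest is bookkeeping with the recursion defining $C_r$. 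I would cite~\cite{feray2018} for the precise form of these two lemmas rather than reprove them.
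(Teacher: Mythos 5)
The paper does not reprove this proposition; it cites F\'eray~\cite{feray2018} directly and only records the recursive structure of the argument in the remark that follows, so there is no in-paper proof to match against. Your reconstruction does identify the three correct ingredients of F\'eray's argument -- strong induction on $|B|$, the Leonov--Shiryaev (Malyshev) identity for cumulants of products, and the spanning-tree merging inequality $\prod_{R\in G}\bbM(\cL[R])\leq \bbM(\cL[B])$ under the connectivity constraint $G\vee\mathbf{B}=\ac{B}$. However, the central identity is mis-stated in a way that would break the proof if taken literally.

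The Leonov--Shiryaev formula reads
$$
\cumul\pa{\prod_{i\in B_1}Y_i,\ldots,\prod_{i\in B_l}Y_i}
=\sum_{\substack{G\in\cP(B)\\ G\vee\mathbf{B}=\ac{B}}}\ \prod_{R\in G}\cumul\pa{(Y_i)_{i\in R}},
$$
with the joint cumulant of the \emph{products} on the left, not $\cumul\pa{(Y_i)_{i\in B}}$. The distinguished term to peel off is $G=\ac{B}$, whose summand is exactly $\cumul\pa{(Y_i)_{i\in B}}$. The term you single out, $G=\mathbf{B}$, is wrong on two counts: when $l\geq 2$, $G=\mathbf{B}$ does not satisfy $G\vee\mathbf{B}=\ac{B}$, so it is not in the sum at all; and even if it were, its summand would be the \emph{product} $\prod_{t}\cumul\pa{(Y_i)_{i\in B_t}}$, not the cumulant of products $\cumul\pa{\prod Y_{B_1},\ldots,\prod Y_{B_l}}$ that the hypothesis bounds. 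Rearranging correctly gives
$$
\cumul\pa{(Y_i)_{i\in B}}
=\cumul\pa{\prod_{i\in B_1}Y_i,\ldots,\prod_{i\in B_l}Y_i}
-\sum_{\substack{G\in\cP(B),\ G\neq\ac{B}\\ G\vee\mathbf{B}=\ac{B}}}\ \prod_{R\in G}\cumul\pa{(Y_i)_{i\in R}},
$$
after which the hypothesis handles the first term (this is the role of $D_{|B|}$, in both cases $l=1$ and $l\geq 2$), and the inductive hypothesis applies blockwise to the sum because every block of any $G\neq\ac{B}$ is trivially a strict subset of $B$ -- no need for the ``spans two components'' detour you began before self-correcting. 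Your steps (i) (super-multiplicativity of $\psi$) and (ii) (the spanning-tree merging bound, using the $G\vee\mathbf{B}=\ac{B}$ constraint to connect the per-block trees by weight-one edges) then close the induction; the fact that the recursion defining $C_r$ ranges over \emph{all} $G\neq\ac{[r]}$ rather than only the connecting ones just makes the stated bound conservative, which is harmless.
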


\medskip

\begin{rem}
 The proof of Proposition \ref{prop:feray} in \cite{feray2018} is a recursion on $B$. Thus, if one wants to upper-bound, for a specific subset $B\subseteq S$, the quantity $|\cumul\pa{(Y_i)_{i\in B}}|$, it is sufficient to upper-bound, for all subsets $B'\subseteq B$, the cumulants $|\cumul\pa{\prod_{i\in B'_1}Y_i,\ldots, \prod_{i\in B'_l}Y_i}|$, where  $B'_1,\ldots,B'_l$ are the vertex sets of the connected components of $L\<1\>[B']$. 
\end{rem}

To establish non-asymptotic bounds of cumulants, we provide the following upper bound on the sequence $|C_r|$ defined in Proposition \ref{prop:feray}.

\begin{lem}\label{lem:subexponential}
In Proposition \ref{prop:feray}, if we suppose that the sequence $(D_r)_{r\geq 1}$ is upper-bounded by some constant $D_{\infty}>0$, then, for all $r\geq 1$, $C_r\leq D_{\infty}r^{6r}$.
\end{lem}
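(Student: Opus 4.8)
The plan is to prove the bound $C_r \leq D_\infty r^{6r}$ by strong induction on $r$, using the recursion
$$C_r = D_r + \sum_{\substack{G\in\mathcal{P}([r])\\ G\neq\{[r]\}}}\prod_{R\in G}C_{|R|}$$
from Proposition \ref{prop:feray}. Since $D_r \leq D_\infty$ for all $r$, it suffices to show that $D_\infty + \sum_{G\neq\{[r]\}}\prod_{R\in G}C_{|R|} \leq D_\infty r^{6r}$. The base case $r=1$ is immediate since the sum over nontrivial partitions is empty and $1^{6}=1$. For the inductive step, I would first group the partitions $G$ of $[r]$ by their block-size type: a partition into blocks of sizes $r_1,\dots,r_m$ (with $m\geq 2$, $\sum r_i = r$) contributes, via the induction hypothesis, a factor at most $\prod_i D_\infty r_i^{6r_i} \leq D_\infty^m \prod_i r_i^{6r_i}$.

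The key combinatorial estimates are then: (i) $\prod_i r_i^{6 r_i} \leq r^{6(r-1)}$ when there are at least two blocks, because replacing the two largest blocks' contributions and using $a^a b^b \le (a+b-1)^{a+b-1}$ (or more simply $r_i^{6r_i}\le r^{6r_i}$ and $\sum r_i = r$ gives $\prod r_i^{6r_i} \le r^{6r}$, and one can save a factor $r^6$ since $m\geq 2$ forces each $r_i \le r-1$); and (ii) the number of partitions of $[r]$ is the Bell number $B_r$, which satisfies $B_r \leq r!\leq r^r$, and more refined bounds controlling $D_\infty^{m}$ versus $D_\infty$ — here one should assume without loss of generality $D_\infty \geq 1$ (otherwise replace $D_\infty$ by $\max(D_\infty,1)$, which only weakens the statement but is harmless, or note the $D_r$ are cumulant-type bounds that one may take $\geq 1$), so that $D_\infty^m \leq D_\infty^{\,}\cdot D_\infty^{m-1}$ and the excess powers of $D_\infty$ must be absorbed. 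Assembling: the full nontrivial sum is at most $D_\infty \cdot B_r \cdot D_\infty^{\,r-1}\cdot r^{6(r-1)}$ in the crudest form, and one checks $D_\infty^{r-1} B_r r^{6(r-1)} + 1 \leq r^{6r}$; the polynomial slack $r^{6r}/r^{6(r-1)} = r^6$ against the super-exponential-looking $B_r \le r^r$ is the tension, so the exponent $6$ must be chosen generously enough — indeed $r^6 \cdot r^{-r}\cdot B_r^{-1}$ is $\Theta$-bounded only because the induction is really producing $r^{6r}$ from $\prod r_i^{6 r_i}$ with substantial room, namely $r^{6r} = \prod(r/r_i)^{\text{pos.}}\cdot\prod r_i^{6r_i}\cdot(\text{large factor})$ since $6\sum r_i\log r_i \ll 6 r\log r$ when the blocks are balanced.

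The cleanest route, which I would actually carry out, is: bound $\prod_{R\in G} r_R^{6 r_R} \le \prod_R r_R^{6 r_R}$ and use the elementary inequality $\prod_R r_R^{r_R} \le \left(\frac{r}{|G|}\right)^{r}\cdot(\text{correction})$ via convexity of $x\log x$, which for $|G|\ge 2$ gives $\prod_R r_R^{6r_R}\le (r/2)^{6r} \le r^{6r}2^{-6r}$; then sum over all $B_r \le r^r$ partitions to get total $\le D_\infty^r r^r (r/2)^{6r} = D_\infty^r r^{7r} 2^{-6r}$, which is \emph{not} $\le D_\infty r^{6r}$ unless $D_\infty$ is controlled — so the honest statement needs $D_\infty$ fixed and $r$ restricted, OR (the intended reading) one proves $C_r \le D_\infty\, r^{6r}$ only treating $D_\infty$ as an absolute constant absorbing into the $r^{6r}$ growth, i.e. proving $C_r \le f(D_\infty) r^{6r}$ and noting $f(D_\infty)=D_\infty$ works once $r^{6r}$ beats the Bell-number bookkeeping. \textbf{The main obstacle} is precisely this bookkeeping: separating the genuinely super-multiplicative growth $r^{6r}$ (which dominates $B_r \prod_i r_i^{6r_i}$ with room to spare, by strict convexity of $x \mapsto x\log x$ giving $\sum_i r_i \log r_i \le (r - m + 1)\log r \le (r-1)\log r$ for $m \ge 2$) from the powers of $D_\infty$, and verifying that the constant $6$ in the exponent is large enough that the induction closes — for which the decisive inequality to check is $D_\infty^{r-1} B_r\, r^{6(r-1)} + D_\infty \le D_\infty r^{6r}$, i.e. $B_r r^{-6}\left(D_\infty/1\right)^{r-1} \le 1 - r^{-6r}$, which does require interpreting $D_\infty$ as a fixed constant and $r$ as the free variable, with the $r^6$ gain from each merge comfortably dominating $B_r^{1/r}\approx r/\log r$ raised to the $r$. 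I would present the induction with the convexity estimate $\prod_R r_R^{6r_R}\le r^{6(r-|G|+1)}$ as the workhorse, so that the nontrivial sum is bounded by $\sum_{m=2}^{r} S(r,m) D_\infty^{m} r^{6(r-m+1)}$ where $S(r,m)$ is the Stirling number, and then dominate this geometric-in-$m$ series by its first term times a constant, closing the induction.
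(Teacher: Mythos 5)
Your proposal identifies the right high-level tools (strong induction, convexity of $x\log x$, partition counting), but the bookkeeping you plan does not close the induction, and this shows up quantitatively once you try. The paper's normalization is WLOG $D_\infty = 1$, which is the opposite of the $D_\infty\geq 1$ you suggest: dividing the recursion through by $D_\infty$ produces factors $D_\infty^{|G|-1}$, and these are $\leq 1$ precisely when $D_\infty\leq 1$, at which point the powers of $D_\infty$ disappear and the inequality to check is genuinely free of $D_\infty$. But even after normalizing, your main estimate fails. The convexity bound $\prod_{R\in G}r_R^{6r_R}\leq r^{6(r-|G|+1)}$ is correct (the convex function $\sum_i r_i\log r_i$ on $\{r_i\geq 1,\ \sum_i r_i=r\}$ is maximized at a corner $(r-m+1,1,\dots,1)$), yet it is only tight at that corner, and pairing it with the Stirling count $S(r,m)$ loses too much: already the $m=2$ term of your bound is $S(r,2)\,r^{6(r-1)}=(2^{r-1}-1)r^{6r-6}$, which exceeds $r^{6r}$ as soon as $2^{r-1}>r^{6}$, i.e.\ for $r\gtrsim 30$. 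The reason is that most two-block partitions are roughly balanced, and for those $\prod_{R}r_R^{6r_R}\approx (r/2)^{6r}=r^{6r}2^{-6r}$ is exponentially smaller than your uniform $r^{6(r-1)}$; that $2^{-6r}$ gain is precisely what must cancel the $\approx 2^{r-1}$ count, and a block-type-uniform bound discards it before it can be traded against the count. (Also, the bound $\prod_R r_R^{r_R}\leq (r/|G|)^r$ that you float earlier is simply false: take $r_1=r-1,r_2=1$.)

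The paper closes the induction by a decomposition that keeps the count and the convexity gain together, per block size. After setting $D_r=1$, it isolates the block $T$ in the recursion, giving $C_r \leq 1+\sum_{\emptyset\subsetneq T\subsetneq[r]}C_{|T|}\big(2C_{r-|T|}-1\big)$; applying the induction hypothesis turns this into a single sum over $l=|T|$ weighted by $\binom{r}{l}$. The decisive estimate is then $\binom{r}{l}\,l^{6l}(r-l)^{6(r-l)}\leq r^{6r}\,2^{-3l}$ for $1\leq l\leq r/2$, proved by taking logarithms: $\log\big[(l/r)^{6l}\binom{r}{l}\big]\leq 6l\log(l/r)+l\log(er/l)\leq -5l\log(r/l)+l\leq -3l\log 2$. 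Here the binomial weight $\binom{r}{l}$ and the gain $(l/r)^{6l}$ from the induction hypothesis never get separated, so the geometric factor $2^{-3l}$ (whose sum is an absolute constant) becomes visible; that is where the exponent $6$ actually earns its keep, and it is exactly the cancellation a Bell/Stirling count obscures.
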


\begin{proof}[Proof of Lemma \ref{lem:subexponential}]

Without loss of generality, we can suppose that $D_{\infty}=1$. We shall prove by induction over $r\geq 0$ that $C_r\leq r^{6r}$. By definition, $C_0=1=0^0$.

We  now assume  that, for all $r'<r$, we have $C_{r'}\leq r'^{6r'}$, and we shall prove that $C_r\leq r^{6r}$. We begin by isolating one subset in the recursion formula 
\begin{align*}
C_r&=1+\sum_{\pi\neq \{[r]\}}\prod_{R\in\pi}C_{|R|}
=1+\sum_{\emptyset\subsetneq T\subsetneq [r]}C_{|T|}\pa{\sum_{\pi\in\mathcal{P}([r]\setminus T)}\prod_{R\in\pi}C_{|R|}}
=1+\sum_{\emptyset\subsetneq T\subsetneq [r]}C_{|T|}\pa{2C_{r-|T|}-1}\enspace .
\end{align*} 
Then, we apply the recursion hypothesis, 
\begin{align*}
C_r&\leq 1+\sum_{\emptyset\subsetneq T\subsetneq [r]}|T|^{6|T|}\pa{2(r-|T|)^{6(r-|T|)}-1}
\leq 2\sum_{\emptyset\subsetneq T\subsetneq [r]}|T|^{6|T|}(r-|T|)^{6(r-|T|)}\\
&\leq 2\sum_{l=1}^{r-1}\binom{r}{l}l^{6l}(r-l)^{6(r-l)}\enspace.
\end{align*}
For $l\in[1,r/2]$, we have $\binom{r}{l}l^{6l}(r-l)^{6(r-l)}\leq \binom{r}{l}l^{6l}r^{6(r-l)}\leq r^{6r}\pa{\frac{l}{r}}^{6l}\binom{r}{l}$. Then,  $\log\pa{\pa{\frac{l}{r}}^{6l}\binom{r}{l}}\leq 6l\log(l/r)+l\log(er/l)\leq -5l\log(r/l)+l\leq -5l\log(2)+l\leq -3l\log(2)$, where the last inequality comes from the fact that $\log(2)\geq \frac{1}{2}$. Hence, we arrive at
 \begin{align*}
C_r&\leq 4r^{6r}\sum_{l=1}^{r/2}2^{-3l}\leq \frac{1}{2}r^{6r}\frac{1-2^{-3r/2}}{1-2^{-3}}\leq r^{6r}\enspace , 
\end{align*}
which concludes the induction and the proof of Lemma \ref{lem:subexponential}.
\end{proof}

\section{Proof of Theorem \ref{thm:lowdegreeconstantsizeclustering} -- pedagogical instantiation of Section~\ref{sec:globalmethod}}\label{app:clustering}
To prove Theorem \ref{thm:lowdegreeconstantsizeclustering}, we proceed in three steps:
\begin{enumerate}
\item We apply \cite{SchrammWein22} to reduce the derivation of LD bound to proving bounds on cumulants $\kappa_{x,\alpha}=\cumul\pa{x,X_{\alpha}}$;
\item We instantiate the methodology of Section~\ref{sec:globalmethod} to upper bound  $\kappa_{x,\alpha}$;
\item We conclude following similar reasoning as in \cite{SchrammWein22,Even25a}.
\end{enumerate}

\paragraph{1. From LD bound to cumulants with \cite{SchrammWein22}.}

With no loss of generality, we assume through this proof that $\sigma^2=1$.
Let $D\in \N$ such that $n\geq \max\pa{2(D+2)^2K, \pa{D+2}^4}$ and $K\geq D+2$. We suppose $p\geq \frac{n}{K^2}$ and 
$$\zeta:= \frac{\overline{\Delta}^{4}}{p}D^{22}(D+2)^2\max\left(1, \frac{n}{K^2}\right)<1\enspace.$$
Since the minimization problem defining $MMSE_{\leq D}$ in Equation (\ref{def:MMSE}) is separable, and since the random variables $M^*_{ij}$ are exchangeable, the $MMSE_{\leq D}$ can be reduced to
\begin{align*}
    MMSE_{\leq D}&:=\frac{1}{n(n-1)}\sum_{i\neq j=1}^{n}\inf_{f_{ij}\in \R_{D}(Y)}\E\cro{\pa{f_{ij}(Y)-\Gamma^*_{ij}}^2}\\
&=\inf_{f\in \R_{D}(Y)}\E\cro{\pa{f(Y)-\Gamma^*_{12}}^2}\enspace.
\end{align*}
In the remaining of the proof, we write $x=\Gamma^*_{12}=\1_{\pi^*(1)=\pi^*(2)}$. Then, our goal is to upper-bound 
$$MMSE_{\leq D}=\inf_{f\in \R_{D}(Y)}\E\cro{\pa{f(Y)-x}^2}\enspace.$$
The model of Definition \ref{def:prior} is a particular instance of the Additive Gaussian Noise Model considered in \cite{SchrammWein22}. 
Therefore, we can use Theorem 2.2 from \cite{SchrammWein22}, stated in Equation \eqref{eq:SW22}. For a matrix $\alpha\in\N^{n\times p}$, we define $|\alpha|=\sum_{i=1}^{n}\sum_{j=1}^{p}\alpha_{ij}$ and $\alpha!=\prod_{i=1}^{n}\prod_{j=1}^{p}\alpha_{ij}!$. We write $X\in \R^{n\times p}$ for the signal matrix, whose $i$-th row is the vector $\mu_{\pi^*(i)}$. Using Equation \eqref{eq:SW22}, we have the lower-bound
\begin{equation}
MMSE_{\leq D}\geq \frac{1}{K}-\frac{1}{K^2}-\underset{\alpha\neq 0}{\sum_{\alpha\in \N^{n\times p}}}\frac{\kappa_{x,\alpha}^2}{\alpha!}\enspace,
\end{equation}
where, for $\alpha\in \N^{n\times p}$, $\kappa_{x,\alpha}=\cumul(x,X^{\alpha}):=\cumul(x,X_{a_{1}},\ldots ,X_{a_{m}})$, with $\{a_{1},\ldots ,a_{m}\}$  the multiset that contains $\alpha_{ij}$ copies of $(i,j)$, for $i,j\in[1,n]\times [1,p]$.

Hence, in order to prove the theorem, it is enough to prove the bound
\begin{equation}\label{eq:SW22clustering}
    \underset{\alpha\neq 0}{\sum_{\alpha\in \N^{n\times p}}}\frac{\kappa_{x,\alpha}^2}{\alpha!}\leq \frac{18D^{21}}{K^2}\frac{\zeta}{1-\sqrt{\zeta}}\enspace.
\end{equation}
In light of Equation \eqref{eq:SW22clustering}, we shall identify the zero cumulant $\kappa_{x,\alpha}=0$ and
control $\kappa_{x,\alpha}$ for any $\alpha\in \N^{n\times p}$. For this purpose, it is convenient to represent $\alpha\in \N^{n\times p}$ as a bipartite multi-graph. 
More precisely, we define $\mathcal{G}_{\alpha}$ as the bipartite multi-graph on two disjoint sets of nodes $U=\{u_{i},i\in[1,n]\}$ and $V=\{v_{j}, j\in[1,p]\}$, with $\alpha_{ij}$ edges between $u_i$ and $v_j$, for $i,j\in [1,n]\times [1,p]$. We denote $\mathcal{G}_{\alpha}^-$ the restriction of $\mathcal{G}_{\alpha}$ to its non-isolated nodes and to $\ac{u_1,u_2}$. 
Henceforth, $\mathcal{G}_{\alpha}^-\cup \ac{(u_1,u_2)}$ stands for the multigraph $\cG_{\alpha}^{-}$ to which we have added an edge between $u_1$ and $u_2$. 
We write $cc(\alpha)$ the number of connected components of $\mathcal{G}_{\alpha}^-\cup \ac{(u_1,u_2)}$. We denote $supp(\alpha):=\ac{i\in [n], \alpha_{i:}\neq 0}$, $col(\alpha)=\ac{j\in [p], \alpha_{:j}\neq 0}$.

\paragraph{2. Bounding $\kappa_{x,\alpha}$ with the methodology outlined Section~\ref{sec:globalmethod}.}
The crux for deriving the lower bound of Theorem \ref{thm:lowdegreeconstantsizeclustering} lies in obtaining an upper-bound on the cumulant $\kappa_{x,\alpha}$.
Following our methodology outlined Section~\ref{sec:globalmethod}, we obtain the following upper-bound. 

\begin{lem}\label{lem:controlcumulantsconstantsizeclustering}
Let $\alpha\neq 0\in \N^{n\times p}$. If $n\geq 2\pa{|\alpha|+2}^2K$ and $K\geq |supp(\alpha)\cup\ac{1,2}|$, then, 
$$|\kappa_{x,\alpha}|\leq 4 \lambda^{|\alpha|}|\alpha|^{5|\alpha|+9}\pa{\frac{1}{K}}^{|supp(\alpha)\cup\ac{1,2}|-1}\pa{\frac{K(|supp(\alpha)\cup\ac{1,2}|)^2}{n}}^{cc(\alpha)-1}\enspace.$$
\end{lem}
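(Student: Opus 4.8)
The goal is to bound $|\kappa_{x,\alpha}|$ for the constant-size-group prior. First I would apply Proposition~\ref{thm:LTC}: since the signal matrix has the latent form~\eqref{eq:latent-model} with $\pi^*$ the uniform random assignment of Definition~\ref{def:prior}, it suffices to bound, for every decomposition $\alpha=\beta_1+\cdots+\beta_l$ with $\beta_s=\{(i_s,j_s),(i'_s,j_s)\}$, the cumulant $\left|\cumul\left\{x,\1\{\pi^*(i_1)=\pi^*(i'_1)\},\ldots,\1\{\pi^*(i_l)=\pi^*(i'_l)\}\right\}\right|$, where $x=\1\{\pi^*(1)=\pi^*(2)\}$. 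Writing $\delta(a,b)=\1\{a=b\}$ and including $x$ as the $s=0$ term (with $(i_0,i'_0)=(1,2)$), this is exactly a cumulant of the form~\eqref{eq:generalformcumulant} with $l$ replaced by $l$ and an extra index $0$, so I now run the four-step machinery of Section~\ref{sec:globalmethod}.

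\textbf{Running the four steps.} In Step~1 I build the weighted graph $\cW$ on $[0,l]$ with edge weight $1$ whenever $\{i_s,i'_s\}\cap\{i_{s'},i'_{s'}\}\neq\emptyset$ and weight $w$ otherwise; Lemma~\ref{lem:weigthgraphreduction} gives $\bbM(\cW[S^{(0)}])=w^{|\mathbf{S}|-1}$, reducing the task to bounding $C_{\mathbf{S}}$ from~\eqref{eq:reducedcumulantgeneral}. In Step~2 I expand each product $\prod_{s\in S}\delta(\pi^*(i_s),\pi^*(i'_s))$ over active $\pi\in[K]^{supp(S)}$ using~\eqref{eq:expansion1}, yielding $C_{\mathbf{S}}=\sum_{\pi\ active}C_{\mathbf{S}}(\pi)$ with $Z_{\pi,S}=\prod_{i\in supp(S)}\1\{\pi^*(i)=\pi(i)\}$. \textbf{For the clustering prior there are no incompatible configurations} — any $\pi$ with $\delta(\pi(i_s),\pi(i'_s))=1$ for all $s$ is realizable provided the number of indices forced to a given label does not exceed $n/K$, which holds under $n\geq 2(|\alpha|+2)^2K$ — so Step~3 is vacuous (as noted in the text, pruning is needed for feature matching and seriation but not clustering). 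In Step~4 I check that $\overline{Z}_t=Z_{\pi,S_t}$ has mixed moments of the form~\eqref{eq:formula:general_momodent}: indeed, for a uniform size-constrained assignment, $\E[\prod_{t\in\Delta}Z_{\pi,S_t}]$ is a ratio of the form $\eta^{|I_\Delta|}\prod_j (1-x_0)^{-1}\cdots(1-(|I_\Delta\cap A_j|-1)x_0)^{-1}$ with $\eta=1/K$, $x_0=K/n$, the blocks $A_j$ being the preimages of labels under $\pi$ (this is the content of the display~\eqref{eq:leveragingclustering} referenced in the text), and $r=0$. Then case~(2) of Lemma~\ref{lem:upperboundgeneralcumulant} gives $|C_{\bS[R]}(\pi)|\leq 2|R|^{2|R|}(1/K)^{|I_R|}(|I_R|^2 x_0)^{|R|-1}$, and feeding this through Proposition~\ref{prop:feray} with $m=|supp(S^{(0)})|^2 x_0$ and $\psi(R)=(1/K)^{|I_R|}$ (super-multiplicative), together with Lemma~\ref{lem:subexponential}, yields $|C_{\mathbf{S}}(\pi)|\leq 2|\bS|^{8|\bS|}(1/K)^{|supp(S^{(0)})|}(|supp(S^{(0)})|^2 x_0)^{cc(\cV\langle1\rangle)-1}$. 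Since Step~3 is vacuous, $cc(\cV\langle1\rangle)=|\bS|$, so $|C_{\mathbf{S}}|=|C_{\mathbf{S}}(\pi)|\cdot(\text{number of active }\pi)$; the number of active $\pi$ is at most $K^{|supp(S^{(0)})|-cc(\cW\langle1\rangle[S^{(0)}])+1}$ (one free label per connected component of the weight-one graph, since $\delta=1$ ties labels within each), which combines with the $(1/K)^{|supp(S^{(0)})|}$ factor to leave $(1/K)^{cc(\cW\langle1\rangle)-1}$. Choosing $w=|supp(S^{(0)})|^2 x_0\cdot\text{(const)}$ and applying Proposition~\ref{prop:feray}/Lemma~\ref{lem:subexponential} one last time in Step~1 gives a bound on $\left|\cumul\{x,(\delta(\pi^*(i_s),\pi^*(i'_s)))_{s\in[l]}\}\right|$ of the shape $(l+1)^{O(l)}(1/K)^{cc-1}(K L^2/n)^{cc-1}$ where $L=|supp(\alpha)\cup\{1,2\}|$ and $cc$ is the number of connected components of the relevant weight-one graph — which, after translating back through the bipartite multigraph $\cG_\alpha^-\cup\{(u_1,u_2)\}$, is exactly $cc(\alpha)$.

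\textbf{Assembling and the main obstacle.} Finally I plug this into Proposition~\ref{thm:LTC}'s conclusion $|\kappa_{x,\alpha}|\leq\lambda^{|\alpha|}|\alpha|^{|\alpha|/2-1}C_\alpha$, absorb the polynomial-in-$|\alpha|$ and combinatorial ($|\cP_2(\alpha)|$-type) factors into a single $|\alpha|^{5|\alpha|+9}$ bound (tracking exponents carefully: roughly $|\alpha|/2$ from $\cP_2(\alpha)$, $6|\bS|\leq 6(|\alpha|+1)$ from each of the two applications of Lemma~\ref{lem:subexponential}, plus lower-order terms), and collect the powers of $1/K$ and $KL^2/n$ to land on
$$|\kappa_{x,\alpha}|\leq 4\lambda^{|\alpha|}|\alpha|^{5|\alpha|+9}\left(\frac1K\right)^{L-1}\left(\frac{KL^2}{n}\right)^{cc(\alpha)-1},$$
with $L=|supp(\alpha)\cup\{1,2\}|$. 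The main obstacle is the bookkeeping in the two nested applications of Proposition~\ref{prop:feray}: correctly identifying that the weight-one connected-component counts at each level compose to give $cc(\alpha)$, verifying the exact form~\eqref{eq:formula:general_momodent} of the size-constrained moments (the ratio of falling factorials must be matched to the $x_0=K/n$ parametrization, and the condition $2L^2 x_0\leq1$ must be derived from the hypothesis $n\geq 2(|\alpha|+2)^2 K$), and ensuring the count of active $\pi$ contributes exactly the compensating power of $K$ — a miscount here is what would break the clean $(1/K)^{L-1}$ exponent that ultimately produces the leading $1/K$ term in Theorem~\ref{thm:lowdegreeconstantsizeclustering}. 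Everything else (the final summation $\sum_{\alpha\neq0}\kappa_{x,\alpha}^2/\alpha!\leq 18D^{21}K^{-2}\zeta/(1-\sqrt{\zeta})$) follows the now-standard counting of bipartite multigraphs by number of edges and connected components, exactly as in \cite{SchrammWein22,Even25a}.
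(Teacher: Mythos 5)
Your overall plan is the right one, and Steps~0--2 and the identification of $cc(\cW\langle1\rangle)$ with $cc(\alpha)$ all match the paper. But Step~4 — the application of Lemma~\ref{lem:upperboundgeneralcumulant} — is where the proof actually breaks, and this is not a bookkeeping slip: it is the step that produces the factor $(1/K)^{|supp(\alpha)\cup\{1,2\}|-1}$ in the target bound.

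You parametrize the mixed moments with $\eta=1/K$, $x_0=K/n$, $r=0$, and the $A_j$ being the label preimages, and then invoke case~(2) of Lemma~\ref{lem:upperboundgeneralcumulant}. This does not match~\eqref{eq:leveragingclustering}. For the size-constrained uniform assignment, the mixed moment is
\begin{equation*}
\E\Big[\prod_{t\in\Delta}Z_{\pi,S_t}\Big]=\Big(\tfrac{1}{K}\Big)^{|I_\Delta|}\frac{1}{(1-\tfrac{1}{n})\cdots(1-\tfrac{|I_\Delta|-1}{n})}\prod_{k\in[K]}\big(1-\tfrac{K}{n}\big)\cdots\big(1-(|I_\Delta\cap B_k|-1)\tfrac{K}{n}\big),
\end{equation*}
which has \emph{both} a $q=1$ block (with $x_0=1/n$, $A_1$ the full support) \emph{and} an $r=K$ family (with $y_0=K/n$, $B_k$ the label preimages). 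One must therefore use case~(1) of Lemma~\ref{lem:upperboundgeneralcumulant}, and the decisive ingredient you drop is the factor $(x_0/y_0)^{cc(\cB)-1}=(1/K)^{\#\pi-1}$, where $\#\pi$ is the number of distinct labels used by $\pi$. That factor is exactly what makes the sum over $\pi$ converge: with $(1/K)^{\#\pi-1}$ in hand, $\sum_{\pi\in[K]^{|\bS|}}(1/K)^{\#\pi-1}\leq K|\bS|^{|\bS|+1}$, and combined with $(1/K)^{|supp(S^{(0)})|}$ one lands on $(1/K)^{|supp(S^{(0)})|-1}$ as in Lemma~\ref{lem:controlreductedcumulantsclusteringnew}. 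In your $r=0$ version there is no such discount, so summing over $\pi$ costs a raw $K^{|\bS|}$, leaving only $(1/K)^{|supp(S^{(0)})|-|\bS|}$ — strictly weaker whenever $|\bS|>1$.

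Your count of ``active $\pi$'' is also off: since $\delta$ forces $\pi$ to be constant on each $supp(S_t)$, the sum is parametrized by $\pi\in[K]^{|\bS|}$, not by $K^{|supp(S^{(0)})|-cc+1}$ elements of $[K]^{supp(S^{(0)})}$. The arithmetic you give produces an intermediate bound of shape $(1/K)^{cc-1}(KL^2/n)^{cc-1}$, and the final display silently replaces $(1/K)^{cc-1}$ by $(1/K)^{L-1}$ with no derivation; these are different quantities ($L\geq cc+1$ in general), so the proof as written does not reach the stated conclusion. Once you use the correct mixed-moment form and case~(1) of the core lemma, the rest of your reasoning — including the composition of connected-component counts to recover $cc(\alpha)$, and the final polynomial bookkeeping — goes through as you outlined.
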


For clarity and pedagogical purposes, the proof of this result is deferred to Appendix~\ref{prf:controlcumulantsconstantsizeclustering}.

\begin{rem} In the case where $cc(\alpha)=1$ and where $[1,2]\subseteq supp(\alpha)$, we recover, up to some $|\alpha|^{|\alpha|}$ terms, the bound obtained in~\cite{Even25a} for independent labels. 
\end{rem}

\paragraph{3. Concluding similarly as in \cite{SchrammWein22, Even25a}.}
It remains to characterize the set of $\alpha$'s such that $\kappa_{x,\alpha}\neq 0$ and to upper-bound the sum of the remaining cumulants in~\eqref{eq:SW22clustering}. 
\medskip 

\noindent{\bf 3.a. Identifying non zero $\kappa_{x,\alpha}$.} In the case where the labels are sampled independently, as in~\cite{Even25a}, it is possible to build upon the independence of the random variables involved in the cumulant $\kappa_{x,\alpha}$ and thereby to easily deduce that $\kappa_{x,\alpha}=0$ when $cc(\alpha)>1$ -- see~\cite{Even25a}. As no such independence holds in our case, the condition for having $\kappa_{x,\alpha}=0$ is more involved.
Suppose that there exists $j_0\in col(\alpha)$ such that $|\alpha_{:j_0}|=1$. Then, since $\pa{x,(X_{ij})_{ij\in [n]\times [p]}}$ has the same distribution as $\pa{x,((-1)^{j=j_0}X_{ij})_{ij\in [n]\times [p]}}$, we deduce from the multilinearity of cumulants that $\kappa_{x,\alpha}=-\cumul\pa{x, X_\alpha}=-\kappa_{x,\alpha}$, and so $\kappa_{x,\alpha}=0$. Hence, for having $\kappa_{x,\alpha}\neq 0$, we need $|\alpha|\geq 2|col(\alpha)|\geq 2$. Moreover, since $|\alpha|+1$ corresponds to the number of edges of $\cG_\alpha^-\cup\ac{(u_1,u_2)}$ and since $|supp(\alpha)\cup\ac{1,2}|+|col(\alpha)|$ corresponds to the number of nodes in $\cG_{\alpha}^{-}$, we deduce that $|\alpha|\geq |col(\alpha)|+|supp(\alpha)\cup\ac{1,2}|-cc(\alpha)-1$. Putting these two conditions together leads to the following condition on the $\alpha$'s such that $\kappa_{x,\alpha}\neq 0$
$$|\alpha|\geq \max\pa{2|col(\alpha)|, |col(\alpha)|+|supp(\alpha)\cup \ac{1,2}|-cc(\alpha)-1}\enspace.$$
Let us identify two other constraints satisfied by $supp(\alpha)$, $cc(\alpha)$ and $|\alpha|$. 
Each connected component of $\cG_\alpha^-\cup\ac{(u_1,u_2)}$ contains at least one point of $\pa{u_i}_{i\in supp(\alpha)\cup \ac{1,2}}$, with one of them containing both $u_1$ and $u_2$. Hence, $|supp(\alpha)\cup \ac{1,2}|\geq cc(\alpha)+1$. It is also clear that $|\alpha|+2\geq |supp(\alpha)\cup \ac{1,2}|$. 
\medskip

\noindent{\bf 3.b. Bounding the sum~\eqref{eq:SW22clustering}.}
Let $d,r,m,cc\geq 1$, with $d\geq \max\pa{2r,r+m-cc-1}$ and $d+2\geq m\geq cc+1$. There are at most $(rm)^{d}p^r n^{m-2}\leq \pa{d(d+2)}^{d}p^r n^{m-2}$ matrices $\alpha$ satisfying $|\alpha|=d$, $|supp(\alpha)\cup\ac{1,2}|=m$, $|col(\alpha)|=r$ and $cc(\alpha)=cc$. 
Since $|\alpha|\leq D$, $K\geq D+2$, and $n\geq 2(D+2)^2K$
by assumption of Theorem~\ref{thm:lowdegreeconstantsizeclustering},
we can apply Lemma~\ref{lem:controlcumulantsconstantsizeclustering}, and get 

\begin{align*}
\underset{\alpha\neq 0}{\sum_{\alpha\in \N^{n\times p}}}\frac{\kappa_{x,\alpha}^2}{\alpha!}\leq& \sum_{d=2}^{D}\sum_{\alpha: \ |\alpha|=d}\kappa_{x,\alpha}^2\\
\leq& \frac{16}{K^2}\sum_{d=2}^{D}\underset{d\geq \max\pa{2r,r+m-cc-1}}{\underset{d+2\geq m\geq cc+1}{\sum_{r,cc\geq1}}} \lambda^{2d}(d+2)^{d}d^{11d+18}p^r\pa{\frac{n}{K^2}}^{m-2}\pa{\frac{K^2m^4}{n^2}}^{cc-1}\\
\leq &\frac{16D^{18}}{K^2}\sum_{d=2}^{D}\underset{d\geq \max\pa{2r,r+m-cc-1}}{\underset{d+2\geq m\geq cc+1}{\sum_{r,cc\geq1}}} \lambda^{2d}\pa{D^{11}(D+2)}^{d}p^r\pa{\frac{n}{K^2}}^{m-cc-1}\pa{\frac{(D+2)^4}{n}}^{cc-1}\\
\leq &\frac{16D^{18}}{K^2}\sum_{d=2}^{D}\underset{d\geq \max\pa{2r,r+m-cc-1}}{\underset{d+2\geq m\geq cc+1}{\sum_{r,cc\geq1}}} \pa{D^{11}(D+2)\lambda^2}^{d}p^r\pa{\frac{n}{K^2}}^{m-cc-1}\enspace \ , 
\end{align*}
where we used in the last line that $n\geq (D+2)^4$. Let us fix $d,r,m,cc\geq 1$, with $d\geq \max\pa{2r,r+m-cc-1}$ and $d+2\geq m\geq cc+1$, and let us upper-bound the quantity $\Upsilon:= \pa{D^{11}(D+2)\lambda^2}^{d}p^r\pa{\frac{n}{K^2}}^{m-cc-1}$. 

First, let us consider the case $r\geq m-cc-1$. Leveraging the constraint $d\geq 2r$, we derive
\begin{align*}
\Upsilon\leq & \pa{D^{11}(D+2)\lambda^2}^{d-2r}\pa{\lambda^4 D^{22}(D+2)^2p}^{r-(m-cc-1)} \pa{\lambda^4 D^{22}(D+2)^2p\frac{n}{K^2}}^{m-cc-1}
\leq \zeta^{d/2}\enspace ,
\end{align*}
by definition of $\zeta$. 

Second, when $r<m-cc-1$, we use the constraint $d\geq r+m-cc-1$ to obtain
\begin{align*}
\Upsilon \leq &\pa{D^{11}(D+2)\lambda^2}^{d-r-m+cc+1}\pa{D^{11}(D+2)\lambda^2 p}^{r}\pa{D^{11}(D+2)\lambda^2\frac{n}{K^2}}^{m-cc-1}\\
\leq & \pa{D^{11}(D+2)\lambda^2}^{d-r-m+cc+1} \pa{D^{11}(D+2)\lambda^2\frac{n}{K^2}}^{m-cc-1-r}\pa{D^{22}(D+2)^2\lambda^4\frac{pn}{K^2}}^r\enspace.
\end{align*}

It is clear that $D^{11}(D+2)\lambda^2\leq \sqrt{\zeta}$ and $D^{22}(D+2)^2\lambda^4\frac{pn}{K^2}\leq \zeta$. For the remaining term, using $p\geq \frac{n}{K^2}$, we have $D^{11}(D+2)\lambda^2\frac{n}{K^2}=\frac{n}{K^2p}D^{11}(D+2)\bar{\Delta}^2\leq \sqrt{D^{22}(D+2)^2\frac{n}{K^2p}\bar{\Delta}^4}\leq \sqrt{\zeta}$. We end up with $\Upsilon \leq \zeta^{d/2}$.
Hence, we can conclude 
\begin{align*}
\underset{\alpha\neq 0}{\sum_{\alpha\in \N^{n\times p}}}\frac{\kappa_{x,\alpha}^2}{\alpha!}\leq&\frac{16D^{18}}{K^2}\sum_{d=2}^{D}\underset{d\geq \max\pa{2r,r+m-cc-1}}{\underset{d+2\geq m\geq cc+1}{\sum_{r,cc\geq1}}}\zeta^{d/2}\\
\leq &\frac{16D^{18}}{K^2} \sum_{d=2}^{D} \frac{d(d+1)(d+4)}{4}\zeta^{d/2}\\
\leq &\frac{18D^{21}}{K^2}\frac{\zeta}{1-\sqrt{\zeta}}\enspace,
\end{align*}
where we used, $d(d+1)(d+4)\leq 4.5d^3$ for $d\geq 2$, and the identity  $\sum_{d\geq 2}z^d=\frac{z^2}{1-z}$, for any $-1<z<1$. This concludes the proof of Theorem \ref{thm:lowdegreeconstantsizeclustering}.

\subsection{Proof of Lemma \ref{lem:controlcumulantsconstantsizeclustering} -- simple instantiation of Section~\ref{sec:globalmethod}}\label{prf:controlcumulantsconstantsizeclustering}

For proving Lemma \ref{lem:controlcumulantsconstantsizeclustering}, we instantiate the methodology outlined in Section~\ref{sec:globalmethod}.

\paragraph{Step 0: From $\kappa_{x,\alpha}$ to cumulants like \eqref{eq:generalformcumulant}.} 
A preliminary step is to apply  Proposition~\ref{thm:LTC} in order to reduce the problem of bounding $\kappa_{x,\alpha}$ to the problem of bounding cumulants like \eqref{eq:generalformcumulant}. 

Let $\alpha\neq 0\in \N^{n\times p}$. We recall the hypotheses $n\geq 2(|\alpha|+2)^2K$ and $K\geq |supp(\alpha)\cup \ac{1,2}|$. 
According to Proposition~\ref{thm:LTC}, all we need to upper-bound $\kappa_{x,\alpha}$ is to upper-bound,
 for any decomposition $(\beta_1,\ldots, \beta_l)$ of $\alpha$ with $|\beta_s|=2$ of the form $\beta_s=\ac{(i_s, j_s), (i'_s, j_s)}$, the cumulant 
$\cumul\pa{x, \pa{\1\ac{\pi^*(i_s)=\pi^*\pa{i'_s}}}}$.
Let us take the convention $i_0=1$, $i_0'=2$, so that we can write
\begin{equation}\label{eq:start:clustering}
\cumul\pa{x, \pa{\1\ac{\pi^*\pa{i_s}=\pi^*\pa{i'_s}}}}=\cumul\pa{\pa{\1\ac{\pi^*\pa{i_s}=\pi^*\pa{i'_s} }}_{s\in [0,l]}}\enspace, 
\end{equation}
which is of the form~\eqref{eq:generalformcumulant} of the cumulants that are dealt with in Section~\ref{sec:globalmethod}. We can now follow the general methodology described in Section~\ref{sec:globalmethod}.

\paragraph{Step 1: Reduction.} 
As explained in Section~\ref{sec:globalmethod}, the first step is to reduce the control of the cumulant \eqref{eq:start:clustering}, to the control of cumulants of products of strongly dependent $\1\ac{\pi^*\pa{i_s}=\pi^*\pa{i'_s} }$.
We introduce a weighted graph $\cW$ to which we will apply Proposition \ref{prop:feray}. Let $\cW$ be the weighted graph on $[0,l]$ with weights $w_{s,s'}$ defined by;
\begin{itemize}
\item If $\ac{i_s, i'_s}\cap \ac{i_{s'}, i'_{s'}}\neq \emptyset$, then $w_{s,s'}=1$;
\item If $\ac{i_s, i'_s}\cap \ac{i_{s'}, i'_{s'}}=\emptyset$, then $w_{s,s'}=K(|supp(\alpha)\cup \ac{1,2}|)^2/n$.
\end{itemize}
We observe that when $w_{s,s'}=1$, there exists some strong dependencies within the variables  $\pi^*\pa{i_s},\pi^*\pa{i'_s},\pi^*\pa{i_{s'}},\pi^*\pa{i'_{s'}}$. As for the value $w_{s,s'}=K(|supp(\alpha)\cup \ac{1,2}|)^2/n$ for the second case, it is chosen based on forthcoming Lemma~\ref{lem:controlreductedcumulantsclusteringnew}.
Let us fix $S^{(0)}\subseteq [0,l]$. We recall that $\cW\<1\>[S^{(0)}]$ stands for the weighted graph induced by $S^{(0)}$, where we only keep the edges of weight one. Let us write $\bS=S_1,\ldots, S_{|\bS|}$ for the partition of $S^{(0)}$ induced by the connected components of $\cW\<1\>[S^{(0)}]$.   In order to apply Proposition \ref{prop:feray}, we shall upper-bound, for such an $S^{(0)}$, the cumulant 
$$C_{\bS}:=\cumul\pa{\pa{\prod_{s\in S}\1\ac{\pi^*\pa{i_s}=\pi^*\pa{i'_s}}}_{S\in\bS}}\enspace,$$
with respect to the maximal weight $\bbM\pa{\cW[S^{(0)}]}=\pa{\frac{K(|supp(\alpha)\cup\ac{1,2}|)^2}{n}}^{|\bS|-1}$. The next lemma (steps 2-4 of Section~\ref{sec:globalmethod}), proved in Section \ref{prf:controlreductedcumulantsclusteringnew}, provides such an upper-bound.

\begin{lem}\label{lem:controlreductedcumulantsclusteringnew}
Suppose $n\geq 2(|supp(\alpha)\cup \ac{1,2}|)^2K$ and $K\geq |supp(\alpha)\cup\ac{1,2}|$. Let $S^{(0)} \subset [0,l]$ and let $\bS=S_1,\ldots, S_{|\bS|}$ denote the connected components of $\cW\<1\>[S^{(0)}]$. Writing $supp(S^{(0)})=\cup_{s\in S^{(0)}}\ac{i_s,i'_s}$, we have
$$\left|C_{\bS}\right|\leq 4|\bS|^{3|\bS|+1}\pa{\frac{1}{K}}^{|supp(S^{(0)})|-1}\pa{\frac{K|supp(S^{(0)})|^2}{n}}^{|\bS|-1}\enspace.$$
\end{lem}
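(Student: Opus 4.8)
The plan is to instantiate Steps~2 and~4 of the methodology of Section~\ref{sec:globalmethod}; Step~3 is vacuous here, since in the balanced model one never has $\P\cro{\pi^*(i)=\pi^*(i')}=0$, so no two $Z_{\pi,S}$'s are incompatible. Throughout write $N=n/K$, and recall $supp(S^{(0)})=\cup_{s\in S^{(0)}}\ac{i_s,i'_s}$, $L:=|supp(S^{(0)})|$. First I would carry out the \emph{expansion} of Step~2: by~\eqref{eq:expansion1} and multilinearity of cumulants --- the sets $supp(S_t)$, $t\in[|\bS|]$, forming a partition of $supp(S^{(0)})$ --- one gets $C_{\bS}=\sum_{\pi\ \mathrm{active}}C_{\bS}(\pi)$ with $C_{\bS}(\pi)=\cumul\pa{(Z_{\pi,S_t})_{t}}$ and $Z_{\pi,S_t}=\prod_{i\in supp(S_t)}\1\ac{\pi^*(i)=\pi(i)}$. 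A coloring $\pi\in[K]^{supp(S^{(0)})}$ is active iff $\pi(i_s)=\pi(i'_s)$ for all $s\in S^{(0)}$ and each colour is used at most $N$ times; the first condition forces $\pi$ to be constant, say $\equiv c_t$, on each $supp(S_t)$, and since the hypothesis $n\ge 2(|supp(\alpha)\cup\ac{1,2}|)^2K$ gives $L\le N$, the second condition is automatic. Hence the active colorings are exactly the $\pi_c$ indexed by $c=(c_1,\dots,c_{|\bS|})\in[K]^{|\bS|}$, where $\pi_c\equiv c_t$ on $supp(S_t)$.

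Next I would identify the mixed moments of the $Z_{\pi_c,S_t}$. Fix $c$, let $\Delta\subseteq[|\bS|]$, put $I_\Delta=\cup_{t\in\Delta}supp(S_t)$, and let $m_k$ be the number of $i\in I_\Delta$ with $\pi_c(i)=k$. Counting the balanced assignments of $[n]$ that agree with $\pi_c$ on $I_\Delta$ against all balanced assignments gives $\E\cro{\prod_{t\in\Delta}Z_{\pi_c,S_t}}=\P\cro{\pi^*\equiv\pi_c\text{ on }I_\Delta}=N^{\underline{m_1}}\cdots N^{\underline{m_K}}/n^{\underline{|I_\Delta|}}$, where $x^{\underline m}$ denotes the falling factorial. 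Writing $x^{\underline m}=x^{m}\prod_{t=1}^{m-1}(1-t/x)$ and using $N/n=1/K$, this equals
\[\pa{\tfrac1K}^{|I_\Delta|}\ \prod_{t=1}^{|I_\Delta|-1}\pa{1-\tfrac tn}^{-1}\ \prod_k\prod_{t=1}^{m_k-1}\pa{1-\tfrac{tK}{n}},\]
which is exactly the mixed-moment form~\eqref{eq:formula:general_momodent} with $\eta=1/K$, $x_0=1/n$, $y_0=K/n$, $q=1$, $A_1=[n]$, and $B_1,\dots,B_r$ the non-empty colour classes of $\pi_c$ on $supp(S^{(0)})$ (so that $|I_\Delta\cap A_1|=|I_\Delta|$ and the numbers $|I_\Delta\cap B_i|$ run over the colour multiplicities $m_k$).

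Then I would apply part~1 of Lemma~\ref{lem:upperboundgeneralcumulant}. Its hypotheses hold: $r\ge1$ since $S^{(0)}\ne\emptyset$; $2L^2y_0\le1$ is the assumption $n\ge 2L^2K$ (note $L\le|supp(\alpha)\cup\ac{1,2}|$); and $2x_0\le y_0$ reads $K\ge 2$, which follows from $K\ge|supp(\alpha)\cup\ac{1,2}|\ge2$. Since $\pi_c$ is constant $\equiv c_t$ on $supp(S_t)$, the graph $\cB$ of Lemma~\ref{lem:upperboundgeneralcumulant} has an edge $t\sim t'$ exactly when $c_t=c_{t'}$, whence $cc(\cB)=d(c):=|\ac{c_1,\dots,c_{|\bS|}}|$; with $x_0/y_0=1/K$ this gives
\[\left|C_{\bS}(\pi_c)\right|\le 4|\bS|^{2|\bS|}\pa{\tfrac1K}^{L}\pa{\tfrac{L^2K}{n}}^{|\bS|-1}\pa{\tfrac1K}^{d(c)-1}.\]
It then remains to sum over $c\in[K]^{|\bS|}$. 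Grouping colorings by the set partition of $[|\bS|]$ they induce, the number of $c$ realizing a given partition $P$ is $K(K-1)\cdots(K-|P|+1)\le K^{|P|}$, while $d(c)=|P|$; hence $\sum_{c\in[K]^{|\bS|}}K^{1-d(c)}$ is at most $K$ times the number of set partitions of $[|\bS|]$, hence at most $K|\bS|^{|\bS|}$. Combining this with the displayed bound and using $|\bS|^{3|\bS|}\le|\bS|^{3|\bS|+1}$ yields $|C_{\bS}|\le 4|\bS|^{3|\bS|+1}(1/K)^{L-1}(L^2K/n)^{|\bS|-1}$, which is the claim.

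The main obstacle I anticipate is the bookkeeping around the mixed moments: verifying cleanly that $\E\cro{\prod_{t\in\Delta}Z_{\pi_c,S_t}}$ has \emph{exactly} the shape~\eqref{eq:formula:general_momodent} for every active coloring $\pi_c$ (the falling-factorial manipulation and the correct reading-off of $\eta,x_0,y_0$ and of the families $A_j,B_i$), together with checking that the hypotheses of Lemma~\ref{lem:upperboundgeneralcumulant} are met uniformly in $\pi_c$. Everything else --- the expansion, the enumeration of active colorings, and the combinatorial sum --- is routine.
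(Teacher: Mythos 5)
Your proposal is correct and follows the paper's argument essentially step for step: the same expansion over constant-on-each-$supp(S_t)$ colorings, the same identification of the mixed moments with form~\eqref{eq:formula:general_momodent} via $\eta=1/K$, $x_0=1/n$, $y_0=K/n$, and then Lemma~\ref{lem:upperboundgeneralcumulant} with $cc(\cB)$ equal to the number of distinct colors. The only cosmetic deviations are your (unnecessary) aside on "active" colorings --- the paper simply sums over all of $[K]^{|\bS|}$, as the moment formula automatically vanishes for over-used colors --- and your Bell-number count of colorings, which is marginally tighter than the paper's $K^k k^{|\bS|}$ bound but lands on the same final estimate.
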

From Lemma \ref{lem:controlreductedcumulantsclusteringnew}, we deduce that for any $S^{(0)}\subseteq [0,l]$, with $\bS$ the set of connected components of $\cW\<1\>[S^{(0)}]$,
$$\left|C_{\bS}\right|\leq 4|\bS|^{3|\bS|+1}\pa{\frac{1}{K}}^{|supp(S^{(0)})|-1}\bbM\pa{\cW[S^{(0)}]}\enspace.$$
The function $S^{(0)}\to \pa{\frac{1}{K}}^{|supp(S^{(0)})|-1} $ is super-multiplicative. Moreover, $4|\bS|^{3|\bS|+1}\leq 4\pa{l+1}^{3l+4}$. Hence, applying Proposition \ref{prop:feray} and Lemma~\ref{lem:subexponential}, we get that 
$$\left|\cumul\pa{\pa{\1\ac{\pi^*\pa{i_s}=\pi^*\pa{i'_s}}}_{s\in [0,l]}} \right| \leq4\pa{l+1}^{9l+10} \pa{\frac{1}{K}}^{|supp(\alpha)\cup\ac{1,2}|-1}\bbM\pa{\cW}\enspace.$$
In other words, writing $cc(\cW)$ the number of connected components of $\cW\<1\>$, we have 
\begin{multline*}
\lefteqn{\left|\cumul\pa{\pa{\1\ac{\pi^*\pa{i_s}=\pi^*\pa{i'_s}}}_{s\in [0,l]}} \right|\leq}\\
4 \pa{l+1}^{9l+10}\pa{\frac{1}{K}}^{|supp(\alpha)\cup\ac{1,2}|-1}\pa{\frac{K(|supp(\alpha)\cup\ac{1,2}|)^2}{n}}^{cc(\cW)-1}\enspace.
\end{multline*}
We recall that $cc(\alpha)$ stands for the number of connected components of $\mathcal{G}_\alpha^-\cup \ac{(u_1,u_2)}$. For all $s\in [0,l]$, $u_{i_s}$ and $u_{i'_s}$ are in the same connected components of $\mathcal{G}_\alpha^-\cup \ac{(u_1,u_2)}$, since they are connected via $v_{j_{s}}$. Moreover, for $s\neq s'$,  $\{u_{i_s}, u_{i'_s}\}$ and $\{u_{i_{s'}}, u_{i'_{s'}}\}$ are in the same connected components of $\mathcal{G}_\alpha^-\cup \ac{(u_1,u_2)}$ if and only if $s$ and $s'$ are in the same connected component of $\cW\<1\>$. We deduce from these two claims that $cc(\alpha)=cc(\cW)$, and so 
\begin{multline*}
\lefteqn{\left|\cumul\pa{\pa{\1\ac{\pi^*\pa{i_s}=\pi^*\pa{i'_s}}}_{s\in [0,l]}} \right|\leq}\\ 4 \pa{l+1}^{9l+10}\pa{\frac{1}{K}}^{|supp(\alpha)\cup\ac{1,2}|-1}\pa{\frac{K(|supp(\alpha)\cup\ac{1,2}|)^2}{n}}^{cc(\alpha)-1}\enspace.
\end{multline*}
This being true for all decomposition $\beta_1,\ldots,\beta_l$ of $\alpha$,  we get from Proposition~\ref{thm:LTC} that 
$$|\kappa_{x,\alpha}|\leq 4 \lambda^{|\alpha|}|\alpha|^{5|\alpha|+9}\pa{\frac{1}{K}}^{|supp(\alpha)\cup\ac{1,2}|-1}\pa{\frac{K(|supp(\alpha)\cup\ac{1,2}|)^2}{n}}^{cc(\alpha)-1}\enspace.$$
This concludes the proof of Lemma \ref{lem:controlcumulantsconstantsizeclustering}.

\subsection{Proof of Lemma \ref{lem:controlreductedcumulantsclusteringnew}}\label{prf:controlreductedcumulantsclusteringnew}

\paragraph{Step 2: Expansion.} 
Let $S^{(0)}\subseteq [0,l]$ and let $\bS=S_1,\ldots, S_{|\bS|}$ be the connected components of $\cW\<1\>[S^{(0)}]$, which is the subgraph induced by $S^{(0)}$ obtained only keeping the edges of $\cW$ of weight $1$. 
The second step amounts to expand the cumulant 
$$|C_{\bS}|=\left|\cumul\pa{\pa{\prod_{s\in S}\1\ac{\pi^*\pa{i_s}=\pi^*\pa{i'_s}}}_{S\in \bS}}\right|\enspace,$$
over all possible values of $\pi^*$.
For any $S\subseteq [0,l]$, We write $supp(S)=\cup_{s\in S}\ac{i_s,i'_s}$. By definition of the graph $\cW$ and of $\bS$, the subsets $supp(S)$, for $S\in \bS$, are disjoint, and form a partition of $supp(S^{(0)})$. Moreover, for all $S\in \bS$, and for all $i,i'\in supp(S)$, there exists a path $s_1,\ldots, s_{h}\subseteq S$, such that $i\in \ac{i_{s_1}, i'_{s_1}}$, $i'\in \ac{i_{s_h}, i'_{s_h}}$, and such that $(s_{h'-1},s_{h'})$, for $h'\in [2,h]$, is an edge of weight $1$ of $\cW$. Thus, for $S\in\bS$, 
$\pi^*(i_{s})=\pi^*(i'_{s})$ if and only if $\pi^*$ is constant on $supp(S)$, and we have
$$\prod_{s\in S}\1\ac{\pi^*\pa{i_s}=\pi^*\pa{i'_s}}=\sum_{a \in [K]}\prod_{i\in supp(S)}\1\ac{\pi^*(i)= a }\enspace.$$
Hence, we are interested in configurations $\pi^*$ that are constant on each $S_l$ for $l=1,\ldots, |\bS|$. For that purpose, we focus on 
$\pi= (\pi(S_l))_{l=1,\ldots,|\bS|}\in  [K]^{|\bS|}$. Then, using the multilinearity of cumulants, we get
\begin{equation}\label{eq:multilinearityclustering}
C_{\bS}:=\cumul\pa{\pa{\prod_{s\in S}\1\ac{\pi^*\pa{i_s}=\pi^*\pa{i'_s}}}_{S\in \bS}}=\sum_{(\pi(S_l))_{l=1,\ldots,|\bS|}\in  [K]^{|\bS|}}
C_{\bS}(\pi)\enspace , 
\end{equation}
where 
\[
C_{\bS}(\pi):= \cumul\pa{\pa{\prod_{i\in supp(S)}\1\ac{\pi^*(i)=\pi(S)}}_{S\in \bS}}\enspace . 
\]

\paragraph{Step 3: Pruning incompatible configurations.} No pruning required in this case.

\paragraph{Step 4: Leveraging Lemma \ref{lem:upperboundgeneralcumulant}.}
The last step is to recognize some variables having mixed moment  of the form \eqref{eq:formula:general_momodent}.
In the following, for $S\in \bS$, we write $Z_{\pi,S}=\prod_{i\in supp(S)}\1\ac{\pi^*(i)=\pi\pa{S}}$. We denote  $B_1,\ldots, B_K$ the partition\footnote{Actually, this is not a partition as some of the $B_k$'s are empty; we only use that the $B_k$'s do not intersect and cover $supp(S^{(0)})$.} of $supp(S^{(0)})$ induced by $\pi$, i.e $B_k=\cup_{S\in \bS, \pi\pa{S}=k}supp(S)$. We recall that we write $\bS=S_1,\ldots, S_{|\bS|}$. Let $\Delta\subseteq [|\bS|]$ and let us denote $I_{\Delta}=\cup_{t\in \Delta}supp(S_t)$. Since $K\geq |supp(\alpha)\cup\ac{1,2}|$ by assumption, we have
\begin{align}\nonumber
\E\cro{\prod_{t\in \Delta}Z_{\pi,S_t}}=&\P\cro{\forall t\in \Delta, \forall i\in supp(S_t), \pi^*(i)=\pi\pa{S_t}}\\\nonumber
=&\frac{1}{n(n-1)\times \ldots\times (n-|I_\Delta|+1)}\prod_{k\in [K]}\frac{n}{K}\times \ldots\times \pa{\frac{n}{K}-|I_\Delta\cap B_k|+1}\\
=&\pa{\frac{1}{K}}^{|I_\Delta|}\frac{1}{1-\frac{1}{n}}\times \ldots\times \frac{1}{1-\frac{|I_\Delta|-1}{n}}\prod_{k\in [K]}\pa{1-\frac{K}{n}}\times \ldots\times \pa{1-\pa{|I_\Delta\cap B_k|-1}\frac{K}{n}}\enspace.\label{eq:leveragingclustering}
\end{align}
We recognize mixed moments of the form \eqref{eq:formula:general_momodent}.
Applying Lemma \ref{lem:upperboundgeneralcumulant} with $\eta=\frac{1}{K}$, $x_0=\frac{1}{n}$ and $y_0=\frac{K}{n}$, we get that, whenever $|supp(S^{(0)})|^2K/n\leq 0.5$ and $K\geq 2$, 
$$\left|C_{\bS}(\pi)\right|\leq 4|\bS|^{2|\bS|}\pa{\frac{1}{K}}^{|supp(S^{(0)})|}\pa{|supp(S^{(0)})|^2\frac{K}{n}}^{|\bS|-1}\pa{\frac{1}{K}}^{\#\pi -1}\enspace,  $$
where we denote $\#\pi=|\ac{\pi(S), S\in \bS}|$. 

\paragraph{Conclusion.}
Back to \eqref{eq:multilinearityclustering}, to upper-bound $|C_{\bS}|$, it remains to sum
 over all possible $\pi= (\pi(S_l))_{l=1,\ldots,|\bS|}\in  [K]^{|\bS|}$ $\pi\in [K]^{\bS}$
\begin{align*}
\left|C_{\bS}\right|\leq &\sum_{\pi=\pa{\pi(S)}_{S\in\bS}}4|\bS|^{2|\bS|}\pa{\frac{1}{K}}^{|supp(S^{(0)})|}\pa{\frac{K|supp(S^{(0)})|^2}{n}}^{|\bS|-1}\pa{\frac{1}{K}}^{\#\pi-1}\\
\leq&4|\bS|^{2|\bS|}\pa{\frac{1}{K}}^{|supp(S^{(0)})|}\pa{\frac{K|supp(S^{(0)})|^2}{n}}^{|\bS|-1}\sum_{k=1}^{|\bS|}\underset{|\pi|=k}{\sum_{\pi=\pa{\pi(S)}_{S\in \bS}}}\pa{\frac{1}{K}}^{k-1}\\
\leq&4|\bS|^{2|\bS|}\pa{\frac{1}{K}}^{|supp(S^{(0)})|}\pa{\frac{K|supp(S^{(0)})|^2}{n}}^{|\bS|-1}\sum_{k=1}^{|\bS|}\pa{\frac{1}{K}}^{k-1}K^k k^{|\bS|}\\
\leq& 4|\bS|^{3|\bS|+1}\pa{\frac{1}{K}}^{|supp(S^{(0)})|-1}\pa{\frac{K|supp(S^{(0)})|^2}{n}}^{|\bS|-1}\enspace,
\end{align*}
which concludes the proof of Lemma \ref{lem:controlreductedcumulantsclusteringnew}.

\section{Proof of Theorem \ref{thm:lowdegreefeature}}\label{prf:lowdegreefeature}

\begin{rem}
The observation $Y$ can either be seen as a matrix $Y\in \R^{KM\times p}$ or as a tensor $Y\in \R^{K\times M\times p}$. For clarity, we consider throughout the proof the tensor notation. But, we can still apply the framework for proving LD lower-bounds with latent structure from \cite{Even25a} which is written for the case where we observe a matrix $Y\in \R^{n\times p}$. 
\end{rem}

Without loss of generality, we assume in this proof that $\sigma^2=1$. Let $D\in \N$ such that $K\geq 2(D+2)^2$. We suppose
\begin{equation}\label{eq:def:zeta:feature}
\zeta:=64D^{40}\lambda^4p\max\pa{1, \frac{M}{K}}<1\enspace.
\end{equation}
Since the minimization problem defining $MMSE_{\leq D}$ in Equation (\ref{def:MMSE}) is separable, and since the random variables $\Gamma^*_{(k,m), (k',m')}$ are jointly exchangeable both with respect to $(k,k')$ and $(m,m')$, the $MMSE_{\leq D}$ can be reduced to
\begin{align*}
    MMSE_{\leq D}&:=\frac{1}{KMK(M-1)}\sum_{u=(k,m), u'=(k',m') \text{ with }m\neq m'} \inf_{f_{uu'}\in \R_{D}(Y)}\E\cro{\pa{f_{uu'}(Y)-\Gamma^*_{uu'}}^2}\\
&=\inf_{f\in \R_{D}(Y)}\E\cro{\pa{f(Y)-\Gamma^*_{(1,1),(1,2)}}^2}\enspace.
\end{align*}
For all $(k,m)\in [K]\times [M]$, in order to keep in line with the notation of Section \ref{sec:globalmethod}, we write $\pi^*(k,m)=\pi^*_m(k)$. In the remaining of the proof, we write $x=\Gamma^*_{(1,1),(1,2)}=\1\{\pi^*(1,1)=\pi^*(1,2)\}$. Then, our goal is to upper-bound 
$$MMSE_{\leq D}=\inf_{f\in \R_{D}(Y)}\E\cro{\pa{f(Y)-x}^2}\enspace.$$
 The model of Definition \ref{def:priorfeaturematching} is a particular case of the Gaussian Additive Model considered by \cite{SchrammWein22}. Therefore, we can use Theorem 2.2 from \cite{SchrammWein22}, stated in Equation \eqref{eq:SW22}. For a tensor $\alpha\in \N^{K\times M\times p}$, we define $|\alpha|=\sum_{k=1}^K\sum_{m=1}^{M}\sum_{j=1}^p \alpha_{k,m,j}$ and $\alpha!=\prod_{k,m,j}\alpha_{k,m,j}$. We write $X\in \R^{K\times M\times p}$ the signal matrix defined by $X_{k,m,j}=\pa{\mu_{\pi^*(k,m)}}_j$. Using Equation \eqref{eq:SW22}, we have the following lower-bound on the low-degree MMSE
 \begin{equation}\label{eq:SW22featurematching}
 MMSE_{\leq D}\geq \frac{1}{K}-\frac{1}{K^2}-\underset{0<|\alpha|\leq D}{\sum_{\alpha\in \N^{K\times M\times p}}}\frac{\kappa_{x,\alpha}^2}{\alpha!}\enspace,
 \end{equation}
where $\kappa_{x,\alpha}$ for $\alpha\in \N^{K\times M\times p}$ is the cumulant $\cumul(x,\ac{X_{k,m,j}}_{(k,m,j)\in \alpha})$, where $\ac{X_{k,m,j}}_{(k,m,j)\in \alpha}$ is the multiset that contains $\alpha_{k,m,j}$ copies of $X_{k,m,j}$, for $(k,m,j)\in[K]\times [M]\times [p]$.
Hence, in order to prove Theorem~\ref{thm:lowdegreefeature}, it is sufficient  to prove that
\begin{equation}\label{eq:objective_lower_bound_feature_matching}
   \underset{0<|\alpha|\leq D}{\sum_{\alpha\in \N^{K\times M\times p}}}\frac{\kappa_{x,\alpha}^2}{\alpha!} \leq 
   \frac{64(D+1)D^{36}}{K^2}\frac{\zeta}{1-\sqrt{\zeta}}\enspace . 
\end{equation}
The first step is to find necessary conditions for having $\kappa_{x,\alpha}\neq 0$. In the following, we define 
\begin{align*}
supp(\alpha)&:=\ac{(k,m)\in [K]\times [M],\enspace \sum_{j\in [p]}\alpha_{k,m,j}\neq 0}\enspace ;\\
sample(\alpha)&=\ac{m\in [M],\enspace \sum_{k\in [K]}\sum_{j\in [p]}\alpha_{k,m,j}\neq 0}\enspace ;\\
col(\alpha)&=\ac{j\in [p],\enspace \sum_{k\in [K]}\sum_{l\in [M]}\alpha_{k,m,j}\neq 0}\enspace\ ,
\end{align*}
where $col(\alpha)$ is the set of features involved in $\alpha$, $sample(\alpha)$ is the collection of data-sets involved in $\alpha$. 
The following lemma establishes some sufficient conditions for the nullity of the cumulant $\kappa_{x,\alpha}$. It is based on some independence properties together with Lemma~\ref{lem:independentcumulant}.

\begin{lem}\label{lem:nullitycumulantfeature}
Let $\alpha\in \N^{K\times M\times p}\neq0$ such that $\kappa_{x,\alpha}\neq 0$. Then, $|\alpha|\geq \max\pa{2|col(\alpha)|, 2|sample(\alpha)\cup\ac{1,2}|-2}$. In particular, we have $|\alpha|\geq 2$.
\end{lem}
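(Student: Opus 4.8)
The plan is to establish the two lower bounds on $|\alpha|$ separately; the ``in particular'' clause then follows since $\alpha\neq 0$ forces $|col(\alpha)|\geq 1$, so the first bound already yields $|\alpha|\geq 2$. The bound $|\alpha|\geq 2|col(\alpha)|$ comes from the sign symmetry of the Gaussian means. Fix $j_0\in col(\alpha)$ and replace each $\mu_k$ by $\tilde\mu_k$ obtained by flipping the sign of its $j_0$-th coordinate. Since each $\mu_k\sim\mathcal N(0,\lambda^2 I_p)$ is invariant under coordinate sign flips and the $\mu_k$ are independent of one another and of $(\pi^*_m)_m$, we get $(\tilde\mu,(\pi^*_m)_m)\stackrel{d}{=}(\mu,(\pi^*_m)_m)$, while $x$ is unchanged (it depends only on $\pi^*_1,\pi^*_2$) and $X_{k,m,j}$ becomes $(-1)^{\1\{j=j_0\}}X_{k,m,j}$. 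Hence $(x,X)\stackrel{d}{=}(x,X')$ with $X'_{k,m,j}=(-1)^{\1\{j=j_0\}}X_{k,m,j}$, and multilinearity of cumulants (Lemma~\ref{lem:mobiusformula}) gives $\kappa_{x,\alpha}=(-1)^{\sum_{k,m}\alpha_{k,m,j_0}}\kappa_{x,\alpha}$. So $\kappa_{x,\alpha}\neq 0$ forces $\sum_{k,m}\alpha_{k,m,j_0}$ to be even, hence $\geq 2$ because $j_0\in col(\alpha)$; summing over $j_0\in col(\alpha)$ yields $|\alpha|\geq 2|col(\alpha)|$ (and in particular $|\alpha|$ is even).

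For the bound $|\alpha|\geq 2|sample(\alpha)\cup\{1,2\}|-2$, note that the model is of the form~\eqref{eq:latent-model} with latent variable $\pi^*(k,m)=\pi^*_m(k)$, and $x=\1\{\pi^*_1(1)=\pi^*_2(1)\}$ is $\sigma(\pi^*)$-measurable, so by Proposition~\ref{thm:LTC} (taking $C_\alpha=0$ in its contrapositive) $\kappa_{x,\alpha}\neq 0$ implies that for some decomposition $\alpha=\beta_1+\dots+\beta_l$, $2l=|\alpha|$, with $\beta_s=\{(k_s,m_s,j_s),(k_s',m_s',j_s)\}$, one has $\cumul\big(x,(\1\{\pi^*_{m_s}(k_s)=\pi^*_{m_s'}(k_s')\})_{s=1}^l\big)\neq 0$. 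Form the multigraph $H$ on vertex set $V=sample(\alpha)\cup\{1,2\}=\big(\bigcup_{s}\{m_s,m_s'\}\big)\cup\{1,2\}$, with one edge $\{1,2\}$ for $x$ and one edge $\{m_s,m_s'\}$ per $s$, and prune as follows. (i) If some $\beta_s$ has $m_s=m_s'$, its indicator is identically $0$ (if $k_s\neq k_s'$, since $\pi^*_{m_s}$ is injective) or identically $1$ (if the two indices of $\beta_s$ coincide), and the cumulant vanishes; so every $\beta_s$ is a genuine edge. (ii) If $H$ is disconnected, group $V$ into unions of connected components: arguments attached to different groups depend on disjoint, independent families of $\pi^*_m$'s, so the cumulant vanishes by Lemma~\ref{lem:independentcumulant}; hence $H$ is connected. (iii) If some $v\in sample(\alpha)\setminus\{1,2\}$ meets only one edge $\{v,m_s'\}$ of $H$, then conditionally on $\mathcal F=\sigma((\pi^*_m)_{m\neq v})$ the corresponding indicator has conditional mean $\P(\pi^*_v(k_s)=\pi^*_{m_s'}(k_s')\mid\mathcal F)=1/K$, while $x$ and all other indicators are $\mathcal F$-measurable; subtracting the constant $1/K$ and expanding via Lemma~\ref{lem:mobiusformula} makes every term vanish (the block containing the centered argument has zero expectation). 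Thus every $v\in sample(\alpha)\setminus\{1,2\}$ has degree $\geq 2$ in $H$.

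It remains to count degrees: $H$ has $l+1$ edges, so $\sum_{v\in V}\deg_H(v)=2(l+1)$. Set $t=|V|$ and $\epsilon=|\{1,2\}\setminus sample(\alpha)|$. The $\epsilon$ vertices of $\{1,2\}$ outside $sample(\alpha)$ meet only the edge $\{1,2\}$, hence have degree $1$; every other vertex has degree $\geq 2$ (those in $sample(\alpha)\setminus\{1,2\}$ by (iii), those in $\{1,2\}\cap sample(\alpha)$ because they carry $\{1,2\}$ plus an edge $\{m_s,m_s'\}$). So $2(l+1)\geq 2(t-\epsilon)+\epsilon=2t-\epsilon$. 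The case $\epsilon=2$ is impossible: then no edge $\{m_s,m_s'\}$ touches $1$ or $2$, so $\{1,2\}$ is an isolated component while $sample(\alpha)\neq\emptyset$, contradicting (ii). For $\epsilon\in\{0,1\}$ the left side is even, so $2(l+1)\geq 2t$, i.e.\ $|\alpha|=2l\geq 2t-2$, as claimed. The one genuinely delicate point is pruning step (iii): a dataset outside $\{1,2\}$ appearing in a single pair contributes a fresh uniform index that decorrelates, and without this observation one only reaches $|\alpha|\geq 2t-4$, the missing unit being recovered from the parity of $2(l+1)$. Everything else is routine, in particular checking that Proposition~\ref{thm:LTC} applies verbatim with the index set $[K]\times[M]$ in place of $[n]$ and that $sample(\alpha)=\bigcup_s\{m_s,m_s'\}$ for every decomposition.
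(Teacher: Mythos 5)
Your proof is correct and follows essentially the same strategy as the paper: the sign-flip symmetry in the $j_0$-th coordinate for the bound $|\alpha|\geq 2|col(\alpha)|$, and the reduction via Proposition~\ref{thm:LTC} together with an independence argument for the bound $|\alpha|\geq 2|sample(\alpha)\cup\{1,2\}|-2$.

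One remark on the second bound: you only establish degree at least two for vertices in $sample(\alpha)\setminus\{1,2\}$, and then you need the $\epsilon$ bookkeeping and the parity trick to recover the clean bound $l+1\geq t$. The paper takes a shorter route by applying the same independence argument \emph{uniformly} to every $m\in\bigcup_{s\in[0,l]}\{m_s,m_s'\}$, including $m\in\{1,2\}$. Indeed, if $m=m_0=1$ appears exactly once in the multiset $\{m_0,m_0',\ldots,m_l,m_l'\}$, then $\pi^*_1(1)$ is independent of $\pi^*_2(1)$ and of every $\pi^*_{m_s}(k_s),\pi^*_{m_s'}(k_s')$ for $s\geq 1$, so $x=\1\{\pi^*_1(1)=\pi^*_2(1)\}$ itself is a Bernoulli$(1/K)$ independent of the remaining arguments and Lemma~\ref{lem:independentcumulant} kills the cumulant directly. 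With that observation, every vertex of $H$ has degree at least two at once, $2(l+1)\geq 2t$ follows immediately, and the case distinction on $\epsilon$ (and the parity step) is unnecessary. Your version is nonetheless correct; the pruning steps (i) and (ii) are also unnecessary for the paper's count but are harmless. Your conditional-expectation argument in (iii) is a valid (if slightly heavier) substitute for the independence appeal in Lemma~\ref{lem:independentcumulant}.
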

The second step, which contains in fact the core of the arguments amounts to controlling the remaining $\kappa_{x,\alpha}$'s. 

\begin{lem}\label{lem:controlcumulantsfeature}
Let $\alpha\neq 0\in \N^{K\times M\times p}$ with $|\alpha|\geq 2$. Suppose that $K\geq 2\left|supp(\alpha)\cup \ac{(1,1), (1,2)}\right|^2$, then
$$|\kappa_{x,\alpha}|\leq \lambda^{|\alpha|}\pa{8|\alpha|^{18}}^{\frac{|\alpha|}{2}+1}\pa{\frac{1}{K}}^{\left|supp(\alpha)\cup\ac{(1,1),(1,2)}\right|-1}\enspace.$$
\end{lem}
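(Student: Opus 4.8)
The plan is to prove Lemma~\ref{lem:controlcumulantsfeature} by instantiating the four-step methodology of Section~\ref{sec:globalmethod}, in the same spirit as the clustering instantiation in Appendix~\ref{prf:controlcumulantsconstantsizeclustering}, but now actually carrying out the ``pruning'' Step~3, which is forced here by the permutation structure. In a preliminary Step~0, I apply Proposition~\ref{thm:LTC}: it suffices to bound $\cumul\pa{\pa{\1\ac{\pi^*(k_s,m_s)=\pi^*(k'_s,m'_s)}}_{s\in[0,l]}}$ uniformly over all pairings $\beta_1,\ldots,\beta_l$ of $\alpha$, where $\beta_s=\ac{(k_s,m_s,j_s),(k'_s,m'_s,j_s)}$ and, by convention, $(k_0,m_0)=(1,1)$ and $(k'_0,m'_0)=(1,2)$ encode $x$. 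A first simplification: for any $s$ with $m_s=m'_s$, injectivity of $\pi^*_{m_s}$ makes $\1\ac{\pi^*(k_s,m_s)=\pi^*(k'_s,m'_s)}$ either identically $0$ or identically $1$, so the joint cumulant (which has at least two arguments since $|\alpha|\geq 2$) vanishes; hence I may restrict to pairings in which every $\beta_s$ is ``cross-dataset''.

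Steps~1--2 then follow the clustering template. In Step~1 I introduce the weighted graph $\cW$ on $[0,l]$, with weight $1$ between $s,s'$ that share a label $(k,m)$ and a small weight (of order $1/K$) otherwise, and Proposition~\ref{prop:feray} reduces the problem to bounding, for each $S^{(0)}\subseteq[0,l]$, the cumulant $C_{\bS}$ of the products over the $\cW\<1\>$-connected components $\bS$. In Step~2 I use multilinearity to expand $C_{\bS}=\sum_{\pi}C_{\bS}(\pi)$, where $\pi$ assigns to each blob the common value that $\pi^*$ must take on that blob's support; a blob whose support meets some dataset in two distinct labels contributes an identically-zero factor, so those $C_{\bS}$ vanish, and I may assume each blob-support contains at most one label per dataset.

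Step~3 is the genuinely new ingredient relative to clustering: two blobs whose supports both meet a common dataset and that are assigned the same value cannot be simultaneously realized, so I introduce a second weighted graph $\cV$ on the blocks of $\bS$ (weight $1$ for such incompatible pairs, small weight otherwise) and apply Proposition~\ref{prop:feray} again; as in Section~\ref{sec:globalmethod}, incompatible products are identically zero and are thereby automatically expurgated, and on the surviving configurations one checks, using independence of the $M$ permutations, that $\E\cro{\prod_{t\in\Delta}Z_{\pi,S_t}}=(1/K)^{|I_\Delta|}\prod_{m\in[M]}\prod_{u=1}^{|I_\Delta\cap([K]\times\ac{m})|-1}(1-u/K)^{-1}$ factorizes over datasets, i.e.\ is exactly of the form~\eqref{eq:formula:general_momodent} with $\eta=x_0=1/K$, $r=0$ and $A_m=[K]\times\ac{m}$ --- this is Equation~\eqref{eq:leveragingfeature}. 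Step~4 then applies part~(2) of Lemma~\ref{lem:upperboundgeneralcumulant} (the hypothesis $2|I_R|^2/K\leq 1$ being guaranteed by $K\geq 2|supp(\alpha)\cup\ac{(1,1),(1,2)}|^2$) to bound each $C_{\bS[R]}(\pi)$, feeds this into Proposition~\ref{prop:feray} on $\cV$, sums over $\pi$, and feeds the result back into Proposition~\ref{prop:feray} on $\cW$. The super-multiplicative weight carried throughout is, up to a harmless power of $K$, $(1/K)^{|supp(\cdot)|}$, and the connected-component exponents produced by the two maximal-spanning-tree weights telescope against the exponents of the super-multiplicative weights, leaving exactly $(1/K)^{|supp(\alpha)\cup\ac{(1,1),(1,2)}|-1}$ with no residual connectivity factor (any such factor being a nonnegative power of $|supp|^2/K<1$, hence $\leq 1$). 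Finally Lemma~\ref{lem:subexponential} turns the recursive constants of Proposition~\ref{prop:feray} into $|\alpha|^{O(|\alpha|)}$ losses, which combine with the $|\alpha|^{|\alpha|/2-1}$ from Proposition~\ref{thm:LTC} to yield the stated prefactor $\lambda^{|\alpha|}\pa{8|\alpha|^{18}}^{|\alpha|/2+1}$.

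The main obstacle is the bookkeeping around the pruning step and the summation over $\pi$: one must set up $\cV$ so that Proposition~\ref{prop:feray} correctly kills the incompatible configurations, verify that no hidden conflict survives on the subsets $R$ free of weight-$1$ edges in $\cV$ (so that the mixed moments there really are of the form~\eqref{eq:formula:general_momodent}), choose the two graph weights so that the connectivity exponents cancel against the super-multiplicative weights rather than leaving a $cc(\cdot)$-dependent term, and control the sum over the exponentially many value-assignments $\pi$ while tracking only polynomial-in-$|\alpha|$ losses. None of these steps is deep, but together they are what makes the feature-matching argument heavier than its clustering counterpart, and they must be carried out carefully to land on the clean exponent $|supp(\alpha)\cup\ac{(1,1),(1,2)}|-1$.
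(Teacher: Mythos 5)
Your plan and nearly every step match the paper's: Proposition~\ref{thm:LTC} for Step~0, the reduction graph $\cW$ for Step~1, multilinear expansion for Step~2, the pruning graph $\cV$ for Step~3, Lemma~\ref{lem:upperboundgeneralcumulant} with $r=0$, $A_m=[K]\times\ac{m}$, $\eta=x_0=1/K$ for Step~4, the count of Lemma~\ref{lem:numberk} over value-assignments $\pi$, and a final pass through Proposition~\ref{prop:feray} and Lemma~\ref{lem:subexponential}. Your preliminary simplifications (dropping same-dataset $\beta_s$, discarding blobs whose support meets a dataset twice) are correct and harmless, though the paper handles those cases implicitly rather than pruning them upfront.

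There is, however, one concrete misstep that would break the rate: you give the non-unit edges of $\cW$ a weight of order $1/K$, whereas the paper keeps this weight as a free parameter $w\in(0,1)$ and takes $w\to 1$ at the very end, and this limit is essential. In the clustering instantiation the inner cumulant bound genuinely carries a $\pa{K\,|supp(\alpha)\cup\ac{1,2}|^2/n}^{cc(\alpha)-1}$ decay that the $\cW$-weight is designed to match. Here it does not: after the sum over $\pi$, the $\cV$-spanning-tree factor $\pa{|supp(S^{(0)})|^2/K}^{cc(\pi)-1}$ is entirely absorbed by the $K^{cc(\pi)}$ count from Lemma~\ref{lem:numberk}, and Lemma~\ref{lem:controlreductedcumulantsfeature} delivers $(1/K)^{|supp(S^{(0)})|-1}$ with no $(\cdot)^{|\bS|-1}$ decay at all. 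To put this in the form $|C_{\bS}|\le D\,\psi(S^{(0)})\,\bbM(\cW[S^{(0)}])$ required by Proposition~\ref{prop:feray}, the paper therefore multiplies and divides by $\bbM(\cW[S^{(0)}])=w^{|\bS|-1}$, pushing a factor $w^{-(l+1)}$ into the constants $D_r$; Lemma~\ref{lem:subexponential} propagates that factor to the end, where it is harmless only because $w$ can be taken arbitrarily close to $1$ and $\bbM(\cW)\le 1$. With $w\asymp 1/K$ the same manipulation would cost an uncontrolled $K^{l+1}$, destroying the claimed $(1/K)^{|supp(\alpha)\cup\ac{(1,1),(1,2)}|-1}$ rate. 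The $O(1/K)$ weight you put on $\cW$ is the one that belongs on $\cV$; for $\cW$, unlike in the clustering and seriation cases you are mimicking, there is no $cc(\cW)$-dependent factor in the target bound, so the weight has to be driven to $1$.
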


For $d\in [2,D]$, $v\in [KM]$ and $l\in [M]$ satisfying $d\geq 2l$ and $v\geq l$, there exist at most $\pa{vd/2}^{d}K^{v-2}m^{l-2}p^{d/2}\leq \pa{d}^{2d}K^{v-2}M^{l-2}p^{d/2}$ matrices $\alpha$ with $d=|\alpha|$, $d\geq 2|col(\alpha)|$, $l=|sample(\alpha)\cup\ac{1,2}|$ and $v=|supp(\alpha)\cup \ac{(1,1), (1,2)}|$. Combining Lemma \ref{lem:nullitycumulantfeature} and Lemma \ref{lem:controlcumulantsfeature} leads us to 
\begin{align*}
\underset{0<|\alpha|\leq D}{\sum_{\alpha\in \N^{K\times m\times p}}}\frac{\kappa_{x,\alpha}^2}{\alpha!}\leq& 4\sum_{d=2}^{D}\sum_{v=2}^{d+2}\ \sum_{l=2}^{v\wedge (d/2+1)}D^{2d}K^{v-2}M^{l-2}p^{d/2}\pa{8D^{18}}^{d+2}\lambda^{2d}\pa{\frac{1}{K^2}}^{v-1}\\
\leq&\frac{64D^{36}}{K^2}\sum_{d=2}^{D}\sum_{v=2}^{d+2}\sum_{l=2}^{v\wedge (d/2+1)}\pa{\lambda^2\sqrt{p}8D^{20}}^{d}\pa{\frac{1}{K}}^{v-2}M^{l-2}\\
\leq& \frac{64D^{36}}{K^2}\sum_{d=2}^{D}\sum_{v=2}^{d+2}\sum_{l=2}^{v\wedge (d/2+1)}\pa{\lambda^2\sqrt{p}8D^{20}}^{d}\pa{\frac{M}{K}}^{l-2}\\
\leq &\frac{64(D+1)D^{36}}{K^2}\sum_{d=2}^{D}\sum_{l=0}^{d/2-1}\pa{\lambda^2\sqrt{p}8D^{20}}^{d-2l}\pa{64D^{40}\lambda^4p\frac{M}{K}}^{l}\\
\leq & \frac{64(D+1)D^{36}}{K^2}\sum_{d=2}^{+\infty}\sqrt{\zeta}^d\\
\leq & \frac{64(D+1)D^{36}}{K^2}\frac{\zeta}{1-\sqrt{\zeta}}\enspace,
\end{align*}
where we used the definition~\eqref{eq:def:zeta:feature} of $\zeta$ in the penultimate line. We have shown~\eqref{eq:objective_lower_bound_feature_matching}, which concludes the proof of Theorem \ref{thm:lowdegreefeature}.

\subsection{Proof of Lemma~\ref{lem:nullitycumulantfeature}}

Let $\alpha\in \N^{K\times M\times p}\neq 0$ such that $\kappa_{x,\alpha}\neq 0$. Let us first prove that $|\alpha|\geq 2|col(\alpha)|$. For that, it is sufficient to prove that, for all $j\in col(\alpha)$, $\sum_{k=1}^K\sum_{m=1}^M\alpha_{k,m,j}\geq 2$.

 Let us consider $\alpha$ such that there exists $j_0\in col(\alpha)$ with $\sum_{k=1}^K\sum_{m=1}^M\alpha_{k,m,j_0}=1$. The joint distribution of $(x, \pa{X_{k,m,j}}_{k,m,j\in [K]\times [M]\times [p]})$ and  $(x, \pa{\pa{-1}^{\1\ac{j=j_0}}X_{k,m,j}}_{k,m,j\in [K]\times [M]\times [p]})$ are the same, so we have 
\begin{align*}\kappa_{x,\alpha}=&\cumul\pa{x, (X_{k,m,j})_{k,m,j\in \alpha}}=\cumul\pa{x, (\pa{-1}^{\1\ac{j=j_0}}X_{k,m,j})_{k,m,j\in \alpha}}\\
=&-\cumul\pa{x, (X_{k,m,j})_{k,m,j\in \alpha}}=-\kappa_{x,\alpha}\enspace,
\end{align*}
which, in turn, implies $\kappa_{x,\alpha}=0$ for such an $\alpha$. As a consequence, if $\kappa_{x,\alpha}\neq 0$ then $|\alpha|\geq 2|col(\alpha)|$.

Let us now prove that $|\alpha|\geq 2|sample(\alpha)\cup \ac{1,2}|-2$. Using Proposition~\ref{thm:LTC}, we know that $|\alpha|$ is even and that there exists at least a decomposition $(\beta_1,\ldots,\beta_l)$ of $\alpha$, with $|\beta_s|=2$ of the form $\beta_s=\ac{(k_s, m_s, j_s), (k'_s, m'_s, j_s)}$, such that 
$$\cumul\pa{x, \pa{\1\ac{\pi^*(k_s,m_s)=\pi^*(k'_s,m'_s)}}_{s\in [1,l]}}\neq 0\enspace. $$
Let us take the convention $k_0=1$, $k'_0=1$, $m_0=1$ and $m'_0=2$. Hence, we get 
\begin{equation}\label{eq:cumulant:simplie:non_zero}
\cumul\pa{\pa{\1\ac{\pi^*(k_s,m_s)=\pi^*(k'_s,m'_s)}}_{s\in [0,l]}}\neq 0\enspace. 
\end{equation}
Let us prove the following claim: for all $m\in \cup_{s\in [0,l]}\ac{m_s, m'_s}$, we have $\sum_{s\in [0,l]}\1\ac{m_s=m}+\1\ac{m'_s=m}\geq 2$. To do so, we suppose that it is not the case and we shall find a contradiction. Let $m\in \cup_{s\in [0,l]}\ac{m_s, m'_s}$ with $\sum_{s\in [0,l]}\1\ac{m_s=m}+\1\ac{m'_s=m}= 1$. By symmetry, suppose that  $m=m_{s_0}$. Then, $\pi^*\pa{k_{s_0},m_{s_0}}$ is independent of $\pa{\pa{\pi^*(k_s,m_s)}_{s\in [0,l] \setminus s_0}, \pi^*\pa{k'_{s_0},m'_{s_0}}}$ and so $\1\ac{\pi^*(k_{s_0},m_{s_0})=\pi^*(k'_{s_0},m'_{s_0})}$ is a Bernoulli with parameter $1/K$ independent from the random variables $\pa{\1\ac{\pi^*(k_{s},m_{s})=\pi^*(k'_{s},m'_{s})}}_{s\in [0,l]\setminus \ac{s_0}}$. Using Lemma \ref{lem:independentcumulant}, we deduce that $\cumul\pa{\pa{\1\ac{\pi^*(k_s,m_s)=\pi^*(k'_s,m'_s)}}_{s\in [0,l]}}=0$, which contradicts~\eqref{eq:cumulant:simplie:non_zero}. We conclude that, for all $m\in \cup_{s\in [0,l]}\ac{m_s, m'_s}$, $\sum_{s\in [0,l]}\1\ac{m_s=m}+\1\ac{m'_s=m}\geq 2$. 

From this claim, we deduce that $2l+2\geq 2|sample(\alpha)\cup \ac{1,2}|$ which implies $|\alpha|\geq 2|sample(\alpha)\cup \ac{1,2}|-2$. This concludes the proof of Lemma~\ref{lem:nullitycumulantfeature}.

\subsection{Proof of Lemma \ref{lem:controlcumulantsfeature}}\label{prf:controlcumulantsfeature}

Let $\alpha\neq 0\in \N^{K\times M\times p}$. We recall the hypothesis $K\geq 2\left|supp(\alpha)\cup \ac{(1,1), (1,2)}\right|^2$. Using Proposition~\ref{thm:LTC}, we are reduced to upper-bounding, for any decomposition $(\beta_1,\ldots,\beta_l)$ of $\alpha$, with $|\beta_s|=2$ of the form $\beta_s=\ac{(k_s, m_s, j_s), (k'_s, m'_s, j_s)}$, the cumulant 
$$\cumul\pa{x, \pa{\1\ac{\pi^*(k_s,m_s)=\pi^*(k'_s,m'_s)}}_{s\in [1,l]}}\enspace. $$
Let us take the convention $k_0=1$, $k'_0=1$, $m_0=1$ and $m'_0=2$. With this notation, we can write
\begin{equation}\label{eq:cumulant:feature:partition}
\cumul\pa{x, \pa{\1\ac{\pi^*(k_s,m_s)=\pi^*(k'_s,m'_s)}}_{s\in [1,l]}}=\cumul\pa{\pa{\1\ac{\pi^*(k_s,m_s)=\pi^*(k'_s,m'_s)}}_{s\in [0,l]}}\enspace. 
\end{equation}

\paragraph{Step 1: Reduction.}  As explained in Section~\ref{sec:globalmethod}, we first reduce the control of the cumulant \eqref{eq:cumulant:feature:partition} to the control of cumulants of products of strongly dependent $\1\ac{\pi^*\pa{k_s,m_s}=\pi^*\pa{k'_s,m'_s} }$. As previously, we introduce a weighted graph to which we will apply Proposition \ref{prop:feray}. Let $\cW$ be the weighted graph on $[0,l]$ with weights $w_{s,s'}$ defined by;
\begin{itemize}
\item If $\ac{(k_s, m_s), (k'_s, m'_s)}\cap \ac{(k_{s'}, m_{s'}), (k'_{s'}, m'_{s'})}\neq \emptyset$, then $w_{s,s'}=1$;
\item If $\ac{(k_s, m_s), (k'_s, m'_s)}\cap \ac{(k_{s'}, m_{s'}), (k'_{s'}, m'_{s'})}=\emptyset$, then $w_{s,s'}=w\in (0,1)$, 
\end{itemize}
where $w$ is to be fixed later.

Let $S^{(0)}\subseteq [0,l]$. Denote by $\cW\<1\>[S^{(0)}]$ the subgraph induced by $S^{(0)}$ while only keeping the edges of weight 1, and by
$\bS=S_1,\ldots, S_{|\bS|}$ the partition of $S^{(0)}$  into the connected components of  $\cW\<1\>[S^{(0)}]$. In order to apply Proposition \ref{prop:feray}, we shall upper-bound, for such a $S^{(0)}$, the cumulant 
$$C_{\bS}:=\cumul\pa{\pa{\prod_{s\in S}\1\ac{\pi^*(k_s,m_s)=\pi^*(k'_s,m'_s)}}_{S\in \bS}}\enspace,$$
with respect to the maximum weight $\bbM\pa{\cW[S^{(0)}]}=\pa{w}^{|\bS|-1}$. The next lemma  (steps 2-4 of Section~\ref{sec:globalmethod}) provides such an upper-bound.

\begin{lem}\label{lem:controlreductedcumulantsfeature}
Let $S^{(0)}\subseteq [0,l]$ and let $\bS$ be its partition into the connected components of $\cW\<1\>[S^{(0)}]$. Writing $supp(S^{(0)})=\cup_{s\in S^{(0)}}\ac{(k_s, m_s), (k'_s, m'_s)}$, we have, whenever $K\geq 2|supp(S^{(0)})|^2$,
$$\left|C_{\bS}\right|\leq \pa{2|\bS|^{9}}^{|\bS|}|supp(S^{(0)})|^{2|\bS|}\pa{\frac{1}{K}}^{|supp(S^{(0)})|-1}\enspace.$$
\end{lem}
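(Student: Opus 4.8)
I would prove Lemma~\ref{lem:controlreductedcumulantsfeature} by instantiating Steps~2--4 of Section~\ref{sec:globalmethod}, following the same template as the proof of Lemma~\ref{lem:controlreductedcumulantsclusteringnew}; the one genuine difference is that here Step~3 (pruning of incompatible configurations) is actually invoked, since each $\pi^*_m$ being a permutation forces $\pi^*(k,m)\neq\pi^*(k',m)$ for $k\neq k'$. Throughout, write $v=|supp(S^{(0)})|$ and $\bS=S_1,\dots,S_{|\bS|}$.

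\emph{Step 2 (expansion).} Since $S_t$ is a connected component of $\cW\<1\>[S^{(0)}]$, the equalities $\pi^*(k_s,m_s)=\pi^*(k'_s,m'_s)$ for $s\in S_t$ force $\pi^*$ to be constant on $supp(S_t)$, so $\prod_{s\in S_t}\1\{\pi^*(k_s,m_s)=\pi^*(k'_s,m'_s)\}=\sum_{a\in[K]}\prod_{i\in supp(S_t)}\1\{\pi^*(i)=a\}$. Expanding and using multilinearity of cumulants gives $C_{\bS}=\sum_{\pi}C_{\bS}(\pi)$, where $\pi=(\pi(S_t))_t$ ranges over $[K]^{\bS}$, $Z_{\pi,S_t}=\prod_{i\in supp(S_t)}\1\{\pi^*(i)=\pi(S_t)\}$ and $C_{\bS}(\pi)=\cumul\big((Z_{\pi,S_t})_t\big)$. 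If some $supp(S_t)$ meets a column $[K]\times\{m\}$ in two distinct points, then $Z_{\pi,S_t}=0$ a.s.\ for every $\pi$, hence $C_{\bS}=0$ and the bound is trivial; so I may assume every surviving $\pi$ is active.

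\emph{Step 3 (pruning).} For a fixed active $\pi$ I would set up the weighted graph $\cV$ on $[|\bS|]$ as in Section~\ref{sec:globalmethod}: a weight-$1$ edge between $t,t'$ whenever $\pi(S_t)=\pi(S_{t'})$ and $supp(S_t),supp(S_{t'})$ both meet a common column $[K]\times\{m\}$ (the incompatibility classes being $U_m=[K]\times\{m\}$, on which $\pi^*_m$ is injective), and edges of weight $m_0:=v^2/K\in(0,1)$ otherwise. A weight-$1$ edge encodes $Z_{\pi,S_t}Z_{\pi,S_{t'}}=0$ a.s.; hence for any $R$ with $\cV[R]$ containing a weight-$1$ edge, every connected component of $\cV\<1\>[R]$ of size $\geq2$ yields an a.s.-zero product, so $\cumul\big((\prod_{t\in R'}Z_{\pi,S_t})_{R'}\big)=0$, and Lemma~\ref{lem:weightedgraphimpossibleevents} gives $\bbM(\cV[R])=m_0^{cc(\cV\<1\>[R])-1}$. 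When $\cV[R]$ has no weight-$1$ edge the components are singletons and the cumulant reduces to $C_{\bS[R]}(\pi):=\cumul\big((Z_{\pi,S_t})_{t\in R}\big)$, bounded in the next step.

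\emph{Step 4 (core bound).} The key computation is that, after pruning, the $Z_{\pi,S_t}$ obey the mixed-moment identity~\eqref{eq:formula:general_momodent}: because $\pi^*_1,\dots,\pi^*_M$ are independent uniform permutations and, for $t\neq t'$, $supp(S_t)$ and $supp(S_{t'})$ never share both a column and a $\pi$-value, the constraints on each $\pi^*_m$ are injective, so for $I_\Delta=\cup_{t\in\Delta}supp(S_t)$ and $c_m=|I_\Delta\cap([K]\times\{m\})|$ one gets $\E\big[\prod_{t\in\Delta}Z_{\pi,S_t}\big]=\prod_{m}\frac{(K-c_m)!}{K!}=(1/K)^{|I_\Delta|}\prod_m\prod_{j=1}^{c_m-1}(1-j/K)^{-1}$, which is~\eqref{eq:formula:general_momodent} with $\eta=x_0=1/K$, $q=M$, $A_m=[K]\times\{m\}$ and $r=0$. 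Since $K\geq 2v^2\geq 2|I_R|^2$, Lemma~\ref{lem:upperboundgeneralcumulant}(2) gives $|C_{\bS[R]}(\pi)|\leq 2|R|^{2|R|}(1/K)^{|I_R|}(|I_R|^2/K)^{|R|-1}\leq 2|\bS|^{2|\bS|}(1/K)^{|I_R|}m_0^{|R|-1}$ for all admissible $R$. With $\psi(R)=(1/K)^{|I_R|}$ super-multiplicative and $D_\infty=2|\bS|^{2|\bS|}$, Proposition~\ref{prop:feray} and Lemma~\ref{lem:subexponential} then yield, for each active $\pi$, $|C_{\bS}(\pi)|\leq 2|\bS|^{8|\bS|}(1/K)^{v}(v^2/K)^{cc(\cV\<1\>)-1}$. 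Finally I would sum over active $\pi$, grouping by $c:=cc(\cV\<1\>(\pi))$ and using that all vertices of a component of $\cV\<1\>$ carry the same $\pi$-value, so $\pi$ has at most $c\leq|\bS|$ distinct values and there are at most $c\,K^{c}|\bS|^{|\bS|}$ active $\pi$ with $cc(\cV\<1\>(\pi))=c$; this gives $|C_{\bS}|\leq 2|\bS|^{8|\bS|}(1/K)^{v}\sum_{c\leq|\bS|}c\,K^{c}|\bS|^{|\bS|}(v^2/K)^{c-1}\leq 2|\bS|^{9|\bS|+2}v^{2|\bS|-2}(1/K)^{v-1}\leq(2|\bS|^{9})^{|\bS|}v^{2|\bS|}(1/K)^{v-1}$, using $v\geq|\bS|$. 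I expect the two delicate points to be the mixed-moment identity in Step~4 (verifying that pruning through $\cV$ truly leaves only injective constraints on each $\pi^*_m$, so that~\eqref{eq:formula:general_momodent} applies with $r=0$) and the final enumeration of active $\pi$ by $cc(\cV\<1\>)$ — the latter bookkeeping, rather than any conceptual difficulty, being the main obstacle.
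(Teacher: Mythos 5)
Your proof follows the paper's argument essentially step for step: the same expansion over $\pi\in[K]^{\bS}$, the same incompatibility graph $\cV$ in the pruning step (weight-one edges between $t,t'$ iff $\pi(S_t)=\pi(S_{t'})$ and the supports share a dataset index), the same appeal to Lemma~\ref{lem:upperboundgeneralcumulant}(2) with $\eta=x_0=1/K$, $A_m=[K]\times\{m\}$, $r=0$, and the same final summation grouped by the number $cc(\cV\langle1\rangle)$ of connected components. The one spot where you are slightly imprecise is the enumeration "at most $c\,K^{c}|\bS|^{|\bS|}$ active $\pi$ with $cc=c$": the paper proves, via Lemma~\ref{lem:numberk}, a bound of $(2|\bS|)^{|\bS|}K^{c}$, obtained by fixing a permutation of $[|\bS|]$ and a subgraph of the induced path with $c$ components; your leading factor $c$ is not justified and your argument "at most $c$ distinct values" is not quite the right count (distinct components can share a $\pi$-value if their supports are disjoint in datasets). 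However, since your bound is no larger than the paper's in the relevant regime, your final chain of inequalities still closes, so this is a cosmetic slip rather than a genuine gap.
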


From Lemma \ref{lem:controlreductedcumulantsfeature}, we know that for any $S^{(0)}\subseteq [0,l]$ with $\bS$ the connected components of $\cW\<1\>[S^{(0)}]$,
$$\left|C_{\bS}\right|\leq \pa{2(l+1)^{9}}^{l+1}\pa{
2l+2}^{2(l+1)}\pa{\frac{1}{K}}^{|supp(S^{(0)})|-1}w^{-l-1}\bbM\pa{\cW[S^{(0)}]}\enspace.$$
The function $S\to \pa{\frac{1}{K}}^{|supp(S)|-1} $ is super-multiplicative. Hence, applying Proposition \ref{prop:feray} and Lemma~\ref{lem:subexponential} together with the fact that $\bbM\pa{\cW}\leq 1$, we get that 
$$\left|\cumul\pa{\pa{\1\ac{\pi^*(k_s,m_s)=\pi^*(k'_s,m'_s)}}_{s\in [0,l]}}\right| \leq w^{-l-1}\pa{8\pa{l+1}^{17}}^{l+1}\pa{\frac{1}{K}}^{|supp(\alpha)\cup\ac{(1,1), (1,2)}|-1}.$$
We also have $l=\frac{|\alpha|}{2}$ and so $l+1\leq |\alpha|$. Since the above bound is valid for any $w<1$, by taking $w$ arbitrarily close to one, we conclude that 
$$\left|\cumul\pa{\pa{\1\ac{\pi^*(k_s,m_s)=\pi^*(k'_s,m'_s)}}_{s\in [0,l]}}\right|\leq \pa{8|\alpha|^{17}}^{\frac{|\alpha|}{2}+1}\pa{\frac{1}{K}}^{\left|supp(\alpha)\cup\ac{(1,1),(1,2)}\right|-1}.$$
This bound of the cumulant~\eqref{eq:cumulant:feature:partition} is valid for for any decomposition $\beta_1,\ldots, \beta_l$ of $\alpha$ with $\beta_s=\ac{(k_s, m_s, j_s), (k'_s, m'_s, j_s)}$. Since the number of such decomposition is at most $|\alpha|^{\frac{|\alpha|}{2}-1}$, we deduce from Proposition~\ref{thm:LTC} that 
$$|\kappa_{x,\alpha}|\leq \lambda^{|\alpha|}\pa{8|\alpha|^{18}}^{\frac{|\alpha|}{2}+1}\pa{\frac{1}{K}}^{\left|supp(\alpha)\cup\ac{(1,1),(1,2)}\right|-1}\enspace.$$
This concludes the proof of Lemma \ref{lem:controlcumulantsfeature}.

\subsection{Proof of Lemma \ref{lem:controlreductedcumulantsfeature}}\label{prf:controlreductedcumulantsfeature}

\paragraph{Step 2: Expansion.} Let $S^{(0)}\subseteq [0,l]$ and let $\bS=S_1,\ldots, S_{|\bS|}$ the connected components of $\cW\<1\>[S^{(0)}]$. We seek to control 
$$C_{\bS}:=\cumul\pa{\pa{\prod_{s\in S}\1\ac{\pi^*(k_s,m_s)=\pi^*(k'_s,m'_s)}}_{S\in \bS}}\enspace.$$
For any $S\subseteq [0,l]$, we write $supp(S)=\cup_{s\in S}\ac{(k_{s}, m_s), (k'_s, m'_s)}$. For $R\subseteq [|\bS|]$, we write $I_{R}=\cup_{t\in R}supp(S_t)$. By definition of the graph $\cW$, the sets $supp(S_t)$ are disjoint and form a partition of $supp(S^{(0)})$. Moreover, for all $(k,m'),(k', m'')\in supp(S_t)$, there exists a path $s_1,\ldots, s_{h}\subseteq S_{t}$ such that $(k,m')\in \ac{(k_{s_1}, m_{s_1}), (k'_{s_1}, m'_{s_1})}$ and $(k', m'')\in \ac{(k_{s_h}, m_{s_h}), (k'_{s_h}, m'_{s_h})}$ and such that $\cW$ has an edge of weight $1$ between all $s_{h'}, s_{h'+1}$. Thus, for $t\in [|\bS|]$, $\prod_{s\in S_{t}}\1\ac{\pi^*(k_s,m_s)=\pi^*(k'_s,m'_s)}$ can be written 
$$\prod_{s\in S_{t}}\1\ac{\pi^*(k_s,m_s)=\pi^*(k'_s,m'_s)}=\sum_{a \in [K]}\prod_{(k,m)\in supp(S_t)}\1\ac{\pi^*(k,m)=a}\enspace.$$
Hence, we are interested in configurations $\pi^*$ that are constant on each $S_t$ for $t=1,\ldots, |\bS|$. For that purpose, we consider  $\pi= (\pi(S_t))_{t=1,\ldots,|\bS|}\in  [K]^{|\bS|}$. Then, using the multilinearity of cumulants, we get
\begin{equation}\label{eq:multilinearityfeature}
C_{\bS}=\sum_{\pi=\pa{\pi(S)}_{S\in \bS}\in [K]^{\bS}}\cumul\pa{\pa{\prod_{(k,m)\in supp(S)}\1\ac{\pi^*(k,m)=\pi(S)}}_{S\in \bS}}\enspace.
\end{equation}
Let us fix such a $\pi=\pa{\pi_S}_{S\in \bS}\in [K]^{\bS}$, and  for $S\in \bS$, let us define 
$$Z_{\pi,S}=\prod_{(k,m)\in supp(S)}\1\ac{\pi^*(k,m)=\pi(S)}.$$ 
  In the following, we shall control 
  $$C_{\bS}(\pi):=\cumul\pa{\pa{Z_{\pi,S}}_{S\in \bS}}=\cumul\pa{\pa{\prod_{(k,m)\in supp(S)}\1\ac{\pi^*(k,m)=\pi(S)}}_{S\in \bS}}.$$

\paragraph{Step 3: Pruning incompatible configurations.}
Note that $k\to \pi(k,m)$ defined by $\pi(k,m)=\pi(S)$ for $(k,m)\in supp(S)$ may not be injective, in which case it is impossible to have all the $Z_{\pi,S}$ non zero for $S\in\bS$. To cope with such cases,
we shall apply Proposition \ref{prop:feray} to a weighted dependency graph $\cV$  on $|\bS|$ defined by
\begin{itemize}
\item an edge of weight $1$ between $t$ and $t'$ if and only if $\pi(S_t)=\pi(S_{t'})$ and $\cup_{s\in S_t}\ac{m_{s}, m'_s}\cap \cup_{s\in S_{t'}}\ac{m_{s}, m'_s}\neq \emptyset$,
\item an edge of weight $\frac{|supp(S^{(0)})|^2}{K}$ otherwise.
\end{itemize}
We highlight the fact that, although this is not explicit, $\cV$ actually depends on the choice of $\pi=\pa{\pi(S)}_{S\in \bS}$. 
Weights 1 are attributed to connected components having the same value $\pi(S)$ and sharing some common datasets. 

Consider the case where  $R\subseteq [|\bS|]$ is such that $\cV$ has an edge of weight $1$ between two nodes of $R$. Then, writing $t$ and $t'$ two such nodes that are connected in $\cV$ be an edge of weight $1$, we observe that this implies that $\pi(S_t)= \pi(S_{t'})$ and there exists $m\in [M]$ such that $m$ belongs to both $\cup_{s\in S_t}\ac{m_{s}, m'_s}$ and $\cup_{s\in S_{t'}}\ac{m_{s}, m'_s}$. Hence, since $S_{t}$ and $S_{t'}$ are disjoint, having $Z_{\pi,S_t}=1$ and $Z_{\pi,S_{t'}}=1$ implies that there exists $k\neq k'$ such that $\pi^*(k,m)=\pi^*(k',m)$, the latter being impossible since $\pi^*(.,m)$ is a permutation of $[K]$. As a consequence, $Z_{\pi,S_{t}}Z_{\pi,S_{t'}}=0$ a.s. Denoting $\bR=R_1,\ldots, R_{|\bR|}$ the connected components of $\cV\<1\>[R]$,  we deduce from Lemma~\ref{lem:independentcumulant} that
\begin{equation}\label{eq:nullityfeaturereduced}
\cumul\pa{\pa{\prod_{t\in R_{h}}Z_{\pi,S_t}}_{h\in [|\bR|]}}=0\enspace.
\end{equation}
On the other hand, if $R\subseteq [|\bS|]$ is such that $\cV$ has no edges of weight $1$ between nodes in $R$, the partition $\bR$ of $R$ induced by the connected components of $\cV\<1\>[R]$ is made of singletons, and can be identified to $R$ itself. Thus, in order to apply Proposition \ref{prop:feray}, we only have to control the cumulants
$$C_{\bS[R]}(\pi):=\cumul\pa{\pa{Z_{\pi,S_t}}_{t\in R}}\enspace ,$$
for all subsets $R$ of $[|\bS|]$ such that $\cV[R]$ does not have any edge of weight $1$. In other words, we only have to control the cumulants for configurations $(\pi(S))_{S\in \bS}$ that are possibly compatible with $\pi^*$.

\paragraph{Step 4: Leveraging Lemma \ref{lem:upperboundgeneralcumulant}.} 

\begin{lem}\label{lem:reducedcumulantfeature2}
Let $R\subseteq [|\bS|]$ such that $\cV[R]$ does not contain any edge of weight $1$. If $K\geq 2|I_R|^2$, with $I_R=\cup_{t\in R}supp(S_t)$,
$$\left|C_{\bS[R]}(\pi)\right|\leq 2|R|^{2|R|}\pa{\frac{1}{K}}^{|I_R|}\pa{\frac{|I_R|^2}{K}}^{|R|-1}\enspace.$$
\end{lem}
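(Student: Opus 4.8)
The plan is to identify, for a subset $R$ with $\cV[R]$ having no weight-$1$ edge, random variables $\overline{Z}_t := Z_{\pi,S_t}$ whose mixed moments fit the template \eqref{eq:formula:general_momodent}, and then to invoke Lemma~\ref{lem:upperboundgeneralcumulant} in its $r=0$ case. First I would write $I_\Delta = \cup_{t\in\Delta} supp(S_t)$ for $\Delta\subseteq R$, where each element of $I_\Delta$ is a pair $(k,m)\in[K]\times[M]$. For the event $\prod_{t\in\Delta}\overline{Z}_t = 1$ to have positive probability, one needs, for every fixed dataset $m$, the map sending each $k$ with $(k,m)\in I_\Delta$ to the common label $\pi(S_t)$ of its block to be injective; since $\cV[R]$ has no weight-$1$ edge, any two distinct blocks $S_t,S_{t'}$ that share a label $\pi(S_t)=\pi(S_{t'})$ necessarily use disjoint dataset indices, so this injectivity holds and the event is realizable. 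Then
\[
\E\Big[\prod_{t\in\Delta}\overline{Z}_t\Big]
=\prod_{m\in[M]}\P\big[\pi^*_m(\cdot)\text{ takes prescribed values on } I_\Delta\cap([K]\times\{m\})\big],
\]
and because $\pi^*_m$ is a uniform permutation of $[K]$, each factor equals $\tfrac{1}{K}\cdot\tfrac{1}{K-1}\cdots\tfrac{1}{K-(c_m-1)}$ with $c_m=|I_\Delta\cap([K]\times\{m\})|$. Rewriting $\tfrac{1}{K-j} = \tfrac{1}{K}\cdot\tfrac{1}{1-j/K}$ and using $\sum_m c_m = |I_\Delta|$ gives exactly \eqref{eq:formula:general_momodent} with $\eta = 1/K$, $x_0 = 1/K$, $r=0$, and the family $\{A_j\}$ taken to be the $M$ columns $[K]\times\{m\}$ restricted to $I_{[R]}$ (this is the pattern recorded in \eqref{eq:leveragingfeature}).

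Next I would check the hypothesis of the second bullet of Lemma~\ref{lem:upperboundgeneralcumulant}: with $L = |I_{[R]}| = |I_R|$ and $x_0 = 1/K$, the condition $2L^2 x_0 \le 1$ reads $2|I_R|^2 \le K$, which is precisely the standing assumption $K \ge 2|I_R|^2$ of the lemma. Applying \eqref{eq:upperboundgeneralcumulant2} then yields
\[
\big|\cumul\big((\overline{Z}_t)_{t\in R}\big)\big|
\;\le\; 2|R|^{2|R|}\,\eta^{L}\,(L^2 x_0)^{|R|-1}
\;=\; 2|R|^{2|R|}\Big(\tfrac{1}{K}\Big)^{|I_R|}\Big(\tfrac{|I_R|^2}{K}\Big)^{|R|-1},
\]
which is the claimed inequality since $C_{\bS[R]}(\pi) = \cumul((Z_{\pi,S_t})_{t\in R})$.

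The main thing to get right — and the step I expect to require the most care — is the verification that the mixed moments genuinely have the product-over-columns form \eqref{eq:formula:general_momodent} rather than some messier expression: one must confirm that the blocks $supp(S_t)$ for $t\in R$ really partition $I_R$ (which follows from the construction of $\cW$ and $\bS$ in Step~1, so that $Z_{\pi,S_t}$ is the indicator that $\pi^*$ is constant equal to $\pi(S_t)$ on $supp(S_t)$), and that the no-weight-$1$ hypothesis on $\cV[R]$ exactly rules out the only obstruction to realizability, namely two blocks sharing both a label and a dataset. Once that combinatorial bookkeeping is in place, the factorization across datasets $m$ is immediate by independence of $\pi^*_1,\dots,\pi^*_M$, the per-dataset factor is the standard sampling-without-replacement expression, and the rest is a direct substitution into Lemma~\ref{lem:upperboundgeneralcumulant}.
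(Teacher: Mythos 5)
Your proposal follows the paper's proof essentially step for step: you identify the mixed moments of $Z_{\pi,S_t}$ as having the form \eqref{eq:formula:general_momodent} with $\eta=x_0=1/K$, $r=0$, and $A_m=[K]\times\{m\}$ (the paper's computation \eqref{eq:leveragingfeature}), and then apply the $r=0$ case of Lemma~\ref{lem:upperboundgeneralcumulant}, checking that $K\geq 2|I_R|^2$ is exactly the hypothesis $2L^2x_0\leq 1$. The computation and the conclusion are identical to the paper's.

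One imprecision worth flagging in your realizability discussion: you assert that the no-weight-$1$ hypothesis on $\cV[R]$ rules out \emph{the only} obstruction, "two blocks sharing both a label and a dataset." That hypothesis handles inter-block conflicts, but it does not rule out the intra-block possibility that a single $supp(S_t)$ contains two elements $(k,m)$ and $(k',m)$ with $k\neq k'$. In that case $Z_{\pi,S_t}=0$ almost surely for every $\pi$, and the factorized mixed-moment expression you write down is strictly positive while $\E[\prod_{t\in\Delta}Z_{\pi,S_t}]=0$, so the formula is false. This does not invalidate the lemma: when some $Z_{\pi,S_t}$ is a.s.\ constant (equal to $0$), the joint cumulant $C_{\bS[R]}(\pi)$ vanishes by Lemma~\ref{lem:independentcumulant}, so the stated bound holds trivially. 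But the degenerate case should be treated separately rather than absorbed into the injectivity claim. The paper's own proof of Lemma~\ref{lem:reducedcumulantfeature2} passes over this point in the same way, so it is a shared informality rather than an error specific to your argument.
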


\begin{proof}[Proof of Lemma~\ref{lem:reducedcumulantfeature2}]
Let $R\subseteq [|\bS|]$ such that $\cV[R]$ does not have any edge of weight $1$. For $m\in [M]$, we denote $A_{m}$ the set $\ac{(k,m), k\in [K]}$. Let $\Delta\subseteq R$, $I_\Delta=\cup_{t\in \Delta}supp(S_t)$, and let us compute 
\begin{align}\nonumber
\E\cro{\prod_{t\in \Delta}Z_{\pi,S_t}}=&\P\cro{\forall t\in \Delta, \forall (k,m)\in supp(S_t), \pi^*(k,m)=\pi(S_t)}\\\nonumber
=& \prod_{m=1}^M \frac{1}{K(K-1)\times\ldots\times \pa{K-\pa{|I_{\Delta}\cap A_{m}|-1}}}\\
=& \pa{\frac{1}{K}}^{|I_{\Delta}|}\prod_{m=1}^M \frac{1}{\pa{1-\frac{1}{K}}\times\ldots\times \pa{1-\pa{|I_{\Delta}\cap A_{m}|-1}\frac{1}{K}}}\enspace.\label{eq:leveragingfeature}
\end{align}
We are therefore in position to apply Lemma~\ref{lem:upperboundgeneralcumulant} with $\eta=\frac{1}{K}$ and $x_0=\frac{1}{K}$. Hence, we deduce from \eqref{eq:upperboundgeneralcumulant2} that, whenever $K\geq 2|I_R|^2$, we have
$$\left|C_{\bS[R]}(\pi)\right|\leq2|R|^{2|R|}\pa{\frac{1}{K}}^{|I_R|}\pa{\frac{|I_R|^2}{K}}^{|R|-1}\enspace,$$
which concludes the proof of the lemma.
\end{proof}

The function $R\mapsto \pa{\frac{1}{K}}^{|I_R|}$ is super-multiplicative.  Combining Lemma~\ref{lem:reducedcumulantfeature2} with Proposition~\ref{prop:feray} and Lemma~\ref{lem:subexponential}, we end up with 
$$\left|C_{\bS}(\pi)\right|\leq 2|\bS|^{8|\bS|}\pa{\frac{1}{K}}^{|supp(S^{(0)})|}\bbM\pa{\cV}\enspace.$$

\paragraph{Conclusion.}
For $\pi=\pa{\pi(S)}_{S\in\bS}\in [K]^{\bS}$, we write $cc(\pi)$ the number of connected components of the induced graph $\cV\<1\>$ (we recall that the graph $\cV$ depends on the choice of $\pi$). In light of the definition of $\bbM\pa{\cV}$, we arrive at
$$\left|C_{\bS}(\pi)\right|\leq 2|\bS|^{8|\bS|}\pa{\frac{1}{K}}^{|supp(S^{(0)})|}\pa{\frac{|supp(S^{(0)})|^2}{K}}^{cc(\pi)-1}\enspace , $$
for any $\pi$. 
Plugging this inequality in \eqref{eq:multilinearityfeature} leads us to 
$$|C_{\bS}|\leq 2|\bS|^{8|\bS|}\pa{\frac{1}{K}}^{|supp(S^{(0)})|}\sum_{cc=1}^{|\bS|}\underset{cc(\pi)=cc}{\sum_{\pi\in [K]^{\bS}}}\pa{\frac{|supp(S^{(0)})|^2}{K}}^{cc-1}\enspace.$$
The following lemma bounds the number of configurations $\pi$ with a given number of $cc(\pi)$. 
\begin{lem}\label{lem:numberk}
For any $cc\in [|\bS|]$, we have 
\[
\left|\ac{\pi=\pa{\pi_S}_{S\in \bS}\in [K]^{\bS},\enspace cc(\pi)=cc}\right|\leq (2|\bS|)^{|\bS|}K^{cc}\enspace.
\]
\end{lem}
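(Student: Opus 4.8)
The plan is to exploit the single structural fact that, for a \emph{fixed} configuration $\pi=(\pi(S))_{S\in\bS}\in[K]^{\bS}$, the graph $\cV\langle 1\rangle$ places an edge between $t$ and $t'$ only when $\pi(S_t)=\pi(S_{t'})$. Consequently $\pi$ takes a constant value along every path of $\cV\langle 1\rangle$, hence is constant on each connected component. This means that a configuration $\pi$ with $cc(\pi)=cc$ is entirely recoverable from two pieces of data: the partition of $[|\bS|]$ into the $cc$ connected components of $\cV\langle 1\rangle$, together with the common $\pi$-value on each component. Bounding the number of such pairs of data will give the claim.

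Concretely, I would proceed as follows. Fix $\pi$ with $cc(\pi)=cc$, list the connected components $P_1,\dots,P_{cc}$ of $\cV\langle 1\rangle$ ordered by smallest element, and associate to $\pi$ the pair $(\ell_\pi,c_\pi)$, where $\ell_\pi\colon[|\bS|]\to[cc]$ sends $t$ to the index $i$ with $t\in P_i$, and $c_\pi\colon[cc]\to[K]$ sends $i$ to the common value of $\pi(S_t)$ over $t\in P_i$. By construction $\pi(S_t)=c_\pi(\ell_\pi(t))$ for every $t$, so if two configurations $\pi,\pi'$ satisfy $(\ell_\pi,c_\pi)=(\ell_{\pi'},c_{\pi'})$ then $\pi(S_t)=c_\pi(\ell_\pi(t))=c_{\pi'}(\ell_{\pi'}(t))=\pi'(S_t)$ for all $t$, i.e.\ $\pi=\pi'$; the map $\pi\mapsto(\ell_\pi,c_\pi)$ is therefore injective. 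Since there are at most $cc^{|\bS|}$ functions $\ell\colon[|\bS|]\to[cc]$ and at most $K^{cc}$ functions $c\colon[cc]\to[K]$, and since $cc\le|\bS|$, this yields
$$\left|\ac{\pi\in[K]^{\bS}:cc(\pi)=cc}\right|\le cc^{|\bS|}K^{cc}\le|\bS|^{|\bS|}K^{cc}\le(2|\bS|)^{|\bS|}K^{cc}\enspace.$$

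I do not expect a genuine obstacle here. The only point worth stating carefully is that the edge set of $\cV$ depends on $\pi$, so ``connected component'' is only meaningful once $\pi$ is fixed; but this is precisely what makes the encoding $(\ell_\pi,c_\pi)$ lossless, since from the pair one reconstructs the same $\pi$ and hence the same graph $\cV\langle 1\rangle$ with the same components. The factor $(2|\bS|)^{|\bS|}$ in the statement is comfortable slack: even the crude estimate $cc^{|\bS|}\le|\bS|^{|\bS|}$ already suffices, and no finer count of set partitions (Stirling numbers) is needed.
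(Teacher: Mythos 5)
Your proof is correct and follows essentially the same approach as the paper: encode each configuration $\pi$ by its partition of $[|\bS|]$ into connected components of $\cV\langle 1\rangle$ together with the common $\pi$-value on each block, then count pairs. Your labeling map $\ell_\pi\colon[|\bS|]\to[cc]$ is a marginally cleaner device than the paper's subgraph of a $\psi$-permuted Hamiltonian path, and your intermediate bound $cc^{|\bS|}K^{cc}$ is a slight sharpening of the stated $(2|\bS|)^{|\bS|}K^{cc}$.
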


Putting everything together, we conclude that 
\begin{align*}
|C_{\bS}|\leq& 2|\bS|^{8|\bS|}\pa{\frac{1}{K}}^{|supp(S^{(0)})|}\sum_{cc=1}^{|\bS|}\underset{cc(\pi)=cc}{\sum_{\pi\in [K]^{\bS}}}\pa{\frac{|supp(S^{(0)})|^2}{K}}^{cc-1}\\
\leq & 2\pa{2|\bS|^9}^{\bS}\pa{\frac{1}{K}}^{|supp(S^{(0)})|-1}\sum_{cc=1}^{|\bS|}|supp(S^{(0)})|^{2cc-2}\\
\leq& \pa{2|\bS|^{9}}^{|\bS|}|supp(S^{(0)})|^{2|\bS|}\pa{\frac{1}{K}}^{|supp(S^{(0)})|-1}\enspace.
\end{align*}
The proof of Lemma \ref{lem:controlreductedcumulantsfeature} is complete

\subsection{Proof of Lemma \ref{lem:numberk}}\label{prf:numberk}

Let us fix $cc\in [|\bS|]$, and let us count the number of $\pi=\pa{\pi_S}_{S\in \bS}$ with $cc(\pi)=cc$. To do so, we fix a permutation $\psi\in \mathcal{S}([|\bS|])$ of $[|\bS|]$ and we fix $\cG$ a subgraph of the set of edges $\ac{(\psi(1), \psi(2)),(\psi(2), \psi(3)),\ldots, (\psi(|\bS|-1), \psi(|\bS|))}$ having exactly $cc$ connected components. We first seek to upper-bound the number of $\pi$'s such that the induced graph $\cV\<1\>$ has the same connected components as $\cG$. In order to highlight the dependency of $\cV$ with respect to $\pi$, we now write $\cG_{\pi}$ for $\cV\<1\>$. If $\cG$ has an edge between $t$, $t'\in [|\bS|]$, then we must choose $\pi$ satisfying $\pi_{S_t}=\pi_{S_t'}$. So, for $R$ a connected component of $\cG$, we must have $\pi_{S_{t}}=\pi_{S_{t'}}$ for all $t,t'\in R$. Hence, there are at most $K^{cc}$ possibilities for choosing $\pi$ such that $\cG_\pi$ has the same connected components as  $\cG$. 

In total, the number of $\pi\in [K]^{\bS}$ such that $cc(\pi)=cc$ is at most $ (2|\bS|)^{|\bS|}K^{cc}$. This concludes the proof of Lemma \ref{lem:numberk}.

\section{Proof of Theorem \ref{thm:lowdegreeseriation}}\label{prf:lowdegreeseriation}

Without loss of generality, we assume through the proof that $\sigma^2=1$. Let $D\in \N$ such that $n\geq 2(2D+2)^2$. We suppose
$$\zeta:=2^{12}\lambda^2\pa{D+1}^{42}\max\pa{1,\frac{4\rho^2}{n}}<1\enspace.$$
Denote $x=\1\ac{|\pi^*(1)-\pi^*(2)|\leq \rho}$. Since the minimization problem defining $MMSE_{\leq D}$ in Equation (\ref{def:MMSE:seriation}) is separable, and since the random variables $X_{ij}$ are jointly exchangeable, the $MMSE_{\leq D}$ can be reduced to
$$MMSE_{\leq D}=\lambda^2 \frac{n-1}{n}\inf_{f\in \R_D[Y]}\E\cro{\pa{f(Y)-x}^2}\enspace.$$
We can compute $\E\cro{x}= \frac{2\rho}{n-1}\pa{1- \frac{\rho+1}{2n}}$.
The model considered is a particular case of the Gaussian Additive model analyzed by \cite{SchrammWein22} and hence, we can use Equation \eqref{eq:SW22}
\begin{equation*}\label{eq:SW22seriation}
\frac{n}{n-1} MMSE_{\leq D}\geq  \lambda^2 \frac{2\rho}{n-1}\pa{1- \frac{\rho+1}{2n}}-\frac{4\lambda^2 \rho^2}{(n-1)^2}\pa{1- \frac{\rho+1}{2n}}^2-\lambda^2 \underset{D\geq |\alpha|> 0}{\sum_{\alpha\in \N^{n\times n}}}\frac{\kappa_{x,\alpha}^2}{\alpha!}\enspace,
\end{equation*}
where $\kappa_{x,\alpha}$ for $\alpha\in \N^{n\times n}$ is the cumulant $\kappa_{x,\alpha}=\cumul\pa{x,\pa{X_{ij}}_{ij\in \alpha}}$, where $\pa{X_{ij}}_{ij\in \alpha}$
 is the multiset that contains $\alpha_{ij}$ copies of $X_{ij}$, for $i,j\in[1,n]$. In light of Equation \eqref{eq:SW22seriation}, it is sufficient, for proving Theorem \ref{thm:lowdegreeseriation}, to establish that 
$$\underset{D\geq |\alpha|> 0}{\sum_{\alpha\in \N^{n\times n}}}\frac{\kappa_{x,\alpha}^2}{\alpha!}\leq 2^{18}\frac{\pa{D+1}^{44}\rho^2}{n^2}\frac{\zeta}{1-\zeta}\enspace.$$
In the proof of this theorem, for $\alpha\in \N^{n\times n}$, we denote $supp(\alpha)=\ac{i\in [n],\enspace \alpha_{i:}\neq 0}\bigcup \ac{i\in [n],\enspace \alpha_{:i}\neq 0}$. Let us define a multigraph $\mathcal{G}_\alpha$ on $[n]$ with $\alpha_{ij}$ edges between the nodes $i$ and $j$, for $i,j\in [n]$. We define $\mathcal{G}_\alpha^-$ the graph obtained after removing the isolated notes (except $1$ and $2$) and we denote $cc(\alpha)$ the number of connected components of $\mathcal{G}_\alpha^-\cup \ac{(1,2)}$, which is the graph $\cG_{\alpha}^{-}$ where we have added an edge between $1$ and $2$. 

The following lemma states the desired control of the cumulant. 
\begin{lem}\label{lem:controlcumulantseriation}
For any $\alpha\neq 0$ with $n\geq 2\pa{|supp(\alpha)\cup\ac{1,2}|}^2$, we have 
$$|\kappa_{x,\alpha}|\leq \lambda^{|\alpha|}2^{5|\alpha|+7 }\pa{|\alpha|+1}^{20|\alpha|+21}
\pa{\frac{2\rho}{n}}^{|supp(\alpha)\cup \ac{1,2}|-1}\pa{\frac{1}{2\rho}}^{cc(\alpha)-1}\enspace.$$
\end{lem}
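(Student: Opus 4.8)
The plan is to instantiate the four-step programme of Section~\ref{sec:globalmethod}, exactly as was done for clustering in Appendix~\ref{prf:controlcumulantsconstantsizeclustering}, but now with the indicator $\delta$ encoding the event $\{|\pi^*(i)-\pi^*(j)|\le\rho\}$ rather than equality. First I would apply Proposition~\ref{thm:LTC}: since $X_{ij}=\lambda\1\{|\pi^*(i)-\pi^*(j)|\le\rho\}$ is of the form \eqref{eq:latent-model} after noting that the indicator can be written as $\sum_{a,b:|a-b|\le\rho}\1\{\pi^*(i)=a\}\1\{\pi^*(j)=b\}$ — actually cleaner: treat it directly via the version of Theorem~2.5 of \cite{Even25a} that reduces $\kappa_{x,\alpha}$ to cumulants of the form $\cumul\big((\delta(\pi^*(i_s),\pi^*(i'_s)))_{s\in[0,l]}\big)$ with $\delta(a,b)=\1\{|a-b|\le\rho\}$, at the cost of a factor $\lambda^{|\alpha|}|\alpha|^{|\alpha|/2-1}$ and with $i_0=1,i'_0=2$. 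So it suffices to bound this joint cumulant by roughly $2^{5|\alpha|+\mathrm{const}}(|\alpha|+1)^{O(|\alpha|)}(2\rho/n)^{|supp(\alpha)\cup\{1,2\}|-1}(2\rho)^{-(cc(\alpha)-1)}$.

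Next come the four steps. \textbf{Step 1 (Reduction).} Build the weighted graph $\cW$ on $[0,l]$ with weight-$1$ edges between $s,s'$ when $\{i_s,i'_s\}\cap\{i_{s'},i'_{s'}\}\ne\emptyset$ and weight $w$ (to be sent to $1$ at the end, or fixed as $2\rho\cdot|supp|^2/n$) otherwise; by Lemma~\ref{lem:weigthgraphreduction}, $\bbM(\cW[S^{(0)}])=w^{|\bS|-1}$, so Proposition~\ref{prop:feray} plus Lemma~\ref{lem:subexponential} reduce everything to bounding $C_{\bS}=\cumul\big((\prod_{s\in S}\delta(\pi^*(i_s),\pi^*(i'_s)))_{S\in\bS}\big)$. \textbf{Step 2 (Expansion).} For $S\in\bS$, expand $\prod_{s\in S}\delta(\pi^*(i_s),\pi^*(i'_s))=\sum_{\pi\in[n]^{supp(S)}\text{ active}}\prod_{i\in supp(S)}\1\{\pi^*(i)=\pi(i)\}$, where ``active'' means the assignment on $supp(S)$ is injective (it must be, since $\pi^*$ is a permutation) and respects all constraints $|\pi(i_s)-\pi(i'_s)|\le\rho$; then multilinearity of cumulants gives $C_{\bS}=\sum_{\pi\text{ active}}\cumul\big((Z_{\pi,S})_{S\in\bS}\big)$ with $Z_{\pi,S}=\prod_{i\in supp(S)}\1\{\pi^*(i)=\pi(i)\}$. \textbf{Step 3 (Pruning).} Here — unlike clustering — pruning is needed: the sets $U_{v'}$ are all of $[n]$ (any two coordinates of $\pi^*$ must differ), so introduce $\cV$ on $[|\bS|]$ with a weight-$1$ edge between $t,t'$ whenever $\pi(i)=\pi(i')$ for some $i\in supp(S_t),i'\in supp(S_{t'})$; if $\cV[R]$ has a weight-$1$ edge then the corresponding $Z$'s cannot be simultaneously $1$ and the cumulant vanishes, so Proposition~\ref{prop:feray} again applies and we only need to bound $C_{\bS[R]}(\pi)=\cumul((Z_{\pi,S_t})_{t\in R})$ for $\pi$ \emph{globally injective} on $I_R:=\cup_{t\in R}supp(S_t)$. \textbf{Step 4 (Lemma~\ref{lem:upperboundgeneralcumulant}).} For such an injective $\pi$, compute the mixed moments: $\E[\prod_{t\in\Delta}Z_{\pi,S_t}]=\P[\pi^* \text{ agrees with }\pi\text{ on }I_\Delta]=\frac{(n-|I_\Delta|)!}{n!}=\big(\tfrac1n\big)^{|I_\Delta|}\prod_{j=1}^{|I_\Delta|-1}(1-j/n)^{-1}$, which is precisely the form \eqref{eq:formula:general_momodent} with $\eta=1/n$, $r=0$, $x_0=1/n$, so \eqref{eq:upperboundgeneralcumulant2} yields $|C_{\bS[R]}(\pi)|\le 2|R|^{2|R|}n^{-|I_R|}(|I_R|^2/n)^{|R|-1}$. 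Running this back through Proposition~\ref{prop:feray}/Lemma~\ref{lem:subexponential} with $m=|supp(S^{(0)})|^2/n$ gives a bound on $C_{\bS}(\pi)$ of order $|\bS|^{8|\bS|}n^{-|supp(S^{(0)})|}(|supp(S^{(0)})|^2/n)^{cc(\cV\langle1\rangle)-1}$, valid for each active injective $\pi$.

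\textbf{The main obstacle} is the accounting of the sum over active $\pi$ in Step~2: one must show that the number of active injective $\pi=(\pi(i))_{i\in supp(S^{(0)})}$ with a prescribed connected-component count $cc(\cV\langle1\rangle)=cc$ is at most something like $(C|\bS|)^{|\bS|}\cdot(2\rho)^{|supp(S^{(0)})|-cc}\cdot n^{?}$ — i.e.\ the geometric constraint $|\pi(i_s)-\pi(i'_s)|\le\rho$ must be quantified. The key combinatorial observation is that once the value $\pi(i)$ is fixed at one vertex of a connected component of $\cW\langle1\rangle$ (equivalently $\cG_\alpha^-$), each additional adjacent vertex in that component can take at most $2\rho+1$ values (its value lies within $\rho$ of an already-placed neighbour), so a component of $\cG_\alpha^-\cup\{(1,2)\}$ touching $b$ of the $supp$-vertices contributes at most $n\cdot(2\rho+1)^{b-1}$ free choices — and after pinning down $\pi(1)$ (hence using up the component containing $1$ and $2$ essentially for free modulo the $(2\rho)$ factor it also carries), one gets the factor $(2\rho/n)^{|supp(\alpha)\cup\{1,2\}|-1}$. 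The extra pruning factor $(2\rho)^{-(cc(\alpha)-1)}$ comes out by choosing the weight $m$ in Step~3 (and $w$ in Step~1) proportionally to $1/(2\rho)$ rather than $1/n$, absorbing the surplus $(2\rho)$ per component, and then using $cc(\cW\langle1\rangle)=cc(\alpha)$ — proven, as in the clustering proof, by checking that $\cG_\alpha^-\cup\{(1,2)\}$ and $\cW\langle1\rangle$ have the same component structure since $u_{i_s},u_{i'_s}$ are joined in $\cG_\alpha^-$ through the shared column vertex $v_{j_s}$. Once these counting bounds are in hand, collecting all the $|\bS|^{O(|\bS|)}\le(|\alpha|+1)^{O(|\alpha|)}$ and numerical constants, then plugging back into Proposition~\ref{thm:LTC} with its $\lambda^{|\alpha|}|\alpha|^{|\alpha|/2-1}$ prefactor, produces the stated bound.
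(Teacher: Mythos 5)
Your Step~0 is where the plan goes wrong: Proposition~\ref{thm:LTC} does not apply here, and no ``version of Theorem~2.5'' is needed. That proposition is specific to the Gaussian latent model~\eqref{eq:latent-model}, where $X_{ij}=\mu_{\pi^*(i)j}$ with $\mu_{kj}$ i.i.d.\ Gaussian; its proof relies on Wick/Isserlis-type pairing of the Gaussian factors, which is why the pairing count $|\alpha|^{|\alpha|/2-1}$ and the restriction to even $|\alpha|$ appear. The seriation signal $X_{ij}=\lambda\1\{|\pi^*(i)-\pi^*(j)|\le\rho\}$ is a \emph{deterministic} function of $\pi^*$ with no Gaussian factor to pair, so there is nothing to reduce: one simply writes $\kappa_{x,\alpha}=\cumul\big(x,(X_{ij})_{(i,j)\in\alpha}\big)=\lambda^{|\alpha|}\cumul\big((\1\{|\pi^*(i_s)-\pi^*(j_s)|\le\rho\})_{s\in[0,l]}\big)$ with $l=|\alpha|$ and $(i_s,j_s)_{s\in[l]}$ ranging over the multiset $\alpha$ (with $i_0=1,j_0=2$), by multilinearity alone. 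Your attempt to massage $\1\{|\pi^*(i)-\pi^*(j)|\le\rho\}=\sum_{|a-b|\le\rho}\1\{\pi^*(i)=a\}\1\{\pi^*(j)=b\}$ into the latent form fails because $\mu_{kj}$ in~\eqref{eq:latent-model} must be a row of the Gaussian $\mu$-matrix, not a two-argument indicator; and writing a bespoke ``version of Theorem~2.5'' would be both unnecessary and unjustified. A practical consequence of this confusion is that you carry $l=|\alpha|/2$ pairs through Steps~1--4 rather than $l=|\alpha|$, which changes the bookkeeping (vertex counts, $supp$ sizes) throughout.

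Once you replace Step~0 by the trivial multilinearity observation, your Steps 1--4 and the combinatorial counting of active injective $\pi$ are substantively correct and do match the paper's route: $\cW$ on $[0,l]$ with unit edges when $\{i_s,j_s\}\cap\{i_{s'},j_{s'}\}\ne\emptyset$, expansion over $\pi$, pruning via a second graph $\cV$ keyed to $\pi$-collisions (forced here because $\pi^*$ is a permutation on all of $[n]$), the mixed-moment identity $\E[\prod_{t\in\Delta}Z_{\pi,S_t}]=(1/n)^{|I_\Delta|}\prod_{j<|I_\Delta|}(1-j/n)^{-1}$ matching~\eqref{eq:formula:general_momodent} with $r=0$, $\eta=x_0=1/n$, and finally the count of active $\pi$ with prescribed component number via the ``$2\rho$ choices per non-root vertex'' argument. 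One small further slip: to argue $cc(\cW\langle1\rangle)=cc(\alpha)$ you invoke a ``shared column vertex $v_{j_s}$,'' which is a carry-over from the bipartite graph $\cG_\alpha$ used for clustering; in seriation $\cG_\alpha$ is a multigraph directly on $[n]$ (an edge $i_s\!-\!j_s$ for each $(i_s,j_s)\in\alpha$), so the identification of components with those of $\cW\langle1\rangle$ is immediate without any auxiliary column vertices. Finally, note the paper fixes the Step-1 weight to exactly $w=\tfrac1{2\rho}$ (so that $\bbM(\cW)=(1/2\rho)^{cc(\alpha)-1}$ falls out directly) and the Step-3 weight to $m=|supp(S^{(0)})|^2/n$; your description of these choices (``sent to 1,'' ``$2\rho\cdot|supp|^2/n$,'' ``proportional to $1/(2\rho)$'') is inconsistent with this and would need to be pinned down to actually produce the factor $(1/2\rho)^{cc(\alpha)-1}$.
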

Before showing this lemma, we first conclude the proof of the theorem.

For $\alpha\neq 0$,  the number of edges of $\mathcal{G}_\alpha^-\cup \ac{(1,2)}$ is $|\alpha|+1$, and  the number of nodes is $|supp(\alpha)\cup \ac{1,2}|$. Since in $\cG_{\alpha}^{-}$,  no node except possibly $1$ and $2$ are isolated, one deduces from these definitions that  $$|\alpha|\geq \max\pa{|supp(\alpha)\cup\ac{1,2}|-cc(\alpha)-1,\frac{|supp(\alpha)\cup\ac{1,2}|-2}{2}} \ . $$ Moreover, the nodes $1$ and $2$ belong to the same connected component of $\mathcal{G}_\alpha^-\cup \ac{(1,2)}$, so we also have $$|supp(\alpha)\cup\ac{1,2}|\geq cc(\alpha)+1\enspace.$$ Moreover, given $d,cc\geq 1$ and $m\geq cc+1$, with $d\geq \max\pa{m-cc-1,\frac{m-2}{2}}$, there are at most $n^{m-2}m^{2d}\leq n^{m-2}(2d+2)^{2d}$  matrices $\alpha\in \N^{n\times n}$ satisfying $|\alpha|=d$, $|supp(\alpha)\cup \ac{1,2}|=m$ and $cc(\alpha)=cc$. Recall that we assume that $n\geq 2\pa{2D+2}^2$. Combining the last bound with Lemma~\ref{lem:controlcumulantseriation} leads us  to
\begin{align*}
\underset{D\geq |\alpha|> 0}{\sum_{\alpha\in \N^{n\times n}}}\frac{\kappa_{x,\alpha}^2}{\alpha!}\leq&2^{14}\pa{D+1}^{42}\hspace{-1cm}\underset{D\geq d\geq \max\pa{m-cc-1,\frac{m-2}{2}}}{\underset{m\geq cc+1}{\sum_{cc\geq 1}}}\hspace{-1cm}\pa{2D+2}^{2d}\lambda^{2d}\pa{2^{10}\pa{D+1}^{40}}^{d}\pa{\frac{4\rho^2}{n^2}}^{m-1}\pa{\frac{1}{4\rho^2}}^{cc-1}n^{m-2}\\
\leq&2^{16}\frac{\pa{D+1}^{42}\rho^2}{n^2}\underset{D\geq d\geq \max\pa{m-cc-1,\frac{m-2}{2}}}{\underset{m\geq cc+1}{\sum_{cc\geq 1}}}\pa{2^{12}\lambda^2\pa{D+1}^{42}}^d \pa{\frac{4\rho^2}{n}}^{m-2}\pa{\frac{1}{4\rho^2}}^{cc-1}.
\end{align*}
Let us fix $d,cc\geq 1$ and $m\geq cc+1$, with $d\geq \max\pa{m-cc-1,\frac{m-2}{2}}$, and let us upper-bound the term $\Upsilon:= \pa{2^{12}\lambda^2\pa{D+1}^{42}}^d \pa{\frac{4\rho^2}{n}}^{m-2}\pa{\frac{1}{4\rho^2}}^{cc-1}$. Using the hypothesis $d\geq m-cc-1$, we get 
\begin{align*}
\Upsilon\leq &\pa{2^{12}\lambda^2\pa{D+1}^{42}}^{d-m+cc+1}
\pa{\frac{4\rho^2}{n}2^{12}\lambda^2\pa{D+1}^{42}}^{m-cc-1}\pa{\frac{1}{n}}^{cc-1}
\leq \max(\zeta,1/n)^d\enspace 
\end{align*}
by definition of $\zeta$.
We end up with 
\begin{align*}
\underset{D\geq |\alpha|> 0}{\sum_{\alpha\in \N^{n\times n}}}\frac{\kappa_{x,\alpha}^2}{\alpha!}\leq& 2^{16}\frac{\pa{D+1}^{42}\rho^2}{n^2}\underset{D\geq d\geq \max\pa{m-cc-1,\frac{m-2}{2}}}{\underset{m\geq cc+1}{\sum_{cc\geq 1}}}\max(\zeta,1/n)^d\\
\leq & 2^{16}\frac{\pa{D+1}^{42}\rho^2}{n^2}\pa{2D+1}^{2}\sum_{d=1}^D \max(\zeta,1/n)^d\\
\leq& 2^{18}\frac{\pa{D+1}^{44}\rho^2}{n^2}\frac{\max(\zeta,1/n)}{1-\max(\zeta,1/n)}\enspace,
\end{align*}
which concludes the proof of Theorem \ref{thm:lowdegreeseriation}.

\subsection{Proof of Lemma \ref{lem:controlcumulantseriation}}\label{prf:controlcumulantseriation}

Let $\alpha\neq 0\in \N^{n\times n}$ and let us upper-bound the absolute value of $\kappa_{x,\alpha}=\cumul\pa{x,\pa{X_{ij}}_{ij\in \alpha}}$.
We recall that $X_{ij}=\lambda \1\ac{|\pi^*(i)-\pi^*(j)|\leq \rho}$. 
 From the multilinearity of cumulants, we know that $\kappa_{x,\alpha}$ is proportional to $\lambda^{|\alpha|}$. Without loss of generality, we suppose in the following that $\lambda=1$. Let $l=|\alpha|$ and let $(i_s,j_s)_{s\in [l]}$ such that $\alpha=\ac{(i_s, j_s), s\in [l]}$. We use  the convention $i_0=1$ and $j_0=2$. With those notation, $supp(\alpha)\cup \ac{1,2}=\cup_{s\in [0,l]}\ac{i_s, j_s}$, and we have 
\begin{equation}\label{eq:definition:cumulant:seriation}
\kappa_{x,\alpha}=\cumul\pa{\pa{\1\ac{|\pi^*(i_s)-\pi^*(j_s)|\leq \rho}}_{s\in [0,l]}}\enspace.
\end{equation}
This cumulant can be controlled using the general approach described in Section~\ref{sec:globalmethod}.

\paragraph{Step 1: Reduction.}  As explained in Section~\ref{sec:globalmethod}, we first reduce the control of the cumulant \eqref{eq:definition:cumulant:seriation} to the control of cumulants of products of strongly dependent $\1\ac{\pi^*\pa{i_s}=\pi^*\pa{j_s} }$. We introduce a weighted graph to which we shall apply Proposition \ref{prop:feray}. Let $\cW$ be the weighted graph on $[0,l]$ with weights $w_{s,s'}$ defined by:
\begin{itemize}
\item if $\ac{i_s,j_s}\cap \ac{i_{s'},j_{s'}}\neq \emptyset$, then $w_{s,s'}=1$,
\item if $\ac{i_s,j_s}\cap \ac{i_{s'},j_{s'}}= \emptyset$, then $w_{s,s'}=\frac{1}{2\rho}$.
\end{itemize}
For any subset $S^{(0)}\subseteq [0,l]$, we denote by $\cW\<1\>[S^{(0)}]$   the subgraph of $\cW[S^{(0)}]$, where we only keep the edges with weight one, and we 
 denote by $\bS$ the partition of $S^{(0)}$  into connected components of $\cW\<1\>[S^{(0)}]$. In order to apply Proposition \ref{prop:feray}, we have to bound the cumulant
$$C_{\bS}:=\cumul \pa{\pa{\prod_{s\in S}\1\ac{|\pi^*(i_s)-\pi^*(j_s)|\leq \rho}}_{S\in \bS}}\enspace  .$$
Such a control is provided in the next lemma (steps 2-4 of Section~\ref{sec:globalmethod}), which is proved in Section \ref{prf:reducedcumulantseriation}. For any $S\subseteq [0,l]$, we write $supp(S)=\cup_{s\in S}\ac{i_s,j_s}$. In particular, we have $supp(\alpha)=supp([1,l])$ and $supp(\alpha)\cup \ac{1,2}=supp([0,l])$. 

\begin{lem}\label{lem:reducedcumulantseriation}
Let $S^{(0)}\subseteq [0,l]$ and suppose $n\geq 2\pa{|supp(S^{(0)})|}^2$. Let $\bS$ be the set of connected components of $\cW\<1\>[S^{(0)}]$. Then,
$$|C_{\bS}|\leq  4|\bS|\pa{2|\bS|^{10}|supp(S^{(0)})|^4}^{|\bS|} \pa{\frac{2\rho}{n}}^{|supp(S^{(0)})|-1}\pa{\frac{1}{2\rho}}^{|\bS|-1}\enspace.$$
\end{lem}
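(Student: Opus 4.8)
The plan is to run Steps~2--4 of Section~\ref{sec:globalmethod} for the seriation model, where $\pi^*$ is a uniform permutation of $[n]$, so that the relevant mixed moments are ``sampling without replacement'' moments. Recall we have reduced to $C_{\bS}=\cumul\big((\prod_{s\in S}\1\{|\pi^*(i_s)-\pi^*(j_s)|\le\rho\})_{S\in\bS}\big)$, where $\bS=S_1,\dots,S_{|\bS|}$ are the connected components of $\cW\langle1\rangle[S^{(0)}]$. Since two indices $s,s'$ lying in distinct components cannot share a vertex, the supports $supp(S_t)=\cup_{s\in S_t}\{i_s,j_s\}$ are pairwise disjoint and partition $supp(S^{(0)})$; moreover inside each component the constraint graph $H_t$ on $supp(S_t)$ with edges $\{i_s,j_s\}$, $s\in S_t$, is connected (its line graph is $\cW\langle1\rangle[S_t]$, which is connected). \textbf{Step~2 (expansion):} writing $\prod_{s\in S_t}\1\{|\pi^*(i_s)-\pi^*(j_s)|\le\rho\}=\sum_{\pi\in[n]^{supp(S_t)}}\1\{\pi^*|_{supp(S_t)}=\pi\}\prod_{s\in S_t}\1\{|\pi(i_s)-\pi(j_s)|\le\rho\}$ and using multilinearity together with $[n]^{supp(S^{(0)})}=\prod_t[n]^{supp(S_t)}$, I get $C_{\bS}=\sum_{\pi\text{ active}}C_{\bS}(\pi)$ with $C_{\bS}(\pi)=\cumul\big((Z_{\pi,S_t})_t\big)$, $Z_{\pi,S_t}=\prod_{i\in supp(S_t)}\1\{\pi^*(i)=\pi(i)\}$, where $\pi$ is \emph{active} if $|\pi(i_s)-\pi(j_s)|\le\rho$ for every $s\in S^{(0)}$ and $\pi$ is injective on each $supp(S_t)$. \textbf{Step~3 (pruning):} for a fixed active $\pi$, build the weighted graph $\cV$ on $[|\bS|]$ with an edge of weight $1$ between $t\ne t'$ iff $\pi(i)=\pi(i')$ for some $i\in supp(S_t),\,i'\in supp(S_{t'})$, and weight $m:=|supp(S^{(0)})|^2/n\in(0,1)$ otherwise; whenever $\cV[R]$ carries a weight-$1$ edge, the product $\prod_{t\in R'}Z_{\pi,S_t}$ over the component $R'$ containing it vanishes a.s.\ (as $\pi^*$ is a permutation), so the associated cumulant in Proposition~\ref{prop:feray} is zero, while if $\cV[R]$ is weight-$1$-free then $\pi$ is injective on $I_R:=\cup_{t\in R}supp(S_t)$ and the components of $\cV\langle1\rangle[R]$ are singletons.

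\textbf{Step~4 (core bound):} for $R$ weight-$1$-free and $\Delta\subseteq R$ one has $\E[\prod_{t\in\Delta}Z_{\pi,S_t}]=\P[\pi^*|_{I_\Delta}=\pi|_{I_\Delta}]=\prod_{k=0}^{|I_\Delta|-1}(n-k)^{-1}=(1/n)^{|I_\Delta|}\prod_{k=1}^{|I_\Delta|-1}(1-k/n)^{-1}$, which is exactly~\eqref{eq:formula:general_momodent} with $\eta=x_0=1/n$ and $r=0$ (one block $A_1=[n]$). Since $n\ge 2|supp(S^{(0)})|^2\ge 2|I_R|^2$, the $r=0$ case of Lemma~\ref{lem:upperboundgeneralcumulant} gives $|C_{\bS[R]}(\pi)|\le 2|R|^{2|R|}(1/n)^{|I_R|}(|I_R|^2/n)^{|R|-1}\le 2|R|^{2|R|}(1/n)^{|I_R|}m^{|R|-1}$. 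The function $R\mapsto(1/n)^{|I_R|}$ is super-multiplicative (disjointness of the $supp(S_t)$), so Proposition~\ref{prop:feray} with $D_r=2r^{2r}\le 2|\bS|^{2|\bS|}$, Lemma~\ref{lem:subexponential}, and Lemma~\ref{lem:weightedgraphimpossibleevents} yield
$$|C_{\bS}(\pi)|\le 2|\bS|^{8|\bS|}\,(1/n)^{|supp(S^{(0)})|}\,m^{cc(\pi)-1},\qquad cc(\pi):=cc(\cV\langle1\rangle)\,,$$
the number of connected components of $\cV\langle1\rangle$ on vertex set $[|\bS|]$.

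\textbf{Assembling.} It remains to sum over active $\pi$, grouped by $c:=cc(\pi)$, which amounts to counting active $\pi$ with $cc(\pi)=c$. I would bound this count by: choosing the partition $\cG_1,\dots,\cG_c$ of $[|\bS|]$ into the components of $\cV\langle1\rangle$ ($\le|\bS|^{|\bS|}$ choices); inside each $\cG_a$, choosing a spanning tree of $\cV\langle1\rangle[\cG_a]$ ($\le|\cG_a|^{|\cG_a|}$ choices, by Cayley) and, for each of its $|\cG_a|-1$ edges, the colliding vertex pair realizing it ($\le|supp(S^{(0)})|^2$ choices); and finally choosing the values $\pi|_{U_a}$, $U_a:=\cup_{t\in\cG_a}supp(S_t)$, by fixing the value at a root ($n$ choices) and propagating along a spanning tree of the graph on $U_a$ made of the constraint forests $H_t$ ($t\in\cG_a$) plus the chosen collision edges: a constraint edge contributes $\le 2\rho$ choices (the band of $2\rho+1$ values around the already-fixed parent, minus the parent itself, by injectivity on $supp(S_t)$) and a collision edge contributes $\le 1$. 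Using $\sum_a(|U_a|-|\cG_a|)=|supp(S^{(0)})|-|\bS|$ and $\sum_a(|\cG_a|-1)=|\bS|-c$, this gives $\#\{\pi\text{ active}:cc(\pi)=c\}\le|\bS|^{2|\bS|}\,|supp(S^{(0)})|^{2(|\bS|-c)}\,n^{c}\,(2\rho)^{|supp(S^{(0)})|-|\bS|}$. Plugging this and the Step~4 bound into $C_{\bS}=\sum_{c=1}^{|\bS|}\sum_{cc(\pi)=c}C_{\bS}(\pi)$, the powers of $n$ telescope to $n^{1-|supp(S^{(0)})|}$, the powers of $|supp(S^{(0)})|$ to $|supp(S^{(0)})|^{2|\bS|-2}$, and since $(2\rho)^{|supp(S^{(0)})|-|\bS|}n^{1-|supp(S^{(0)})|}=(2\rho/n)^{|supp(S^{(0)})|-1}(1/2\rho)^{|\bS|-1}$, one gets $|C_{\bS}|\le 2|\bS|^{10|\bS|+1}|supp(S^{(0)})|^{2|\bS|-2}(2\rho/n)^{|supp(S^{(0)})|-1}(1/2\rho)^{|\bS|-1}$, which is dominated by the claimed $4|\bS|\,(2|\bS|^{10}|supp(S^{(0)})|^4)^{|\bS|}(2\rho/n)^{|supp(S^{(0)})|-1}(1/2\rho)^{|\bS|-1}$.

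\textbf{Main obstacle.} The delicate point is the final count: one must stratify the active $\pi$ by the collision pattern $\cV\langle1\rangle$ and track the three exponents (in $n$, in $2\rho$, and in $|supp(S^{(0)})|$) so that, after multiplying by $m^{cc(\pi)-1}$ from Step~4 and summing over $c$, precisely $(2\rho/n)^{|supp(S^{(0)})|-1}(1/2\rho)^{|\bS|-1}$ survives. In particular it is essential to exploit injectivity inside each $supp(S_t)$ to obtain $2\rho$ (rather than $2\rho+1$) choices per constraint edge: otherwise a spurious factor $(1+1/2\rho)^{|supp(S^{(0)})|}$ appears, which the polynomial prefactors do not absorb when $\rho$ is small and $|supp(S^{(0)})|$ large. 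Everything else (Steps 2--4) is a routine instantiation of the machinery already used for clustering and feature matching.
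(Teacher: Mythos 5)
Your proof is correct and follows essentially the same route as the paper's: Step~2 expansion over label patterns $\pi$ restricted to injections on each $supp(S_t)$ with the band constraint, Step~3 pruning via the auxiliary weighted graph $\cV$ (collision edges of weight $1$ forcing zero cumulants), Step~4 applying the $r=0$ case of Lemma~\ref{lem:upperboundgeneralcumulant} with $\eta=x_0=1/n$ and Proposition~\ref{prop:feray}, and finally stratifying the active $\pi$ by $cc(\pi)$ and counting them via spanning-tree propagation with $n$ choices at the root and $\le2\rho$ choices per constraint edge (exploiting injectivity, exactly as the paper's Lemma~\ref{lem:numbersigma}). The only deviation is cosmetic bookkeeping in the count -- you enumerate spanning trees via Cayley's formula together with per-edge colliding-pair choices, whereas the paper enumerates subgraphs of a fixed Hamiltonian path on $[|\bS|]$ -- which gives a slightly tighter power of $|supp(S^{(0)})|$ but lands safely inside the stated bound.
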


Since $\bbM\pa{\cW[S^{(0)}]}=\pa{\frac{1}{2\rho}}^{|\bS|-1}$,
Lemma \ref{lem:reducedcumulantseriation} implies that, for all $S^{(0)}\subseteq [0,l]$, 
$$\left|C_{\bS}\right|\leq 4\pa{l+1}\pa{2(l+1)^{10}|supp(\alpha)\cup\ac{1,2}|^4}^{l+1}\pa{\frac{2\rho}{n}}^{|supp(S^{(0)})|-1}\bbM\pa{\cW[S^{(0)}]}.$$
The function $S^{(0)}\mapsto \pa{\frac{2\rho}{n}}^{|supp(S^{(0)})|-1}$ being super-multiplicative,
applying Proposition \ref{prop:feray} and Lemma~\ref{lem:subexponential}, with the weighted dependency graph $\cW$, we get that, since $l=|\alpha|$ and $|supp(\alpha)\cup\ac{1,2}|\leq 2(|\alpha|+1)$, 
$$|\kappa_{x,\alpha}|\leq 4\pa{|\alpha|+1}\pa{2^5(|\alpha|+1)^{20}}^{|\alpha|+1}\pa{\frac{2\rho}{n}}^{|supp(\alpha)\cup \ac{1,2}|-1}\bbM\pa{\cW}\enspace.$$
The proof of Lemma \ref{lem:controlcumulantseriation} is complete since $\bbM\pa{\cW}=\pa{\frac{1}{2\rho}}^{cc(\alpha)-1}$.

\subsection{Proof of Lemma \ref{lem:reducedcumulantseriation}}\label{prf:reducedcumulantseriation}

\paragraph{Step 2: Expansion.} Let $S^{(0)}\subseteq [0,l]$ and let $\bS=S_1,\ldots, S_{|\bS|}$ the connected components $\cW\<1\>[S^{(0)}]$. Recall that we aim at bounding in absolute value the following cumulant
$$C_{\bS}=\cumul \pa{\pa{\prod_{s\in S}\1\ac{|\pi^*(i_s)-\pi^*(j_s)|\leq \rho}}_{S\in \bS}} \enspace.$$

For $S\in \bS$, we denote $\mathcal{A}_{S}$ the set of all injections $\pi_S:supp(S)\to [n]$ such that, for all $s\in S$, $|\pi_S(i_s)-\pi_S(j_s)|\leq \rho$. We have, for $S\in \bS$, 
$$\prod_{s\in S}\1\ac{|\pi^*(i_s)-\pi^*(j_s)|\leq \rho}=\sum_{\pi_S\in \mathcal{A}_{S}}\prod_{i\in supp(S)}\1\ac{\pi^*(i)=\pi_S(i)}\enspace.$$
By the multilinearity of cumulants, we deduce 
\begin{equation}\label{eq:multilinearityseriation}
C_{\bS}=\sum_{\pi=\pa{\pi_{S}}_{S\in \bS}\in \prod_{S\in \bS}\mathcal{A}_{S}}\cumul\pa{\pa{\prod_{i\in supp(S)}\1\ac{\pi^*(i)=\pi_{S}(i)}}_{S\in \bS}}\enspace.
\end{equation}

Let us fix $\pi=\pa{\pi_S}_{S\in \bS}\in \prod_{S\in \bS}\mathcal{A}_{S}$. For $S\in \bS$ and $i\in supp(S)$, we write $\pi(i)=\pi_S(i)$. For $S\in \bS$, let us write $Z_{\pi, S}=\prod_{i\in supp(S)}\1\ac{\pi^*(i)=\pi(i)}$. In light of~\eqref{eq:multilinearityseriation}, we need to control the cumulants
$$C_{\bS}(\pi):=\cumul\pa{\pa{Z_{\pi, S}}_{S\in \bS}}\enspace.$$
Note that the corresponding function  $\pi$ is possibly non-injective, in which case it is impossible that all $Z_{\pi, S}$ with $S\in \bS$ are simultaneously non-zero. Indeed, if there exists $i\in S$ and $i'\in S'$ such that $\pi(i)=\pi(i')$, it is impossible to have $\pi^*(i)=\pi(i)$ and $\pi^*(i')=\pi(i')$ since $\pi^*$ is permutation of $[n]$.

\paragraph{Step 3: Pruning incompatible configurations.}  We recall that we write $\bS=S_1,\ldots, S_{|\bS|}$. We shall apply Proposition \ref{prop:feray} to some weighted graph $\cV$ with vertex set $[|\bS|]$ with:
\begin{itemize}
\item an edge of weight $1$ between $t,t'\in [|\bS|]$ if and only if $\ac{\pi(i), i\in supp(S_t)}\cap \ac{\pi(i), i\in supp(S_{t'})}\neq \emptyset$,
\item an edge of weight $\frac{|supp(S^{(0)})|^2}{n}$ between $t$ and $t'$ otherwise.
\end{itemize}
As argued above, if the graph $\cV$ contains at least an edge with weight $1$, then $\pi^*$ cannot be compatible with $\pi$. The graph $\cV$ depends on $\pi$ but we do not write explicitly this dependency for simplicity of notation. If $R\subseteq [|\bS|]$ is such that $\cV$ has an edge of weight $1$ between two nodes of $R$,  denote $\bR=R_1,\ldots,R_{|\bR|}$ the partition of $R$ into connected components of $\cV\<1\>[R]$. If such a connected component $R_h$ contains at least two nodes, this implies that $\prod_{t\in R_{h}}Z_{\pi, S_t}=0$ a.s. since $\prod_{t\in R_{h}}Z_{\pi, S_t}\neq 0$ would imply that, for some $i\neq i'$, $\pi^*(i)= \pi^*(i')$. It then follows from Lemma~\ref{lem:independentcumulant} that 
\begin{equation}\label{eq:nullityseriationreduced}
\cumul\pa{\pa{\prod_{t\in R_{h}}Z_{\pi, S_t}}_{h\in [|\bR|]}}=0\enspace.
\end{equation}

On the other hand, if $R\subseteq [|\bS|]$ is such that $\cV$ has no edges of weight $1$ between nodes in $R$, the partition $\bR$ of $R$ induced by the connected components of $\cV\<1\>[R]$ is made of singletons, and can be identified to $R$ itself. Thus, in order to apply Proposition \ref{prop:feray}, we only have  to control the cumulant $\left| \cumul\pa{\pa{Z_{\pi, S_t}}_{t\in R}}\right|$ for all subsets $R$ of $[|\bS|]$ such that $\cV[R]$ does not have any edge of weight $1$.

\paragraph{Step 4: Leveraging Lemma \ref{lem:upperboundgeneralcumulant}.} For $R\subseteq [|\bS|]$, let us write $I_R=\cup_{t\in R}supp(S_t)$.

\begin{lem}\label{lem:reducedcumulantseriation2}
Let $R\subseteq [|\bS|]$ such that $\cV[R]$ does not have any edge of weight $1$. If $n\geq 2|I_R|^2$, 
$$\left|\cumul\pa{\pa{Z_{\pi, S_t}}_{t\in R}}\right|\leq 2|R|^{2|R|}\pa{\frac{1}{n}}^{|I_R|}\pa{\frac{|I_R|^2}{n}}^{|R|-1}\enspace.$$
\end{lem}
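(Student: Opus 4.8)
The plan is to show that, under the hypothesis on $\cV[R]$, the random variables $\overline{Z}_t:=Z_{\pi,S_t}$, $t\in R$, have mixed moments of the template form~\eqref{eq:formula:general_momodent}, and then to invoke the second part of Lemma~\ref{lem:upperboundgeneralcumulant}. This mirrors the proof of Lemma~\ref{lem:reducedcumulantfeature2}.

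First I would record the structural consequence of assuming that $\cV[R]$ has no edge of weight $1$: by the very definition of $\cV$, this means that the image sets $\{\pi(i):i\in supp(S_t)\}$, for $t\in R$, are pairwise disjoint. Since, moreover, each $\pi_{S_t}\in\mathcal{A}_{S_t}$ is an injection on $supp(S_t)$, the map $\pi$ is injective on $I_R=\cup_{t\in R}supp(S_t)$, hence also on every $I_\Delta=\cup_{t\in\Delta}supp(S_t)$ with $\Delta\subseteq R$.

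Next I would compute the mixed moments. For $\Delta\subseteq R$,
\[
\E\!\left[\prod_{t\in\Delta}\overline{Z}_t\right]=\P\!\left[\forall t\in\Delta,\ \forall i\in supp(S_t):\ \pi^*(i)=\pi(i)\right]=\frac{(n-|I_\Delta|)!}{n!}\enspace ,
\]
since $\pi^*$ is a uniform permutation of $[n]$ and $\pi$ is injective on $I_\Delta$, so forcing $\pi^*=\pi$ on the $|I_\Delta|$ positions of $I_\Delta$ has probability $\prod_{k=0}^{|I_\Delta|-1}(n-k)^{-1}$. Rewriting,
\[
\frac{(n-|I_\Delta|)!}{n!}=\left(\frac{1}{n}\right)^{|I_\Delta|}\frac{1}{1-\tfrac1n}\times\cdots\times\frac{1}{1-\tfrac{|I_\Delta|-1}{n}}\enspace ,
\]
which is exactly the form~\eqref{eq:formula:general_momodent} with $\eta=x_0=1/n$, with $q=1$ and $A_1$ any set containing $I_R$ (so that $I_\Delta\cap A_1=I_\Delta$), and with $r=0$ (no $B$-family).

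Finally I would apply the second part of Lemma~\ref{lem:upperboundgeneralcumulant} with $\ell=|R|$ and $L=|I_R|$: the condition $2L^2x_0\le 1$ is precisely the hypothesis $n\ge 2|I_R|^2$, and~\eqref{eq:upperboundgeneralcumulant2} gives
\[
\left|\cumul\!\left(\overline{Z}_1,\ldots,\overline{Z}_{|R|}\right)\right|\le 2|R|^{2|R|}\left(\frac{1}{n}\right)^{|I_R|}\left(\frac{|I_R|^2}{n}\right)^{|R|-1}\enspace ,
\]
which is the claimed bound. There is no serious obstacle here; the only step requiring care is the injectivity argument in the first paragraph, because it is exactly the absence of weight‑$1$ edges in $\cV[R]$ — combined with the injectivity of each $\pi_{S_t}$ — that collapses the mixed moments to the clean permutation‑agreement probability above and thereby lets us match the template~\eqref{eq:formula:general_momodent} needed to feed Lemma~\ref{lem:upperboundgeneralcumulant}.
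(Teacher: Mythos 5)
Your proof is correct and follows essentially the same route as the paper: you observe that the absence of weight-$1$ edges in $\cV[R]$ together with the injectivity of each $\pi_{S_t}$ makes $\pi$ an injection on $I_R$, compute the mixed moments as permutation-agreement probabilities, recognize them as a special instance of~\eqref{eq:formula:general_momodent} with $\eta=x_0=1/n$ and $r=0$, and then apply~\eqref{eq:upperboundgeneralcumulant2}. The paper's proof is identical in substance; you merely spell out the disjointness-of-images argument a bit more explicitly.
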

\begin{proof}[Proof of Lemma~\ref{lem:reducedcumulantseriation2}]

Let $R\subseteq [|\bS|]$ such that $\cV[R]$ does not have any edge of weight $1$. We recall that  this corresponds to the case where the event such that $\pi^*(i)= \pi(i)$ for all $i\in S_t$ with $t\in R$ holds with positive probability. 
Equivalently, this implies that $\pi$ is an injection on $I_{R}$. We seek to upper-bound $\left| \cumul\pa{\pa{Z_{\pi, S_t}}_{t\in R}}\right|$, where, for $t\in R$, $Z_{\pi, S_t}=\prod_{i\in supp(S_t)}\1\ac{\pi^*(i)=\pi(i)}$. Let $\Delta\subseteq R$, and let us compute, writing $I_\Delta=\cup_{t\in \Delta}supp(S_t)$,
\begin{align}\nonumber
\E\cro{\prod_{t\in \Delta} Z_{\pi, S_t}}=&\P\cro{\forall t\in \Delta, \enspace \forall i\in supp(S_t),\enspace \pi^*(i)=\pi(i)}\\\nonumber
=&\frac{1}{n(n-1)\times \ldots\times (n-|I_\Delta|+1)}\\
=&\pa{\frac{1}{n}}^{|I_\Delta|}\frac{1}{1-\frac{1}{n}}\times \ldots\times \frac{1}{1-\frac{|I_\Delta|-1}{n}}\enspace.\label{eq:leveragingseriation}
\end{align}
We are in position to apply the general bound for cumulants  with weakly dependant random variables from Lemma~\ref{lem:upperboundgeneralcumulant}. Indeed, applying Equation \eqref{eq:upperboundgeneralcumulant2} with $x_0=\eta=\frac{1}{n}$, leads us to
$$\left| \cumul\pa{\pa{Z_{\pi, S_t}}_{t\in R}}\right|\leq 2|R|^{2|R|}\pa{\frac{1}{n}}^{|I_R|}\pa{\frac{|I_R|^2}{n}}^{|R|-1}\enspace.$$
whenever $n\geq 2|I_R|^2$.

\end{proof}

For all $R\subseteq [|\bS|]$, denoting $\bR=R_1,\ldots, R_{|\bR|}$ the connected components of $\cV\<1\>[R]$, we have 
$\bbM\pa{\cV[R]}=\pa{\frac{|I_R|^2}{n}}^{|R|-1}$.
Hence, when $n\geq 2\pa{supp(S^{(0)})}^2$, the identity \eqref{eq:nullityseriationreduced} and Lemma \ref{lem:reducedcumulantseriation2} imply that, 
$$\left|\cumul\pa{\pa{\prod_{t\in R_{h}}Z_{\pi, S_t}}_{h\in [|\bR|]}}\right|\leq 2|R|^{2|R|}\pa{\frac{1}{n}}^{|I_R|}\bbM\pa{\cV[R]}\enspace.$$
Since the function $R\to \pa{\frac{1}{n}}^{|I_R|}$ is super-multiplicative, we have all the ingredients to apply Proposition \ref{prop:feray} and Lemma~\ref{lem:subexponential},  leading to
$$\left|C_{\bS}(\pi)\right|= \left|\cumul\pa{\pa{Z_{\pi, S_t}}_{t\in [|\bS|]}}\right|\leq 2|\bS|^{8|\bS|}\pa{\frac{1}{n}}^{|supp(S^{(0)})|}\bbM\pa{\cV}\enspace.$$

\paragraph{Conclusion.} 
For $\pi=\pa{\pi_S}_{S\in \bS}\in \prod_{S\in \bS}\mathcal{A}_{S}$, we denote $cc(\pi)$ the number of connected components of $\cV\<1\>$ so that $\bbM\pa{\cV} = (|supp(S^{(0)})|^2/n)^{cc(\pi)-1}$. Hence, we have 
$$\left|C_{\bS}(\pi)\right|\leq 2|\bS|^{8|\bS|}\pa{\frac{1}{n}}^{|supp(S^{(0)})|}\pa{\frac{|supp(S^{(0)})|^2}{n}}^{cc(\pi)-1}\enspace.$$
Plugging this inequality in \eqref{eq:multilinearityseriation} leads us to 
\begin{equation}\label{eq:upper:C_bS}
|C_{\bS}|\leq 2|\bS|^{8|\bS|}\pa{\frac{1}{n}}^{|supp(S^{(0)})|}\sum_{cc=1}^{|\bS|}\left|\ac{\pi,\enspace cc(\pi)=cc}\right| \pa{\frac{|supp(S^{(0)})|^2}{n}}^{cc-1}\enspace.
\end{equation}

Let us fix $cc\in [|\bS|]$ and let us count the number of $\pi$'s with $cc(\pi)=cc$.  
\begin{lem}\label{lem:numbersigma}
For any $cc\in [|\bS|]$,
$$\left|\ac{\pi\in \prod_{S\in \bS}\cA_S,\enspace cc(\pi)=cc}\right|\leq (2|\bS|^2|supp(S^{(0)})|^2)^{|\bS|}n^{cc}\pa{2\rho}^{|supp(S^{(0)})|-|\bS|}\enspace.$$
\end{lem}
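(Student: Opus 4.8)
The plan is to enumerate the admissible $\pi=(\pi_S)_{S\in\bS}\in\prod_{S\in\bS}\cA_S$ with $cc(\pi)=cc$ by building them up component by component, exploring the components of $\bS$ along a rooted spanning forest of the collision graph $\cV\<1\>$ (which depends on the chosen $\pi$). The key point is that, when $cc(\pi)=cc$, the graph $\cV\<1\>$ has exactly $cc$ connected components, so any spanning forest of it has exactly $cc$ trees and $|\bS|-cc$ edges; in the exploration, exactly $cc$ components of $\bS$ will be ``free'', each contributing a factor $n$ for the choice of a base value, while each of the remaining $|\bS|-cc$ components is glued to an already-placed component through a forced collision and contributes no factor $n$.

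Before the exploration I would record two structural facts. The supports $supp(S_t)$, $t\in[|\bS|]$, are pairwise disjoint and partition $supp(S^{(0)})$ (a common vertex of two of them would create a weight-$1$ edge of $\cW$ between them, contradicting that they are distinct components of $\cW\<1\>[S^{(0)}]$), hence $\sum_t(|supp(S_t)|-1)=|supp(S^{(0)})|-|\bS|$; and, being a connected component, each $S_t$ has a connected ``intersection graph'' on $supp(S_t)$ with edges $\{i_s,j_s\}$, $s\in S_t$. We may also assume $i_s\ne j_s$ for all $s$, since otherwise $\1\{|\pi^*(i_s)-\pi^*(j_s)|\le\rho\}$ is the constant $1$ and the relevant cumulants, being of order $\ge 2$, vanish; consequently each edge $\{i_s,j_s\}$ forces $\pi(i_s)$ and $\pi(j_s)$ to differ by at most $\rho$ and, by injectivity of $\pi_S\in\cA_S$, to be distinct, leaving at most $2\rho$ values for one of them given the other.

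Now fix $cc$. For each $\pi$ with $cc(\pi)=cc$, choose a spanning forest $F$ of $\cV\<1\>$ (it has $cc$ trees and $|\bS|-cc$ edges) and root each of its trees; the number of such rooted forests on $[|\bS|]$ with $cc$ components is at most $\binom{|\bS|}{2}^{|\bS|-cc}|\bS|^{cc}\le|\bS|^{2|\bS|}$. For a fixed rooted forest $F$, bound the number of $\pi$ compatible with it by exploring the trees of $F$ in breadth-first order: if $S_t$ is a root, choosing a base value in $[n]$ and propagating along a spanning tree of its intersection graph gives at most $n\,(2\rho)^{|supp(S_t)|-1}$ admissible $\pi_{S_t}$; if $S_t$ is a non-root with parent $S_{t'}$, the edge $(t,t')\in F\subseteq\cV\<1\>$ forces $\pi(i)=\pi(i')$ for some $i\in supp(S_t)$, $i'\in supp(S_{t'})$, and since $\pi_{S_{t'}}$ is already fixed, choosing $i$ ($\le|supp(S_t)|$ ways), the value $\pi(i)$ among those taken on $supp(S_{t'})$ ($\le|supp(S_{t'})|$ ways), and then propagating, gives at most $|supp(S_t)|\,|supp(S_{t'})|\,(2\rho)^{|supp(S_t)|-1}\le|supp(S^{(0)})|^2(2\rho)^{|supp(S_t)|-1}$ choices. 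Every $\pi$ with $cc(\pi)=cc$ is counted for its chosen $F$, so multiplying over the $cc$ roots and $|\bS|-cc$ non-roots and summing over the at most $|\bS|^{2|\bS|}$ rooted forests yields
$$\left|\ac{\pi\in\textstyle\prod_{S\in\bS}\cA_S:\ cc(\pi)=cc}\right|\le|\bS|^{2|\bS|}\,n^{cc}\,|supp(S^{(0)})|^{2(|\bS|-cc)}\,(2\rho)^{|supp(S^{(0)})|-|\bS|}\le\pa{2|\bS|^2|supp(S^{(0)})|^2}^{|\bS|}n^{cc}(2\rho)^{|supp(S^{(0)})|-|\bS|},$$
which is the claimed bound.

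The main obstacle is the correct choice of exploration order: a naive fixed ordering of the components of $\bS$ would allow up to $|\bS|-1$ of them to be ``fresh'' even when $cc(\pi)=1$, producing an uncontrolled power of $n$. Organizing the exploration around a rooted spanning forest of $\cV\<1\>$ is what forces exactly $cc$ free components (hence $n^{cc}$); the remaining bookkeeping — the count of rooted forests, and the within-component propagation using connectivity of the intersection graph together with the $2\rho$-window — then lines up with the target bound without further difficulty.
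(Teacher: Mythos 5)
Your proof is correct and takes essentially the same approach as the paper: both fix a combinatorial certificate of the component structure of $\cV\<1\>$ (a rooted spanning forest for you; a permutation of $[|\bS|]$ together with a subgraph of the induced path and per-component exploration permutations in the paper) and then count compatible $\pi$ by paying $n$ for each root, at most $|supp(S^{(0)})|^2$ for each gluing, and $2\rho$ per propagation step inside the intersection graph of a component. Your rooted-forest encoding packages the partition and the exploration order into a single object and is a mild streamlining, and your explicit note that one may assume $i_s\neq j_s$ makes the $2\rho$-per-step count cleanly rigorous, but the underlying counting and the final bound are the same.
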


Combining Lemma \ref{lem:numbersigma} and Eq.~\eqref{eq:upper:C_bS} leads us to 
\begin{align*}
|C_{\bS}|\leq& 2|\bS|^{8|\bS|}\pa{\frac{1}{n}}^{|supp(S^{(0)})|}\sum_{cc=1}^{|\bS|}(2|\bS|^2|supp(S^{(0)})|^2)^{|\bS|}n^{cc}\pa{2\rho}^{|supp(S^{(0)})|-|\bS|} \pa{\frac{|supp(S^{(0)})|^2}{n}}^{cc-1}\\
\leq&2n\pa{2|\bS|^{10}|supp(S^{(0)})|^2}^{|\bS|} \pa{\frac{2\rho}{n}}^{|supp(S^{(0)})|}\pa{\frac{1}{2\rho}}^{|\bS|}\sum_{cc=1}^{|\bS|}|supp(S^{(0)})|^{2(cc-1)}\\
\leq& 4|\bS|\pa{2|\bS|^{10}|supp(S^{(0)})|^4}^{|\bS|} \pa{\frac{2\rho}{n}}^{|supp(S^{(0)})|-1}\pa{\frac{1}{2\rho}}^{|\bS|-1}\enspace.
\end{align*}
This concludes the proof of Lemma \ref{lem:reducedcumulantseriation}.

\subsection{Proof of Lemma \ref{lem:numbersigma}}\label{prf:numbersigma}

Let us fix $cc\in [|\bS|]$ and let us count the number of $\pi=\pa{\pi_S}_{S\in \bS}\in \prod_{S\in \bS}\mathcal{A}_S$'s with $cc(\pi)=cc$. To do so, we fix a permutation $\psi\in \mathcal{S}([|\bS|])$ of $|\bS|$ and we fix $\cG$ a subgraph of the set of edges $\ac{(\psi(1), \psi(2)),(\psi(2), \psi(3)),\ldots, (\psi(|\bS|-1), \psi(|\bS|))}$ having exactly $cc$ connected components. There are less than $|\bS|! 2^{|\bS|}\leq (2|\bS|)^{|\bS|}$ possible choices for $\cG$ and $\psi$. We first seek to upper-bound the number of $\pi$'s such that the induced weighted graph $\cV\<1\>$ with vertex set $[|\bS|]$ has the same connected components as $\cG$. We recall that $\cV\<1\>$ has an edge of weight $1$ between $t,t'\in [|\bS|]$ if and only if there exists $i\in S_t$ and $i'\in S_{t'}$ with $\pi(i)=\pi(i')$. In the following, we write $\cG_{\pi}$ for the graph $\cV\<1\>$ in order to emphasize the dependency with respect to $\pi$. Let us upper-bound the number of $\pi\in \prod_{S\in \bS}\mathcal{A}_S$ such that $\cG_{\pi}$ and $\cG$ have the same connected components. Without loss of generality, we suppose in the following that $\psi$ is the identity; in particular, the connected components of $\cG$ are intervals.

Let us fix $R\subseteq [|\bS|]$ a connected component of $\cG$ and we upper-bound the number of choices for $\pi^{(R)}=\pa{\pi_{S_t}}_{t\in R}\in \prod_{t\in R}\mathcal{A}_{S_t}$ connecting $R$, i.e such that the graph $\cG_{\pi^{(R)}}$ with vertex set $R$ is connected. By symmetry, we can suppose that $R=[|R|]$. Let us choose a permutation $\phi$ of $[|R|]$ and let us count the number of $\pi^{(R)}$ satisfying; for all $t\in [2,|R|]$, $\cG_{\pi^{(R)}}$ has an edge between $\phi(t)$ and $\ac{\phi(t'), t'\leq t-1}$. Without loss of generality, we now suppose that the chosen and fixed permutation $\phi$ is the identity. Let us choose for $t\in [2,|R|]$, an element $i^{t}_1\in supp(S_{t})$ and let us count the number of $\pi^{(R)}=\pa{\pi_{S_t}}_{t\in R}$ such that $\pi_{S_{t}}(i^{t})$ is in $\pa{\cup_{t'\leq t-1}\cup_{i\in S_{t'}}\pi_{S_{t'}}(i)}$. There are at most $\prod_{t\in [|R|]}|supp(S_{t})|$ possibilities for choosing the $i^{t}_1$'s. For $t\leq [|R|]$, let us order $S_{t}=\ac{s_1^{t},\ldots, s^{t}_{|S_{t}|}}$, with $i^{t}_1\in \ac{i_{s_1^{t}}, j_{s_1^{t}}}$, in such a way that, for all $j\in [2,|S_{t}|]$, $\ac{i_{s_j^{t}}, j_{s_j^{t}}}$ intersects $\bigcup_{j'\leq j-1}\ac{i_{s_{j'}^{t}}, j_{s_{j'}^{t}}}$. Then:

\begin{itemize}
\item For $S_{1}$, we choose a value in $[n]$ for $\pi_{S_1}(i_{s_1^{1}})$. That makes $n$ possibilities. Then, choosing the other labels for the elements $supp({S_{1}})$ in the order of appearance in the couples indexed by $\ac{s_1^{1},\ldots, s^1_{|S_{1}|}}$, we have at each time only $2\rho$ possibilities (each label must be at distance at most $\rho$ from the already chosen label of the point which is in the same $\ac{i_{s_j^{1}}, j_{s_j^{1}}}$ for some $j\leq |S_1|$). That makes at most $n(2\rho)^{|supp(S_1)|-1}$ possibilities.
\item For the other $S_{t}$'s, the same thing occurs except that the label of the first element $i^{t}_1$ must be in the labels already chosen previously. That makes at most $|I_R|\pa{2\rho}^{|supp\pa{S_{t}}|-1}$ possibilities, where we recall that $I_R=\cup_{t\leq |R|}supp(S_{t})$.
\end{itemize}
In total, counting the number of possibilities for choosing the $i^{t}_1$'s and for choosing the permutation $\phi$, we have $|R|^{|R|}n\pa{2\rho}^{|I_R|-|R|}|I_R|^{|R|-1}\prod_{t\in R}|supp(S_{t})|$ possibilities. Doing so for all the connected components of $\cG$, the number of $\pi$'s such that $\cG_{\pi}$ has the same connected components as $\cG$ is at most 
$$n^{cc}\pa{2\rho}^{|supp(S^{(0)})|-|\bS|}\pa{|\bS||supp(S^{(0)})|^2}^{|\bS|}\enspace.$$

Multiplying for the number of choices for the graph $G$ and for the permutation $\psi$, we deduce that the number of $\pi\in \prod_{S\in \bS}\mathcal{A}_{S}$ with $cc(\pi)=cc$ is at most 
$$(2|\bS|^2|supp(S^{(0)})|^2)^{\bS}n^{cc}\pa{2\rho}^{|supp(S^{(0)})|-|\bS|}\enspace,$$
which concludes the proof of the lemma.

\section{Proof of Lemma~\ref{lem:upperboundgeneralcumulant}}\label{sec:quazifactorizationseries}

This section is devoted to the proof of Lemma~\ref{lem:upperboundgeneralcumulant}. We first treat the first point of the lemma (case $r\geq 1$), postponing to Section \ref{ref:caser0} the proof for the particular case $r=0$. Assume that $Z_{1},\ldots,Z_{\ell}$ have mixed moments given by \eqref{eq:formula:general_momodent}: for all $\Delta\subseteq R$
\[
\E\cro{\prod_{t\in \Delta}Z_{t}}=\eta^{|I_\Delta|}\prod_{j=1}^q \frac{1}{1-x_0}\times\ldots\times \frac{1}{1-(|I_\Delta\cap A_{j}|-1)x_0}\times\prod_{i=1}^r (1-y_0)\times\ldots\times \pa{1-\pa{|I_\Delta \cap B_{i}|-1}y_0}. \nonumber\ 
\]
The random variables $\pa{Z_{t}}_{t\in [\ell]}$ have their moments that are close to be factorized. If instead of~\eqref{eq:formula:general_momodent}, we had considered random variables $\pa{\tilde{Z}_{t}}_{t\in [\ell]}$, such that, for any $\Delta\subseteq R$, we had 
\[
\E\cro{\prod_{t\in \Delta}\tilde{Z}_{t}}=\eta^{|I_\Delta|}\prod_{j=1}^q \left[\frac{1}{1-x_0}\right]^{|I_{\Delta}\cap A_j|}\prod_{i=1}^r (1-y_0)^{|I_\Delta \cap B_{i}|}\enspace , \nonumber\ 
\]
then, we would have $\E\cro{\prod_{t\in \Delta}\tilde{Z}_{t}}= \prod_{t\in \Delta}\E\cro{\tilde{Z}_{t}}$. Since the mixed moments of $\tilde{Z}_{t}$ are the same as if they were independent, it then follows from Lemmas~\ref{lem:independentcumulant} and~\ref{lem:mobiusformula} that $ \cumul\pa{\pa{\tilde{Z}_{t}}_{t\in [\ell]}}=0$.  The purpose of Lemma~\ref{lem:upperboundgeneralcumulant} is to quantify to what extent the fact that the moments of the $Z_{t}$'s  in~\eqref{eq:formula:general_momodent} are close to be factorized implies that their joint cumulant is small.

\subsection{Small-cumulant and quasi-factorization}

A key result of F\'eray~\cite{feray2018} is Proposition~5.8 (in~\cite{feray2018}), that we recall here informally. Let $Z_{1},\ldots,Z_{\ell}$ be random variables, with a distribution depending on some parameter $n$, and let $\cL$ be a weighted graph with vertex set $[\ell]$. If for all $\Delta \subseteq [\ell]$ of size at least 2 we have the quasi-factorisation
\begin{equation} \label{eq:quasi-factor}
\prod_{\delta\subseteq \Delta}\E\cro{\prod_{t\in\delta}Z_{t}}^{(-1)^{|\Delta|-|\delta|}}=1+O_{n}(\bbM(\cL[\Delta])),
\end{equation}
then, we have the small cumulant property  for all $\Delta \subseteq [\ell]$ of size at least 2
\begin{equation} \label{eq:small-cumul}
\cumul\pa{\pa{Z_{t}}_{t\in \Delta}}=\prod_{t\in\Delta} \E\cro{Z_{t}} \times O_{n}(\bbM(\cL[\Delta])),
\end{equation}
where $O_{n}(\cdot)$ refers to the parameter $n$, and $\bbM(\cL[\Delta])$ is the weight of the graph $\cL$ restricted to the node set $\Delta$, see Section~\ref{sec:weighted-graph} for the definition of $\bbM$.

To prove Lemma~\ref{lem:upperboundgeneralcumulant}, we provide a non-asymptotic and quantitative version of this result.
Instead of considering moments $\E\cro{\prod_{t\in\delta}Z_{t}}$ as real numbers, our approach is to consider them as bivariates series in $(x_{0},y_{0})$. In this direction, 
for any subset $\Delta\subseteq [\ell]$, we introduce the multivariate series
\begin{align}\label{eq:definition_u_Delta}
u_\Delta(x,y):=&\eta^{|I_\Delta|}\prod_{j=1}^q \frac{1}{1-x}\times\ldots\times \frac{1}{1-(|I_\Delta\cap A_{j}|-1)x}\cdot \prod_{i=1}^{r}(1-y)\times\ldots\times \pa{1-\pa{|I_\Delta \cap B_{i}|-1}y}, 
\end{align}
with the convention that $u_{\emptyset}=1$. We have $\E\cro{\prod_{t\in \Delta}Z_{t}}= u_\Delta(x_0,y_0)$, and 
 $u_\Delta(x,y)$  is well-defined when $x<\frac{1}{L}$, where $L=|I_{[\ell]}|$.

Having in mind the Möbius formula (Lemma~\ref{lem:mobiusformula}) for cumulants,  we define, for $\Delta\subseteq [\ell]$, the bivariate series
 \begin{equation}\label{eq:definition:kappa_Delta}
 \kappa_{\Delta}(x,y):=\sum_{G\in \mathcal{P}[\Delta]}m(G)\prod_{\delta\in G}u_{\delta}(x,y)\enspace ,
 \end{equation}
so that 
 \begin{equation}\label{eq:cumul:kappa}
    \cumul\pa{\pa{Z_{t}}_{t\in [\ell]}}=\kappa_{[\ell]}(x_0,y_0)\enspace.
    \end{equation}
 To derive a non asymptotic upper bound on $\kappa_{[\ell]}(x_0,y_0)$, we will first control the so-called \emph{order} (defined in Section~\ref{sec:order})  of the bivariate series $\kappa_{[\ell]}(x,y)$, by building on ideas from~\cite{feray2018}.  Then, this will allow us to work around the non-asymptotic bound by carefully bounding the terms. The counterpart of the 
 quasi-factorisation property \eqref{eq:quasi-factor} in our framework, will be some lower bound on the order of  the following bivariate rational function $P_{\Delta}[u]$.
 
 \begin{definition}\label{defi:P_delta}
Consider any family $(u_{\Delta}(x,y))$ with $\Delta\subseteq [\ell]$ of bivariate power series with an open domain of convergence.  
For $\Delta\subseteq  [\ell]$ non empty, we define the multivariate rational function 
\begin{equation}\label{eq:definition_P_Delta}
P_{\Delta}[u](x,y):=\prod_{\delta\subseteq \Delta}\pa{u_{\delta}(x,y)}^{(-1)^{|\Delta|-|\delta|}}\enspace. 
\end{equation}
\end{definition}

A key property of the rational function $P_{\Delta}[v](x,y)$ is that when the power series $(v_\Delta(x,y))_{\Delta\subset[\ell]}$ can be factorized, as if they were mixed moments of variables with some independence, then $P_{\Delta}[v](x,y)=1$.

\begin{lem}\label{lem:independentquasifacto}
Let $\ell \geq 1$ and $v=(v_\Delta(x,y))_{\Delta\subseteq[\ell]}$ be a family of power series indexed by subsets of $[\ell]$. For some $(x,y)$ in the domain, we suppose that  for all $\Delta\subseteq [\ell]$, $v_\Delta(x,y)\neq 0$. Let $\Delta\subseteq[\ell]$, and let us suppose that there exist $I_1$ and $I_2$ forming a partition of $\Delta$ such that, for all $\delta\subseteq \Delta$, one has $v_\delta(x,y)=v_{\delta\cap I_1}(x,y)v_{\delta\cap I_2}(x,y)$. Then, $P_\Delta[v](x,y)=1$.
\end{lem}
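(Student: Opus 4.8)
The plan is to compute $\log P_\Delta[v](x,y)$ directly and show it vanishes by a Möbius-inversion / inclusion–exclusion argument. Fix the point $(x,y)$ in the domain and suppress it from the notation. By hypothesis every $v_\delta$ is nonzero there, so we may take logarithms: set $f(\delta) := \log v_\delta$ for $\delta \subseteq \Delta$, and recall $f(\emptyset)=\log v_\emptyset$ (the convention $u_\emptyset = 1$ gives $f(\emptyset)=0$, though we will see it does not even matter). Then
\[
\log P_\Delta[v] = \sum_{\delta \subseteq \Delta} (-1)^{|\Delta|-|\delta|} f(\delta),
\]
so it suffices to show this alternating sum is zero. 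The factorization hypothesis reads $f(\delta) = f(\delta\cap I_1) + f(\delta\cap I_2)$ for every $\delta\subseteq\Delta$, where $\{I_1,I_2\}$ partitions $\Delta$ and $I_2\neq\emptyset$ (both parts nonempty, since they form a partition of $\Delta$ and the statement is trivial if $\Delta=\emptyset$; if $\Delta$ has one element one of the parts is empty, but then $P_\Delta[v] = v_\Delta/v_\emptyset = v_{I_1}v_{I_2}/v_\emptyset$ and one checks directly it equals $1$ using $v_\emptyset\cdot v_\emptyset$... — cleaner to just assume $|\Delta|\ge 2$, as otherwise $P_\Delta$ is the trivial case; actually the statement as phrased with a genuine partition into two nonempty parts forces $|\Delta|\ge 2$).

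The key computation: substitute the factorization and split the sum over $\delta = \delta_1 \sqcup \delta_2$ with $\delta_1\subseteq I_1$, $\delta_2\subseteq I_2$, using $|\Delta|-|\delta| = (|I_1|-|\delta_1|) + (|I_2|-|\delta_2|)$ so that $(-1)^{|\Delta|-|\delta|} = (-1)^{|I_1|-|\delta_1|}(-1)^{|I_2|-|\delta_2|}$. This gives
\[
\log P_\Delta[v] = \sum_{\delta_1\subseteq I_1}\sum_{\delta_2\subseteq I_2} (-1)^{|I_1|-|\delta_1|}(-1)^{|I_2|-|\delta_2|}\bigl(f(\delta_1)+f(\delta_2)\bigr).
\]
For the first piece, $f(\delta_1)$ does not depend on $\delta_2$, so the $\delta_2$-sum factors out as $\sum_{\delta_2\subseteq I_2}(-1)^{|I_2|-|\delta_2|} = \sum_{k=0}^{|I_2|}\binom{|I_2|}{k}(-1)^{|I_2|-k} = (1-1)^{|I_2|} = 0$ since $I_2\neq\emptyset$. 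Symmetrically, the $f(\delta_2)$ piece is killed by the empty alternating sum $\sum_{\delta_1\subseteq I_1}(-1)^{|I_1|-|\delta_1|} = 0$ since $I_1\neq\emptyset$. Hence $\log P_\Delta[v] = 0$, i.e.\ $P_\Delta[v](x,y)=1$.

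I do not anticipate a genuine obstacle here — the only care needed is bookkeeping: confirming that $v_\delta(x,y)\neq 0$ on the chosen point (given in the hypothesis) legitimizes taking logs and that the exponents $(-1)^{|\Delta|-|\delta|}$ indeed split multiplicatively under the disjoint decomposition $\delta=\delta_1\sqcup\delta_2$. One can also present the argument multiplicatively without logarithms, grouping the product $\prod_{\delta\subseteq\Delta} v_\delta^{(-1)^{|\Delta|-|\delta|}}$ into $\bigl(\prod_{\delta_1\subseteq I_1} v_{\delta_1}^{(-1)^{|I_1|-|\delta_1|}\sum_{\delta_2}(-1)^{|I_2|-|\delta_2|}}\bigr)\bigl(\text{symmetric}\bigr)$ and observing the exponents are $0$; the logarithmic phrasing is just more transparent. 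This lemma is the analogue, in the ring of bivariate power series, of the classical fact that the joint cumulant of two independent blocks of variables vanishes (Lemma~\ref{lem:independentcumulant}); indeed $P_\Delta[v]=1$ is exactly the "quasi-factorization with remainder $0$" that, via the framework recalled around~\eqref{eq:quasi-factor}–\eqref{eq:small-cumul}, forces $\kappa_\Delta = 0$.
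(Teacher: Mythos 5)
Your proof is correct and uses essentially the same inclusion--exclusion collapse as the paper: split $\delta = \delta_1 \sqcup \delta_2$, use the disjoint factorization to separate variables, and kill each half with $\sum_{\delta_i\subseteq I_i}(-1)^{|I_i|-|\delta_i|}=(1-1)^{|I_i|}=0$ since $I_1,I_2\neq\emptyset$. The only small caveat is that $v_\delta(x,y)\neq 0$ does not guarantee $v_\delta(x,y)>0$, so $\log v_\delta$ may not be well-defined over $\mathbb{R}$; the multiplicative phrasing you mention at the end avoids this entirely and is exactly what the paper does, so the substance is identical.
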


In the two following subsection, we provide a counterpart of the small cumulant property~\eqref{eq:small-cumul}, by proving that
the order of $\kappa_{[\ell]}(x,y)$ is at least that of $P_{[\ell]}[u](x,y)-1$.  Then, we will lower bound the latter  for $u$ defined in~\eqref{eq:definition_u_Delta}.

\subsection{Order of a power series}\label{sec:order}

For our purpose, we introduce a  specific notion of \emph{order} of a power series, which slightly differs from that in the literature. 
Let us first introduce a partial order relation on $\mathbb{N}^2$. 
\begin{definition}\label{def:orderrelation}
Let $s_0,s'_0\geq 0$ and $s_1,s'_1\geq 0$. We write that $(s_1,s'_1)\succcurlyeq (s_0,s'_0)$ if and only if $s_1\geq s_0$ and $s_1+s'_1\geq  s_0+s'_0$. The binary relation $\succcurlyeq$ is an order relation.
\end{definition}

\begin{lem}\label{lem:minimumexposant}
Any subset  $S\subseteq \N^2$ admits a unique infimum for the order relation $\succcurlyeq$. This infimum is not necessarily in $S$.
\end{lem}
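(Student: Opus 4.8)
The statement concerns the partial order $\succcurlyeq$ on $\mathbb{N}^2$ defined by $(s_1,s_1')\succcurlyeq(s_0,s_0')$ iff $s_1\geq s_0$ and $s_1+s_1'\geq s_0+s_0'$, and asserts that every nonempty subset $S\subseteq\mathbb{N}^2$ has a (unique) infimum in $(\mathbb{N}^2,\succcurlyeq)$, not necessarily belonging to $S$. (The empty subset trivially has an infimum too only if the poset has a top element, which it does not; so I read the statement as applying to nonempty $S$, or equivalently one allows the infimum to be "$+\infty$" in a suitable completion — I would add a sentence clarifying this convention.) The natural approach is to produce the candidate infimum by an explicit formula and then verify the two defining properties of an infimum: that it is a lower bound for $S$, and that it dominates every lower bound of $S$.

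\textbf{Key steps.} First I would set $a:=\inf\{s_1:(s_1,s_1')\in S\}$ and $b:=\inf\{s_1+s_1':(s_1,s_1')\in S\}$; both are well-defined nonnegative integers since $S$ is a nonempty subset of $\mathbb{N}^2$ and $\mathbb{N}$ is well-ordered. I claim the infimum of $S$ is $m:=(a,\,b-a)$, provided $b\geq a$ — and indeed $b\geq a$ always holds, because for any $(s_1,s_1')\in S$ we have $s_1+s_1'\geq s_1\geq a$, so every element of the set defining $b$ is $\geq a$, hence $b\geq a$; thus $b-a\in\mathbb{N}$ and $m\in\mathbb{N}^2$. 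Second, I verify $m$ is a lower bound: for any $(s_1,s_1')\in S$, $s_1\geq a$ by definition of $a$, and $s_1+s_1'\geq b=a+(b-a)$ by definition of $b$; these are exactly the two inequalities required for $(s_1,s_1')\succcurlyeq m$. Third, I verify $m$ is the greatest lower bound: if $(t_1,t_1')$ is any lower bound of $S$, then for every $(s_1,s_1')\in S$ we have $s_1\geq t_1$ and $s_1+s_1'\geq t_1+t_1'$; taking infima over $S$ gives $a\geq t_1$ and $b\geq t_1+t_1'$, i.e. $t_1\leq a$ and $t_1+t_1'\leq b=a+(b-a)$, which is precisely $m\succcurlyeq(t_1,t_1')$. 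Uniqueness of the infimum is automatic from antisymmetry of $\succcurlyeq$: the binary relation $\succcurlyeq$ is a genuine partial order (reflexivity and transitivity are immediate; antisymmetry holds because $s_1\geq s_0$, $s_0\geq s_1$ force $s_1=s_0$, and then $s_1+s_1'\geq s_0+s_0'$, $s_0+s_0'\geq s_1+s_1'$ force $s_1'=s_0'$), so two infima would each dominate the other and hence coincide. Finally, to justify the remark that the infimum need not lie in $S$, I would exhibit $S=\{(1,0),(0,1)\}$: here $a=0$, $b=1$, so $\inf S=(0,1)\notin$... wait, $(0,1)\in S$; better take $S=\{(2,0),(0,2)\}$, giving $a=0$, $b=2$, $\inf S=(0,2)\notin S$.

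\textbf{Main obstacle.} There is essentially no deep obstacle here — the result is a soft order-theoretic fact — but the one point deserving care is the consistency check that $m=(a,b-a)$ actually lands in $\mathbb{N}^2$, i.e. that $b\geq a$; this must be argued from the structure of $S$ (every coordinate-sum bounds the first coordinate from above... rather, the first coordinate from below is $a$, and the sum is at least the first coordinate, hence at least $a$), as I did above. The only other subtlety is the convention for $S=\emptyset$ or, more relevantly for the paper's later use, the implicit extension of the order to "order $+\infty$" for the zero series; I would mention this parenthetically so that the subsequent use in Section~\ref{sec:order} of "$\kappa_{[\ell]}(x,y)$ has order at least that of $P_{[\ell]}[u](x,y)-1$" is not left hanging on an edge case.
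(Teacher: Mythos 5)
Your construction of the infimum as $m=(a,b-a)$ with $a=\inf\{s_1:(s_1,s_1')\in S\}$ and $b=\inf\{s_1+s_1':(s_1,s_1')\in S\}$ is exactly the paper's argument (the paper writes $s_0=a$ and $s_0'=b-a$), and your verification of the lower-bound and greatest-lower-bound properties matches theirs; you add the useful sanity check that $b\geq a$ so that $m\in\N^2$.

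One small error: your proposed counterexample $S=\{(2,0),(0,2)\}$ does \emph{not} show that the infimum can fall outside $S$, since there $a=0$, $b=2$, so $\inf S=(0,2)$, which \emph{is} an element of $S$. A working example is $S=\{(1,0),(0,2)\}$: here $a=0$, $b=\min\{1,2\}=1$, so $\inf S=(0,1)\notin S$, and one readily checks $(1,0)\succcurlyeq(0,1)$ and $(0,2)\succcurlyeq(0,1)$.
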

We are now in position to define the \emph{order} of a multivariate series.
\begin{definition}\label{def:minimumexposant}
Let $f(x,y)=\sum_{s,s'\geq 0}f_{s,s'}x^sy^{s'}$ a multivariate series with positive radius. Let $\mathcal{D}(f)=\ac{(s,s')\in \N^2,\enspace f_{s,s'}\neq 0}$. We define the \textbf{order} of $f$ as the infimum of $\mathcal{D}(f)$ with respect to the partial order $\succcurlyeq$. It is referred as $\mathrm{ord}(f):=\inf \mathcal{D}(f)$.
\end{definition}

Our objective in this subsection is to control $\mathrm{ord}\pa{\kappa_{\Delta}}$ for the power series $\kappa_{\Delta}(x,y)$ defined by \eqref{eq:definition:kappa_Delta}.
For that purpose, we introduce a notion of {\it polynomial graph}, which are weighed graphs whose edges have weights $x$, $y$ or $1$. Consider such a polynomial graph $\mathcal{L}$ with edges $w_e\in \ac{x,y,1}$. Consider, if it exists (i.e if $\cL$ is connected), a spanning tree $\cT$ of $\cL$. We define the weight of $\cT$ as the monomial in $x$ and $y$ defined by $\prod_{e\in \cT}w_e$, and we write $deg(\cT)\in \N^2$ the \textit{degree} of this monomial, i.e. if $\prod_{e\in \cT}w_e=x^sy^{s'}$, then $deg(\cT):=(s,s')$.

\begin{definition}\label{def:minimumexposantgraph}
For $\mathcal{L}$ a polynomial graph, we define the \textbf{order} of $\cL$, and we write $\mathrm{ord}(\cL)$ the infimum (w.r.t. $\succcurlyeq$) of $deg(\mathcal{T})$ over all spanning trees $\mathcal{T}$, with the convention that $\inf \emptyset = (+\infty, +\infty)$.
\end{definition}

It turns out that the order of $\cL$ is achieved by a spanning tree $\mathcal{T}$, its there exists any. 
\begin{lem}\label{lem:minimumexposantgraph}
Let $\cL$ be a connected   polynomial graph. There exists a spanning tree $\mathcal{T}$  such that $\mathrm{ord}(\cL)=deg(\mathcal{T})$.
\end{lem}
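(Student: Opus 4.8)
\emph{Plan of proof.} The plan is to exhibit one spanning tree $\mathcal{T}^\star$ of $\cL$ whose degree equals $\mathrm{ord}(\cL)$; since $\mathrm{ord}(\cL)$ is by definition a lower bound, with respect to $\succcurlyeq$, for all the $deg(\mathcal{T})$'s, this is exactly what is needed. Write $N$ for the number of vertices of $\cL$, so every spanning tree has $N-1$ edges, and for a spanning tree $\mathcal{T}$ let $a_{\mathcal T}$ and $b_{\mathcal T}$ be the number of its edges of weight $x$ and of weight $y$ respectively, so that its weight is $x^{a_{\mathcal T}}y^{b_{\mathcal T}}$, i.e. $deg(\mathcal{T})=(a_{\mathcal T},b_{\mathcal T})$, while $N-1-a_{\mathcal T}-b_{\mathcal T}$ counts its weight-$1$ edges. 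First I would pin down $\mathrm{ord}(\cL)$ concretely: by Lemma~\ref{lem:minimumexposant} and the definition of $\succcurlyeq$, the infimum of $S=\ac{(a_{\mathcal T},b_{\mathcal T}):\ \mathcal T\text{ a spanning tree of }\cL}$ is $(a,\,m-a)$, where $a=\min_{\mathcal T}a_{\mathcal T}$ and $m=\min_{\mathcal T}(a_{\mathcal T}+b_{\mathcal T})$. Hence it suffices to produce a single spanning tree realizing $a_{\mathcal T}=a$ and $a_{\mathcal T}+b_{\mathcal T}=m$ simultaneously.

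Next I would build $\mathcal{T}^\star$ greedily, prioritising weight-$1$ edges over weight-$y$ edges over weight-$x$ edges: first take a spanning forest $F_1$ of the subgraph $\cL_1$ of $\cL$ formed by the weight-$1$ edges; then enlarge it to a spanning forest $F_{1,y}\supseteq F_1$ of the subgraph $\cL_{1,y}$ of $\cL$ formed by the weight-$1$ and weight-$y$ edges, by adding weight-$y$ edges that create no cycle; finally complete $F_{1,y}$ to a spanning tree $\mathcal{T}^\star$ of $\cL$ by adding weight-$x$ edges, which is possible because $\cL$ is connected. Let $c_1$ and $c_{1,y}$ denote the numbers of connected components of $\cL_1$ and of $\cL_{1,y}$ (on the full vertex set). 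Since no further weight-$1$ or weight-$y$ edge is added after $F_{1,y}$, we get $\mathcal{T}^\star\cap\cL_1=F_1$ and $\mathcal{T}^\star\cap\cL_{1,y}=F_{1,y}$, so $\mathcal{T}^\star$ has exactly $N-c_1$ weight-$1$ edges and exactly $N-c_{1,y}$ edges of weight $1$ or $y$; therefore $a_{\mathcal{T}^\star}=(N-1)-(N-c_{1,y})=c_{1,y}-1$ and $a_{\mathcal{T}^\star}+b_{\mathcal{T}^\star}=(N-1)-(N-c_1)=c_1-1$. On the other hand, for \emph{any} spanning tree $\mathcal T$, its weight-$1$ edges form a forest inside $\cL_1$ and so number at most $N-c_1$, giving $a_{\mathcal T}+b_{\mathcal T}\geq c_1-1$; likewise its weight-$1$ and weight-$y$ edges form a forest inside $\cL_{1,y}$, giving $a_{\mathcal T}\geq c_{1,y}-1$. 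Comparing these inequalities with the values computed for $\mathcal{T}^\star$ forces $a=c_{1,y}-1=a_{\mathcal{T}^\star}$ and $m=c_1-1=a_{\mathcal{T}^\star}+b_{\mathcal{T}^\star}$, whence $deg(\mathcal{T}^\star)=(a,\,m-a)=\mathrm{ord}(\cL)$, which would finish the proof.

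The only point requiring care, and the one I would flag as the \emph{main obstacle}, is the claim that this single greedy tree simultaneously minimises the number of $x$-edges \emph{and} the number of ($x$ or $y$)-edges; this is precisely the standard matroid-greedy (Kruskal) property of the graphic matroid, namely that for each colour ``prefix'' the restriction of $\mathcal{T}^\star$ to that prefix is a maximum forest of the corresponding subgraph, and the two elementary rank/forest inequalities above are what turn it into the desired equalities. Everything else (identifying $\mathrm{ord}(\cL)$ via Lemma~\ref{lem:minimumexposant}, and the edge-counting) is routine; the degenerate case $N=1$, where the empty tree gives $deg=(0,0)=\mathrm{ord}(\cL)$, is trivially covered.
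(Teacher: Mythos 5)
Your proof is correct and uses essentially the same Kruskal‑type greedy construction as the paper: build a spanning tree by preferring weight‑$1$ edges, then weight‑$y$ edges, then weight‑$x$ edges, and count components of the weight‑$1$ and weight‑$(1,y)$ subgraphs to see that this tree simultaneously minimizes $a_{\mathcal T}$ and $a_{\mathcal T}+b_{\mathcal T}$. Your write‑up is somewhat more explicit than the paper's (which works in the contracted graph $\cG_{CC}$ and leaves the lower bound on the number of $x$‑edges of an arbitrary spanning tree implicit), but the underlying idea is the same.
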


In the following lemma, we adapt a result from \cite{feray2018} to the setup of multivariate series, which enables us to lower-bound the \textit{order} of a polynomial graph $\cL$ by the \textit{order} of well-chosen sub-graphs. Given two partition $\bV$ and $\bW$ of a set $V$, we write $\mathbf{W}\vee \mathbf{V}$ as the finest partition which is coarser than $\bW$ and coarser than $\bW$. In other words, this partition is obtained by merging the groups of $\bW$ and $\bV$ whenever they intersect.

Given $\bold{\Delta}=\ac{\Delta_1,\ldots, \Delta_m}$ a family of subsets of $[\ell]$, we write, for $i\in [m]$, $\Pi\pa{\Delta_i}$ the partition of $[\ell]$ made of $\Delta_i$ and singletons. The partition $\Pi(\Delta_1)\vee\ldots\vee\Pi(\Delta_m)$ is defined as previously as the finest partition with is coarser than all $\Pi(\Delta_i)$.

\begin{lem}\label{lem:decompmimimumexposantgraph}
Let $\cL=(V,E)$ by a polynomial graph. Let $\mathbf{V}=(V_1,\ldots, V_{|CC|})$ be the partition of the vertices induced by the connected components of $\cL$ while only keeping the edges with weights one.  Let $\bold{\Delta}=\ac{\Delta_1,\ldots, \Delta_m}$ be another partition of $V$ that satisfies  $\mathbf{V}\vee \Pi\pa{\Delta_1}\vee \ldots\vee \Pi\pa{\Delta_m}=\{V\}$. Then, we have  
$$\sum_{i=1}^m \mathrm{ord}\pa{\cL[\Delta_i]}\succcurlyeq \mathrm{ord}\pa{\cL}\enspace.$$
\end{lem}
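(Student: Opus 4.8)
\textbf{Plan for the proof of Lemma~\ref{lem:decompmimimumexposantgraph}.}
The statement is a combinatorial/algebraic fact about polynomial graphs, and I would prove it by a direct spanning-tree construction. By Lemma~\ref{lem:minimumexposantgraph}, for each $i\in[m]$ pick a spanning tree $\cT_i$ of $\cL[\Delta_i]$ with $deg(\cT_i)=\mathrm{ord}(\cL[\Delta_i])$ (if some $\cL[\Delta_i]$ is disconnected the left-hand side is $(+\infty,+\infty)$ in at least one coordinate and the inequality is trivial, so assume all are connected). The goal is to assemble from the $\cT_i$ a spanning tree $\cT$ of the whole graph $\cL$ whose degree is $\succcurlyeq \sum_i deg(\cT_i)$ — actually one wants $\mathrm{ord}(\cL)\preccurlyeq deg(\cT)\preccurlyeq\sum_i deg(\cT_i)$ plus possibly some weight-$1$ edges, which contribute $(0,0)$ to the degree and hence do not increase it. The hypothesis $\mathbf{V}\vee\Pi(\Delta_1)\vee\dots\vee\Pi(\Delta_m)=\{V\}$ is exactly what guarantees that the forest $\bigcup_i \cT_i$, together with the weight-$1$ edges of $\cL$ (which connect the blocks $V_j$), spans a connected subgraph of $\cL$ on all of $V$.

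The key steps, in order, are: (1) reduce to the case where each $\cL[\Delta_i]$ is connected; (2) invoke Lemma~\ref{lem:minimumexposantgraph} to realize each $\mathrm{ord}(\cL[\Delta_i])$ by an actual spanning tree $\cT_i$; (3) form the disjoint union $F=\bigsqcup_i \cT_i$, a spanning forest of $\cL$ (it touches every vertex because the $\Delta_i$ partition $V$), and observe that the blocks of its connected-component partition refine, and in fact equal, the partition $\bold{\Delta}$ — wait, more precisely $F$'s components are the $\Delta_i$; (4) use the join hypothesis to argue that adding weight-$1$ edges of $\cL$ to $F$ connects all the $\Delta_i$: since $\mathbf{V}\vee\Pi(\Delta_1)\vee\dots=\{V\}$ and each $\Pi(\Delta_i)$ is already ``inside'' $F$, the remaining merging must be achieved through the blocks $V_j$ of $\mathbf{V}$, i.e.\ through weight-$1$ edges; pick a subset of weight-$1$ edges forming a tree on the ``quotient'' and add them to $F$ to get a spanning tree $\cT$ of $\cL$; (5) compute $deg(\cT)=\sum_i deg(\cT_i)+ (\text{degree of the added weight-}1\text{ edges})=\sum_i deg(\cT_i)+(0,0)=\sum_i\mathrm{ord}(\cL[\Delta_i])$; (6) conclude $\mathrm{ord}(\cL)\preccurlyeq deg(\cT)=\sum_i\mathrm{ord}(\cL[\Delta_i])$, which is the claimed inequality (recall $\mathrm{ord}(\cL)$ is an infimum of degrees, so it is $\preccurlyeq$ the degree of any particular spanning tree).

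One subtlety to handle carefully in step (4): after contracting each block $\Delta_i$ of $F$ to a point, I need that the weight-$1$ edges of $\cL$, projected to this contracted multigraph, yield a \emph{connected} graph; this is precisely the content of $\mathbf{V}\vee\Pi(\Delta_1)\vee\dots\vee\Pi(\Delta_m)=\{V\}$ — the partition $\mathbf{V}$ records which vertices are joined by weight-$1$ paths, and joining $\mathbf{V}$ with all the $\Pi(\Delta_i)$ gives the single block $\{V\}$, meaning that once the $\Delta_i$ are each already connected, the weight-$1$ structure suffices to connect everything. Then choose any spanning tree of that connected contracted graph and lift its edges back. I should also double-check a monotonicity point used implicitly: adding edges to a spanning forest to make a spanning tree, using only weight-$1$ (degree $(0,0)$) edges, does not change the degree, and $\sum_i(s_i,s_i') \succcurlyeq \sum_i(s_i,s_i')$ trivially; more importantly I use that $deg$ is additive over edge-disjoint unions and that $(0,0)$ is the minimum, so the monomial $\prod_{e\in\cT}w_e$ has degree exactly $\sum_i deg(\cT_i)$.

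\textbf{Main obstacle.} The only real work is step (4): formalizing why the join condition lets one complete the forest $\bigsqcup_i\cT_i$ to a spanning tree of $\cL$ using only weight-$1$ edges, i.e.\ translating the lattice-of-partitions statement $\mathbf{V}\vee\Pi(\Delta_1)\vee\dots=\{V\}$ into the graph-connectivity statement ``contract the $\Delta_i$, keep weight-$1$ edges, get a connected graph.'' This is a standard but slightly fiddly argument about how the join of partitions corresponds to transitive closure of the union of the corresponding equivalence relations; once it is set up, everything else is bookkeeping on degrees. I expect this lemma is the graph-theoretic heart behind the ``quasi-factorization $\Rightarrow$ small cumulant'' mechanism, mirroring the role of connectivity in Lemma~\ref{lem:independentcumulant}.
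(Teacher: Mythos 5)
Your proposal is correct and follows essentially the same argument as the paper: reduce to the case where each $\cL[\Delta_i]$ is connected, use Lemma~\ref{lem:minimumexposantgraph} to pick spanning trees $\cT_i$ realizing $\mathrm{ord}(\cL[\Delta_i])$, observe that the union of the $\cT_i$'s with the weight-$1$ edges of $\cL$ spans $V$ by the join hypothesis, extract a spanning tree $\cT$, and conclude $\mathrm{ord}(\cL)\preccurlyeq deg(\cT)\preccurlyeq \sum_i deg(\cT_i)$. The only cosmetic difference is that you complete the forest $\bigsqcup_i\cT_i$ by adding a minimal set of weight-$1$ edges (getting equality for $deg(\cT)$), while the paper forms $E_0$ with \emph{all} weight-$1$ edges and extracts an arbitrary spanning tree, getting $deg(\cT)\preccurlyeq deg(E_0)$; both give the claimed bound.
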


We gather in next lemma some properties of the \textit{order} of multivariate power series that we will use throughout the proofs. 

\begin{lem}\label{lem:basicpropertiesorder}
Let $f,g$ be multivariate series with positive radius. We have;
\begin{enumerate}
\item $\mathrm{ord}(fg)\succcurlyeq \mathrm{ord}(f)+\mathrm{ord}(g)$;
\item $\mathrm{ord}(f+g)\succcurlyeq \inf\pa{\mathrm{ord}(f), \mathrm{ord}(g)}$, where the infimum is taken for the order $\succcurlyeq$;
\item If $f(0,0)=0$, then $\frac{1}{1+f}-1$ is a multivariate series with positive radius and $\mathrm{ord}(\frac{1}{1+f}-1)=\mathrm{ord}(f)$.
\end{enumerate}
\end{lem}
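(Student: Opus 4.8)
The plan is to reduce all three items to the explicit shape of the infimum for the order $\succcurlyeq$, and then argue each in turn. First I would record that for a nonzero series $f(x,y)=\sum_{s,s'}f_{s,s'}x^sy^{s'}$, setting $a(f)=\min\{s:(s,s')\in\mathcal{D}(f)\}$ and $c(f)=\min\{s+s':(s,s')\in\mathcal{D}(f)\}$, one has $\mathrm{ord}(f)=\big(a(f),\,c(f)-a(f)\big)$: indeed a lower bound $(u,v)$ of $\mathcal{D}(f)$ must satisfy $u\le a(f)$ and $u+v\le c(f)$, and the $\succcurlyeq$-largest such pair is $(a(f),c(f)-a(f))$ (note $c(f)\ge a(f)$), which by Lemma~\ref{lem:minimumexposant} is $\inf\mathcal{D}(f)$. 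In particular $(0,0)$ is the $\succcurlyeq$-minimum of $\mathbb{N}^2$, so $\mathrm{ord}(g)\succcurlyeq(0,0)$ always, with equality exactly when $g(0,0)\neq0$. I will use three elementary facts: (i) $\mathrm{ord}(f)\preccurlyeq(s,s')$ for every $(s,s')\in\mathcal{D}(f)$; (ii) $\succcurlyeq$ is compatible with addition, i.e.\ $a\succcurlyeq a'$ and $b\succcurlyeq b'$ give $a+b\succcurlyeq a'+b'$, which is immediate from Definition~\ref{def:orderrelation}; (iii) $S\subseteq T$ implies $\inf S\succcurlyeq\inf T$, since any lower bound of $T$ is one of $S$. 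I also use that $\succcurlyeq$ is antisymmetric, to promote a pair of $\succcurlyeq$-inequalities to an equality. The cases where $f$ or $g$ is the zero series are trivial, so I assume $f,g\neq0$.

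For item 1, if $(s,s')\in\mathcal{D}(fg)$ then $(fg)_{s,s'}=\sum_{s_1+s_2=s,\,s_1'+s_2'=s'}f_{s_1,s_1'}g_{s_2,s_2'}\neq0$, so some summand is nonzero, giving $(s_1,s_1')\in\mathcal{D}(f)$ and $(s_2,s_2')\in\mathcal{D}(g)$ with $(s_1,s_1')+(s_2,s_2')=(s,s')$; by (i) and (ii), $(s,s')\succcurlyeq\mathrm{ord}(f)+\mathrm{ord}(g)$. Hence $\mathrm{ord}(f)+\mathrm{ord}(g)$ is a lower bound of $\mathcal{D}(fg)$, so it is $\preccurlyeq\inf\mathcal{D}(fg)=\mathrm{ord}(fg)$, which is the claim. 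For item 2, $\mathcal{D}(f+g)\subseteq\mathcal{D}(f)\cup\mathcal{D}(g)$, so by (iii) $\mathrm{ord}(f+g)\succcurlyeq\inf\big(\mathcal{D}(f)\cup\mathcal{D}(g)\big)$; and $\inf(A\cup B)$ equals the $\succcurlyeq$-meet of $\inf A$ and $\inf B$ (a routine check from the explicit description above: $\min s$ and $\min(s+s')$ over $A\cup B$ are the minima of those quantities over $A$ and over $B$), so this bound is exactly $\inf(\mathrm{ord}(f),\mathrm{ord}(g))$.

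For item 3, since $f$ is analytic near the origin with $f(0,0)=0$, $1+f$ is nonvanishing on some polydisc around $0$, so $\tfrac1{1+f}$, and hence $\tfrac1{1+f}-1$, are analytic there, i.e.\ multivariate series of positive radius; moreover $\tfrac1{1+f}(0,0)=1$, so $\mathrm{ord}\big(\tfrac1{1+f}\big)=\mathrm{ord}(1+f)=(0,0)$. Applying item 1 to the identity $\tfrac1{1+f}-1=\tfrac{-f}{1+f}$ gives $\mathrm{ord}\big(\tfrac1{1+f}-1\big)\succcurlyeq\mathrm{ord}(-f)+\mathrm{ord}\big(\tfrac1{1+f}\big)=\mathrm{ord}(f)$ (using $\mathrm{ord}(-f)=\mathrm{ord}(f)$, since $\mathcal{D}(-f)=\mathcal{D}(f)$), while applying item 1 to $-f=\big(\tfrac1{1+f}-1\big)(1+f)$ gives $\mathrm{ord}(f)\succcurlyeq\mathrm{ord}\big(\tfrac1{1+f}-1\big)+\mathrm{ord}(1+f)=\mathrm{ord}\big(\tfrac1{1+f}-1\big)$. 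Antisymmetry of $\succcurlyeq$ then yields equality.

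The content here is entirely bookkeeping around the nonstandard order $\succcurlyeq$: that infima exist (Lemma~\ref{lem:minimumexposant}), are antitone in the underlying set, respect sums, and collapse to $(0,0)$ precisely for series with nonzero constant term. I do not anticipate a genuine obstacle; if anything, item 3 is the most delicate, only in that it requires the analyticity remark to certify "positive radius" and a two-sided comparison closed off by antisymmetry.
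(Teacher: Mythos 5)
Your arguments for items 1 and 2 are the same as the paper's, merely organized a little more systematically: the paper also reads off, from a nonzero product (resp.\ sum) coefficient, a nonzero coefficient of each factor (resp.\ of $f$ or $g$), then takes the infimum. Your preliminary closed-form description $\mathrm{ord}(f)=(a(f),c(f)-a(f))$ is a helpful addition but not a different method.

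For item 3, your route is genuinely different and, in fact, tighter than the paper's. The paper expands $\tfrac{1}{1+f}-1=\sum_{d\geq 1}(-f)^d$ on a small polydisc and combines item 1 (to bound $\mathrm{ord}((-f)^d)\succcurlyeq d\cdot\mathrm{ord}(f)$) with an extension of item 2 to a truncated geometric sum; this involves a tail-chasing step ("for each $(s,s')$ in the support there is a $d$ past which higher powers cannot contribute"). You instead use the identities $\tfrac{1}{1+f}-1=\tfrac{-f}{1+f}$ and $-f=\big(\tfrac{1}{1+f}-1\big)(1+f)$, together with $\mathrm{ord}(1+f)=\mathrm{ord}\big(\tfrac{1}{1+f}\big)=(0,0)$ (both series having nonzero constant term), and apply item 1 in each direction. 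This avoids the series expansion entirely. More substantively, the paper's written argument establishes only $\mathrm{ord}\big(\tfrac{1}{1+f}-1\big)\succcurlyeq \mathrm{ord}(f)$ and never supplies the reverse inequality (the statement asserts equality; the paper's later uses obtain equality by a symmetry argument that is never spelled out in the proof of the lemma). Your two-sided product argument, closed off by antisymmetry of $\succcurlyeq$, actually proves the stated equality cleanly — so your version both simplifies and completes the paper's proof of item 3.
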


\subsection{Control of the order of $\kappa_{_{[\ell]}}$}\label{sec:orderkappa}

We are now in position to introduce the polynomial graph $\cL^*$ that will be useful to lower-bound the  \textit{order} of $\kappa_{[\ell]}(x,y)$. 
We recall that $I_{1},\ldots,I_{\ell},A_{1},\ldots,A_{q},B_{1},\ldots,B_{r}$ are subsets of $[n]$ involved in the definition of the 
mixed moments~\eqref{eq:definition_u_Delta} of $Z_{1},\ldots,Z_{\ell}$. 

\begin{definition}\label{def:Lstar}
Let $\cL^*$ be the polynomial graph with vertex set $[\ell]$ whose edges are defined as follows. For $(t,t')\in [\ell]$: 
   \begin{itemize}
    \item there is an edge with weight $y$ between $t$ and $t'$ if there exists  $i \in [r]$ such that both $I_{t}$ and $I_{t'}$ intersect $B_{i}$.
    \item there is an edge with weight $x$ between $t$ and $t'$, if there does not exist $i\in [r]$ such that both $I_{t}$ and $I_{t'}$ intersect $B_{i}$ and if there exists $j\in [q]$ such that that both  $I_{t}$ and $I_{t'}$ intersect $A_{j}$.
    \item Otherwise, there is no edge between $t$ and $t'$. 
    \end{itemize}
\end{definition}

    We shall lower-bound the \textit{order} of $\kappa_{\ell}(x,y)$ in term of  the \textit{order} of $\cL^*$.
    The following lemma, which is the counterpart  in our setting of Proposition 5.8  of \cite{feray2018}, ensures that, if one is able to lower-bound the \textit{order} of $P_{\Delta}[u]$, for all $\Delta\subseteq [\ell]$ of size at least $2$, with respect to the \textit{order} of some well chosen polynomial graph, then one is able to lower-bound the \textit{order} of $\kappa_{\Delta}$ for all $\Delta\subseteq R$. The proof is postponed to the end of the section.

    \begin{lem}\label{lem:ferayseries}
    Let $\pa{u_\Delta(x,y)}_{\Delta\subseteq [\ell]}$ be a sequence of multivariate series indexed by subsets of $[\ell]$. Let $\cL$ be a polynomial graph on $[\ell]$ whose edges are equal either to $x$, $y$, $0$ or $1$. Let us suppose that, for all $\Delta\subseteq [\ell]$ of size at least $2$, one has $\mathrm{ord}\pa{P_{\Delta}[u]-1}\succcurlyeq \mathrm{ord}\pa{\cL[\Delta]}$. Then, for all $\Delta\subseteq [\ell]$, one also has $$\mathrm{ord}(\kappa_{\Delta})\succcurlyeq \mathrm{ord}\pa{\cL[\Delta]}+\sum_{i\in \Delta}\mathrm{ord}(u_{\{i\}})\enspace.$$
    \end{lem}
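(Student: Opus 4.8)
The plan is to prove Lemma~\ref{lem:ferayseries} by first establishing an \emph{exact} closed-form expression for $\kappa_{\Delta}$ as a polynomial in the quasi-factorisation defects $P_{\delta}[u]-1$ ($\delta\subseteq\Delta$), and then reading off its order term by term. No induction on $|\Delta|$ will be needed, in contrast with the asymptotic argument behind \cite[Prop.~5.8]{feray2018}. (Note that the rational functions $P_{\delta}[u]$ are regular at the origin only when the $u_{\delta}$ are units, so here $\mathrm{ord}(u_{\{i\}})=(0,0)$ and the term $\sum_{i\in\Delta}\mathrm{ord}(u_{\{i\}})$ is in fact vacuous; I keep it to parallel \cite{feray2018}.)

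\emph{Step 1: the algebraic identity.} First I would invert the defining relations $P_{\delta}[u]=\prod_{\gamma\subseteq\delta}u_{\gamma}^{(-1)^{|\delta|-|\gamma|}}$ by multiplicative M\"obius inversion on the Boolean lattice, obtaining $u_{B}=\prod_{\delta\subseteq B}P_{\delta}[u]=\big(\prod_{i\in B}u_{\{i\}}\big)\prod_{\delta\subseteq B,\,|\delta|\ge 2}P_{\delta}[u]$ for every $B\subseteq[\ell]$ (using $P_{\emptyset}[u]=1$, $P_{\{i\}}[u]=u_{\{i\}}$). Substituting into $\kappa_{\Delta}=\sum_{\pi\in\mathcal{P}[\Delta]}m(\pi)\prod_{B\in\pi}u_{B}$ from \eqref{eq:definition:kappa_Delta}, pulling out $\prod_{i\in\Delta}u_{\{i\}}$, writing $P_{\delta}[u]=1+\epsilon_{\delta}$ and expanding the products, I would collect the monomials $\prod_{\delta\in\mathcal{E}}\epsilon_{\delta}$ indexed by finite sets $\mathcal{E}$ of size-$\ge2$ subsets of $\Delta$. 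The observation that makes the bookkeeping tractable is that a set of size $\ge 2$ lies inside \emph{at most one} block of any partition $\pi$; this forces the coefficient of $\prod_{\delta\in\mathcal{E}}\epsilon_{\delta}$ to equal $\sum_{\pi\ge\,\sigma_{\mathcal{E}}}m(\pi)$ with $\sigma_{\mathcal{E}}:=\bigvee_{\delta\in\mathcal{E}}\Pi(\delta)$, which by standard M\"obius calculus on the partition lattice ($m(\pi)$ being the value $\mu(\pi,\{\Delta\})$ of its M\"obius function) equals $\mathbf{1}\{\sigma_{\mathcal{E}}=\{\Delta\}\}$. Hence all \emph{non}-connecting families disappear and I am left with
\[
\kappa_{\Delta}=\Big(\prod_{i\in\Delta}u_{\{i\}}\Big)\sum_{\mathcal{E}\,:\,\bigvee_{\delta\in\mathcal{E}}\Pi(\delta)=\{\Delta\}}\ \prod_{\delta\in\mathcal{E}}\big(P_{\delta}[u]-1\big),
\]
the sum ranging over the sets $\mathcal{E}$ of size-$\ge2$ subsets of $\Delta$ that connect $\Delta$ as a hypergraph.

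\emph{Step 2: reading off the order.} Fix a connecting $\mathcal{E}$. If $\cL[\delta]$ is disconnected for some $\delta\in\mathcal{E}$ then $\mathrm{ord}(\cL[\delta])=(+\infty,+\infty)$, so the hypothesis gives $P_{\delta}[u]=1$ and the term vanishes; so I may assume every $\cL[\delta]$, $\delta\in\mathcal{E}$, is connected. Choosing by Lemma~\ref{lem:minimumexposantgraph} spanning trees $\mathcal{T}_{\delta}$ of $\cL[\delta]$ with $deg(\mathcal{T}_{\delta})=\mathrm{ord}(\cL[\delta])$, the union $\bigcup_{\delta\in\mathcal{E}}\mathcal{T}_{\delta}$ augmented by the weight-$1$ edges of $\cL[\Delta]$ is a connected spanning subgraph of $\cL[\Delta]$, hence contains a spanning tree $\mathcal{T}$ with $deg(\mathcal{T})\preccurlyeq\sum_{\delta\in\mathcal{E}}deg(\mathcal{T}_{\delta})$ (removing edges only lowers the degree coordinatewise, and weight-$1$ edges add $(0,0)$); this is exactly the content of Lemma~\ref{lem:decompmimimumexposantgraph}, whose short proof I would reproduce for a covering (rather than disjoint) family. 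Therefore $\mathrm{ord}(\cL[\Delta])\preccurlyeq\sum_{\delta\in\mathcal{E}}\mathrm{ord}(\cL[\delta])$, and combining with the hypothesis $\mathrm{ord}(P_{\delta}[u]-1)\succcurlyeq\mathrm{ord}(\cL[\delta])$ and Lemma~\ref{lem:basicpropertiesorder}(1) gives $\mathrm{ord}\big(\prod_{\delta\in\mathcal{E}}(P_{\delta}[u]-1)\big)\succcurlyeq\mathrm{ord}(\cL[\Delta])$. The sum over $\mathcal{E}$ being finite, Lemma~\ref{lem:basicpropertiesorder}(2) and a last use of Lemma~\ref{lem:basicpropertiesorder}(1) for the prefactor then give $\mathrm{ord}(\kappa_{\Delta})\succcurlyeq\sum_{i\in\Delta}\mathrm{ord}(u_{\{i\}})+\mathrm{ord}(\cL[\Delta])$.

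\emph{Main obstacle.} I expect Step~1 to be the delicate part: the whole argument hinges on the coefficient of each monomial $\prod_{\delta\in\mathcal{E}}\epsilon_{\delta}$ collapsing precisely to the partition-lattice M\"obius sum, so that every non-connecting family $\mathcal{E}$ drops out with coefficient $0$. It is this vanishing — the non-asymptotic analogue of the quasi-factorisation cancellations of \cite{feray2018} — that keeps the conclusion meaningful (both sides equal $(+\infty,+\infty)$) when $\cL[\Delta]$ is disconnected, and that must be checked carefully against the elementary "unique-block" fact. Step~2, by comparison, is routine manipulation of spanning trees and of the order relation $\succcurlyeq$, given Lemmas~\ref{lem:minimumexposantgraph}, \ref{lem:decompmimimumexposantgraph}, and \ref{lem:basicpropertiesorder}.
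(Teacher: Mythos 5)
Your proposal is correct and follows essentially the same route as the paper: Möbius-inverting $u_B=\prod_{\delta\subseteq B}P_\delta[u]$, substituting into the Möbius formula for $\kappa_\Delta$, and computing the coefficient of each monomial $\prod_{\delta\in\mathcal{E}}(P_\delta[u]-1)$ is exactly the paper's derivation of its Equations~\eqref{eq:cumulantsumRDelta}--\eqref{eq:cumulantsumRDelta2}, and your partition-lattice identity $\sum_{\pi\ge\sigma_{\mathcal{E}}}m(\pi)=\mathbf{1}\{\sigma_{\mathcal{E}}=\{\Delta\}\}$ is precisely the content of the paper's Lemma~\ref{lem:cumulantsumRDelta}, which you invoke rather than reprove. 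Step~2 (via Lemmas~\ref{lem:minimumexposantgraph}, \ref{lem:decompmimimumexposantgraph}, \ref{lem:basicpropertiesorder}) matches the paper verbatim; your caution about the non-disjointness of the family $\mathcal{E}$ when applying Lemma~\ref{lem:decompmimimumexposantgraph} is well placed but a non-issue, since the paper's proof of that lemma never uses the partition hypothesis.
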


    Hence, to lower bound the order of $\kappa_{\Delta}$, we mostly have to lower bound the order of $P_{\Delta}[u]-1$ in terms of that of $\cL^*[\Delta]$, and then in turn to control the order of $\cL^*[\Delta]$. 
    The following lemma, proved in Section \ref{prf:quazifactogeneral}, lower-bounds the \textit{order} of $P_{\Delta}[u]-1$, for all $\Delta$ of size at least $2$.

    \begin{lem}\label{lem:quazifactogeneral}
    For all $\Delta\subseteq [\ell]$ of size at least $2$, one has 
    $$\mathrm{ord}\pa{P_{\Delta}[u]-1}\succcurlyeq \mathrm{ord}\pa{\cL^*[\Delta]}\enspace.$$
    \end{lem}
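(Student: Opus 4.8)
The plan is to pass to the logarithm and reduce the whole statement to a finite-difference vanishing. Fix $\Delta\subseteq[\ell]$ with $|\Delta|\ge2$. The combinatorial input I would isolate first is the following elementary fact: for any univariate polynomial $Q$ and any reals $c_1,\dots,c_N$, the alternating sum $\sum_{\delta\subseteq[N]}(-1)^{N-|\delta|}Q\big(\sum_{t\in\delta}c_t\big)$ is a linear combination of monomials $\prod_{t=1}^{N}c_t^{e_t}$ with every $e_t\ge1$ and $\sum_t e_t\le\deg Q$; in particular it vanishes when $\deg Q<N$, and also when some $c_t=0$. (Proof: expand $Q$ into monomials, expand each $\big(\sum_{t\in\delta}c_t\big)^m$ multinomially, and use $\sum_{\delta:\,T\subseteq\delta\subseteq[N]}(-1)^{N-|\delta|}=\1\{T=[N]\}$.) Since the family $I_1,\dots,I_\ell$ is disjoint, $|I_\delta\cap A_j|=\sum_{t\in\delta}|I_t\cap A_j|$, $|I_\delta\cap B_i|=\sum_{t\in\delta}|I_t\cap B_i|$ and $|I_\delta|=\sum_{t\in\delta}|I_t|$; applying the fact with $Q(w)=w$ (degree $1<|\Delta|$) and $c_t=|I_t|$ gives $\sum_{\delta\subseteq\Delta}(-1)^{|\Delta|-|\delta|}|I_\delta|=0$, so $P_\Delta[u](0,0)=\prod_\delta(\eta^{|I_\delta|})^{(-1)^{|\Delta|-|\delta|}}=1$. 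Hence $P_\Delta[u]$ is analytic near the origin with value $1$, $g:=\log P_\Delta[u]$ is a well-defined power series with $g(0,0)=0$, and from $P_\Delta[u]-1=g+g^2/2!+\cdots$ together with Lemma~\ref{lem:basicpropertiesorder} (and $k\theta\succcurlyeq\theta$ for $\theta\in\N^2$, $k\ge1$) one obtains $\mathrm{ord}(P_\Delta[u]-1)\succcurlyeq\mathrm{ord}(g)$. It therefore suffices to show $\mathrm{ord}(g)\succcurlyeq\mathrm{ord}(\cL^*[\Delta])$.

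Next I would write $g$ out. By \eqref{eq:definition_u_Delta}, $\log u_\delta=|I_\delta|\log\eta+\sum_{j=1}^{q}\sum_{k\ge1}\frac{\Phi_k(|I_\delta\cap A_j|)}{k}x^k-\sum_{i=1}^{r}\sum_{k\ge1}\frac{\Phi_k(|I_\delta\cap B_i|)}{k}y^k$, where $\Phi_k(m):=\sum_{a=1}^{m-1}a^k$ is a polynomial in $m$ of degree $k+1$. Taking $\sum_{\delta\subseteq\Delta}(-1)^{|\Delta|-|\delta|}(\cdot)$, the $\log\eta$ term disappears (it is the $Q(w)=w$ case above), so $g=X(x)+Y(y)$ splits into a power series in $x$ alone plus one in $y$ alone, the coefficient of $x^k$ in $X$ being $\frac1k\sum_{j}\sum_{\delta}(-1)^{|\Delta|-|\delta|}\Phi_k\big(\sum_{t\in\delta}|I_t\cap A_j|\big)$ and symmetrically for $Y$ with $B_i$ (and a global sign). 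Feeding the finite-difference fact ($N=|\Delta|$, $Q=\Phi_k$, $c_t=|I_t\cap A_j|$) into these coefficients gives: the $j$-contribution vanishes unless $k+1\ge|\Delta|$ and $A_j$ meets every $I_t$, $t\in\Delta$; likewise for $Y$. Two consequences: (i) every monomial of $g$ is $x^k$ or $y^k$ with $k\ge|\Delta|-1$, whence $\mathrm{ord}(g)\succcurlyeq(0,|\Delta|-1)$; and (ii) if no $B_i$ meets every $I_t$, $t\in\Delta$, then $Y\equiv0$ and $\mathrm{ord}(g)\succcurlyeq(|\Delta|-1,0)$, and symmetrically if no $A_j$ does then $X\equiv0$.

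Finally I would compare with $\mathrm{ord}(\cL^*[\Delta])$ through three cases. If neither an $A_j$ nor a $B_i$ meets every $I_t$, $t\in\Delta$, then $X\equiv Y\equiv0$, so $P_\Delta[u]=1$ and $\mathrm{ord}(P_\Delta[u]-1)=(+\infty,+\infty)$ dominates everything. If some $B_i$ meets every $I_t$, $t\in\Delta$, then in $\cL^*$ every pair $t,t'\in\Delta$ carries a weight-$y$ edge, so $\cL^*[\Delta]$ is connected with a spanning tree of $y$-edges, giving $\mathrm{ord}(\cL^*[\Delta])=(0,|\Delta|-1)$, which is $\preccurlyeq\mathrm{ord}(g)$ by (i). If some $A_j$ but no $B_i$ meets every $I_t$, $t\in\Delta$, then any pair $t,t'$ (both meeting $A_j$) is joined by an edge of $\cL^*$, so $\cL^*[\Delta]$ is complete, hence connected, and every spanning tree has $|\Delta|-1$ edges, so $\mathrm{ord}(\cL^*[\Delta])=(p,|\Delta|-1-p)$ for some $0\le p\le|\Delta|-1$; by (ii), $\mathrm{ord}(g)\succcurlyeq(|\Delta|-1,0)\succcurlyeq(p,|\Delta|-1-p)$. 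Combining the cases with transitivity of $\succcurlyeq$ and $\mathrm{ord}(P_\Delta[u]-1)\succcurlyeq\mathrm{ord}(g)$ finishes the proof.

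I expect the only real difficulty to be organisational rather than technical: recognising that $g=\log P_\Delta[u]$ decouples into a pure $x$-series plus a pure $y$-series (which is exactly what makes the one-weight-per-edge structure of $\cL^*$ the right bookkeeping device), and then handling the partial order $\succcurlyeq$ with care — in particular the facts that $(|\Delta|-1,0)$ and $(0,|\Delta|-1)$ each dominate every $(p,|\Delta|-1-p)$ with $0\le p\le|\Delta|-1$, and that a $B_i$ (resp.\ $A_j$) meeting every $I_t$, $t\in\Delta$, simultaneously forces $\cL^*[\Delta]$ to be connected and pins down which spanning tree is cheapest. Everything else is a routine application of the finite-difference identity.
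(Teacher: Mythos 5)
Your proof is correct, and it takes a genuinely different route from the paper's. The paper decomposes $u_\Delta = z_\Delta \prod_j w^{(j)}_\Delta \prod_i v^{(i)}_\Delta$ multiplicatively, applies Lemma~\ref{lem:decomppowerseries} to reduce to bounding $\mathrm{ord}(P_\Delta[w^{(j)}]-1)$ and $\mathrm{ord}(P_\Delta[v^{(i)}]-1)$ factor by factor, and for each nontrivial factor invokes Lemma~\ref{lem:factorielseries} — whose proof is a two-level induction ultimately resting on Lemma~A.1 of~\cite{feray2018} (a rational-function degree bound). You instead pass to $g=\log P_\Delta[u]$, observe (using disjointness of the $I_t$'s so that $|I_\delta\cap A_j|$, $|I_\delta\cap B_i|$, $|I_\delta|$ are all additive over $\delta$) that $g$ splits cleanly into a pure $x$-series $X$ plus a pure $y$-series $Y$, and control every coefficient at once via the single elementary finite-difference identity: $\sum_{\delta\subseteq[N]}(-1)^{N-|\delta|}Q(\sum_{t\in\delta}c_t)$ vanishes whenever $\deg Q<N$ or some $c_t=0$. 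That one fact simultaneously produces the ``$1$ when some $I_t$ misses $A_j$'' cases and the ``order $\ge|\Delta|-1$'' bound that the paper gets from Lemma~\ref{lem:factorielseries}, so you entirely avoid the outside reference and the decomposition lemma. The step $\mathrm{ord}(P_\Delta[u]-1)\succcurlyeq\mathrm{ord}(g)$ needs the same localisation-in-degree argument the paper already uses to prove point~3 of Lemma~\ref{lem:basicpropertiesorder} (so the infinite sum $e^g-1=\sum_{k\ge1}g^k/k!$ poses no problem), and your three-case comparison with $\mathrm{ord}(\cL^*[\Delta])$ correctly accounts for the asymmetry of the order $\succcurlyeq$: when some $B_i$ meets every $I_t$ you only need $(s,s')$ with $s+s'\ge|\Delta|-1$, which holds for both $x^k$ and $y^k$ monomials, whereas when only some $A_j$ meets every $I_t$ the fact that $Y\equiv 0$ is precisely what upgrades the bound to $(|\Delta|-1,0)$. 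Both approaches exploit the same underlying multiplicativity; yours is more self-contained and makes the weight structure of $\cL^*$ visibly dual to the $X/Y$ split of $\log P_\Delta[u]$, while the paper's is more modular and reuses Feray's machinery.
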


    Combining Lemma \ref{lem:ferayseries} and Lemma \ref{lem:quazifactogeneral}, we get that, for all $\Delta\subseteq [\ell]$ of size at least $2$, 
    $$\mathrm{ord}\pa{\kappa_{\Delta}}\succcurlyeq \mathrm{ord}\pa{\cL^*[\Delta]}+\sum_{i\in \Delta}\mathrm{ord}\pa{u_{\{i\}}}\enspace.$$
    When $\Delta$  is a singleton that is $\Delta=\ac{i}$, we directly have $\kappa_{\Delta}=u_{\{i\}}$ and so the above inequality still holds. 
    Since $\mathrm{ord}\pa{u_{\{i\}}}\in\N^2$, we have  for all non-empty $\Delta\subseteq R$, 
    \begin{equation}\label{eq:kappa2L}
    \mathrm{ord}\pa{\kappa_{\Delta}}\succcurlyeq \mathrm{ord}\pa{\cL^*[\Delta]}\enspace\enspace.
    \end{equation}
    It remains to lower-bound $\mathrm{ord}\pa{\cL^*}=\mathrm{ord}\pa{\cL^*[[\ell]]}$. We recall that the graph $\mathcal{B}$ defined in the statement of Lemma~\ref{lem:upperboundgeneralcumulant}, is a graph on the vertex set $[\ell]$ with an edge between $t$, $t'$ in $[\ell]$ if and only if there exists $i \in [r]$ such that both $I_{t}$ and $I_{t'}$ intersects $B_{i}$ and that $|cc(\cB)|$ stands for the number of connected components of this graph.

    \begin{lem}\label{lem:quasifactographgeneral}
    Writing $\mathrm{ord}\pa{\cL^*}=(d_{[\ell]}, d'_{[\ell]})$, we have 
    $$d_{[\ell]}\geq |cc\pa{\mathcal{B}}|-1\quad \quad \text{ and } d_{[\ell]}+d'_{[\ell]}\geq \ell-1\enspace.$$
    In particular, $\mathrm{ord}\pa{\cL^*}\succcurlyeq \pa{|cc\pa{\mathcal{B}}|-1, \ell-|cc\pa{\mathcal{B}}|}$.
    \end{lem}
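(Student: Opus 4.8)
\textbf{Proof plan for Lemma~\ref{lem:quasifactographgeneral}.}

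The plan is to analyze the polynomial graph $\cL^*$ on vertex set $[\ell]$ directly via its spanning trees. Recall that $\cL^*$ has two types of edges: $y$-edges between $t,t'$ whenever some $B_i$ meets both $I_t$ and $I_{t'}$, and $x$-edges between $t,t'$ when no such $B_i$ exists but some $A_j$ meets both $I_t,I_{t'}$. The graph $\cB$ from the statement of Lemma~\ref{lem:upperboundgeneralcumulant} is exactly the subgraph of $\cL^*$ consisting of the $y$-edges. By Lemma~\ref{lem:minimumexposantgraph}, $\mathrm{ord}(\cL^*)$ is attained by an actual spanning tree $\cT$ (assuming $\cL^*$ is connected; if it is disconnected then $\mathrm{ord}(\cL^*)=(+\infty,+\infty)$ and the claimed inequalities hold trivially). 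Write $\mathrm{ord}(\cL^*)=(d_{[\ell]},d'_{[\ell]})$ where $d_{[\ell]}$ counts $x$-edges in $\cT$ and $d'_{[\ell]}$ counts $y$-edges in $\cT$, so that $d_{[\ell]}+d'_{[\ell]}=\ell-1$ is automatic for any spanning tree. This immediately gives the second claimed inequality $d_{[\ell]}+d'_{[\ell]}\geq \ell-1$ (in fact equality).

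For the first inequality $d_{[\ell]}\geq |cc(\cB)|-1$: consider the spanning tree $\cT$ achieving the order. Contract each connected component of $\cB$ (the $y$-edge graph) to a single super-node. Since $\cT$ is connected and spans $[\ell]$, after contraction the image of $\cT$ must connect all $|cc(\cB)|$ super-nodes. Every $y$-edge of $\cT$ lies inside a single $\cB$-component (by definition of $\cB$), hence is contracted away; so the surviving edges of $\cT$ connecting distinct super-nodes are all $x$-edges, and there must be at least $|cc(\cB)|-1$ of them in the contracted tree. Therefore $\cT$ contains at least $|cc(\cB)|-1$ $x$-edges, i.e. $d_{[\ell]}\geq |cc(\cB)|-1$. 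One should also verify the optimization is not fooled by the partial order $\succcurlyeq$: among all spanning trees we want to minimize $(d,d')$ with $d+d'=\ell-1$ fixed, so minimizing with respect to $\succcurlyeq$ amounts to minimizing $d$, the number of $x$-edges; the argument above shows this minimum is at least $|cc(\cB)|-1$, and any spanning tree obtained by first taking a spanning forest of $\cB$ inside each component and then adding $x$-edges to join components (such $x$-edges exist precisely because $\cL^*$ is connected) realizes $d=|cc(\cB)|-1$. Hence $d_{[\ell]}=|cc(\cB)|-1$ and $d'_{[\ell]}=\ell-|cc(\cB)|$, which yields $\mathrm{ord}(\cL^*)=(|cc(\cB)|-1,\ \ell-|cc(\cB)|)$, and in particular $\mathrm{ord}(\cL^*)\succcurlyeq(|cc(\cB)|-1,\ \ell-|cc(\cB)|)$ as claimed (indeed with equality).

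The main obstacle I anticipate is purely bookkeeping: being careful that edges of $\cL^*$ are correctly classified (an edge is a $y$-edge iff some $B_i$ meets both endpoints' index sets, and an $x$-edge only in the complementary case), and that the contraction argument genuinely forces the counted $x$-edges to be distinct edges of the tree. There is also the minor point of handling the disconnected case, where $\mathrm{ord}(\cL^*)=(+\infty,+\infty)$ by the convention in Definition~\ref{def:minimumexposantgraph}; in that degenerate situation both displayed inequalities hold vacuously and the conclusion $\mathrm{ord}(\cL^*)\succcurlyeq(|cc(\cB)|-1,\ell-|cc(\cB)|)$ is immediate. No deep input is needed beyond Lemma~\ref{lem:minimumexposantgraph} and elementary spanning-tree combinatorics.
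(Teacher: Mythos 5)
Your proposal is correct and takes essentially the same route as the paper: invoke Lemma~\ref{lem:minimumexposantgraph} to obtain a spanning tree $\cT$ achieving the order, note that all edges of $\cL^*$ are of weight $x$ or $y$ (so $d_{[\ell]}+d'_{[\ell]}\geq \ell-1$), and observe that the $y$-edges of $\cL^*$ are exactly the edges of $\cB$, forcing $\cT$ to contain at least $cc(\cB)-1$ edges of weight $x$. Your version simply spells out the contraction argument that the paper leaves implicit and additionally records the (correct but not needed) observation that equality holds.
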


Combining  \eqref{eq:kappa2L} with Lemma \ref{lem:quasifactographgeneral}  leads to o lower-bound on the order of $\kappa_{[\ell]}$
    \begin{equation}\label{eq:lower:bound:kappa_R}
    \mathrm{ord}\pa{\kappa_{[\ell]}}\succcurlyeq \pa{|cc\pa{\cB}|-1, \ell-|cc\pa{\cB}|}\enspace.
    \end{equation}
    
\subsection{Non-asymptotic control of the coefficients}\label{sec:controlcoeff}

 In order to deduce from \eqref{eq:lower:bound:kappa_R} a non-asymptotic upper-bound of $\kappa_{[\ell]}(x_0, y_0)$, it is sufficient to upper-bound the absolute value of the coefficients of the decomposition of $\kappa_{[\ell]}(x,y)$ as a multivariate series. We recall that, for $[\ell]\subseteq R$, we have 
    \begin{align*}
    u_\Delta(x,y)=&\eta^{|I_\Delta|}\prod_{j=1}^{q}\frac{1}{1-x}\times\ldots\times \frac{1}{1-(|I_\Delta\cap A_{j}|-1)x}
   \times\prod_{i=1}^{r}(1-y)\times\ldots\times \pa{1-\pa{|I_\Delta \cap B_{i}|-1}y}\enspace.
    \end{align*}
 In order to upper-bound the coefficients of the series $\kappa_{[\ell]}(x,y)$, we shall first do so for the $u_{\Delta}(x,y)$'s. In the following, given $f$ a multivariate series and $d,d'\geq 0$, we write $[d,d']_f$ the coefficient of $x^dy^{d'}$ in the decomposition of $f$. For any $d$ ,$d'\geq 0$, and $j=1,\ldots, q$, we define
\begin{align*}
g_{1,j}(x,y)&:= \frac{1}{1-x}\times\ldots\times \frac{1}{1-(|I_\Delta\cap A_{j}|-1)x}\ ;\\
g_{1}(x,y)&:= \prod_{j=1}^{q}g_{1,j}(x,y)  \ ; \\
g_{2}(x,y)&:= \prod_{i=1}^{r}(1-y)\times\ldots\times \pa{1-\pa{|I_\Delta \cap B_{i}|-1}y} \ ,
\end{align*}
so that  $ [d,d']_{u_{\Delta}} =\eta^{|I_\Delta|}[d,0]_{g_1} [0,d']_{g_2}$. One the one hand, for any $j=1,\ldots, q$ and any $d\geq 0$, we have 
    \begin{align*}
    [d,0]_{g_1,j}
    =&\underset{d_1+\ldots+d_{|I_\Delta\cap A_{j}|-1}=d}{\sum_{d_1,\ldots, d_{|I_\Delta\cap A_{j}|-1}\geq 0}}\prod_{a=1}^{|I_\Delta\cap A_{j}|-1}a^{d_a}
    \leq\pa{1+\ldots+(|I_\Delta\cap A_{j}|-1)}^d\\ 
    \leq& \pa{\frac{|I_\Delta\cap A_{j}|^2}{2}}^d\enspace .
    \end{align*}
We deduce that 
    \begin{align*}
    [d,0]_{g_1}=&\underset{d_1+\ldots+d_q=d}{\sum_{d_1,\ldots, d_q\geq 0}}\prod_{j=1}^{q}[d_{j}, 0]_{g_1,j}\\
    \leq&\underset{d_1+\ldots+d_q=d}{\sum_{d_1,\ldots, d_q\geq 0}}\prod_{j=1}^{q}\pa{\frac{|I_\Delta\cap A_{j}|^2}{2}}^{d_j}\\
    \leq& \pa{\sum_{j=1}^q\frac{|I_\Delta\cap A_{j}|^2}{2}}^d \leq  \pa{\frac{|I_\Delta|^2}{2}}^d \ ,
    \end{align*}
    where the last equality stems from the disjunction of the $A_{i}$.
Then, turning our attention to $g_2$, we have 
    \begin{align*}
    \left|[0,d']_{g_2}\right|\leq& \binom{\sum_{i=1,\ldots, r}|I_\Delta\cap B_{i}|-1}{d'}|I_{\Delta}|^{d'}
    \leq  |I_{\Delta}|^{2d'}\enspace . 
    \end{align*}
    We conclude that 
    \begin{equation}\label{eq:boundcoeffseriemomentclustering1}
    |[d,d']_{u_\Delta}|\leq \eta^{|I_\Delta|}|I_\Delta|^{2d+2d'}\enspace. 
    \end{equation}
    Then, we consider any partition $\boldsymbol{\Delta}=(\Delta_1,\ldots,\Delta_{|\boldsymbol{\Delta}|} )$ of $[\ell]$. Define $h_{\boldsymbol{\Delta}}= \prod_{a=1}^{{|\boldsymbol{\Delta}|}} u_{\Delta_a}$. 
    Recall that the $I_t$'s defined for \eqref{eq:formula:general_momodent} have disjoint support so that the $I_{\Delta_a}$'s are disjoint and $\sum_{a=1}^{|\boldsymbol{\Delta}|}|I_{\Delta_a}|= L$.  Hence, we deduce from \eqref{eq:boundcoeffseriemomentclustering1} that
    \begin{align*}
    \left|[d,d']_{h_{\boldsymbol{\Delta}}}\right|&=\left|\underset{d_1+\ldots+d_{|\boldsymbol{\Delta}|}=d}{\sum_{d_1,\ldots,d_{|\boldsymbol{\Delta}|}\geq 0}}\quad \underset{d'_1+\ldots+d'_{|\boldsymbol{\Delta}|}=d'}{\sum_{d'_1,\ldots,d'_{|\boldsymbol{\Delta}|}\geq 0}}\prod_{a=1}^{|\boldsymbol{\Delta}|}[d_a, d'_a]_{u_{\Delta_a}} \right|\\
    &\leq  \eta^{L}\underset{d_1+\ldots+d_{|\boldsymbol{\Delta}|}=d}{\sum_{d_1,\ldots,d_{|\boldsymbol{\Delta}|}\geq 0}}\quad \underset{d'_1+\ldots+d'_{|\boldsymbol{\Delta}|}=d'}{\sum_{d'_1,\ldots,d'_{|\boldsymbol{\Delta}|}\geq 0}}\prod_{a=1}^{|\boldsymbol{\Delta}|}|I_{\Delta_a}|^{2d_{a}+2d'_{a}}\\
    &\leq   \eta^{L}\pa{\sum_{a=1}^{|\boldsymbol{\Delta}|}|I_{\Delta_a}|}^{2d+2d'}=  \eta^{L}L^{2d+2d'}\enspace.
    \end{align*}
    Then, combining this with a triangular inequality and the Möbius formula (Lemma \ref{lem:mobiusformula}), we obtain the following bound for the coefficients of $\kappa_{[\ell]}$. 
    \begin{equation}\label{eq:boundcoeffseriemomentclustering2}
    \left|[d,d']_{\kappa_{[\ell]}}\right|\leq \eta^{L}L^{2d+2d'}\ell^{2\ell}\enspace.
    \end{equation}

 \subsection{Conclusion}
Combining Equation \eqref{eq:boundcoeffseriemomentclustering2} together with \eqref{eq:lower:bound:kappa_R} leads us to 
    \begin{align*}
    \left|\kappa_{[\ell]}\pa{x_0,y_0}\right|\leq&\ell^{2\ell}\eta^{L}\underset{d+d'\geq \ell-1}{\sum_{d\geq cc\pa{\cB}-1}}L^{2d+2d'}x_0^dy_0^{d'}\\
    &\leq  \ell^{2\ell}\eta^{L}\sum_{d\geq cc\pa{\cB}-1}\sum_{d'\geq \ell-1-d}\pa{L^2 x_0}^d\pa{L^2y_0}^{d'}\\
    &\leq 2\ell^{2\ell}\eta^{L}\sum_{d\geq cc\pa{\cB}-1}\pa{L^2 x_0}^d \pa{L^2y_0}^{\ell-1-d}\\
    &\leq 2\ell^{2\ell}\eta^{L}\pa{L^2y_0}^{\ell-1}\sum_{d\geq cc\pa{\cB}-1}\pa{x_0/y_0}^d\\
    &\leq  4\ell^{2\ell}\eta^{L}\pa{L^2y_0}^{\ell-1}\pa{\frac{x_0}{y_0}}^{cc\pa{\cB}-1}\enspace,
    \end{align*}
    where the third and fifth inequalities are satisfied whenever $L^2y_0\leq\frac{1}{2}$ and $x_0\leq\frac{1}{2}y_0$. We have shown
    $$\left|\cumul\pa{\pa{Z_t}_{t\in [\ell]}}\right|\leq 4\ell^{2\ell}\eta^{L}\pa{L^2y_0}^{\ell-1}\pa{\frac{x_0}{y_0}}^{cc\pa{\cB}-1}\enspace.$$
This concludes the proof of Lemma~\ref{lem:upperboundgeneralcumulant}.

\subsection{Proof of Lemma \ref{lem:ferayseries}}\label{prf:ferayseries}

The proof of Lemma \ref{lem:ferayseries} build on the ideas of the proof of Proposition 5.8 of \cite{feray2018}, which are adapted to the framework of multivariate series.

Let us first reduce the problem to the case where $u_{\{i\}}= 1$. Let us define the family of power series
$$w_\Delta(x,y)=\frac{u_\Delta (x,y)}{\prod_{i\in \Delta}u_{\{i\}}(x,y)}.$$ 
We have $P_\Delta[w]=P_{\Delta}[u]$, for all $\Delta\subseteq [\ell]$ of size at least $2$. Defining  $\kappa_{\Delta}[w]$ as the counterpart of $\kappa_{\Delta}$ when the function $u$ is replaced by $w$, we get  
$$\kappa_{\Delta}(x,y)=\pa{\prod_{i\in \Delta}u_{\{i\}}(x,y)}\kappa_{\Delta}[w](x,y)\enspace .$$
So, if we prove that $\mathrm{ord}(\kappa_{\Delta}[w])\succcurlyeq \mathrm{ord}(\cL[\Delta])$ (see Definition \ref{def:minimumexposantgraph} for the order of a power serie), then by Lemma~\ref{lem:basicpropertiesorder}
$$\mathrm{ord}(\kappa_{\Delta})\succcurlyeq \mathrm{ord}(\cL[\Delta])+\sum_{i\in\Delta}\mathrm{ord}(u_{\ac{i}}). $$
Hence, in the following, we assume that $u_{\{i\}}=1$ for all $i\in [\ell]$. By assumption,  for all $\Delta$ of size at least $2$, one has $\mathrm{ord}\pa{P_\Delta (u)-1}\succcurlyeq \mathrm{ord}\pa{\cL[\Delta]}$ and we shall lower-bound $\mathrm{ord}\pa{\kappa_{\Delta}}$ for all $\Delta\subseteq [\ell]$. Without loss of generality, it is sufficient to do so for $\Delta=[\ell]$.

For $\Delta\subseteq [\ell]$, define $R_\Delta(x,y)=P_\Delta (u)(x,y)-1$. By assumption, we have $\mathrm{ord}\pa{R_\Delta}\succcurlyeq \mathrm{ord}\pa{\cL[\Delta]}$. Let us fix a partition $\bW=(W_1,\ldots, W_s)$ of $[\ell]$, and let us consider $W\in \bW$. Since $u_\emptyset=1$, 
we have
\begin{align*}
\prod_{\Delta\subseteq W}P_{\Delta}[u](x,y)=&\prod_{\Delta\subseteq W}\prod_{\delta\subseteq \Delta}\pa{u_{\delta}(x,y)}^{\pa{-1}^{|\Delta|-|\delta|}}\\
=&\prod_{\delta\subseteq W}\pa{u_{\delta}(x,y)}^{\sum_{\delta\subseteq \Delta \subseteq W}(-1)^{|\Delta|-|\delta|}}\\
=&u_{W}(x,y)\enspace,
\end{align*}
where the last inequality comes from the fact that, whenever $\delta\subsetneq W$, $\sum_{\delta\subseteq \Delta \subseteq W}(-1)^{|\Delta|-|\delta|}=\pa{1-1}^{|W|-|\delta|}=0$. In turn,  expanding $P_{\Delta}(x,y)=1+R_{\Delta}(x,y)$ and using the fact that for all $\ac{i}$, $P_{\ac{i}}[u](x,y)=1$,
we get,
$$u_W(x,y)=\underset{|\Delta|\geq 2}{\prod_{\Delta\subseteq W}}\pa{1+R_\Delta(x,y)}=\sum_{\ac{\Delta_1,\ldots, \Delta_m}}R_{\Delta_1}(x,y)\ldots R_{\Delta_m}(x,y)\enspace,$$
where the sum runs over all finite sets of distinct subsets of $W$ of size at least $2$. Therefore, 
\begin{equation}\label{eq:prod-uWi}
\prod_{i=1}^su_{W_i}(x,y)=\sum_{\ac{\Delta_1,\ldots,\Delta_m}}R_{\Delta_1}(x,y)\ldots R_{\Delta_m}(x,y)\enspace,
\end{equation}
where the sum is taken over all finite sets of distinct subsets (but not necessarily disjoint) of $[\ell]$ of size at least $2$, such that each $\Delta_i$ is contained in a group of the partition $\bW$. Let us fix such a set $\bold{\Delta}=\ac{\Delta_1,\ldots,\Delta_m}$ and, for $i\in [m]$, let us write $\Pi\pa{\Delta_i}$ the partition of $[\ell]$ containing $\Delta_{i}$ and singletons. We recall that $\Pi\pa{\Delta_1}\vee\ldots \vee  \Pi\pa{\Delta_m}$ is the partition of $[\ell]$ defined by; for $t,t'\in [\ell]$, $t$ and $t'$ are in the same group of $\Pi\pa{\Delta_1}\vee\ldots \vee  \Pi\pa{\Delta_m}$ if and only if there exist $t_1,\ldots, t_h$ with $t_1=t$, $t_h=t'$ and, for all $h'\leq h-1$, there exists $i\in [m]$ such that $t_h$ and $t_{h+1}$ are in the same group of the partition $\Pi\pa{\Delta_i}$.  Since each $\Delta_i$ is contained in a group of $\bW$, we have that each group of $\Pi\pa{\Delta_1}\vee\ldots \vee  \Pi\pa{\Delta_m}$ must be contained in a group of $\bW$. Consequently, $\bW\vee\Pi\pa{\Delta_1}\vee\ldots \vee  \Pi\pa{\Delta_m}=\bW$. 

In the following, given $\bold{\Delta}=\ac{\Delta_1,\ldots,\Delta_m}$ a set of distinct subsets of $[\ell]$ of size at least $2$, we denote $\cP_{\bold{\Delta}}([\ell])$ the set of all partitions $\bW$ of $[\ell]$ satisfying that, for all $i\in [m]$, $\Delta_i$ is contained in a group of $\bW$. For all $\bW\in \cP_{\bold{\Delta}}([\ell])$, we have $\bW\vee\Pi\pa{\Delta_1}\vee\ldots \vee  \Pi\pa{\Delta_m}=\bW$.
Using the Möbius formula stated in Lemma \ref{lem:mobiusformula}, together with \eqref{eq:prod-uWi}, and after reversing the summation order, we get
\begin{equation}\label{eq:cumulantsumRDelta}
\kappa_{[\ell]}(x,y)=\underset{|\Delta_i|\geq 2}{\underset{\Delta_i\subseteq [\ell]}{\sum_{\bold{\Delta}=\ac{\Delta_1,\ldots, \Delta_m}}}}R_{\Delta_1}(x,y)\ldots R_{\Delta_m}(x,y)\pa{\sum_{\bW\in \mathcal{P}_{\bold{\Delta}}([\ell])}m(\bW)}\enspace,
\end{equation}
where $m(\bW)=(-1)^{|\bW|-1}(|\bW|-1)!$ is the Möbius function.
Next lemma prunes some of the terms of~\eqref{eq:cumulantsumRDelta}.

\begin{lem}\label{lem:cumulantsumRDelta}
Let $\bold{\Delta}=\ac{\Delta_1,\ldots, \Delta_m}$ a family of distinct subsets of $[\ell]$. If $\Pi(\Delta_1)\vee\ldots\vee \Pi(\Delta_m)\neq \ac{[\ell]}$, we have $$\sum_{\bW\in \mathcal{P}_{\bold{\Delta}}([\ell])}m(\bW)=0\enspace.$$
\end{lem}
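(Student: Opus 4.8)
The plan is to recognise the sum $\sum_{\bW\in\mathcal{P}_{\bold{\Delta}}([\ell])}m(\bW)$ as a sum of Möbius-function values over an interval in the lattice $\Pi$ of set partitions of $[\ell]$, ordered by refinement (so that the one-block partition $\{[\ell]\}$ is the maximal element), and then to invoke the fundamental identity satisfied by the Möbius function of a finite poset. Throughout, I would write $\bW\preceq\bW'$ for ``$\bW$ is coarser than or equal to $\bW'$'', so that $m(\bW)$ will be the value of the Möbius function ``from below the top''.

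First I would pin down the index set $\mathcal{P}_{\bold{\Delta}}([\ell])$. For a partition $\bW$ of $[\ell]$ and an index $i\in[m]$, the set $\Delta_i$ lies inside a single block of $\bW$ precisely when $\Pi(\Delta_i)\preceq\bW$. Intersecting over $i\in[m]$ and setting $\pi_0:=\Pi(\Delta_1)\vee\cdots\vee\Pi(\Delta_m)$ (the join, i.e.\ the finest partition coarser than every $\Pi(\Delta_i)$), this says exactly that
$$\mathcal{P}_{\bold{\Delta}}([\ell])=\ac{\bW:\ \pi_0\preceq\bW}=[\pi_0,\{[\ell]\}]\enspace,$$
the interval of $\Pi$ above $\pi_0$. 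This is where the hypothesis enters: it states precisely that $\pi_0\neq\{[\ell]\}$.

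Next I would observe that $m(\bW)=(-1)^{|\bW|-1}(|\bW|-1)!$ is nothing but the value $\mu_\Pi(\bW,\{[\ell]\})$ of the Möbius function of the partition lattice between $\bW$ and the one-block partition; this is classical, it depends on $\bW$ only through its number of blocks because the interval $[\bW,\{[\ell]\}]$ is isomorphic to the full partition lattice on the block set of $\bW$, and it may be taken as known or established by a short downward induction on $\ell-|\bW|$ from the Möbius recursion. Combining this with the previous step,
$$\sum_{\bW\in\mathcal{P}_{\bold{\Delta}}([\ell])}m(\bW)=\sum_{\pi_0\preceq\bW\preceq\{[\ell]\}}\mu_\Pi(\bW,\{[\ell]\})=\1\ac{\pi_0=\{[\ell]\}}\enspace,$$
the last equality being the defining property $\sum_{x\preceq z\preceq y}\mu(z,y)=\1\{x=y\}$ of the Möbius function of a locally finite poset. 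Since $\pi_0\neq\{[\ell]\}$ by hypothesis, the right-hand side is $0$, which is the claimed identity.

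I do not expect a genuine obstacle here: the statement is a textbook Möbius computation on the partition lattice, and the only point deserving an explicit sentence is the identification $m(\bW)=\mu_\Pi(\bW,\{[\ell]\})$ used above. If one prefers to avoid the lattice vocabulary altogether, the same conclusion can be reached by a direct inclusion--exclusion argument, grouping the partitions of $\mathcal{P}_{\bold{\Delta}}([\ell])$ according to which blocks of $\pi_0$ they merge and summing the resulting signed contributions.
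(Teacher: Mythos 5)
Your proof is correct, and it identifies the same crucial structural fact as the paper does: the index set $\mathcal{P}_{\bold{\Delta}}([\ell])$ is the upward interval $[\pi_0,\{[\ell]\}]$ in the partition lattice, which is isomorphic (via merging the blocks of $\pi_0$) to the full partition lattice on $|\pi_0|$ elements. The only divergence is in how the final cancellation $\sum_{\bG\in\cP([n])} m(\bG)=0$ (for $n\geq 2$) is justified: you invoke the defining Möbius recursion $\sum_{x\preceq z\preceq y}\mu(z,y)=\1\{x=y\}$ of the locally finite poset directly, whereas the paper stays within its probabilistic toolkit and reads the same sum as $\cumul(1,\ldots,1)$, which vanishes by the independence property of cumulants (Lemma~\ref{lem:independentcumulant}). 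Your route is arguably the more standard combinatorial phrasing and avoids reintroducing random variables into a purely lattice-theoretic calculation; the paper's route has the small advantage of reusing machinery (Möbius formula for cumulants, nullity under independence) already set up in the appendix. Both are complete and equivalent in substance.
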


\begin{proof}[Proof of Lemma \ref{lem:cumulantsumRDelta}]
Let us consider $\bW^{(0)}=\Pi(\Delta_1)\vee\ldots\vee \Pi(\Delta_m)$, and let us write  $\bW^{(0)}=W^{(0)}_1,\ldots, W^{(0)}_{|\bW^{(0)}|}$, with $|\bW^{(0)}|\geq 2$. By definition of $\bW^{(0)}$, we have $\mathcal{P}_{\bold{\Delta}}([\ell])=\mathcal{P}_{\bW^{(0)}}([\ell])$ and so it is sufficient to prove that $\sum_{\bW\in \mathcal{P}_{\cW^{(0)}}([\ell])}m(\bW)=0$, when $\bW^{(0)}$ is a partition of $[\ell]$ different from the trivial partition $\ac{[\ell]}$. We can construct a bijection $\phi$ from all partitions $\bG$ of $[|\bW^{(0)}|]$ to the set $\mathcal{P}_{\cW^{(0)}}([\ell])$ defined by $\phi(\bG)=\ac{\cup_{i\in G}W^{(0)}_i,\enspace G\in \bG}$. This bijection preserves the Möbius function in the sense that $m(\phi(\bG))=m(\bG)$ and so $$\sum_{\bW\in \mathcal{P}_{\cW^{(0)}}([\ell])}m(\bW)=\sum_{\bG\in \cP([|\bW^{(0)}|])}m(\bG)=\cumul\pa{1,\ldots,1}=0\enspace,$$
where the last cumulant is the cumulant of $|\bW^{(0)}|$ copies of the deterministic variable $1$, which is equal to $0$ as soon as $|\bW^{(0)}|\geq 2$ (see Lemma \ref{lem:independentcumulant}). This concludes the proof of Lemma \ref{lem:cumulantsumRDelta}.
\end{proof}

Combining Lemma \ref{lem:cumulantsumRDelta} and Equality \eqref{eq:cumulantsumRDelta}, we get 
\begin{equation}\label{eq:cumulantsumRDelta2}
\kappa_{[\ell]}(x,y)=\underset{|\Delta_i|\geq 2}{\underset{\Pi(\Delta_1)\vee\ldots\vee \Pi(\Delta_m)=\ac{[\ell]}}{\sum_{\bold{\Delta}=\ac{\Delta_1,\ldots, \Delta_m}}}}R_{\Delta_1}(x,y)\ldots R_{\Delta_m}(x,y)
\enspace.
\end{equation}
Let us now consider $\bold{\Delta}=\ac{\Delta_1,\ldots, \Delta_m}$ a set of subsets of $[\ell]$ such that $\Pi\pa{\Delta_1}\vee\ldots \vee  \Pi\pa{\Delta_m}=\ac{[\ell]}$. We consider $\bV$ the partition of $[\ell]$ induced by the connected components of $\cL\<1\>$, which is the graph keeping only the edges of weight $1$ of $\cL$. Since $\Pi\pa{\Delta_1}\vee\ldots \vee  \Pi\pa{\Delta_m}=\ac{[\ell]}$, one necessarily has $\bV\vee\Pi\pa{\Delta_1}\vee\ldots \vee  \Pi\pa{\Delta_m}=\ac{[\ell]}$. Hence, using Lemma \ref{lem:decompmimimumexposantgraph}, we get
$$\sum_{i=1}^m \mathrm{ord}(\cL[\Delta_i])\succcurlyeq \mathrm{ord}(\cL)\enspace.$$
We recall our assumption that  $\mathrm{ord}(R_{\Delta_i})\succcurlyeq \mathrm{ord}(\cL[\Delta_i])$. Lemma \ref{lem:basicpropertiesorder} ensures that, 
$$\mathrm{ord}\pa{R_{\Delta_{1}}\ldots R_{\Delta_m}}\succcurlyeq \sum_{i=1}^m \mathrm{ord}(\cL[\Delta_i])\succcurlyeq \mathrm{ord}(\cL)\enspace.$$
Thus, all the non-zero terms in the sum \eqref{eq:cumulantsumRDelta} have their \textit{order} higher than $\mathrm{ord}(\cL)$ with respect to the ordering $\succcurlyeq$. Invoquing a last time Lemma \ref{lem:basicpropertiesorder}, we get the claimed inequality
$$\mathrm{ord}(\kappa_{[\ell]})\succcurlyeq \mathrm{ord}(\cL)\enspace.$$

\subsection{Proof of Lemma \ref{lem:quazifactogeneral}}\label{prf:quazifactogeneral}

The power series $u_{\Delta}$ involves multiple terms that are alike factorial terms. For proving Lemma~\ref{lem:quasifactographgeneral}, we shall apply several times the following result which which is variation of Proposition 5.10 in~\cite{feray2018}. Its proof is postponed to the end of the subsection. 

\begin{lem}\label{lem:factorielseries}
Let $a_1, \ldots, a_\ell$ be non-negative integers, and, for $\Delta\subseteq [\ell]$, let us define 
\begin{align*}
v_\Delta(x,y)&:=\pa{1-x}\ldots \pa{1-\pa{\sum_{i\in \Delta}a_i-1}x}\enspace ;\\ w_\Delta(x,y)&:=\frac{1}{\pa{1-x}\ldots \pa{1-\pa{\sum_{i\in \Delta}a_i-1}x}}\enspace \ , 
\end{align*}
with the convention, if $\sum_{i\in \Delta} a_i =0$ or $\sum_{i\in \Delta} a_i =1$, $v_\Delta(x,y)=w_\Delta(x,y)=1$. Recall the operator $P_{\Delta}$ defined in~\eqref{eq:definition_P_Delta}.
 Then, for all $\Delta$ of size at least $2$, we have
$$\mathrm{ord}\pa{P_{\Delta}[v]-1}\succcurlyeq (|\Delta|-1, 0)\enspace\ ;\quad \quad  \mathrm{ord}\pa{P_\Delta[w]-1}\succcurlyeq (|\Delta|-1, 0)\enspace.$$
\end{lem}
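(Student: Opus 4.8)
The plan is to pass to logarithms, where the operator $P_\Delta$ linearizes, to recognize the resulting coefficients as values of Faulhaber polynomials, and then to kill the low-order coefficients with a finite-difference vanishing argument.

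First I would record the elementary reductions. Both $v_\Delta$ and $w_\Delta$ are functions of $x$ alone, depending on $\Delta$ only through $n_\Delta:=\sum_{i\in\Delta}a_i$; writing $V(x;m):=\prod_{j=1}^{m-1}(1-jx)$ with the convention that an empty or negative-range product equals $1$ (this reproduces the stated convention $v_\Delta=w_\Delta=1$ when $n_\Delta\le1$), we have $v_\Delta=V(\cdot\,;n_\Delta)$ and $w_\Delta=1/V(\cdot\,;n_\Delta)$. Each $v_\delta$ and $w_\delta$ is a rational function of $x$ taking value $1$ at the origin, so by Definition~\ref{defi:P_delta} both $P_\Delta[v]$ and $P_\Delta[w]$ are rational in $x$, analytic near $(0,0)$, and equal to $1$ there; hence $\log P_\Delta[v]$ and $\log P_\Delta[w]$ are honest power series vanishing at the origin. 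Since $w_\delta=1/v_\delta$ gives $P_\Delta[w]=1/P_\Delta[v]$, and thus $\log P_\Delta[w]=-\log P_\Delta[v]$, it is enough to treat $v$. Taking the logarithm of the product defining $P_\Delta[v]$,
\[
\log P_\Delta[v](x)=\sum_{\delta\subseteq\Delta}(-1)^{|\Delta|-|\delta|}\log v_\delta(x)\enspace.
\]
Next I would expand each term: for $n_\delta\ge1$,
\[
\log v_\delta(x)=\sum_{j=1}^{n_\delta-1}\log(1-jx)=-\sum_{k\ge1}\frac{x^k}{k}\,\sigma_k(n_\delta),\qquad \sigma_k(m):=\sum_{j=1}^{m-1}j^k\enspace,
\]
where, by Faulhaber's formula, $\sigma_k$ extends to a polynomial in $m$ of degree $k+1$ with $\sigma_k(0)=\sigma_k(1)=0$ (the latter reconciling the $n_\delta\le1$ convention). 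Consequently the coefficient of $x^k$ in $\log P_\Delta[v]$ equals $-\tfrac1k\sum_{\delta\subseteq\Delta}(-1)^{|\Delta|-|\delta|}\sigma_k(n_\delta)$ for $k\ge1$, and the $x^0$-coefficient is $0$.

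The crux --- and the step I expect to be the main obstacle, although it amounts to a routine adaptation of Proposition~5.10 in~\cite{feray2018} --- is the identity that $\sum_{\delta\subseteq\Delta}(-1)^{|\Delta|-|\delta|}p(n_\delta)=0$ for any polynomial $p$ with $\deg p<|\Delta|$. I would prove it by fixing the constants $(a_i)_{i\in\Delta}$ and recognizing the sum as $\bigl(\prod_{i\in\Delta}(E_{a_i}-I)\bigr)p$ evaluated at $0$, where $E_a$ denotes translation by $a$: expanding the product over the subset $\delta\subseteq\Delta$ of ``active'' shifts reproduces exactly $\sum_{\delta}(-1)^{|\Delta|-|\delta|}p\bigl(\sum_{i\in\delta}a_i\bigr)$. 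Each factor $E_{a_i}-I$ maps polynomials of degree $\le d$ to polynomials of degree $\le d-1$, so the composition of the $|\Delta|$ factors annihilates every polynomial of degree $\le|\Delta|-1$. Taking $p=\sigma_k$ (degree $k+1$) shows that the $x^k$-coefficient of $\log P_\Delta[v]$ vanishes whenever $k\le|\Delta|-2$; together with the vanishing $x^0$-coefficient and the absence of any $y$-dependence, this yields $\mathrm{ord}(\log P_\Delta[v])\succcurlyeq(|\Delta|-1,0)$ by Definition~\ref{def:minimumexposant}, since every monomial of $\log P_\Delta[v]$ is of the form $x^k$ with $k\ge|\Delta|-1$.

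Finally I would exponentiate. Set $h:=\log P_\Delta[v]$, a power series with $h(0,0)=0$ and $\mathrm{ord}(h)\succcurlyeq(|\Delta|-1,0)$; then $P_\Delta[v]-1=\sum_{k\ge1}h^k/k!$, and by Lemma~\ref{lem:basicpropertiesorder}(1) one has $\mathrm{ord}(h^k)\succcurlyeq k\,\mathrm{ord}(h)\succcurlyeq\mathrm{ord}(h)$ (using $|\Delta|\ge2$, so $\mathrm{ord}(h)$ has first coordinate at least $1$), whence Lemma~\ref{lem:basicpropertiesorder}(2) gives $\mathrm{ord}(P_\Delta[v]-1)\succcurlyeq\mathrm{ord}(h)\succcurlyeq(|\Delta|-1,0)$. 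The identical computation applied to $\log P_\Delta[w]=-h$ gives $\mathrm{ord}(P_\Delta[w]-1)\succcurlyeq(|\Delta|-1,0)$, which is the claim.
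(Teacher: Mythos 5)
Your proof is correct and takes a genuinely different route from the paper's. The paper proceeds by a double induction, first on $\ell$ and then on $a_\ell$, reducing the inductive step to a factorization $P_{[\ell]}[v[k+1]]=Q(x)\,P_{[\ell]}[v[k]]$ and then invoking Lemma~A.1 of~\cite{feray2018} (a degree bound for a certain rational function) as a black box to control $\mathrm{ord}(Q-1)$. You instead pass to $\log P_\Delta[v]=\sum_{\delta\subseteq\Delta}(-1)^{|\Delta|-|\delta|}\log v_\delta$, expand $\log v_\delta(x)=-\sum_{k\ge1}\frac{x^k}{k}\sigma_k(n_\delta)$ via Faulhaber, and observe that the alternating sum $\sum_{\delta\subseteq\Delta}(-1)^{|\Delta|-|\delta|}p(n_\delta)$ is the evaluation at $0$ of $\bigl(\prod_{i\in\Delta}(E_{a_i}-I)\bigr)p$, which vanishes for $\deg p<|\Delta|$ since each forward difference drops degree by at least one (and annihilates everything if some $a_i=0$). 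This avoids the induction entirely, is self-contained, and makes the mechanism of the cancellation transparent: the $|\Delta|$-fold finite difference kills all polynomials of degree $\le|\Delta|-1$, and since $\sigma_k$ has degree $k+1$, the coefficients of $x^0,\dots,x^{|\Delta|-2}$ in $\log P_\Delta[v]$ vanish. The paper's proof buys a closer alignment with~\cite{feray2018}; yours buys directness and elementariness.

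One small point to tighten: you invoke Lemma~\ref{lem:basicpropertiesorder}(2) to get $\mathrm{ord}(P_\Delta[v]-1)\succcurlyeq\mathrm{ord}(h)$ from $P_\Delta[v]-1=\sum_{k\ge1}h^k/k!$, but as stated that lemma handles a sum of two series, not an infinite one. The fix is the same truncation argument the paper uses in its own proof of Lemma~\ref{lem:basicpropertiesorder}(3); or more simply, note (as you already do) that $h$ is a power series in $x$ alone with valuation $\ge|\Delta|-1$, so $e^h-1$ is a univariate power series of valuation $\ge|\Delta|-1$ by the standard valuation inequality, and its order is $(\mathrm{val}, 0)\succcurlyeq(|\Delta|-1,0)$. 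Either patch is one line.
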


We shall also use next lemma, whose proof is also postponed to the end of this subsection, and which enables us to decompose $u$ as a product of power series.

\begin{lem}\label{lem:decomppowerseries}
Let $\pa{v_\Delta}_{\Delta\subseteq [\ell]}$ and $\pa{w_\Delta}_{\Delta\subseteq [\ell]}$ to families of power series in $x$ and $y$ indexed by subsets of $[\ell]$. Suppose that there exists $(s,s')\in \N^2$ such that, for all $\Delta\subseteq [\ell]$, we have $\mathrm{ord}\pa{P_{\Delta}[v]-1}\succcurlyeq (s,s')$ and $\mathrm{ord}\pa{P_\Delta[w]-1}\succcurlyeq (s,s')$. Then, for all $\Delta\subseteq [\ell]$, $\mathrm{ord}\pa{P_{\Delta}[vw]-1}\succcurlyeq (s,s')$.
\end{lem}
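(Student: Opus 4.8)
The plan is to exploit the fact that the operator $P_\Delta$ of Definition~\ref{defi:P_delta} is multiplicative in its argument, so that the hypothesis transfers to the product family $(vw)_\Delta := v_\Delta w_\Delta$ in a completely mechanical way. First I would record that, for any families $(v_\delta)$, $(w_\delta)$ of power series with nonzero constant term,
\begin{align*}
P_\Delta[vw](x,y) &= \prod_{\delta\subseteq\Delta}\bigl(v_\delta w_\delta\bigr)^{(-1)^{|\Delta|-|\delta|}}
= \Bigl(\prod_{\delta\subseteq\Delta}v_\delta^{(-1)^{|\Delta|-|\delta|}}\Bigr)\Bigl(\prod_{\delta\subseteq\Delta}w_\delta^{(-1)^{|\Delta|-|\delta|}}\Bigr)\\
&= P_\Delta[v](x,y)\,P_\Delta[w](x,y),
\end{align*}
which holds because the ring of formal power series is commutative and each exponent $(-1)^{|\Delta|-|\delta|}$ is $\pm 1$. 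I would also note here that the hypotheses implicitly guarantee that $P_\Delta[v]$ and $P_\Delta[w]$, hence the individual $v_\delta$ and $w_\delta$, are power series with positive radius of convergence and nonzero value at the origin (otherwise $\mathrm{ord}(P_\Delta[v]-1)$ and $\mathrm{ord}(P_\Delta[w]-1)$ would not be well defined), so that the manipulation of inverses above is legitimate.

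Next I would fix $\Delta\subseteq[\ell]$ and write $P_\Delta[v] = 1 + R_\Delta$ and $P_\Delta[w] = 1 + S_\Delta$, where $R_\Delta$, $S_\Delta$ are power series of positive radius satisfying $\mathrm{ord}(R_\Delta)\succcurlyeq(s,s')$ and $\mathrm{ord}(S_\Delta)\succcurlyeq(s,s')$ by assumption. Multiplying the two identities and subtracting $1$, the multiplicativity just established yields
$$P_\Delta[vw]-1 = (1+R_\Delta)(1+S_\Delta)-1 = R_\Delta + S_\Delta + R_\Delta S_\Delta.$$

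Finally I would invoke Lemma~\ref{lem:basicpropertiesorder}. Item~1 gives $\mathrm{ord}(R_\Delta S_\Delta)\succcurlyeq \mathrm{ord}(R_\Delta)+\mathrm{ord}(S_\Delta)\succcurlyeq(s,s')+(s,s')\succcurlyeq(s,s')$, the last step because $(s,s')\in\N^2$, so that adding a coordinatewise nonnegative pair preserves the order $\succcurlyeq$. Item~2 gives $\mathrm{ord}(R_\Delta+S_\Delta)\succcurlyeq\inf\bigl(\mathrm{ord}(R_\Delta),\mathrm{ord}(S_\Delta)\bigr)\succcurlyeq(s,s')$, and one further application of item~2 gives $\mathrm{ord}\bigl(R_\Delta+S_\Delta+R_\Delta S_\Delta\bigr)\succcurlyeq\inf\bigl(\mathrm{ord}(R_\Delta+S_\Delta),\mathrm{ord}(R_\Delta S_\Delta)\bigr)\succcurlyeq(s,s')$, that is $\mathrm{ord}(P_\Delta[vw]-1)\succcurlyeq(s,s')$. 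Since $\Delta$ was arbitrary, this proves the lemma. The argument is essentially bookkeeping: the only genuine point is the multiplicativity of $P_\Delta$ over products of families, which is why I would isolate it as the first step, together with the (routine) observation that all objects involved are honest power series with positive radius so that Lemma~\ref{lem:basicpropertiesorder} applies.
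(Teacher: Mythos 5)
Your proof is correct and follows essentially the same route as the paper's: you write $P_\Delta[vw]-1 = R_\Delta + S_\Delta + R_\Delta S_\Delta$ (the paper's identity $P_\Delta[vw]-1 = (P_\Delta[v]-1)+(P_\Delta[w]-1)+(P_\Delta[v]-1)(P_\Delta[w]-1)$) and then apply items 1 and 2 of Lemma~\ref{lem:basicpropertiesorder}. The only difference is that you spell out the multiplicativity $P_\Delta[vw]=P_\Delta[v]P_\Delta[w]$ and the monotonicity of $\succcurlyeq$ under coordinatewise addition, which the paper leaves implicit.
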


Now, we have all the ingredients to prove Lemma \ref{lem:quazifactogeneral}. Let $\Delta\subseteq [\ell]$ of size at least $2$. We seek to lower bound $\mathrm{ord}\pa{P_{\Delta}[u]-1}$, where $$P_{\Delta}[u](x,y)=\prod_{\delta\subseteq \Delta}\pa{u_\delta(x,y)}^{(-1)^{|\Delta|-|\delta|}}\enspace,$$
where we recall that, for all $\Delta\subseteq [\ell]$,
\begin{align*}
u_\Delta(x,y)=&\eta^{|I_\Delta|}\prod_{j=1}^q\frac{1}{1-x}\times\ldots\times \frac{1}{1-(|I_\Delta\cap A_{j}|-1)x}\\
&\times\prod_{i=1}^r(1-y)\times\ldots\times \pa{1-\pa{|I_\Delta \cap B_{i}|-1}y}\enspace.
\end{align*}
Let us define, for $j=1,\ldots, q$, $$w^{(j)}_\Delta(x,y):=\frac{1}{1-x}\times\ldots\times \frac{1}{1-(|I_\Delta\cap A_{j}|-1)x}\enspace,$$
and, for $i\in [r]$,
$$v^{(i)}_\Delta(x,y):=(1-y)\times\ldots\times \pa{1-\pa{|I_\Delta \cap B_{i}|-1}y}\enspace,$$
and $z_{\Delta}(x,y):=\eta^{|I_\Delta|}$.

For all $\Delta\subseteq [\ell]$, we have $u_\Delta(x,y)=z_{\Delta}(x,y)\prod_{j=1}^qw^{(j)}_\Delta(x,y)\prod_{i=1}^r v^{(i)}_\Delta(x,y)$. In the light of Lemma \ref{lem:decomppowerseries}, in order to prove that $\mathrm{ord}\pa{P_{\Delta}[u]-1}\succcurlyeq \mathrm{ord}(\cL^*[\Delta])$, it is sufficient to prove that we have $\mathrm{ord}\pa{P_\Delta[w^{(j)}]-1}\succcurlyeq \mathrm{ord}(\cL^*[\Delta])$, for all $j=1,\ldots, q$, and $\mathrm{ord}\pa{P_\Delta[v^{(i)}]-1}\succcurlyeq \mathrm{ord}(\cL^*[\Delta])$, for all $i\in [r]$, and  $\mathrm{ord}\pa{P_\Delta[z]-1}\succcurlyeq \mathrm{ord}(\cL^*[\Delta])$.

\paragraph{The term z}. Let us first deal with the constant term $P_\Delta[z]$. It is clear that for all $\delta\subseteq \Delta$, we have $z_{\delta}=\prod_{t'\in \delta}z_{\ac{t'}}$. Therefore, using Lemma \ref{lem:independentquasifacto}, we deduce that as soon as $\Delta$ is of size at least $2$, $P_\Delta[z]=1$ and thus $\mathrm{ord}\pa{P_\Delta[z]-1}=+\infty$. Hence, it is clear that $\mathrm{ord}\pa{P_\Delta[z]-1}\succcurlyeq \mathrm{ord}\pa{\cL^*[\Delta]}$.

\paragraph{The term $w^{(j)}$.} Let us now deal with the term $P_\Delta[w^{(j)}]$. If there exists $t\in \Delta$ such that $I_{t}\cap A_{j}=\emptyset$, we can easily check using Lemma \ref{lem:independentquasifacto} that $P_\Delta[w^{(j)}]=1$ and so $\mathrm{ord}\pa{P_\Delta[w^{(j)}]-1}\succcurlyeq \mathrm{ord}(\cL^*[\Delta])$. Let us now suppose that for all $t\in \Delta$, we have $I_{t}\cap A_{j}\neq \emptyset$. Note that this implies that $\cL^*[\Delta]$ is a complete graph, and is in particular connected (we recall that $\cL^*$ is defined in Definition~\ref{def:Lstar}, page~\pageref{def:Lstar}). Since any spanning tree of $\cL^*[\Delta]$ has exactly $|\Delta|-1$ edges, we derive that  $\mathrm{ord}(\cL^*[\Delta])\preccurlyeq \pa{|\Delta|-1, 0}$.

For all $\delta\subseteq \Delta$, we remark that
$$w^{(j)}_\delta(x,y)=\frac{1}{(1-x)\ldots (1-(\sum_{t\in \delta}|I_{\ac{t}}\cap A_{j}|-1)x)}\enspace.$$
 Lemma \ref{lem:factorielseries} then ensures that, for all $\Delta$ of size at least $2$,  $\mathrm{ord}\pa{P_\Delta[w^{(j)}]-1}\succcurlyeq \pa{|\Delta|-1, 0}$. We have proved that $\mathrm{ord}\pa{P_\Delta[w^{(j)}]-1}\succcurlyeq \mathrm{ord}(\cL^*[\Delta])$.

\paragraph{The term $v^{(i)}$.} Finally, we shall prove that, for all $i \in [r]$, $\mathrm{ord}\pa{P_\Delta[v^{(i)}]-1}\succcurlyeq \mathrm{ord}\pa{\cL^*[\Delta]}$. If there exists $t\in \Delta$ such that $I_{t}\cap B_{i}=\emptyset$, then, using Lemma \ref{lem:independentquasifacto}, we deduce $P_\Delta[v^{(i)}]=1$ and so $\mathrm{ord}\pa{P_\Delta[v^{(i)}]-1}\succcurlyeq \mathrm{ord}(\cL^*[\Delta])$. Let us now suppose that all  $t \in \Delta$, we have $B_{i}\cap I_t\neq \emptyset$. In particular, this implies that $\cL^*[\Delta]$ is a complete graph with edges with weight $y$. Hence, $\mathrm{ord}(\cL^*[\Delta])=\pa{0, |\Delta|-1}$. For all $\delta\subseteq\Delta$, $v^{(i)}_\delta=\pa{1-y}\ldots \pa{1-\pa{\sum_{t'\in \delta}|I_{t'}\cap B_{i}|-1}y}$. So , using Lemma \ref{lem:factorielseries}, we deduce that $\mathrm{ord}\pa{P_{\Delta}[w]-1}\succcurlyeq (0,|\Delta|-1)=\mathrm{ord}\pa{\cL^*[\Delta]}$.

Combining those three points together concludes the proof of Lemma~\ref{lem:quazifactogeneral}.

\begin{proof}[Proof of Lemma \ref{lem:factorielseries}]

Let $a_1,\ldots, a_\ell$ be a set of non-negative integers. We remark that $v_{\Delta}=1/w_{\Delta}$ for any subset $\Delta$, which in turn implies $P_{\Delta}[v]-1=\frac{1}{P_{\Delta}[w]}-1=\frac{1}{1+(P_{\Delta}[w]-1)}-1$. Since $\pa{P_{\Delta}[v]-1(0,0)}=\pa{P_{\Delta}[w]-1} (0,0)=0$, we get, using Lemma \ref{lem:basicpropertiesorder}, that $\mathrm{ord}\pa{P_{\Delta}[v]-1}\succcurlyeq \mathrm{ord}\pa{P_{\Delta}[w]-1}$ and, similarly, $\mathrm{ord}\pa{P_{\Delta}[w]-1}\succcurlyeq \mathrm{ord}\pa{P_{\Delta}[v]-1}$. We deduce $\mathrm{ord}\pa{P_{\Delta}[w]-1}=\mathrm{ord}\pa{P_{\Delta}[v]-1}$. Hence, for $\Delta\subseteq [\ell]$, having $\mathrm{ord}\pa{P_{\Delta}[v]-1}\succcurlyeq \pa{|\Delta|-1,0}$ is equivalent to having $\mathrm{ord}\pa{P_{\Delta}[w]-1}\succcurlyeq \pa{|\Delta|-1,0}$. Hence, we only need to prove the result for $P_{\Delta}[v]$.

We argue by induction on $\ell$ and then $a_\ell$ that this holds for all $\Delta\subseteq [\ell]$. For $\ell=1$, the result is trivial. Consider $\ell>1$, and assume that for all $\Delta\subsetneq [\ell]$, we have $\mathrm{ord}\pa{P_{\Delta}[v]-1}\succcurlyeq \pa{|\Delta|-1,0}$. Let us prove that $\mathrm{ord}\pa{P_{[\ell]}[v]-1}\succcurlyeq (\ell-1,0)$.

In the following, for $k\geq 0$, we define $v_{\Delta}[k]$  as 
$$v_\Delta[k](x,y):=\pa{1-x}\ldots \pa{1-\pa{\sum_{i\in \Delta\setminus \ac{\ell}}a_i+k\1\ac{\ell\in \Delta}-1}x}\enspace.$$
In particular, we have  $v_\Delta=v_\Delta[a_\ell]$. Hence, we only need to prove that $\mathrm{ord}\pa{P_{[\ell]}(v[k])-1}\succcurlyeq (\ell-1,0)$ for all non-negative integers $k$ and any $(a_1,\ldots, a_{\ell-1})$.

Let us first consider the case $k=0$. Then, for any $\Delta\subseteq [\ell-1]$, 
$$v_\Delta[k](x,y)=v_{\Delta\cup\ac{\ell}}[k](x,y)\enspace.$$
By definition~\eqref{eq:definition_P_Delta} of $P_{\Delta}$, this implies that for all $x,y$, $P_{[\ell]}(v[a_\ell])(x,y)=1$. Hence, we have $\mathrm{ord}\pa{P_{[\ell]}(v[a_\ell])-1}\succcurlyeq (\ell-1,0)$.

Now assume for some $k>0$, we have $\mathrm{ord}\pa{P_{[\ell]}(v[k])-1}\succcurlyeq (\ell-1,0)$ and let us prove that $\mathrm{ord}\pa{P_{[\ell]}(v[k+1])-1}\succcurlyeq (\ell-1,0)$. We observe that, for $\ell\in \Delta$, 
\begin{align*}
v_\Delta[k+1]=&(1-x)\ldots\pa{1-\pa{\sum_{i\in \Delta\setminus \ac{l}}a_i+k}x}\\
=&(1-x)\ldots\pa{1-\pa{\sum_{i\in \Delta\setminus \ac{\ell}}a_i+k-1}x}\times \pa{1-\pa{\sum_{i\in \Delta\setminus \ac{\ell}}a_i+k}x}\\
=&v_{\Delta}[k]\pa{1-\pa{\sum_{i\in \Delta\setminus \ac{\ell}}a_i+k}x} \enspace . 
\end{align*}
On the other hand, if $\ell\notin \Delta$, we have $v_\Delta[k+1]=v_{\Delta}[k]$.
Thus, we deduce that 
$$P_{[\ell]}[v[k+1]]=\pa{\underset{\ell\in \Delta}{\prod_{\Delta\subseteq [\ell]}}\pa{1-\pa{\sum_{i\in \Delta\setminus \ac{\ell}}a_i+k}x}^{(-1)^{\ell-|\Delta|}}}P_{[\ell]}[v[k]]\enspace . $$
Let us define the multivariate series
\begin{align*}
Q(x):=&\underset{\ell\in \Delta}{\prod_{\Delta\subseteq [\ell]}}\pa{1-\pa{\sum_{i\in \Delta\setminus \ac{\ell}}a_i+k}x}^{(-1)^{\ell-|\Delta|}}
=\underset{\ell\in \Delta}{\prod_{\Delta\subseteq [\ell]}}x^{(-1)^{\ell-|\Delta|}}\pa{\frac{1}{x}-\pa{\sum_{i\in \Delta\setminus \ac{\ell}}a_i+k}}^{(-1)^{\ell-|\Delta|}}\\
=&\underset{\ell\in \Delta}{\prod_{\Delta\subseteq [\ell]}}\pa{\frac{1}{x}-\pa{\sum_{i\in \Delta\setminus \ac{\ell}}a_i+k}}^{(-1)^{\ell-|\Delta|}}\ , 
\end{align*}
where we used that,  for $\ell\geq 2$, $\underset{\ell\in \Delta}{\prod_{\Delta\subseteq [\ell]}}x^{(-1)^{\ell-|\Delta|}}=1$.

For any $t\in \R$, define $F(t):=\prod_{\Delta\subseteq [\ell-1]}\pa{t-\sum_{i\in \Delta}a_i}^{(-1)^{|\Delta|+1}}$. Then, we have 
\[
Q(x)= \pa{F\pa{\frac{1}{x}-k}}
^{(-1)^\ell}\enspace . 
\]
From Lemma A.1 of \cite{feray2018}, we know that $F(t)-1$ is a rational function in $t$ of degree at most $-\ell+1$. Hence, $Q(x)-1$ is a multivariate series in $x$ of \textit{order} at least $\ell-1$. Combining this with the induction hypothesis, we deduce that $\mathrm{ord}\pa{P_{[\ell]}[v[k+1]]}\succcurlyeq (\ell-1, 0)$, which concludes the induction and the proof of Lemma~\ref{lem:factorielseries}. 

\end{proof}

\begin{proof}[Proof of Lemma \ref{lem:decomppowerseries}]
Let $\Delta\subseteq [\ell]$. We have 
$$P_{\Delta}[vw]-1=P_{\Delta}[v]-1+P_{\Delta}[w]-1+\pa{P_{\Delta}[v]-1}\pa{P_{\Delta}[w]-1}\enspace.$$
By hypothesis of the lemma, we both have $\mathrm{ord}\pa{P_{\Delta}[v]-1}\succcurlyeq (s,s')$ and $\mathrm{ord}\pa{P_\Delta[w]-1}\succcurlyeq (s,s')$. This also implies by Lemma \ref{lem:basicpropertiesorder} that $\mathrm{ord}\pa{\pa{P_{\Delta}[v]-1}\pa{P_\Delta[w]-1}}\succcurlyeq (s,s')$ and so, still using Lemma \ref{lem:basicpropertiesorder}, $\pa{P_{\Delta}[vw]-1}\succcurlyeq (s,s')$. This concludes the proof of Lemma~\ref{lem:decomppowerseries}.

\end{proof}

\subsection{Proof of the remaining  technical lemmas}

\begin{proof}[Proof of Lemma \ref{lem:independentquasifacto}]

For $\delta\subset \Delta$, we write $\delta_1=\delta\cap I_1$ and $ \delta_2= \delta\cap I_2$. By definition, one has
\begin{align*}
P_{\Delta}[v]=&\prod_{\delta\subseteq \Delta}v_\delta^{(-1)^{|\Delta|-|\delta|}}
=\prod_{\delta\subseteq \Delta}v_{\delta_1}^{(-1)^{|\Delta|-|\delta|}}v_{\delta_2}^{(-1)^{|\Delta|-|\delta|}}\\
=&\prod_{\delta_1\subseteq I_1}\prod_{\delta_2\subseteq I_2}v_{\delta_1}^{(-1)^{|I_1|-|\delta_1|}(-1)^{|I_2|-|\delta_2|}}v_{\delta_2}^{(-1)^{|I_2|-|\delta_2|}(-1)^{|I_1|-|\delta_1|}}\\
=&\prod_{\delta_1\subseteq I_1}\pa{v_{\delta_1}^{(-1)^{|I_1|-|\delta_1|}}}^{\sum_{\delta_2\subseteq I_2}(-1)^{|I_2|-|\delta_2|}}
\prod_{\delta_2\subseteq I_2}\pa{v_{\delta_2}^{(-1)^{|I_2|-|\delta_2|}}}^{\sum_{\delta_1\subseteq I_1}(-1)^{|I_1|-|\delta_1|}}\enspace.
\end{align*}
We have $\sum_{\delta_1\subseteq I_1}(-1)^{|I_1|-|\delta_1|}=\pa{-1}^{-|I_1|}\sum_{t=1}^{|I_1|}\binom{|I_1|}{t}(-1)^t=0^{|I_1|}=0$ since $I_1\neq \emptyset$. As a consequence, $$\prod_{\delta_2\subseteq I_2}\pa{v_{\delta_2}^{(-1)^{|I_2|-|\delta_2|}}}^{\sum_{\delta_1\subseteq I_1}(-1)^{|I_1|-|\delta_1|}}=1\enspace,$$
and, similarly,
$$\prod_{\delta_1\subseteq I_1}\pa{v_{\delta_1}^{(-1)^{|I_1|-|\delta_1|}}}^{\sum_{\delta_2\subseteq I_2}(-1)^{|I_2|-|\delta_2|}}=1\enspace.$$
We conclude that $P_{\Delta}[v]=1$.

\end{proof}

\begin{proof}[Proof of lemma \ref{lem:minimumexposant}]
Consider any  $S\subset \N^{2}$. Let
$$s_0=\min\ac{s\in\N, \enspace \exists s'\in \N\enspace s.t\enspace (s,s')\in S}\enspace,$$
and let $s_0'$ the maximal element in $\N$ such that for all $(s,s')\in S$, $s_0+s_0'\leq s+s'$, i.e. 
$$s_0+s'_0=s_{0}+\min_{(s,s')\in S} (s+s').$$ 
It is clear that  $(s,s')\succcurlyeq (s_0,s'_0)$ for all $(s,s')\in S$. It remains to prove that for all $(s_1,s'_1)$ satisfying $(s,s')\succcurlyeq (s_1,s'_1)$ for all $(s,s')\in S$, we have $(s_0,s'_0)\succcurlyeq (s_1,s'_1)$. First, for all $(s,s')\in S$, $s_1\leq s$, which implies that $s_0\geq s_1$. Then, for all $(s,s')\in S$, we have  $s+s'\geq s_1+s'_1$. Again, this readily implies that  $s_0+s'_0\geq s_1+s'_1$. We conclude that $(s_0,s'_0)\succcurlyeq (s_1,s'_1)$. Hence, $(s_0,s'_0)$ is the infimum of $S$, which is unique by definition
\end{proof}

\begin{proof}[Proof of Lemma \ref{lem:minimumexposantgraph}]
Let $\cL=(V,E)$ be a polynomial graph. We write $(V_1,\ldots, V_{|CC|})$ for the partition of the vertices corresponding to the connected components of a subgraph of $\cL$, where we only keep the edges with weight $1$. Let $\mathcal{T}$ denote a spanning tree of $\cL$ and let $(s,s')$ be its degree. By definition, $\mathcal{T}$ must span all the $V_{t}$'s.  Hence, it has at least $ |CC|-1$ edges of weight $x$ or $y$. We deduce that $s+s'\geq |CC|-1$. 

Let us construct a spanning tree $\mathcal{T}$, with degree  $(s,s')$, such that $s+s'=|CC|-1$. To do so, we consider $E_1$ the set of edges of $\mathcal{L}$ whose extremities are between two different connected components $V_{t}$ and $V_{t'}$. We denote $\cG_{CC}$ the induced polynomial multi-graph on $[|CC|]$. We extract from $\cG_{CC}$ a spanning tree $\mathcal{T}^{CC}$, and for all edges of $\mathcal{T}_{CC}$ between some $t$ and $t'$, we keep an edge of $E_1$ connecting $V_{t}$ and $V_{t'}$. We denote by $E_1'$, which is of size $|CC|-1$, the corresponding  subset of $E_1$. Let $\cG$ denote the polynomial graph obtained by the union of $E_1'$ with all the edges of weight $1$. It is clear that $\cG$ spans the vertex set $V$. We extract $\mathcal{T}$ a spanning tree of $\cG$. By definition, we have $s+s'= |CC|-1$. 

Let us now take $\mathcal{T}_0$ a spanning tree of $\cG_{CC}$ with $|CC|-1$ edges of weight $x$ or $y$ and that minimizes the number of edges of weight $x$. By constructing a spanning graph $\mathcal{T}^*$ of $\cG$ from $\mathcal{T}_0$ as above, we deduce that  its degree is the infimum of the degrees of all spanning trees. 
\end{proof}

\begin{proof}[Proof of Lemma \ref{lem:decompmimimumexposantgraph}]

We consider $\bold{\Delta}=\ac{\Delta_1,\ldots,\Delta_m}$ a set of subsets of $[\ell]$  such that $\bV\vee \Pi(\Delta_1)\vee\ldots\vee\Pi(\Delta_m)=\{V\}$. If there exists $i\in [m]$ such that $\cL[\Delta_i]$ is not connected, then $\mathrm{ord}\pa{\cL[\Delta_i]}=\pa{+\infty,+\infty}$ and the result is trivial. Let us now suppose that, for all $i\in [m]$,  $\cL[\Delta_i]$ is connected. For $i\in [m]$, consider $\cT_i$ a spanning tree of $\cL[\Delta_i]$ with $deg(\cT_i)=\mathrm{ord}\pa{\cL[\Delta_i]}$ --such a tree exists by Lemma \ref{lem:minimumexposantgraph}. Each $\cT_i$ can then be viewed as a subset of edges of the original graph $\cL$. Let $E_0$ be the union of all the edges  of $\cT_i$'s and of all edges of weight $1$ of $\cL$. Since $\bV\vee \Pi(\Delta_1)\vee\ldots\vee\Pi(\Delta_m)=\{V\}$, we know that $E_0$ spans $V$. We extract a spanning tree $\cT$ from $E_0$. Considering $E_0$ as a monomial in $x$ and $y$, we deduce that  
$$deg(\cT)\preccurlyeq deg(E_0)=\sum_{i=1}^m deg(\cT_i)=\sum_{i=1}^m \mathrm{ord}\pa{\cL[\Delta_i]} \enspace .$$
Since $\mathrm{ord}\pa{\cL}\preccurlyeq deg(\cT)$, we conclude that 
$$\mathrm{ord}\pa{\cL} \preccurlyeq\sum_{i=1}^m \mathrm{ord}\pa{\cL[\Delta_i]} \enspace. $$
\end{proof}

\begin{proof}[Proof of Lemma \ref{lem:basicpropertiesorder}]
Let us take $f(x,y)=\sum_{s,s'\geq 0}f_{s,s'}x^s y^{s'}$ and $g=\sum_{s,s'\geq 0}g_{s,s'}x^sy^{s'}$.
\begin{enumerate}
\item We have $(fg)(x,y)=\sum_{s_0,s_0'\geq 0}\pa{\sum_{s\leq s_0}\sum_{s'\leq s'_0}f_{s,s'}g_{s_0-s, s'_0-s'}}x^{s_0}y^{s'_0}$. We write $(fg)_{s_0,s'_0}=\sum_{s\leq s_0}\sum_{s'\leq s'_0}f_{s,s'}g_{s_0-s, s'_0-s'}$. For having $(fg)_{s_0,s'_0}\neq 0$, there must exists $s\leq s_0$ and $s'\leq s'_0$ such that $f_{s,s'}\neq 0$ and $g_{s_0-s, s'_0-s'}\neq 0$. In that case, by definition, we have $(s,s')\succcurlyeq \mathrm{ord}(f)$ and $(s_0-s, s'_0-s')\succcurlyeq \mathrm{ord}(g)$ so that $(s_0,s'_0)\succcurlyeq  \mathrm{ord}(f)+\mathrm{ord}(g)$. This being true for all $(s_0,s'_0)$ with $(fg)_{s_0,s'_0}\neq 0$, we deduce taking the infimum that $\mathrm{ord}(fg)\succcurlyeq \mathrm{ord}(f)+\mathrm{ord}(g)$.

\item We have $(f+g)(x,y)=\sum_{s,s'\geq 0}\pa{f_{s,s'}+g_{s,s'}}x^sy^{s'}$. Let us write $(f+g)_{s,s'}=f_{s,s'}+g_{s,s'}$. For having $(f+g)_{s,s'}\neq 0$, one needs that either $f_{s,s'}\neq 0$ or $g_{s,s'}\neq 0$ and so $(s,s')\succcurlyeq \inf \pa{\mathrm{ord}(f),\mathrm{ord}(g)}$. We conclude $\mathrm{ord}(f+g)\succcurlyeq \inf \pa{\mathrm{ord}(f),\mathrm{ord}(g)}$:

\item Let us suppose that $f(0,0)=0$. Then, on an open ball around $(0,0)$, $|f(x,y)|<1$, and so 
$$\frac{1}{1+f(x,y)}-1=\sum_{d\geq 1}(-f(x,y))^d:=\sum_{s,s'\geq 0}\pa{\frac{1}{1+f}-1}_{s,s'}x^sy^{s'}\enspace.$$
Using point 1., we know that for all $d\geq 1$, $\mathrm{ord}((-f)^d)\succcurlyeq d\cdot\mathrm{ord}(f)\succcurlyeq \mathrm{ord}(f)$. For $s,s'$ such that $\pa{\frac{1}{1+f}-1}_{s,s'}\neq 0$, there exists $d$ such that, wherenever $d'\geq d+1$, $d'\cdot\mathrm{ord}(f)\succcurlyeq (s,s')$. And so, using the second point of this proof generalized to finite sums of multivariate power series, we get that $(s,s')\succcurlyeq \mathrm{ord}(\sum_{d'\leq d}f^d)\succcurlyeq \mathrm{ord}(f)$. We conclude $\mathrm{ord}(\frac{1}{1+f}-1)\succcurlyeq \mathrm{ord}(f)$.
\end{enumerate}
\end{proof}

\begin{proof}[Proof of Lemma \ref{lem:quasifactographgeneral}]
Let us write $\mathrm{ord}\pa{\cL^*}=\pa{d_{[\ell]},d'_{[\ell]}}$. Suppose $\pa{d_{[\ell]},d'_{[\ell]}}$ is finite. From Lemma \ref{lem:minimumexposantgraph}, we know that there exists a spanning tree $\mathcal{T}$ of $\cL^*$ of degree $\pa{d_{[\ell]},d'_{[\ell]}}$.

A spanning tree, if there exists one, has necessarily at least $|\ell|-1$ edges. All the edges in $\mathcal{L}^*$ are of weight $x$, $y$. Hence, $d_{[\ell]}+d'_{[\ell]}\geq \ell-1$.

We recall that $\mathcal{B}$ is the graph on $[\ell]$ with an edge between $t$ ,$t'$ if and only if there exists $i\in [r]$ such that both $I_{t}$ and $I_{t'}$ intersects $B_{i}$, and that $cc(\cB)$ stands for the number of connected components of this graph. We remark that $(t,t')$ is an edge of $\cB$ if and only if it is an edge of weight $y$ of $\cL^*$. Thus, $\mathcal{T}$ has at least $cc(\cB)-1$ edges of weight $x$ and so $d_{[\ell]}\geq cc(\cB)-1$. 

\end{proof}

\subsection{Case $r=0$.}\label{ref:caser0}

This section is devoted to proving the second point of Lemma \ref{lem:upperboundgeneralcumulant}. Assume that $Z_1, \ldots, Z_\ell$ have mixed moments given by 
$$\E\cro{\prod_{t\in \Delta}Z_{t}}=\eta^{|I_\Delta|}\prod_{j=1}^q \frac{1}{1-x_0}\times\ldots\times \frac{1}{1-(|I_\Delta\cap A_{j}|-1)x_0}\ ,\quad \text{for $\Delta\subseteq [R]$.}$$

This is a particular case of the cumulant dealt in the first point. We could use the result of the first point, but loose a factor $2$ compared to the result that we want to prove. However, let us explain in this section what simplifications we can make in the specific case $r=0$. We Define, for $\Delta\subseteq [\ell]$, $u_\Delta(x)=\eta^{|I_\Delta|}\prod_{j=1}^q \frac{1}{1-x}\times\ldots\times \frac{1}{1-(|I_\Delta\cap A_{j}|-1)x}$ which is a power series in $x$. We define the associated power series, for $\Delta\subseteq [\ell]$, $\kappa_{\Delta}(x)=\sum_{G\in \cP\pa{\Delta}}m(G)\prod_{\delta\in G}u_\delta(x)$. We remark that $\cumul\pa{Z_1,\ldots,Z_\ell}=\kappa_{[\ell]}(x_{0})$. The \textit{order} of this power series is of the form $(d_\Delta,0)$ and, using the results of Section \ref{sec:orderkappa}, we get that $d_{[\ell]}\geq \ell-1$. 
When $x$ is small enough, we can expand the power series $\kappa_{[\ell]}(x)=\sum_{d\geq  \ell-1}\kappa_{d}x^d$. We plug the results of Section \ref{sec:controlcoeff} and we get that, for all $d\geq  \ell-1$, $|\kappa_{d}|\leq \eta^L L^{2d}\ell^{2\ell}$, where we recall $L=|I_{[\ell]}|$. When $2Lx^2\leq 1$, we end up with, 
\begin{align*}
|\kappa_{[\ell]}(x)|&\leq \sum_{d\geq \ell-1}\ell^{2\ell} \eta^L L^{2d}x^d\\
&\leq 2\ell^{2\ell}\eta^L \pa{L^2 x}^{\ell-1}\enspace,
\end{align*}
and we conclude with $|\kappa_{[\ell]}(x_0)|=|\cumul\pa{Z_1,\ldots, Z_\ell}|\leq  2\ell^{2\ell}\eta^L \pa{L^2 x_{0}}^{\ell-1}$ when $2Lx_{0}^2\leq 1$.

\section{Other losses for seriation}\label{sec:seriation:complementary}

In this section, we introduce the general T\oe plitz-Robinson seriation model. Given a vector $\theta = (\theta_0,\dots, \theta_{n-1})\in \bbR^n$, we write $T(\theta)$ for the T\oe plitz matrix in $\mathbb{R}^{n\times n}$ with entries $T(\theta)_{i,j} = \theta_{i-j}$, where we use the convention $\theta_k := \theta_{-k}$ for $k \leq 0$. 
The matrix $T(\theta)$ is  said to be T\oe plitz Robinson if the entries of $\theta$ are non-increasing. 

Given a permutation $\pi^*$ in $\cS_n$, we write $P(\pi^*)$ for the corresponding permutation matrix. In this seriation model, we observe a $Y\in \mathbb{R}^{n\times n}$ with $Y=X + E$ where  the matrix $E$ is made of independent standard normal entries and 
\begin{equation}\label{eq:model:seriation:Toeplitz}
X=P(\pi^*)T(\theta)P(\pi^*)
\end{equation}
with $T(\theta)$ is a possibly unknown T\oe plitz-Robinson matrix, and $P(\pi^*)$ is an unknown permutation matrix to be estimated. The permutation $P(\pi^*)$ is not identifiable: $P(\pi^*)$ and the corresponding reverse permutation $P(\pi^*)_{-}$ are such that $P(\pi^*)T(\theta)P(\pi^*)^T= P(\pi)_-^*T(\theta)P(\pi^*)_-^{T}$. Hence, the main goal  in seriation is to recover a permutation $\pi$ such that $X= P(\pi)T(\theta)P(\pi)^T$.

Given a positive integer $\rho$ and $\lambda>0$, we defined the vector $\theta_{\lambda,\rho}\in \mathbb{R}^n$ by $(\theta_{\lambda,\rho})_{i}= \lambda \1_{i\leq \rho}$. Note that the model~\eqref{eq:definition:model:seriation} introduced in Section~\ref{sec:seriation} corresponds to $X= P(\pi^*)T(\theta_{\lambda,k})P(\pi^*)$

One way of quantifying the error of a given permutation matrix $\Pi$ is the $\ell_2$ loss which quantifies to what extent 
$X$ is close in Frobenius distance to a Robinson matrix ordered by $\pi$.
$$
\ell_2(\pi) := \inf_{R \in \cR_n} \|  X  - P(\pi) RP(\pi)^T\|_F\enspace ,
$$
where $\cR_n$ is the collection of Robinson matrices, that is the collection of symmetric matrices $R$ such, for any $i< j$, we have  $R_{i,j} \leq R_{i+1,j}$ and $R_{i,j}\geq R_{i,j+1}$. Equivalently, $R$ is a Robinson matrix if its entries are non-increasing when one moves away from the diagonal. 

It is established in \cite{berenfeld2024seriation} that the minimax rate in $\ell^2_2$ for the seriation problem is of the order of $n$. In contrast, the best known polynomial-time procedures~\cite{berenfeld2024seriation} only achieves a rate of the order of $n^{3/2}$. To provide evidence of the optimality of the $n^{3/2}$ for polynomial-time procedures, \cite{berenfeld2024seriation} also provide  a nearly matching LD polynomial lower bound but their model was quite different as $\pi^*$ was not necessarily a permutation. Here, we strengthen this evidence by deriving a computational $n^{3/2}$ lower bound in the model~\eqref{eq:model:seriation:Toeplitz}.

As for clustering, the problem of estimating a permutation $\hat{\pi}$ is of combinatorial nature and in order to state a LD lower bounds, we reduce it to the problem of estimating the matrix $X=P(\pi^*) T(\lambda,\rho)P(\pi^*)^T$. 

The following lemma is a slight adaptation of Lemma 8 in~\cite{berenfeld2024seriation} --see Section~\ref{sec:proof:reminaining:seriation} for a proof. 
\begin{lem}[Reduction to matrix estimation]\label{lem:reduction}
Consider any $\lambda>0$, positive integer $\rho$ and any estimator $\widehat{\pi}$. 
Define the matrix $\hat{X}$ by $\hat{X}_{i,j}= \lambda/2$ if $|\hat{\pi}^{-1}(i)-\hat{\pi}^{-1}(j)|\leq \rho$ and $\hat{X}_{i,j}= 0$ otherwise. Then, we have 
\begin{equation}\label{eq:lower_bound_risk}
\ell^2_2(\hat{\pi})\geq \frac{1}{4}\left[\left[\|\hat{X}-X\|_F^2\right]- \lambda^2\rho n \right]  
\end{equation}
\end{lem}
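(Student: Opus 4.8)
The plan is to pass to the frame in which $\hat\pi$ is the identity ordering, bound the matrix-estimation error entry by entry, and reduce the statement to a rigidity fact about permuted band matrices and Robinson matrices. \textbf{Step 1 (change of frame).} Put $Z:=P(\hat\pi)^{\top}XP(\hat\pi)$. Conjugation by a permutation matrix is a Frobenius isometry, so $\ell_2^2(\hat\pi)=\inf_{R\in\cR_n}\|Z-R\|_F^2$, while $P(\hat\pi)^{\top}\hat XP(\hat\pi)=B$ with $B_{ij}=\tfrac{\lambda}{2}\1\{|i-j|\le\rho\}$, hence $\|\hat X-X\|_F^2=\|B-Z\|_F^2$. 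Note that $Z$ is merely a reindexing of the band matrix $T(\theta_{\lambda,\rho})$: its entries lie in $\{0,\lambda\}$, $Z_{ii}=\lambda$, and in every row the set $\{j:Z_{ij}=\lambda\}$ has at most $2\rho+1$ elements.

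\textbf{Step 2 (entrywise split).} If $|i-j|\le\rho$ then $B_{ij}=\lambda/2$ and $Z_{ij}\in\{0,\lambda\}$, so $(B_{ij}-Z_{ij})^2=\lambda^2/4$; the number of such pairs is at most $(2\rho+1)n$, and since $\rho\ge1$ this contributes at most $\tfrac{\lambda^2}{4}(2\rho+1)n\le\lambda^2\rho n$. If $|i-j|>\rho$ then $B_{ij}=0$, so $(B_{ij}-Z_{ij})^2=\lambda^2\1\{Z_{ij}=\lambda\}$. Therefore
\[
\|\hat X-X\|_F^2\ \le\ \lambda^2\rho n+\lambda^2E,\qquad E:=\#\bigl\{(i,j):\ |i-j|>\rho,\ Z_{ij}=\lambda\bigr\},
\]
and it remains to prove $\lambda^2E\le 4\inf_{R\in\cR_n}\|Z-R\|_F^2$.

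\textbf{Step 3 (rounding the optimal Robinson matrix).} Let $R^{\star}\in\cR_n$ realize the infimum and set $R^{\flat}_{ij}:=\lambda\,\1\{R^{\star}_{ij}\ge\lambda/2\}$. Thresholding at a fixed level preserves Robinson monotonicity and symmetry, so $R^{\flat}\in\cR_n$; distinguishing the four cases given by $Z_{ij}\in\{0,\lambda\}$ and by the sign of $R^{\star}_{ij}-\lambda/2$ shows $(R^{\flat}_{ij}-Z_{ij})^2\le 4(R^{\star}_{ij}-Z_{ij})^2$ for every $(i,j)$, whence $\|R^{\flat}-Z\|_F^2\le 4\inf_{R\in\cR_n}\|Z-R\|_F^2$. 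Since $R^{\flat}$ and $Z$ are both $\{0,\lambda\}$-valued, $\|R^{\flat}-Z\|_F^2=\lambda^2|\cD|$ with $\cD:=\{(i,j):R^{\flat}_{ij}\ne Z_{ij}\}$, so it suffices to show $E\le|\cD|$ (the surplus of order $\rho n$ left over in Step~2 even makes the weaker bound $E\le 4|\cD|+\tfrac{(2\rho-1)n}{4}$ enough).

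\textbf{Step 4 (the combinatorial core --- the main obstacle).} It remains to charge the out-of-band $\lambda$-entries of $Z$ to the disagreements counted by $\cD$. Here one exploits the two structural inputs: $Z$ is a permutation of the band matrix, so its $\lambda$-support, read in the ordering that turns $Z$ into $T(\theta_{\lambda,\rho})$, is a contiguous band of width $2\rho+1$, whereas $R^{\flat}$ is a symmetric $\{0,\lambda\}$ Robinson matrix, whose rowwise $\lambda$-supports are diagonal-containing intervals that shift monotonically with the row. If a row of $R^{\flat}$ carries a long $\lambda$-interval, the $(2\rho+1)$-bounded $\lambda$-support of $Z$ forces a proportional number of disagreements in that row; if instead $R^{\flat}$ matches an out-of-band $\lambda$-entry $(i,j)$ of $Z$, then the $\lambda$-interval of row $i$ of $R^{\flat}$ reaches more than $\rho$ past $i$, and the rigidity of permuted band matrices --- a reordering of $T(\theta_{\lambda,\rho})$ cannot be both far from the identity ordering and close to a Robinson matrix --- again produces the needed disagreements. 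Arranging this charging with bounded multiplicity is the delicate point; it is the adaptation of Lemma~8 of \cite{berenfeld2024seriation}, and the remaining steps are routine bookkeeping.
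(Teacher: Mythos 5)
Your Steps 1--3 are correct and track the paper's proof closely, modulo two cosmetic changes of viewpoint: you conjugate so that $\hat\pi$ becomes the identity (the paper instead sets $\pi^*=\mathrm{id}$), and you threshold the Robinson minimizer directly inside $\cR_n$, whereas the paper first relaxes $\cR_n$ to the strictly larger class $\cR_n'$ of matrices with unimodal, diagonal-peaked rows and only then thresholds; both forms of the factor-$4$ rounding bound are valid. (A small slip: the ``slack'' version of the counting inequality you could afford is $E\le|\cD|+\tfrac{(2\rho-1)n}{4}$, not $E\le 4|\cD|+\tfrac{(2\rho-1)n}{4}$, but since you aim for $E\le|\cD|$ this does not matter.)

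The genuine gap is Step 4. What you call ``the delicate point,'' deferred to an adaptation of Lemma 8 of \cite{berenfeld2024seriation}, is precisely the mathematical content of this lemma; everything before it is bookkeeping, and you stop exactly where the argument begins. The paper carries it out row by row, and it is short once phrased correctly. Fix a row $i$, let $b_i$ be the number of $j$ with $|i-j|\le\rho$ that land out of band in the $\hat\pi$-ordering (so $X_{ij}=\lambda$ but the conjugated entry is off-band), and assume without loss that they all fall on the right side, $\hat\pi(j)>\hat\pi(i)+2\rho$. Inspect the single entry $R_{\hat\pi(i),\hat\pi(i)+2\rho+1}\in\{0,\lambda\}$. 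If it equals $0$, unimodality forces every entry further out to be $0$ as well, so each of the $b_i$ out-of-band positions contributes a $\lambda$-disagreement. If it equals $\lambda$, unimodality forces $\lambda$ on the entire interval $(\hat\pi(i),\hat\pi(i)+2\rho]$ of length $2\rho$; at most $2\rho-b_i$ of those positions come from indices $j$ with $|i-j|\le\rho$ (there are at most $2\rho$ such $j\neq i$, of which $b_i$ are already sent past $\hat\pi(i)+2\rho$), so at least $b_i$ of them come from $j$ with $|i-j|>\rho$, where $X_{ij}=0$, again giving $b_i$ $\lambda$-disagreements. Either way the row contributes $\lambda^2 b_i$ to the squared Frobenius error, and summing over $i$ gives exactly the required $E\le|\cD|$. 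This dichotomy-on-a-single-entry argument is what is missing from your write-up, and it relies on the row-unimodality of $R^\flat$ (inherited from $\cR_n$, but most transparently exploited after the paper's relaxation to $\cR_n'$, which decouples the rows). Without spelling it out, the proof is not complete.
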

As a consequence of this lemma, if all polynomial-time estimators $\hat{X}$ of $X$ satisfy $\mathbb{E}[\|\hat{X}-X\|_F^2]\geq \frac{3}{2} \lambda^2 \rho n$, then no polynomial-time estimators of $\hat{\pi}$ satisfy 
$\mathbb{E}[\ell^2_2(\hat{\pi})]\geq c  \lambda^2 \rho n$ for some $c>0$. Hence, to establish the hardness of achieving $\mathbb{E}[\ell^2_2(\hat{\pi})] \preccurlyeq_{\log(n)} n^{3/2}$, it suffices to prove that, for $\rho=\lceil \sqrt{n} \rceil$ and $\lambda$ such that $1/\lambda$ is a polylogarithmic  term, one has $\mathbb{E}[\|\hat{X}-X\|_F^2]\geq \frac{3}{2} \lambda^2 \rho n$. The latter precisely corresponding to LD lower bound in Theorem~\ref{thm:lowdegreeseriation}.

In particular, taking $D\asymp \log(n)$, $\rho\asymp \sqrt{n}$, $\lambda$ as the inverse of a polylogarithmic term, we deduce from  that theorem that 
\[
MMSE_{\leq \log(n)}\geq \lambda^2 \frac{3\rho}{2}n  \succcurlyeq_{\log(n)} n^{3/2}\ . 
\] 
Together with Lemma~\ref{lem:reduction}, this suggests the hardness of achieving seriation with a $\ell_2^2$ rate smaller than $n^{3/2}$.

\section{Remaining proofs}

\subsection{Seriation}\label{sec:proof:reminaining:seriation}

\begin{proof}[Proof of Proposition~\ref{prp:upper:polynomial:seriation}]
    
The first result straightforwardly derives from the proof of Theorem 4 in~\cite{berenfeld2024seriation}. Translating the third line of that proof with our notation, we deduce that there exists an event $\cA$ of probability higher than $1-4/n$ such that 
\[
\|X_{\hat{\pi}} - X\|_F^2 \leq \lambda^2 n + \lambda n^{3/2} \sqrt{\log(n)}
\]
Since $\|X_{\hat{\pi}} - X\|_F^2\leq \lambda^2 n^2$, we deduce that 
\[
\mathbb{E}\left[\|X_{\hat{\pi}} - X\|_F^2\right]\leq 5\lambda^2 n + \lambda n^{3/2} \sqrt{\log(n)}\ . 
\]
Let us turn to the second result. For a fixed $(i,j)$, it follows from a standard control of the Gaussian tail distribution that  $(\hat{X}_{th})_{ij}= X_{ij}$ with probability at least $1-1/n^2$. Hence, $\mathbb{E}[((\hat{X}_{th})_{ij}- X_{ij})^2]=\lambda^2/n^2$. The result follows by linearity. 
\end{proof}

\begin{proof}[Proof of Proposition~\ref{prp:upper:IT:seriation}]
$\|Y - \hat{X}_{\hat{\pi}}\|_F^2 \leq \|Y- X\|^2_F$ implies that $\|X - \hat{X}_{\hat{\pi}}\|_F^2+ 2 \langle E , X - \hat{X}_{\hat{\pi}}\rangle\leq 0$. Let us apply an union bound of the random variable $\langle E ,\hat{X}_{\pi}-X\rangle$  over all $n!$ matrices of the form $X_{\pi}$. 
Hence, with probability higher than $1-e^{-x}$, we have $-\langle E , X - \hat{X}_{\hat{\pi}}\rangle\leq \sqrt{2[n\log(n)+x]}\|X -\hat{X}_{\hat{pi}}\|_F$. On this event, we have $\|X - \hat{X}_{\hat{\pi}}\|^2_F\leq 8[n\log(n) + x]$. The result follows by integrating with respect to $x$. 
\end{proof}

\begin{proof}[Proof of Lemma~\ref{lem:reduction}]
The proof closely follows that of Lemma 8 in~\cite{berenfeld2024seriation}. Still, we provide here a complete proof as 
the notation and the conventions slightly differ from~\cite{berenfeld2024seriation}. 
Without loss of generality, we assume that $\pi^*$ is the identity permutation. 
If $|\hat{\pi}(i)- \hat{\pi}(j)|\leq 2\rho$, we have $|\hat{X}_{ij}-X_{ij}|= \lambda/2$. Hence, it follows that 
\begin{equation}\label{eq:upper_bound_l2_pi3}
\|\hat{X}-X\|_F^2\leq \lambda^2 \rho n + \lambda^2 \left|\{(i,j): X_{ij}= \lambda \text{ and } |\hat{\pi}(i)- \hat{\pi}(j)|>2\rho\}\right| 
\end{equation}
We bound the RHS of \eqref{eq:lower_bound_risk} using the loss $\ell^2_2(\hat{\pi})$. For that purpose, let us introduce $\mathcal{R}'_n$ the collection of $n\times n$ matrices whose rows are unimodal and achieve their maximum on the diagonal. Obviously $\mathcal{R}_n\subset \mathcal{R}'_n$ and so $\ell^2_2(\hat{\pi})\geq \inf_{R\in \mathcal{R}'_n}\| X - P(\hat{\pi}) R P(\hat{\pi})^{T} \|_F^2$. Besides, we introduce $\mathcal{R}'_n(\lambda)$ as the subset of $\mathcal{R}'_n$ that only take its values in $\{0,\lambda\}$ outside the diagonal. For $R\in \cR_n'$, let us introduce $R_0\in \cR_n(\lambda)$ defined by $\cro{R_0}_{ij}=\lambda\1\ac{R_{ij}\leq \lambda/2}$. Since $X$ takes its values on $\ac{0,\lambda}$, we have that $\| X - P(\hat{\pi}) R P(\hat{\pi})^{T} \|_F^2\geq \frac{1}{4}\| X - P(\hat{\pi}) R_0 P(\hat{\pi})^{T} \|_F^2$. We deduce
\begin{equation}\label{eq:lower_bound_l2_pi}
\ell^2_2(\hat{\pi})\geq  \inf_{R\in \mathcal{R}'_n}\| X - P(\hat{\pi}) R P(\hat{\pi})^{T} \|_F^2 \geq  \frac{1}{4}\inf_{R\in \mathcal{R}'_n(\lambda)}\| X - P(\hat{\pi})R P(\hat{\pi})^{T}\|_F^2 .
\end{equation}
Let  $R$ denote any matrix in $\mathcal{R}'_n(\lambda)$ that achieves the above infimum.  Fix any $i\in [n]$. We claim that  
\begin{align}
\frac{\|[ X - P(\hat{\pi}) RP(\hat{\pi})^T ]_{i}\|^2}{\lambda^2}&  \geq  \left|\{j: |i-j|\leq \rho \text{ and } |\hat{\pi}(i)-\hat{\pi}(j)|> 2\rho \}\right|   , \label{eq:lower_row}  
\end{align}
where $[ X - P(\hat{\pi}) RP(\hat{\pi})^T ]_{i}$ stands for the $i$-th row ot the matrix. Before showing this claim, let us conclude the proof. Since $$\left|\{j: |i-j|\leq \rho \text{ and } |\hat{\pi}(i)-\hat{\pi}(j)|> 2\rho \}\right|= \left|\{j: X_{i,j}= \lambda \text{ and } |\hat{\pi}(i)- \hat{\pi}(j)|> 2\rho\}\right|\enspace,$$ it follows from 
\eqref{eq:upper_bound_l2_pi3} and~\eqref{eq:lower_row} that 
\[
\|\hat{X}-X\|_F^2\leq \lambda^2 \rho n + 4\ell^2_2(P(\hat{\pi}))\ , 
\]
which is precisely the desired bound.

Let us now show~\eqref{eq:lower_row}. Without loss of generality, we can assume that  the $j$'s such that $|i-j|\leq  \rho$ and $ |\hat{\pi}(i)-\hat{\pi}(j)|> 2\rho $ are such that $\hat{\pi}(j)> \hat{\pi}(i)+ 2\rho$. We consider two cases: (i)
$R_{\hat{\pi}(i), \hat{\pi}(i) + 2\rho + 1 }=0$.  For all $j$ such that $|i-j|\leq  \rho$ and $ |\hat{\pi}(i)-\hat{\pi}(j)|> 2\rho$, we have $[ X - P(\hat{\pi}) RP(\hat{\pi})^T ]_{ij}=\lambda$. Then, this straightforwardly implies~\eqref{eq:lower_bound_l2_pi}. Now, suppose (ii) that  $R_{\hat{\pi}(i), \hat{\pi}(i) + 2\rho + 1 }=\lambda$. There exists at least $\left|\{j: |i-j|\leq \rho \text{ and } |\hat{\pi}(i)-\hat{\pi}(j)|\geq 2\rho \}\right|$ indices $j$ such that $\hat{\pi}(j)\in (\hat{\pi}(i), \hat{\pi}(i)+2\rho]$ and $|i-j|>\rho$. Since, for a such a $j$, we have  $X_{i,j}=0$ and $R_{\hat{\pi}(i),\hat{\pi}(j)}= \lambda$ , we again derive~\eqref{eq:lower_row}.
\end{proof}

\subsection{Proof of Proposition \ref{prop:hardnessclustering}}\label{prf:hardnessclustering}

This proof is adapted from the proof of Proposition A.3 of \cite{Even25a}. Let us consider a regime where $n,K$ go to infinity (and possibly $p$) and where $K=o(n)$. Given a partition $G=G_1,\ldots, G_K$ such that $|G_k|=n/K$ for all $k\in [K]$, we define the partnership matrix $\Gamma^G\in \R^{n\times n}$ by $\Gamma^G_{i,j}=\sum_{k\in [K]}\1\ac{i,j\in G_k}$. We write $\Gamma^*$ for $G^*$. Then, 
\begin{equation*}
n(n-1) MMSE_{poly}=\E\cro{\|\Gamma^*\|^2_F}-corr_{poly}^2=n^2/K-corr_{poly}^2,
\end{equation*}
where we define $$corr_{poly}^2:= \sup_{\hat \Gamma\ poly-time,\ \E\cro{\|\hat \Gamma\|^2_{F}}=1} \E\cro{\langle \Gamma^*,\hat \Gamma\rangle_{F}}^2\enspace.$$ 
Since $MMSE_{poly}= 1/K(1+o(1))$, it follows that   $corr_{poly}^2=o(n^2/K)$. For $\hat G$ a polynomial time estimator of $G^*$ with prescribed group size $n/K$, we have 
$$\E\left[\<\Gamma^{\hat G}, \Gamma^*\>\right]\leq \sqrt{\E\cro{\|\Gamma^{\hat{G}}\|^2_F}}\ corr_{poly}=\sqrt{n^2\over K}\ corr_{poly}$$ 
since $\|\Gamma^{\hat{G}}\|^2_F=n^2/K$ almost surely. We deduce 
\begin{align*}
\E\cro{\|\Gamma^{\hat{G}}-\Gamma^*\|^2_F}=&2n^2/K-2\mathbb{E}\left[\<\Gamma^{\hat{G}}, \Gamma^*\>\right]\\
\geq& 2n^2/K-\sqrt{n^2/K}\ corr_{poly}\\
\geq& 2n^2/K(1+o(1))\enspace.
\end{align*}
We conclude the proof of the proposition with the following lemma.
\begin{lem}\label{lem:clusteringhardness} For any $G$ with prescribed group size $n/K$, we have 
$\cro{1-err(G,G^*)}^2\leq 1-\frac{K\|\Gamma^G-\Gamma^*\|_F^2}{2n^2}$.
\end{lem}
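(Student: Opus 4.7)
The plan is to reduce the inequality to a direct application of Cauchy–Schwarz on the contingency table of the two partitions. Let me sketch the main steps.

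First, since both $err(G,G^*)$ and $\Gamma^G$ are invariant under relabelling the groups of $G$, we may assume without loss of generality that the minimum in the definition of $err(G,G^*)$ is achieved by $\psi=\mathrm{id}$. Let $n_{k\ell}:=|G^*_k\cap G_\ell|$; since $|G^*_k|=|G_\ell|=n/K$, the matrix $(n_{k\ell})$ has all row sums and column sums equal to $n/K$. A direct computation gives $|G^*_k\triangle G_k|=2(n/K-n_{kk})$, hence
\[
1-err(G,G^*)=\frac{1}{n}\sum_{k=1}^{K}n_{kk}.
\]

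Next, I would expand $\|\Gamma^G-\Gamma^*\|_F^2$ in terms of the contingency table. Using $\|\Gamma^G\|_F^2=\|\Gamma^*\|_F^2=K\cdot(n/K)^2=n^2/K$ and the identity $\langle \Gamma^G,\Gamma^*\rangle_F=\sum_{k,\ell}n_{k\ell}^2$, I obtain
\[
\|\Gamma^G-\Gamma^*\|_F^2=\frac{2n^2}{K}-2\sum_{k,\ell}n_{k\ell}^2.
\]
Plugging these two identities into the target inequality and simplifying, the bound $[1-err(G,G^*)]^2\leq 1-\tfrac{K}{2n^2}\|\Gamma^G-\Gamma^*\|_F^2$ is equivalent to
\[
\Bigl(\sum_{k=1}^{K}n_{kk}\Bigr)^2\leq K\sum_{k,\ell}n_{k\ell}^2.
\]

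The final step, which is in fact the easy one here, is Cauchy–Schwarz: $\bigl(\sum_{k} n_{kk}\bigr)^2\leq K\sum_{k}n_{kk}^2\leq K\sum_{k,\ell}n_{k\ell}^2$, where the last inequality simply uses that all $n_{k\ell}$ are non-negative. There is no real obstacle: once the two quantities are rewritten in terms of $(n_{k\ell})$, the inequality is a one-line consequence of Cauchy–Schwarz applied to the diagonal of the contingency matrix. The only subtlety worth emphasizing is the initial reduction to $\psi=\mathrm{id}$, which is legitimate because relabelling the groups of $G$ leaves both sides of the claimed inequality unchanged.
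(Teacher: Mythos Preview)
Your proof is correct and follows essentially the same route as the paper: reduce to $\psi=\mathrm{id}$, express both sides via the contingency table $n_{k\ell}=|G_k^*\cap G_\ell|$, and apply Cauchy--Schwarz to the diagonal entries. The only cosmetic difference is that the paper lower-bounds $\langle\Gamma^G,\Gamma^*\rangle_F\ge\sum_k n_{kk}^2$ directly, whereas you first compute $\langle\Gamma^G,\Gamma^*\rangle_F=\sum_{k,\ell}n_{k\ell}^2$ exactly and then drop the off-diagonal terms; both arrive at the same Cauchy--Schwarz step.
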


\begin{proof}[Proof of Lemma \ref{lem:clusteringhardness}]
With no loss of generality, we assume that the permutation $\psi$ in the definition of $err(G,G^*)$ is the identity. Thus,
\[
err(G,G^*)= \frac{1}{2n}\sum_{k=1}^K |G_k \Delta G^*_k|= 1 - \frac{1}{n}\sum_{k=1}^K
|G_k\cap G^*_k|\ , 
\]
which implies that $\sum_{k=1}^K|G_k\cap G^*_k| = n[1- err(G,G^*)]$. We have, for $G,G^*$ with group size $n/K$,  that 
\begin{equation*}
\|\Gamma^G-\Gamma^*\|_F^2=  \|\Gamma^G\|^2_F+\|\Gamma^*\|_F^2-2\<\Gamma^G, \Gamma^*\>_F = 2n^2/K-2\<\Gamma^G, \Gamma^*\>_F \ .
\end{equation*}
Furthermore,
$$\<\Gamma^G, \Gamma^*\>_F=\sum_{ij\in [n]}\sum_{k,k'\in [K]}\1\ac{i,j\in G_k}\1\ac{i,j\in G_k^*} \enspace.$$
For $k\in [K]$, let us denote $N_k=\left|G_k\cap G^*_k\right|$. Then,
\begin{equation*}
\<\Gamma^G, \Gamma^*\>_F\geq\sum_{k\in [K]}N_k^2\geq \frac{1}{K}\cro{\sum_{k\in [K]}N_k}^2\geq n^2\cro{1-err(G,G^*)}^2/K\enspace,
\end{equation*}
where we used the Cauchy-Schwarz inequality for the second inequality. We conclude that $$\|\Gamma^G-\Gamma^*\|_F^2\leq 2{n^2\over K}\pa{1-\cro{1-err(G,G^*)}^2}\enspace.$$
\end{proof}

\subsection{Proof of Proposition \ref{prop:featurematchinghardness}}\label{prf:featurematchinghardness}

This proof is also adapted from the proof of Proposition A.3 of \cite{Even25a} and is similar to the proof of Proposition \ref{prop:hardnessclustering}. Let us consider a regime where $M,K$ go to infinity (and possibly $p$). Given a $M$-tuple of permutations $\pi=\pi_1,\ldots, \pi_M$, we define the partnership matrix $\Gamma^\pi\in \R^{(K\times M)^2}$ by $\Gamma^\pi_{(k,m), (k',m')}=\1\ac{\pi_m(k)=\pi_{m'}(k')}$. We write $\Gamma^*$ for $\pi^*$. Then, 
\begin{equation*}
(M(M-1))K^2 MMSE_{poly}=\E\cro{\|\Gamma^*\|^2_F}-corr_{poly}^2=KM^2-corr_{poly}^2,
\end{equation*}
where we define $$corr_{poly}^2:= \sup_{\hat \Gamma\ poly-time,\ \E\cro{\|\hat \Gamma\|^2_{F}}=1} \E\cro{\langle \Gamma^*,\hat \Gamma\rangle_{F}}^2\enspace.$$ 
Since $MMSE_{poly}= 1/K(1+o(1))$, it follows that   $corr_{poly}^2=o(M^2K)$. For $\hat{\pi}\in \pa{\mathcal{S}_K}^M$ a polynomial time estimator of $\pi^*$, we have 
$$\E\left[\<\Gamma^{\hat{\pi}}, \Gamma^*\>\right]\leq \sqrt{\E\cro{\|\Gamma^{\hat{\pi}}\|^2_F}}\ corr_{poly}=\sqrt{KM^2}\ corr_{poly}$$ 
since $\|\Gamma^{\hat{\pi}}\|^2_F=KM^2$ almost surely. We deduce \begin{align*}
\E\cro{\|\Gamma^{\hat{\pi}}-\Gamma^*\|^2_F}=&2KM^2-2\mathbb{E}\left[\<\Gamma^{\hat{\pi}}, \Gamma^*\>\right]\\
\geq& 2KM^2-\sqrt{KM^2}\ corr_{poly}\\
\geq& 2KM^2(1+o(1))\enspace.
\end{align*}
We conclude the proof of the proposition with the following lemma.
\begin{lem}\label{lem:featurematchinghardness} For any $\pi\in\pa{\mathcal{S}_{K}}^M$, we have 
$\cro{1-err(\pi,\pi^*)}^2\leq 1-\frac{\|\Gamma^\pi-\Gamma^*\|_F^2}{2KM^2}$.
\end{lem}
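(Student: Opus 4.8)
The plan is to reduce the statement to Lemma~\ref{lem:clusteringhardness}, which is the analogous bound for balanced clustering partitions. First I would observe that an $M$-tuple of permutations $\pi=(\pi_1,\dots,\pi_M)\in(\mathcal{S}_K)^M$ induces a partition $G^\pi=\{G^\pi_1,\dots,G^\pi_K\}$ of the ground set $[K]\times[M]$, identified with $[KM]$, via $G^\pi_v=\{(k,m):\pi_m(k)=v\}$; since each $\pi_m$ is a bijection, every block $G^\pi_v$ contains exactly one element of $[K]\times\{m\}$ for each $m$, so $|G^\pi_v|=M=n/K$ with $n=KM$. By the very definitions of the partnership matrices, $\Gamma^\pi=\Gamma^{G^\pi}$ and $\Gamma^*=\Gamma^{G^*}$ in the clustering notation of Section~\ref{sec:clustering_constant}.

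Next I would check that $err(\pi,\pi^*)=err(G^\pi,G^*)$. Fix $\psi\in\mathcal{S}_K$ and set $N_v=|G^*_v\cap G^\pi_{\psi^{-1}(v)}|=|\{(k,m):\pi_m^*(k)=v=\psi(\pi_m(k))\}|$. Using $|G^*_v|=|G^\pi_{\psi^{-1}(v)}|=M$, a one-line computation gives $\sum_v|G^*_v\triangle G^\pi_{\psi^{-1}(v)}|=2KM-2\sum_v N_v=2\sum_{k,m}\mathbf{1}_{\psi(\pi_m(k))\neq\pi_m^*(k)}$. Dividing by $2n=2KM$ and minimizing over $\psi$ (equivalently over $\psi^{-1}$, i.e.\ over all relabellings of the blocks of $G^\pi$) shows, via the definition~\eqref{eq:MtoG} of $err(\cdot,\cdot)$ for partitions and~\eqref{eq:errorperm} for permutations, that the two error functionals coincide.

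Then I would simply apply Lemma~\ref{lem:clusteringhardness} to the partitions $G^\pi,G^*$ of $[n]=[KM]$ with prescribed group size $n/K=M$: it yields $[1-err(G^\pi,G^*)]^2\leq 1-\tfrac{K\|\Gamma^{G^\pi}-\Gamma^*\|_F^2}{2n^2}$, and substituting $n=KM$, $\Gamma^{G^\pi}=\Gamma^\pi$ and $err(G^\pi,G^*)=err(\pi,\pi^*)$ gives precisely $[1-err(\pi,\pi^*)]^2\leq 1-\tfrac{\|\Gamma^\pi-\Gamma^*\|_F^2}{2KM^2}$. (If one prefers a self-contained argument, one mimics the proof of Lemma~\ref{lem:clusteringhardness} directly: a value-counting argument using that each $\pi_m,\pi_m^*$ is a permutation gives $\|\Gamma^\pi\|_F^2=\|\Gamma^*\|_F^2=KM^2$, hence $\|\Gamma^\pi-\Gamma^*\|_F^2=2KM^2-2\langle\Gamma^\pi,\Gamma^*\rangle_F$; replacing $\pi$ by $\psi^\star\circ\pi$ — which changes neither $\Gamma^\pi$ nor $err(\pi,\pi^*)$ — one may assume the optimal global permutation is the identity, so $\sum_v N_v=\sum_{k,m}\mathbf{1}_{\pi_m(k)=\pi_m^*(k)}=KM[1-err(\pi,\pi^*)]$ with $N_v=|G^\pi_v\cap G^*_v|$; finally $\langle\Gamma^\pi,\Gamma^*\rangle_F\geq\sum_v N_v^2\geq\tfrac1K(\sum_v N_v)^2=KM^2[1-err(\pi,\pi^*)]^2$ by Cauchy--Schwarz, which rearranges to the claim.)

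I do not anticipate a genuine obstacle: the whole statement is Lemma~\ref{lem:clusteringhardness} transported through the dictionary ``$M$-tuple of permutations on $[K]$ $\leftrightarrow$ balanced partition of $[KM]$ into $K$ blocks of size $M$.'' The only mildly delicate points, both elementary, are the invariance of $\Gamma^\pi$ and of $err(\pi,\pi^*)$ under composing all the $\pi_m$'s with a fixed global permutation of $[K]$ (needed for the ``without loss of generality'' step in the direct argument), and the bookkeeping giving $\|\Gamma^\pi\|_F^2=KM^2$ (group each quadruple by the common value $v=\pi_m(k)=\pi_{m'}(k')$, for which there are $M$ choices of $(k,m)$ and $M$ of $(k',m')$).
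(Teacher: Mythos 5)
Your proof is correct, and your primary route is genuinely different from the paper's. The paper proves Lemma~\ref{lem:featurematchinghardness} from scratch by a computation essentially parallel to the one in the proof of Lemma~\ref{lem:clusteringhardness}: assume WLOG the optimal global relabelling $\psi$ is the identity, write $\|\Gamma^\pi-\Gamma^*\|_F^2=2KM^2-2\langle\Gamma^\pi,\Gamma^*\rangle_F$, define $N_k=|\{(k',m):\pi_m(k')=k=\pi^*_m(k')\}|$, and apply Cauchy--Schwarz to $\langle\Gamma^\pi,\Gamma^*\rangle_F\geq\sum_k N_k^2\geq\tfrac1K\bigl(\sum_k N_k\bigr)^2$. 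You instead observe that under the dictionary ``$M$-tuple of permutations on $[K]$ $\leftrightarrow$ partition of $[KM]$ into $K$ blocks of size $M$,'' the objects $\Gamma^\pi$, $\Gamma^*$ and $err(\pi,\pi^*)$ literally become the partnership matrices and the misclassification error of Section~\ref{sec:clustering_constant}, so the statement is Lemma~\ref{lem:clusteringhardness} specialized to $n=KM$. Your reduction is structurally cleaner: it makes explicit that the two lemmas are the same fact rather than twins, at the small cost of having to verify the three identifications ($\Gamma^\pi=\Gamma^{G^\pi}$, $\Gamma^*=\Gamma^{G^*}$, and $err(\pi,\pi^*)=err(G^\pi,G^*)$, the last requiring the change of variable $\psi\mapsto\psi^{-1}$ to match the two minimizations), all of which you carry out correctly. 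Your parenthetical self-contained argument is essentially word-for-word the paper's proof, including the same Cauchy--Schwarz step and the same ``compose with the optimal $\psi$'' normalization, so no discrepancy there either.
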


\begin{proof}[Proof of Lemma \ref{lem:featurematchinghardness}]
With no loss of generality, we can suppose that the permutation $\psi$ minimizing the definition of the error $err(\pi,\pi^*)$ is the identity and thus $$err(\pi,\pi^*)=\frac{1}{KM}\sum_{k,m}\1\ac{\pi_m(k)\neq \pi^*_m(k)}=1-\frac{1}{KM}\sum_{k,m}\1\ac{\pi_m(k)= \pi^*_m(k)}\enspace.$$
And, we also have $$\|\Gamma^\pi-\Gamma^*\|_F^2=\|\Gamma^\pi\|_F^2+\|\Gamma^*\|_F^2-2\<\Gamma^\pi, \Gamma^*\>\enspace.$$ We can directly compute $\|\Gamma^\pi\|_F^2=\|\Gamma^*\|_F^2=KM^2$. For the crossed-product term, we expand $$\<\Gamma^\pi, \Gamma^*\>=\sum_{(k,m), (k',m')}\1\ac{\pi_m(k)=\pi_{m'}(k')}\1\ac{\pi^*_m(k)=\pi^*_{m'}(k')}\enspace.$$
For $k\in [K]$, let us denote $N_k=\left|\ac{(k',m)\in [K]\times [M], \pi_m(k')=k\\ \quad\text{and}\quad \pi^*_m(k')=k}\right|$. We get \begin{align*}
\<\Gamma^\pi, \Gamma^*\>\geq&\sum_{k\in [K]}N_k^2
\geq \frac{1}{K}\cro{\sum_{k\in [K]}N_k}^2\geq KM^2\cro{1-err(\pi,\pi^*)}^2\enspace,
\end{align*}
where we used Cauchy-Schwarz inequality for the second inequality. We conclude that $$\|\Gamma^\pi-\Gamma^*\|^2_F\leq 2KM^2-2KM^2\cro{1-err(\pi,\pi^*)}^2\enspace.$$
\end{proof}

\subsection{Proof of Corollary \ref{cor:featurematchingupperbound}}\label{prf:featurematchingupperbound}

We write $G^*$ the partition of $[K]\times [M]$ defined by $G_{k}^*=\ac{(k',m)\in [K]\times [M],\enspace \pi^*_m(k')=k}$. We remark that $G^*$ is a balanced partition of $[K]\times [M]$ into $K$ groups and that $Y^{(m)}_{k'}\sim \cN\pa{\mu_k, \sigma^2I_p}$. 

\begin{enumerate}
\item Using Proposition 4 of \cite{Even24}, we know that a single linkage hierarchical procedure is able to recover exactly $G^*$ with high probability as soon as $\Delta^2\gtrsim \log(KM)+\sqrt{p\log(KM)}$;
\item Using Theorem 1 of \cite{giraud2019partial}, we know that a SDP relaxation procedure is able to recover exactly $G^*$ in high dimension $p\geq MK$ as soon as $\Delta^2\geq_{\log}\sqrt{\frac{pK}{M}}$;
\item Using Proposition 3.2 of \cite{Even25a}, we deduce that, with a method based on spectral projections together with the low-dimensional clustering procedure from \cite{LiuLi2022}, we are able to recover exactly $G^*$ with high probability when $M\geq K^c$, for some numerical constant $c$, and $\Delta^2\geq_{\log}1+\sqrt{\frac{pK}{M}}$;
\item Still using Proposition 3.2 of \cite{Even25a}, we know that, with a method based on spectral projections together with a single linkage hierarchical procedure, we are able to recover exactly $G^*$ with high probability when $M\gtrsim K$, $MK\geq p\geq M$ and $\Delta^2\geq_{\log}1+\sqrt{\frac{pK}{M}}$.
\end{enumerate}

All the methods that are used for these four points are computable in polynomial time. Hence, combining those results, we deduce that, except when both $M\in [K,K^c]$ and $p\in [\log(KM), \frac{p}{KM}]$, there exists an efficient algorithm $\hat{G}$ which recovers exactly with high probability $G^*$ as soon as $\Delta^2\geq_{\log} 1 +\min\pa{\sqrt{p}, \sqrt{\frac{pK}{M}}}$. We define the estimator $\hat{\pi}$ by $\hat{\pi}_m(k')=k$ if $(k',m)\in \hat{G}_k$. On the event where $\hat{G}=G^*$, $\hat{\pi}$ is a $M$-tuple of permutations on $[K]$ which satisfies $err(\hat{\pi}, \pi^*)=err(\hat{G},G^*)=0$. This concludes the proof of the corollary.

 \subsection{Sketch of proof of Proposition \ref{prop:upperboundinformationalfeature}}\label{prf:upperboundinformationalfeature}

This proof is directly adapted from the proof of Theorem 2 of \cite{Even24} for exact $K$-means and we mainly explain the key steps.  We view $Y$ as a matrix in $\R^{KM\times p}$. Given $\pi\in \pa{\cS_K}^M$, let $B(\pi)$ the normalized partnership matrix $B(\pi)\in \R^{KM\times KM}$ defined $\pa{B(\pi)}_{(k,m),(k',m')}=\frac{1}{M}\1\ac{\pi_m(k)=\pi_{m'}(k')}$. As noticed by \cite{PengWei07}, minimizing Criterion \eqref{eq:critpi} is equivalent to maximizing the quantity $\<YY^T, B(\pi)\>$. Let $\cB:=\{B(\pi), \pi\in \pa{\cS_K}^M\}$ and let $$\hat{B}:=\argmax_{B\in \cB}\<YY^T, B\>\enspace.$$
$\cB$ is a subset of the set $\cC=\{ B\in S_{n}(\mathbb{R})^{+}:\ B_{ij}\geq 0, \tr(B)=K, B1=1, B^{2}=B \}$ which is the set of normalized partnership matrices associated to partitions $G$ of $[KM]$ into $K$ groups. In particular, all the inequalities from \cite{Even24} which are valid on $\cC$ are in fact valid on $\cB$. In the following, we write $B^*$ for $B(\pi^*)$. Let us consider;\begin{itemize}
\item $A\in\R^{KM\times K}$ the membership matrix defined by $A_{(k',m),k}=\1_{\pi^*_m(k')=k}$, 
\item $E\in\R^{KM\times p}=Y-\E[Y]$ the noise matrix,
\item $\mu \in\R^{K\times p}$ whose $k$-th row is $\mu_{k}$.
\end{itemize}
By definition of $\hat{B}$, we have $\<YY^T, \hat{B}-B^*\>\geq 0$. Decomposing $Y$ with the relation $Y=A\mu +E$, we get
$$\<A\mu(A\mu)^{T},B^{*}-\hat{B}\>\leq \<A\mu E^{T}+E(A\mu)^{T},\hat{B}-B^{*}\>+\<EE^{T}-pI_{MK},\hat{B}-B^{*}\>\enspace.$$
Given a matrix $A$, we write $\|A\|_1$ for the sum of the absolute values of its entries. 
For any $M$-tuple of permutation, the error $err(\pi,\pi^*)$ is linked to the norm $\|B^*-B^*B(\pi)\|_1$ 
through the inequality $err(\pi,\pi^*)\lesssim \frac{\|B^*-B^*B(\pi)\|_1}{KM}$ (see Lemma 9 of \cite{Even24}). Building on this bound;\begin{enumerate}
\item One can lower-bound the signal term $\<A\mu(A\mu)^{T},B^{*}-B\>$ with respect to $\frac{\|B^*-B^*B\|_1}{KM}$. Using Lemma 4 from \cite{giraud2019partial} which is valid on a set containing $\cC$ and thus $\cB$, we get that, for all $\pi\in \pa{\cS_K}^M$, $\<A\mu(A\mu)^{T},B^{*}-B(\pi)\>\gtrsim \Delta^2\|B^*-B^*B(\pi)\|_1\gtrsim KM\Delta^2err(\pi,\pi^*)$.
\item Directly plugging Lemma 11 of \cite{Even24}, we deduce that with high probability, simultaneously on all $B\in \cB$, \begin{align*}
    |\<EE^T-pI_{KM}, B^*-B\>|\lesssim& \|B^*-B^*B\|_{1}\pa{\log(K)+\sqrt{\frac{p}{M}\log(K)}}\\
    &+\|B^*-B^*B\|_{1}\pa{\sqrt{\frac{p}{M}\log(\frac{MK^4}{ \|B^*-B^*B\|_{1}})}+\log(\frac{MK^4}{ \|B^*-B^*B\|_{1}})}\enspace.
\end{align*}
In particular, the first term of this some is smaller than one half of the signal term when $\Delta^2\gtrsim \log(K)+\sqrt{\frac{p}{M}\log(K)}$. 
\item Finally, using Lemma 12 of \cite{Even24}, we get that, with high probability, simultaneously on all $B\in \cB$, $$\langle A\mu E^T +E(A\mu)^T  ,B-B^{*}\rangle \lesssim \sqrt{\<A\mu(A\mu)^{T},B^{*}-B\>}\sqrt{ \|B^*-B^*B\|_{1}\log(\frac{MK^4}{\|B^*-B^*B\|_{1}})}\enspace . $$
\end{enumerate}
In the following, let us denote $\hat{\delta}=\|B^*-B^*\hat{B}\|_{1}$. Combining those three points together, we get that, whenever $\Delta^2\gtrsim \log(K)+\sqrt{\frac{p}{M}\log(K)}$, with high probability,
\begin{align*}
\<A\mu(A\mu)^{T},B^{*}-\hat{B}\>\lesssim& \hat{\delta}\pa{\sqrt{\frac{p}{M}\log(\frac{MK^4}{ \hat{\delta}})}+\log(\frac{MK^4}{ \hat{\delta}})}+\sqrt{\<A\mu(A\mu)^{T},B^{*}-\hat{B}\>}\sqrt{ \hat\delta\log(\frac{MK^4}{\hat\delta})}\enspace,
\end{align*}
which, in turn, implies,
\begin{align*}
\Delta^2\hat{\delta}\lesssim&\hat\delta\pa{\log(\frac{MK^4}{\hat\delta})+\sqrt{\frac{p}{M}\log(\frac{MK^4}{\hat\delta})}}\enspace,
\end{align*}
and so 
$$\hat{\delta}\lesssim MK^4\exp\pa{-c\pa{\Delta^2\wedge\frac{M\Delta^4}{p}}}\enspace,$$
with $c$ some numerical constant. Finally, we end up with 
$$err(\hat{\pi}, \pi^*)\lesssim \frac{\hat{\delta}}{KM}\lesssim K^3\exp\pa{-c\pa{\Delta^2\wedge\frac{M\Delta^4}{p}}}\leq  \exp\pa{-c'\pa{\Delta^2\wedge \frac{\Delta^4 M}{p}}}\enspace, $$ where the last inequality holds for some numerical constant $c'$ if $\Delta^2\gtrsim \log(K)+\sqrt{\frac{p\log(K)}{M}}$. This concludes the (sketch of) proof.

\subsection{Proof of Theorem \ref{thm:lowerboundinformationalfeature}; Perfect Recovery}\label{prf:lowerboundexact}

This section is dedicated to proving the first point of Theorem \ref{thm:lowerboundinformationalfeature}. Without loss of generality, we suppose throughout this proof that $\sigma=1$. We suppose that $K\geq K_{0}$ with $K_{0}$ a constant that we will choose large enough. 

To prove the first point of Theorem \ref{thm:lowerboundinformationalfeature}, we will distinguish two cases. First, we will prove that there exist positive numerical constants $c_{1}$ and $C$ such that, when $\Bar{\Delta}^{2}\leq c_{1}\log(KM)$, we have $$\inf_{\hat{\pi}:\R^{K\times M\times p}\to (\mathcal{S}_{K})^{M}} \sup_{\mu\in\Theta_{\bar{\Delta}}}\sup_{\pi\in (\mathcal{S}_{K})^{M}}\P_{\mu,\pi}(err(\hat{\pi}, \pi)\neq 0)>C\enspace.$$
Second, we will show that there exists a numerical constant $c_{2}$ such that this still holds when $c_{1}\log(KM)\leq \bar{\Delta}^{2}\leq c_{2}\sqrt{\frac{p}{M}\log(KM)}.$

\subsubsection{Case $\bar{\Delta}^{2}\leq c_{1}\log(KM)$.}

Let us suppose that $\bar{\Delta}^{2}\leq c_{1}\log(KM)$ for $c_{1}$ a positive numerical constant that we will choose small enough later. Let $e$ be a unit vector of $\R^{p}$ and define $\mu_{k}=\sqrt{2}k\Bar{\Delta} e$ for $k\in[1,K]$. To prove our statement, we will use Fano's Lemma that we recall here (see e.g p 57 of \cite{HDS2}).
\begin{lem}[Fano's Lemma]\label{lem:fano}
    Let $(\P_{s})_{s\in[\ell]}$ be a set of probability distributions on some set $\mathcal{Y}$. For any probability distribution $\mathbb{Q}$ such that for all $s\in[\ell]$, $\P_{s}<<\mathbb{Q}$, $$\min_{\hat{J}:\mathcal{Y}\to [\ell]}\frac{1}{\ell}\sum_{s=1}^{\ell}\P_{s}\pa{\hat{J}(Y)\neq s}\geq 1-\frac{1+\frac{1}{\ell}\sum_{s=1}^{\ell}KL(\P_{s},\mathbb{Q})}{\log(\ell)}\enspace , $$ where we recall that $KL(\P,\mathbb{Q})=\int \log\pa{\frac{d\P}{d\mathbb{Q}}}d\P$ stands for the Kullback-Leibler divergence between $\P$ and $\mathbb{Q}$. 
\end{lem}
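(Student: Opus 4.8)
The plan is to derive Lemma~\ref{lem:fano} from the classical information-theoretic Fano inequality, instantiated in the Bayesian setup where the index is drawn uniformly. First I would introduce an auxiliary random variable $J$ uniform on $[\ell]$, and take $Y$ conditionally distributed as $\P_J$ given $J$. For any measurable map $\hat{J}:\mathcal{Y}\to[\ell]$ one then has $\frac{1}{\ell}\sum_{s=1}^{\ell}\P_{s}\pa{\hat J(Y)\neq s}=\P\pa{\hat J(Y)\neq J}$, so the left-hand side of the claimed inequality equals $p^{\star}:=\min_{\hat J}\P\pa{\hat J(Y)\neq J}$, the Bayes error. If $\log(\ell)\leq 1+\frac{1}{\ell}\sum_{s}KL(\P_{s},\mathbb{Q})$ the asserted lower bound is nonpositive and hence vacuous; in particular the case where some $KL(\P_{s},\mathbb{Q})=+\infty$ is trivial, so I may and do assume all the divergences $KL(\P_{s},\mathbb{Q})$ are finite (so that $\P_{s}$ and the mixture $\bar\P:=\frac{1}{\ell}\sum_{s}\P_{s}$ are absolutely continuous with respect to $\mathbb{Q}$), and that $\ell\geq 2$ so $\log(\ell)>0$.

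The first key step is Fano's inequality. Since $\hat J(Y)$ is a function of $Y$, the data-processing property of conditional entropy gives $H(J\mid \hat J(Y))\geq H(J\mid Y)$. On the other hand, applying the chain rule for entropy to the pair $(J,\1\{\hat J(Y)\neq J\})$ yields the standard estimate $H(J\mid \hat J(Y))\leq h(p^{\star})+p^{\star}\log(\ell-1)\leq 1+p^{\star}\log(\ell)$, where $h$ denotes the (natural-logarithm) binary entropy, which satisfies $h\leq \log 2\leq 1$; I would include this short derivation. Combining the two inequalities with the identity $H(J\mid Y)=H(J)-I(J;Y)=\log(\ell)-I(J;Y)$ gives $\log(\ell)-I(J;Y)\leq 1+p^{\star}\log(\ell)$, that is
\[
p^{\star}\;\geq\;1-\frac{1+I(J;Y)}{\log(\ell)}\enspace.
\]

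It then remains only to bound the mutual information by the average divergence to an arbitrary reference $\mathbb{Q}$. Writing $\bar\P=\frac{1}{\ell}\sum_{s}\P_{s}$ for the marginal law of $Y$, one has the standard decomposition $I(J;Y)=\frac{1}{\ell}\sum_{s=1}^{\ell}KL(\P_{s},\bar\P)$. Moreover, since $\bar\P\ll\mathbb{Q}$, the elementary identity
\[
\frac{1}{\ell}\sum_{s=1}^{\ell}KL(\P_{s},\mathbb{Q})-\frac{1}{\ell}\sum_{s=1}^{\ell}KL(\P_{s},\bar\P)=\int \log\Big(\frac{d\bar\P}{d\mathbb{Q}}\Big)\,d\bar\P=KL(\bar\P,\mathbb{Q})\geq 0
\]
shows $I(J;Y)\leq\frac{1}{\ell}\sum_{s}KL(\P_{s},\mathbb{Q})$. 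Substituting this into the displayed lower bound on $p^{\star}$ produces exactly the statement of the lemma.

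The main obstacle is bookkeeping rather than any genuine difficulty: one must ensure that all the entropies and Kullback--Leibler divergences are well defined and that the manipulations of Radon--Nikodym derivatives are legitimate in the general measure-theoretic setting (handled by first reducing to the case where every $KL(\P_{s},\mathbb{Q})$ is finite, so that $\bar\P$ and each $\P_{s}$ have densities with respect to $\mathbb{Q}$), and one must carefully justify the data-processing step $H(J\mid\hat J(Y))\geq H(J\mid Y)$ and the decomposition $I(J;Y)=\frac{1}{\ell}\sum_{s}KL(\P_{s},\bar\P)$. All of these are standard, so the remainder of the argument is a direct computation.
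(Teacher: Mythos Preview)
Your proof is correct and follows the standard information-theoretic derivation of Fano's inequality. The paper does not actually prove Lemma~\ref{lem:fano}: it is stated as a recalled classical result with a reference (``see e.g.\ p.~57 of \cite{HDS2}'') and used as a black box, so there is no ``paper's own proof'' to compare against. Your argument---uniform prior on $J$, the entropy bound $H(J\mid\hat J(Y))\leq 1+p^\star\log(\ell)$, and the centroid identity $\frac{1}{\ell}\sum_s KL(\P_s,\mathbb{Q})=I(J;Y)+KL(\bar\P,\mathbb{Q})$---is exactly the textbook route and would serve perfectly well as a self-contained justification.
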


\medskip

Let us consider the distribution $\P_{\mu,id}$ as reference distribution for applying Lemma \ref{lem:fano}. Our goal will be to find different $M$-tuples of permutations $\pi^{(1)},...,\pi^{(\ell)}$, with $\ell$ as large as possible, such that $KL(\P_{\mu, \pi^{(s)}},\P_{\mu,id})$ remains small for all $s\in[\ell]$. To do so, for $s\leq \lfloor \frac{K}{2}\rfloor$ and $m\in[M]$, we define $\pi^{(s,m)}$ the $M$-tuple of permutations defined as follows. If $m'\neq m$, then $\pi^{(s,m)}_{m'}=id$, and $\pi^{(s,m)}_{m}=(2s-1,2s)$ where $(2s-1,2s)$ stands for the transposition of $2s-1$ and $2s$.  

For $s\leq \lfloor \frac{K}{2}\rfloor$ and $m\in[M]$, let us compute $KL(\P_{\mu, \pi^{(s,m)}}, \P_{\mu,id})$. Given $m'\in[M]$ and $k\in[K]$, we denote $\P_{\mu, \pi^{(s,m)};k,m'}$ the marginal distribution of the vector $Y_{k}^{(m')}$ under  $\P_{\mu, \pi^{(s,m)}}$. Using the independence of the $Y_{k}^{(m')}$'s for $k\in[K]$ and $m'\in[M]$, we get 

\begin{align*}
    KL(\P_{\mu, \pi^{(s,m)}}, \P_{\mu,id})=&\sum_{k=1}^{i=K}\sum_{m'=1}^{M}KL(\P_{\mu, \pi^{(s,m)};k,m'}, \P_{\mu,id;k,m'})\\
    =& KL\pa{\P_{\mu, \pi^{(s,m)};2s-1,m},\P_{\mu, id;2s-1,m}}+KL\pa{\P_{\mu, \pi^{(s,m)};2s,m},\P_{\mu, id;2s,m}}\\
    =&KL\pa{\mathcal{N}(\mu_{2s-1},I_p),\mathcal{N}(\mu_{2s},I_{p})}+KL\pa{\mathcal{N}(\mu_{2s},I_p),\mathcal{N}(\mu_{2s-1},I_{p})}\\
    =& 2\bar{\Delta}^2   \leq 2c_1 \log(KM)\enspace .
\end{align*}

For any estimator $\hat{\pi}$, we associate $\hat{j}$ the estimator that gives $(s,m)$ if $\hat{\pi}=\pi^{(s,m)}$ and $(1,1)$ otherwise. Fano's Lemma (Lemma~\ref{lem:fano}) implies that, for all estimators $\hat{\pi}$, the corresponding estimator $\hat{j}$ satisfies $$\frac{1}{M\lfloor \frac{K}{2}\rfloor}\sum_{(s,m)}\P_{\mu,\pi^{(s,m)}}(\hat{j}\neq (s,m))\geq 1-\frac{1+2c_{1} \log(KM)}{\log(M\lfloor \frac{K}{2}\rfloor)}\enspace.$$ If $c_{1}$ is small enough and $K_{0}$ large enough, there exists a constant $C>0$ such that, for all integers $K\geq K_{0}$ and $M\geq 2$, we have $1-\frac{1+2c_{1} \log(KM)}{\log(M\lfloor \frac{K}{2}\rfloor)}\geq C.$

Since, for any estimator $\hat{\pi}$ and its corresponding estimator $\hat{j}$, for any $s\leq \lfloor K/2\rfloor$ and $m\leq M$, we have $\P_{\mu,\pi^{(s,m)}}(\hat{\pi}=\pi^{(s,m)})\leq \P_{\mu,\pi^{(s,m)}}(\hat{j}=(s,m))$, we get $$\frac{1}{M\lfloor \frac{K}{2}\rfloor}\sum_{(s,m)}\P_{\mu,\pi^{(s,m)}}(\hat{\pi}\neq \pi^{(s,m)})\geq C\enspace .$$
This, with the fact that, for all estimator $\hat{\pi}$, $$\sup_{\mu\in\Theta_{\bar{\Delta}}}\sup_{\pi\in(\mathcal{S}_{K})^{M}}\P_{\mu,\pi}(\hat{\pi}\neq \pi)\geq \frac{1}{M\lfloor \frac{K}{2}\rfloor}\sum_{(s,m)}\P_{\mu,\pi^{(s,m)}}(\hat{\pi}\neq \pi^{(s,m)})$$ concludes the proof of our statement in the regime where $\bar{\Delta}^{2}\leq c_{1}\log(KM)$.

\subsubsection{Case $c_{1}\log(KM)\leq \bar{\Delta}^{2}\leq c_{2}\sqrt{\frac{p}{M}\log(KM)}$.}

We suppose that 
\begin{equation}\label{eq:constraint:delta_bar}
c_{1}\log(KM)\leq \bar{\Delta}^{2}\leq c_{2}\sqrt{\frac{p}{M}\log(KM)}\enspace ,
\end{equation}
with $c_{1}$ defined just above and $c_{2}$ that we will choose small enough. Given $\rho$ a probability distribution on $(\R^{p})^{K}$ and a $M$-tuple of permutations $\pi\in(\mathcal{S}_{K})^{M}$, we define the probability distribution on $\R^{K\times M\times p}$ by $$\P_{\rho,\pi}(A)=\int \P_{\mu,\pi}(A)d\rho(\mu)\enspace.$$ In this regime, we shall use the following lemma.

\begin{lem}\label{lem:reductionfano}
    We suppose that there exists a probability distribution $\rho$ on $(\R^{p})^{K}$ and $a>0$ such that $$\inf_{\hat{\pi}:\R^{K\times M\times p}\to (\mathcal{S}_{K})^{M}}\sup_{\pi\in(\mathcal{S}_{K})^{M}}\P_{\rho,\pi}(err(\hat{\pi}, \pi)\neq 0)-\rho(\R^{p}\setminus\Theta_{\bar{\Delta}})>a\enspace.$$
    Then, we have $$\inf_{\hat{\pi}:\R^{K\times M\times p}\to (\mathcal{S}_{K})^{M}}\sup_{\mu\in\Theta_{\Bar{\Delta}}}\sup_{\pi\in(\mathcal{S}_{K})^{M}}\P_{\mu,\pi}(err(\hat{\pi}, \pi)\neq 0)>a\enspace.$$
\end{lem}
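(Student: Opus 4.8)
\textbf{Proof plan for Lemma~\ref{lem:reductionfano}.}
The statement is a standard ``Bayesian averaging'' reduction: a lower bound that holds under the mixture prior $\rho$ over means, transfers to a lower bound over the parameter set $\Theta_{\bar\Delta}$, once one accounts for the prior mass that $\rho$ puts outside $\Theta_{\bar\Delta}$. The plan is to fix an arbitrary estimator $\hat\pi\colon \R^{K\times M\times p}\to (\mathcal{S}_K)^M$ and to bound its worst-case risk over $\Theta_{\bar\Delta}\times (\mathcal{S}_K)^M$ from below by its Bayes risk under the prior $\rho\otimes \mathrm{Unif}\big((\mathcal{S}_K)^M\big)$ restricted to the event $\{\mu\in\Theta_{\bar\Delta}\}$.

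First I would write, for any fixed $\pi\in (\mathcal{S}_K)^M$,
\begin{align*}
\P_{\rho,\pi}\big(err(\hat\pi,\pi)\neq 0\big)
&=\int \P_{\mu,\pi}\big(err(\hat\pi,\pi)\neq 0\big)\,d\rho(\mu)\\
&\leq \int_{\Theta_{\bar\Delta}} \P_{\mu,\pi}\big(err(\hat\pi,\pi)\neq 0\big)\,d\rho(\mu) + \rho\big(\R^p\setminus\Theta_{\bar\Delta}\big)\\
&\leq \sup_{\mu\in\Theta_{\bar\Delta}}\P_{\mu,\pi}\big(err(\hat\pi,\pi)\neq 0\big) + \rho\big(\R^p\setminus\Theta_{\bar\Delta}\big)\,,
\end{align*}
where in the second line we simply bounded the integrand by $1$ on the complement of $\Theta_{\bar\Delta}$ (here I am slightly abusing notation by identifying $\Theta_{\bar\Delta}\subset (\R^p)^K$ with the event $\{\mu\in\Theta_{\bar\Delta}\}$, exactly as in the statement). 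Taking the supremum over $\pi\in(\mathcal{S}_K)^M$ on both sides, and then the infimum over $\hat\pi$, yields
\[
\inf_{\hat\pi}\sup_{\pi}\P_{\rho,\pi}\big(err(\hat\pi,\pi)\neq 0\big) - \rho\big(\R^p\setminus\Theta_{\bar\Delta}\big)
\;\leq\; \inf_{\hat\pi}\sup_{\pi}\sup_{\mu\in\Theta_{\bar\Delta}}\P_{\mu,\pi}\big(err(\hat\pi,\pi)\neq 0\big)\,.
\]
Combining this inequality with the hypothesis that the left-hand side exceeds $a$ gives exactly the desired conclusion.

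There is essentially no obstacle here: the only point requiring a little care is the order of the suprema/infimum and the measurability of $\hat\pi$, but since $\hat\pi$ ranges over all (measurable) estimators and the bound above is uniform in $\pi$, swapping $\sup_\pi$ and $\sup_\mu$ is harmless (both are genuine suprema over fixed index sets, not couplings). One should also note the implicit but trivial fact that $\sup_{\mu\in\Theta_{\bar\Delta}}\sup_\pi = \sup_\pi\sup_{\mu\in\Theta_{\bar\Delta}}$, which is used when passing to the final displayed form. With this reduction in hand, the subsequent task—carried out in the rest of the proof of Theorem~\ref{thm:lowerboundinformationalfeature}—is to exhibit a suitable prior $\rho$ (e.g.\ i.i.d.\ Gaussian means with variance tuned so that $\bar\Delta^2$ is the typical separation) for which the mixture lower bound on $\inf_{\hat\pi}\sup_\pi\P_{\rho,\pi}(err\neq 0)$ can be proved via Fano's inequality (Lemma~\ref{lem:fano}) while simultaneously $\rho(\R^p\setminus\Theta_{\bar\Delta})$ is negligible; this is where the constraint~\eqref{eq:constraint:delta_bar} on $\bar\Delta^2$ enters.
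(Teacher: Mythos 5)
Your proof is correct and uses the same core idea as the paper's: split $\P_{\rho,\pi}=\int\P_{\mu,\pi}\,d\rho$ into the part where $\mu\in\Theta_{\bar\Delta}$ and the complement, bound the integrand by $1$ on the complement, and bound the remaining integral by the supremum over $\Theta_{\bar\Delta}$. The paper phrases this as an existence argument (fix $\hat\pi$, extract a witnessing $\pi$ and then a witnessing $\mu$), while you run the same bound as a direct chain of inequalities in $\inf/\sup$ form; the two are interchangeable.
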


We refer to Section \ref{prf:reductionfano} for a proof of this lemma. We will consider the following distribution on $(\R^{p})^{K}$. Define $\eps=2\sqrt{\frac{1}{p}}\bar{\Delta}$ and define $\rho$ the uniform distribution on the hypercube $\mathcal{E}=\{-\eps,\eps\}^{p\times K}$. We will use Fano's Lemma to lower bound $\inf_{\hat{\pi}:\R^{K\times M\times p}\to (\mathcal{S}_{K})^{M}}\sup_{\pi\in(\mathcal{S}_{K})^{M}}\P_{\rho,\pi}(err(\hat{\pi}, \pi)\neq 0)$. To do so, we need to find many $M$-tuple of permutations $\pi^{(1)},\ldots,\pi^{(\ell)}\in(\mathcal{S}_{K})^{M}$, with $\ell$ large, such that $KL(\P_{\rho,\pi^{(s)}},\P_{\rho,id})$ remains small for all $s\in [\ell]$. 

Given $m\in[M]$, $k< k'\in [K]$, we define $\pi^{(m,k,k')}$ the $M$-tuple of permutations that satisfies; for all $m'\neq m$, $\pi^{(m,k,k')}_{m'}=id$, and $\pi^{(m,k,k')}_{m}$ is the transposition $(k,k')$. We denote $V$ the set of all these $M$-tuple of permutation; $V=\{\pi^{(m,k,k')},\enspace m\in[M-1],\enspace k< k'\in [K]\}$. For a $M$-tuple of permutations $\pi\in V$, let us compute the quantity $KL(\P_{\rho,\pi},\P_{\rho,id})=\int\log (\frac{d\P_{\rho,\pi}}{d\P_{\rho,id}})d\P_{\rho,\pi}$. We refer to Section \ref{prf:calculcompliquéexactrecovery} for a proof of the next lemma.

\begin{lem}\label{lem:calculcompliquéexactrecovery}
We suppose that  the numerical constant $c_2$ is small enough with respect to $c_1$, where both $c_1$ and $c_2$ arise in~\eqref{eq:constraint:delta_bar}. Then, there exists a numerical constant $c>0$ such that, for all $m,k,k'$ with $m\in [M]$, $k<k'\in [K]$, we have  
\[
KL(\P_{\rho,\pi},\P_{\rho,id})=\int\log (\frac{d\P_{\rho,\pi^{(m,k,k')}}}{d\P_{\rho,id}})d\P_{\rho,\pi^{(m,k,k')}}\leq cc_2^2\log(KM)\enspace . 
\]
\end{lem}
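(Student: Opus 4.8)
The plan is to compute the Kullback-Leibler divergence $KL(\P_{\rho,\pi^{(m,k,k')}},\P_{\rho,\text{id}})$ explicitly, exploiting the product structure of the model over datasets $m'\in[M]$ and components $k\in[K]$, and the fact that $\pi^{(m,k,k')}$ differs from the identity only by swapping two components in a single dataset. First I would observe that, by independence of the vectors $Y_{k''}^{(m')}$ across $(k'',m')$, the divergence decomposes as a sum over those coordinates, and all terms corresponding to $m'\neq m$ or to $k''\notin\{k,k'\}$ vanish, leaving only the contribution of the pair $(Y_k^{(m)},Y_{k'}^{(m)})$. Under $\P_{\rho,\text{id}}$ this pair has the mixture law $\frac12\sum_{\mu}(\cN(\mu_k,I_p)\otimes\cN(\mu_{k'},I_p))$ with $\mu$ uniform on $\cE$, while under $\P_{\rho,\pi^{(m,k,k')}}$ it is the same mixture with $\mu_k$ and $\mu_{k'}$ exchanged; because $\rho$ is a product of symmetric Rademacher-type laws on each coordinate, these two mixtures are in fact equal on the marginals of all \emph{other} data points, so the whole divergence is a divergence between two $2p$-dimensional Gaussian mixtures differing only by the labelling of two independent Rademacher means.

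The key computation is then a bound of the form $KL\leq c\,c_2^2\log(KM)$ for this two-point mixture divergence. I would write $d\P_{\rho,\pi}/d\P_{\rho,\text{id}}$ as a ratio of mixture densities, use the convexity/log-sum inequality (or a direct second-moment/$\chi^2$ bound, since $KL\leq \log(1+\chi^2)$) to reduce to estimating $\E[(L-1)^2]$ where $L$ is the likelihood ratio. Writing out the Gaussian densities with means $\pm\eps$ in each of the $p$ coordinates, the likelihood ratio factorizes over coordinates into terms of the form $\cosh$ or $\tanh$ of $\eps\langle\text{noise}\rangle$, and a standard computation gives $\chi^2\lesssim \exp(Cp\eps^4)-1$ (this is the usual ``Ingster--Suslina''-type bound for detecting a sparse/dense Rademacher perturbation). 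Recalling $\eps^2 = 4\bar\Delta^2/p$, we get $p\eps^4 = 16\bar\Delta^4/p$, and the constraint $\bar\Delta^2\leq c_2\sqrt{(p/M)\log(KM)}$ gives $\bar\Delta^4/p \leq c_2^2\log(KM)/M\leq c_2^2\log(KM)$, so $\chi^2\lesssim \exp(Cc_2^2\log(KM))-1$. For $c_2$ small enough (depending only on the absolute constant $C$, hence compatible with $c_1$), this is $\leq Cc_2^2\log(KM)$ after using $e^x-1\leq 2x$ for small $x$, and then $KL\leq \log(1+\chi^2)\leq \chi^2\leq c\,c_2^2\log(KM)$, which is the claim.

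The main obstacle I expect is controlling the mixture likelihood ratio cleanly: unlike a pure Gaussian-shift problem, here the means are random (Rademacher), so the divergence is not simply $\frac12\|\mu_k-\mu_{k'}\|^2$ but an average over the $2^{2p}$ sign configurations, and one must be careful that the swap $\pi^{(m,k,k')}$ genuinely produces a nontrivial but still small perturbation. The cleanest route is to pass to the $\chi^2$-divergence, for which the average over signs factorizes over coordinates and each coordinate contributes a factor like $\E[\cosh(2\eps Z)]$-type quantity with $Z\sim\cN(0,1)$ shifted appropriately; bounding this factor by $1+C\eps^4$ (the $\eps^2$ term cancels by symmetry of the Rademacher prior, which is precisely why the dependence is on $\bar\Delta^4/p$ and not $\bar\Delta^2$) is the technical heart. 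Once that cancellation is in hand, the rest is the elementary algebra above. Finally I would assemble the pieces exactly as in the case $\bar\Delta^2\leq c_1\log(KM)$: apply Fano's Lemma (Lemma~\ref{lem:fano}) with reference $\P_{\rho,\text{id}}$ and the $M\binom{K}{2}$ hypotheses in $V$, noting $\log|V|\asymp \log(KM)$, so the average error is at least $1-\frac{1+cc_2^2\log(KM)}{\log|V|}\geq C$ for $c_2$ small and $K_0$ large, then remove the $\rho(\R^p\setminus\Theta_{\bar\Delta})$ term via Lemma~\ref{lem:reductionfano} (the event that the random means fail the separation $\bar\Delta$ has small probability by a union bound over the $\binom{K}{2}$ pairs, using concentration of $\|\mu_k-\mu_{k'}\|^2$ around $4\bar\Delta^2$), and relate the permutation-estimation error to the hypothesis-testing error as before.
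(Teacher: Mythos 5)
Your first step is incorrect, and the error propagates through the whole argument. You claim that by independence of the $Y_{k''}^{(m')}$ across $(k'',m')$ the KL divergence localizes to the pair $(Y_k^{(m)}, Y_{k'}^{(m)})$. But under $\P_{\rho,\pi}$ and $\P_{\rho,id}$ the vectors $Y_{k''}^{(m')}$ are only \emph{conditionally} independent given the random means $\mu$; marginally they are coupled through the shared $\mu_k,\mu_{k'}$, which appear simultaneously in dataset $m$ and in every other dataset $m'\neq m$. Worse, if you actually compute the marginal law of the pair $(Y_k^{(m)},Y_{k'}^{(m)})$ under $\P_{\rho,\pi}$ versus $\P_{\rho,id}$, you find that they are \emph{identical}: swapping $\mu_k$ and $\mu_{k'}$ leaves the product prior $\rho$ invariant because the two means are i.i.d. So the ``local'' divergence you propose to bound is zero, which is inconsistent with your own subsequent $\chi^2$ estimate and obviously not what the lemma asserts. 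The entire signal distinguishing the two hypotheses comes precisely from the cross-dataset coupling that you discard: observing $Y_k^{(m')},Y_{k'}^{(m')}$ for $m'\neq m$ partially reveals $\mu_k,\mu_{k'}$, which then makes it possible to tell whether dataset $m$ had its $k$-th and $k'$-th entries swapped.

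This is also why the correct order of the divergence carries a factor of $M$. The paper's computation (see the proof of Lemma~\ref{lem:calculcompliquéexactrecovery}) writes the likelihood ratio with respect to an auxiliary prior $\rho'$ that zeroes out $\mu_1,\mu_2$, and obtains products of terms $\cosh\bigl(\eps\sum_{m'}Y^{(m')}_{\cdot,d}\bigr)$ involving sums over \emph{all} $M$ datasets; Taylor-expanding $\log\cosh$ around $u=\eps\bigl((M-1)\mu_{1,1}+\sqrt{M}x\bigr)$ and exploiting that $h'=\eps\mu_{2,1}$ is independent of $u$ and mean-zero (whereas $h=\eps\mu_{1,1}$ is not), they get $KL \leq c\,pM\eps^4(1+M\eps^2)$, not $c\,p\eps^4$ as your ``Ingster--Suslina'' heuristic would give. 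You happen to land on the right final bound because in the last line you write $\bar\Delta^4/p\leq c_2^2\log(KM)/M\leq c_2^2\log(KM)$, silently discarding the $1/M$; this cancels the factor of $M$ you were missing, but that is a coincidence, not a proof. A $\chi^2$-based second-moment argument could in principle work, but it must be done for the full $(K\times M\times p)$-dimensional joint with the coupled means, not for a $2p$-dimensional two-point problem, and the resulting bound must exhibit the correct $M$-dependence before the constraint on $\bar\Delta^2$ is invoked.
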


This inequality combined with Lemma \ref{lem:fano} applied to the set $V$ of all the $M$-tuple of permutations $\pi^{(m,k,k')}$ leads to $$\inf_{\hat{\pi}:\R^{^{K\times M\times p}}\to (\mathcal{S}_{K})^{M}}\frac{1}{|V|}\sum_{\pi\in V}\P_{\rho,\pi}[\hat{\pi}\neq \pi]\geq 1-\frac{1+cc_{2}^{2}\log(Km)}{\log(|V|)}\enspace.$$ For all estimator $\hat{\pi}:\R^{K\times M\times p}\to (\mathcal{S}_{K})^{M}$, let us define $\psi_{\hat{\pi}}\in \mathcal{S}_K$ the permutation on $[K]$ such that $\psi_{\hat{\pi}}\circ \hat{\pi}_M=id$. We remark that, for any $\pi\in V$, having $err(\hat{\pi}, \pi)=0$ is equivalent to having $\pi=\psi_{\hat{\pi}}$. Thus, $$\inf_{\hat{\pi}:\R^{K\times M\times p}\to (\mathcal{S}_{K})^{M}}\frac{1}{|V|}\sum_{\pi\in V}\P_{\rho,\pi}[err(\hat{\pi}, \pi)\neq 0]\geq 1-\frac{1+cc_{2}^{2}\log(Km)}{\log(|V|)}\enspace,$$
and so
$$\inf_{\hat{\pi}:\R^{K\times M\times p}\to (\mathcal{S}_{K})^{M}}\sup_{\pi\in(\mathcal{S}_{K})^{M}}\P_{\pi,\rho}[err(\hat{\pi}, \pi)\neq 0]\geq 1-\frac{1+cc_{2}^{2}\log(Km)}{\log(|V|)}\enspace.$$
Finally, the fact that $\log(|V|)= \log(K(K-1)M/2)$ implies that, if we choose $c_{2}$ small enough and if $K_{0}$ is large enough, we will dispose of a numerical constant $b>0$ satisfying $$\inf_{\hat{\pi}:\R^{K\times M\times p}\to (\mathcal{S}_{K})^{M}}\frac{1}{|V|}\sum_{\pi\in V}\P_{\rho,\pi}[err(\hat{\pi}, \pi)\neq 0]\geq b\enspace.$$ In order to apply Lemma \ref{lem:reduction}, it remains to control $\rho(\pa{\R^{p}}^K\setminus \Theta_{\bar{\Delta}})$. From Lemma 14 of \cite{Collier16}, we get that $\rho(\pa{\R^{p}}^K\setminus \Theta_{\Bar{\Delta}})\leq \frac{K(K-1)}{2}e^{\frac{-p}{8}}$. Moreover, $c_{1}\log(KM)\leq c_{2}\sqrt{\frac{p}{M}\log(KM)}$ implies $p\geq \frac{c_{1}^{2}}{c_{2}^{2}}\log(KM)$. Thus, $\rho(\R^{p}\setminus \Theta_{\Bar{\Delta}})\leq \frac{b}{2}$, provided $c_{2}$ is small enough compared to $c_{1}$. Hence, Lemma \ref{lem:reductionfano} gives $$\inf_{\hat{\pi}:\R^{K\times M\times p}\to (\mathcal{S}_{K})^{M}}\sup_{\pi\in(\mathcal{S}_{K})^{M}}\P_{\rho,\pi}[\hat{\pi}\neq \pi]\geq \frac{b}{2}\enspace.$$ This concludes the proof of the first point of Theorem \ref{thm:lowerboundinformationalfeature} (perfect recovery).

\subsubsection{Proof of Lemma \ref{lem:calculcompliquéexactrecovery}}\label{prf:calculcompliquéexactrecovery}

By symmetry, it is sufficient to bound the Kullback-Leibler discrepansy for $\pi=\pi^{(1,1,2)}=\pa{(1,2), id,\ldots,id}$. In the following, we denote $\rho'$ the probability distribution on $(\R)^{p\times K}$ that satisfies; if $(\mu_{1},\ldots,\mu_{K})\sim \rho'$, all the $\mu_{i}$'s are independent, $\mu_{1}=\mu_{2}=0$ and all the other $\mu_{i}$'s are drawn uniformly on the set $\{\eps,-\eps\}^{p}$. 

First, we will compute the quantity \begin{equation}\label{eq:vraisemblance}
    \frac{d\P_{\rho,\pi}}{d\P_{\rho,id}}=\frac{\frac{d\P_{\rho,\pi}}{d\P_{\rho',\pi}}}{\frac{d\P_{\rho,id}}{d\P_{\rho',\pi}}}=\frac{\frac{d\P_{\rho,\pi}}{d\P_{\rho',\pi}}}{\frac{d\P_{\rho,id}}{d\P_{\rho',id}}}\enspace,
\end{equation} where the second equality comes from the fact that $\P_{\rho',id}=\P_{\rho',\pi}$. Given a probability law $\P$ on some Euclidean space, we denote $d\P$ the density of this law with respect to the Lebesgue measure on this space (when $\P$ is absolutely continuous with respect to the Lebesgue measure).  For the numerator in \eqref{eq:vraisemblance}, we have,
\begin{align*}
    \frac{d\P_{\rho,\pi}}{d\P_{\rho',\pi}}(Y)=&\frac{\E_{\rho}\cro{d\P_{\mu,\pi}(Y)}}{\E_{\rho'}\cro{d\P_{\mu,\pi}(Y)}}\\
    =&\frac{\E_{\rho}\cro{\prod_{k\in[1,K]}\prod_{m\in[M]}\exp\pa{-\frac{1}{2}\|Y^{(m)}_{\pi_{m}^{-1}(k)}-\mu_{k}\|^{2}}}}{\E_{\rho'}\cro{\prod_{k\in[1,K]}\prod_{m\in[M]}\exp\pa{-\frac{1}{2}\|Y^{(m)}_{\pi_{m}^{-1}(k)}-\mu_{k}\|^{2}}}}\\
    =&\frac{\E_{\rho}\cro{\prod_{d\in[1,p]}\prod_{k\in[K]}\prod_{m\in[M]}\exp\pa{-\frac{1}{2}\|X^{(m)}_{\pi_{m}^{-1}(k),d}-\mu_{k,d}\|^{2}}}}{\E_{\rho'}\cro{\prod_{d\in[1,p]}\prod_{k\in[K]}\prod_{m\in[M]}\exp\pa{-\frac{1}{2}\|Y^{(m)}_{\pi_{m}^{-1}(k),d}-\mu_{k,d}\|^{2}}}}\enspace.
\end{align*}
Using the independence of the $\mu_{k,d}$'s both for the law $\rho$ and $\rho'$ together with the fact that, when $k>3$, $\mu_{k,d}$ has the same law under $\rho$ and $\rho'$, we get that 
\begin{align*}
    \frac{d\P_{\rho,\pi}}{d\P_{\rho',\pi}}(Y)=&\prod_{d\in[p]}\prod_{k\in[K]}\frac{\E_{\rho}\cro{\prod_{m\in[M]}\exp\pa{-\frac{1}{2}(Y^{(m)}_{\pi_{m}^{-1}(k),d}-\mu_{k,d})^{2}}}}{\E_{\rho'}\cro{\prod_{m\in[M]}\exp\pa{-\frac{1}{2}(Y^{(m)}_{\pi_{m}^{-1}(k),d}-\mu_{k,d})^{2}}}}\\
    =&\prod_{d\in[p]}\prod_{k\in\{1,2\}}\frac{\E_{\rho}\cro{\prod_{m\in[M]}\exp\pa{-\frac{1}{2}(Y^{(m)}_{\pi_{m}^{-1}(k),d}-\mu_{k,d})^{2}}}}{\E_{\rho'}\cro{\prod_{m\in[M]}\exp\pa{-\frac{1}{2}(Y^{(m)}_{\pi_{m}^{-1}(k),d}-\mu_{k,d})^{2}}}}\\
    =&\prod_{d\in[p]}\prod_{k\in\{1,2\}}\E_{\rho}\cro{\prod_{m\in[M]}\exp{\pa{-\frac{1}{2}\pa{(Y^{(m)}_{\pi_{m}^{-1}(k),d}-\mu_{k,d})^{2}-(Y^{(m)}_{\pi_{k}^{-1}(k),d})^{2}}}}}\\
    =&\prod_{d\in[p]}\prod_{k\in\{1,2\}}\E_{\rho}\cro{\prod_{m\in[M]}\exp{\pa{Y^{(m)}_{\pi_{m}^{-1}(k),d}\mu_{k,d}-\frac{\eps^{2}}{2}}}}\\
    =&\prod_{d\in[p]}e^{\frac{-M\eps^{2}}{2}}\cosh\left[\eps \left(Y^{(1)}_{2,d}+ \sum_{m\in[2,M]} Y^{(m)}_{1,d} \right)\right]
    \cdot \prod_{d\in[p]}e^{\frac{-M\eps^{2}}{2}}\cosh\left[\eps \left(Y^{(1)}_{1,d}+ \sum_{m\in[2,M]} Y^{(m)}_{2,d} \right)\right]\enspace,\\
\end{align*}
where the third equality comes from the fact that, for all $d\in[1,p]$ and when $k\leq2$, $\mu_{k,d}=0$ almost surely under the law $\rho'$. Similarly, we get that 
\begin{align*}
    \frac{d\P_{\rho,id}}{d\P_{\rho',id}}(Y)=&\prod_{d\in[p]}e^{\frac{-M\eps^{2}}{2}}\cosh{\pa{\sum_{m\in[M]}\eps Y^{(m)}_{1,d}}}\cdot  \prod_{d\in[p]}e^{\frac{-M\eps^{2}}{2}}\cosh{\pa{\sum_{m\in[M]}\eps Y^{(m)}_{2,d}}}\enspace.\\
\end{align*}
Combining these two equalities in \eqref{eq:vraisemblance} leads to 
$$\frac{d\P_{\rho,\pi}}{d\P_{\rho,id}}=\frac{\prod_{d\in[p]}\cosh{\pa{\eps\sum_{m\in[2,M]}Y^{(m)}_{1,d}+\eps Y^{(1)}_{2,d}}}\cosh{\pa{\eps\sum_{m\in[2,M]}Y^{(m)}_{2,d}+\eps Y^{(1)}_{1,d}}}}{\prod_{d\in[p]}\cosh{\pa{\eps\sum_{m\in[M]}Y^{(m)}_{1,d}}}\cosh{\pa{\eps\sum_{m\in[M]}Y^{(m)}_{2,d}}}}\enspace.$$
We denote $\phi$ the standard Gaussian density $\phi(x)=\frac{1}{\sqrt{2\pi}}e^{\frac{-x^{2}}{2}}$. Under the law $\P_{\rho,\pi}$, conditionally on $\mu\sim\rho$, we have that: 
\begin{itemize}
    \item $\sum_{m\in[M]}Y^{(M)}_{1,d}\sim \mathcal{N}((M-1)\mu_{1,d}+\mu_{2,d},M)$,
    \item $\sum_{M\in[M]}Y^{(M)}_{2,d}\sim \mathcal{N}((M-1)\mu_{2,d}+\mu_{1,d},M)$,
    \item $\sum_{m\in[2,M]}Y^{(m)}_{1,d}+Y^{(1)}_{2,d}\sim \mathcal{N}(M\mu_{1,d},M)$,
    \item $\sum_{m\in[2,M]}Y^{(m)}_{2,d}+Y^{(1)}_{1,d}\sim \mathcal{N}(M\mu_{2,d},M)$.
\end{itemize}
This leads to 
\begin{align*}
    KL(\P_{\rho, \pi}, \P_{\rho,id})=&\E_{\rho,\pi}\cro{\log\pa{\frac{d\P_{\rho,\pi}}{d\P_{\rho,id}}}}\\
    =&\sum_{d\in[p]}\E_{\rho,\pi}\cro{\log\cosh\pa{\eps\sum_{m\in[2,M]}Y^{(m)}_{2,d}+\eps Y^{(1)}_{1,d}}}\\
    &+\sum_{d\in[p]}\E_{\rho,\pi}\cro{\log\cosh\pa{\eps\sum_{m\in[2,M]}Y^{(m)}_{1,d}+\eps Y^{(1)}_{2,d}}}\\
    &-\sum_{d\in[p]}\E_{\rho,\pi}\cro{\log\cosh\pa{\eps\sum_{m\in[M]}Y^{(m)}_{1,d}}}\\
    &-\sum_{d\in[p]}\E_{\rho,\pi}\cro{\log\cosh\pa{\eps\sum_{m\in[M]}Y^{(m)}_{2,d}}}\\
    =& 2p \E_{\rho,\pi}\cro{\log\cosh\pa{\eps \sum_{m\in[2,M]}Y^{(m)}_{1,1}+Y^{(1)}_{2,1}}}-2p \E_{\rho,\pi}\cro{\log\cosh\pa{\eps \sum_{m\in[M]}Y^{(m)}_{1,1}}}\\
    =&2p \E_{\rho}\cro{\int \log\cosh(\eps(M\mu_{1,1}+\sqrt{M}x))\phi(x)dx}\\
    &-2p \E_{\rho}\cro{\int \log\cosh(\eps((M-1)\mu_{1,1}+\mu_{2,1}+\sqrt{M}x))\phi(x)dx}\enspace.    
\end{align*}
First, let us upper-bound the term $$A:=\E_{\rho}\cro{\int \log\cosh(\eps(M\mu_{1,1}+\sqrt{M}x))\phi(x)dx}\enspace.$$ We denote $u=\eps\pa{ (M-1) \mu_{1,1}+\sqrt{M}x}$ and $h=\eps \mu_{1,1}$. Then, $A=\E_{\rho}\cro{\int \log\cosh(u+h)\phi(x)dx}\enspace.$ We will use the Taylor expansion of the function $\log\cosh$ around $u$. We compute the following derivatives: \begin{itemize}
    \item For all $x\in \R$, $\log\cosh'(x)=\tanh(x)$,
    \item For all $x\in\R$, $\log\cosh ''(x)=1-\tanh^{2}(x)$ which is bounded by $2$ in absolute value.
\end{itemize}
Hence, Taylor-Lagrange inequality implies \begin{equation}\label{eq:Taylor}
    |\log\cosh(x+y)- \log\cosh(x)- \tanh(x)y|\leq y^{2}, \enspace \forall (x,y)\in\R^{2}\enspace.
\end{equation}
Plugging this inequality leads to \begin{equation}\label{eq:Taylor_1}
    A=\E_{\rho}\cro{\int \log\cosh(u+h)\phi(x)dx}\leq \E_{\rho}\cro{\log\cosh(u)}+\E_{\rho}\cro{\int\tanh{(u)}h\phi(x)dx}+ \E_{\rho}(h^{2})\enspace.
\end{equation}

First, since $h^{2}=\eps^{4}$, we have $\E_{\rho}(h^{2})=\eps^{4}$. Now, we need to upper bound $\E_{\rho}\cro{\int\tanh{(u)}h\phi(x)dx}$. For any $y\in\R$, we have $\tanh'(y)=1-tanh^{2}(y)$ and $\tanh''(y)=-2\tanh(y)(1-\tanh(y)^{2})$, which is bounded by $4$ in absolute value. Hence, Taylor-Lagrange inequality taken at $0$ leads to $$ |\tanh(y)-y|\leq 2y^{2}\enspace, \forall y\in\R\enspace.$$

This leads us to $\E_{\rho}\cro{\int \tanh{(u)}h\phi(x)dx}\leq \E_{\rho}\cro{\int uh\phi(x)dx}+2\E_{\rho}\cro{\int u^{2}|h|\phi(x)dx}$. On the one hand, 
\begin{align*}
    \E_{\rho}\cro{\int uh\phi(x)dx}=&\eps^{2}\E_{\rho}\cro{\int((M-1)\mu_{1,1}+\sqrt{M}x)\mu_{1,1}\phi(x)dx}\\
    =&\eps^{2}\E_{\rho}\cro{(M-1)\mu_{1,1}^{2}}\\
    =&(M-1)\eps^{4}\enspace.    
\end{align*}
On the other hand, we have 
\begin{align*}
    \E_{\rho}[\int u^{2}|h|\phi(x)dx]=&\eps^{4}\E_{\rho}[\int((M-1)\mu_{1,1}+\sqrt{M}x)^{2}\phi(x)dx]\\
    =&\eps^{4}\pa{(M-1)^{2}\eps^{2}+M}\enspace.
\end{align*}
Plugging all these inequalities in \eqref{eq:Taylor_1} leads us to 
\begin{equation}\label{eq:KL_2}
   A\leq \E_{\rho}\cro{\log\cosh(u)}+(M-1)\eps^{4}+2\eps^{4}\pa{(M-1)^{2}\eps^{2}+M}+ \eps^{4}\enspace.
\end{equation}
Now, let us lower-bound the term $$B:=\E_{\rho}\cro{\int \log\cosh(\eps(m\mu_{1,1}+\mu_{2,1}+\sqrt{m+1}x))\phi(x)dx}=\E_{\rho}\cro{\int \log\cosh(u+h')\phi(x)dx}\enspace,$$
where we define $h'=\eps \mu_{2,1}$ which is independent of $u$. Using inequality \eqref{eq:Taylor} together with the independence of $u$ and $h'$ leads to $$\E_{\rho}\cro{\int \log\cosh(u+h')\phi(x)dx}\geq \E_{\rho}\cro{\log\cosh{u}}+\E_{\rho}\cro{\int \tanh{(u)}\phi(x)dx}\E_{\rho}\cro{h'}- \E_{\rho}\cro{h'^{2}}\enspace.$$
Since $\E_{\rho}\cro{h'}=0$ and $\E_{\rho}\cro{h'^{2}}=\eps^{4}$, we have \begin{equation}\label{eq:KL_1}
    \E_{\rho}\cro{\int \log\cosh(u+h')\phi(x)dx}\geq \E_{\rho}\cro{\log\cosh{u}}-\eps^{4}\enspace.
\end{equation} 
Hence, plugging inequalities \eqref{eq:KL_2} and \eqref{eq:KL_1} in the expression of $KL(\P_{\rho, \pi}, \P_{\rho,id})$ leads us to 
\begin{align*}
    KL(\P_{\rho, \pi}, \P_{\rho,id})=&2p(A-B)\leq 2\pa{2\eps^{4}+(M-1)\eps^{4}+2\eps^{4}((M-1)^{2}\eps^{2}+M)}\\
    \leq& c\pa{pM\eps^{4}\pa{1+M\eps^{2}}}\enspace,
\end{align*}
for some numerical constant $c$. Since $\eps^{2}=\frac{4}{p}\bar{\Delta}^{2}$, for some numerical constant $c$ that may differ, we have $$KL(\P_{\rho, \pi}, \P_{\rho,id})\leq c\pa{ \bar{\Delta}^{4}\frac{M}{p}(1+\frac{M}{p}\bar{\Delta}^{2})}\enspace.$$ 
The hypothesis $c_{1}\log(KM)\leq\bar{\Delta}^{2}\leq c_{2}\sqrt{\frac{p}{M}\log(KM)}$ gives us 
$$\bar{\Delta}^{4}\frac{M}{p}\leq c_{2}^{2}\log(Km)\enspace ,$$ and  $$\frac{M}{p}\bar{\Delta}^{2}=\frac{M}{p}\bar{\Delta}^{4}\frac{1}{\bar{\Delta}^{2}}\leq \frac{c_{2}^{2}\log(KM)}{c_{1}\log(KM)}\leq 1\enspace ,$$ 
provide that we chose $c_{2}\leq \sqrt{c_1}$. Thus, there exists a numerical constant $c$ such that for all $\pi\in V$, we have $$KL(\P_{\rho, \pi}, \P_{\rho,id})\leq c c_{2}^{2}\log(Km)\enspace.$$

\subsubsection{Proof of Lemma \ref{lem:reductionfano}}\label{prf:reductionfano}

Let us suppose that there exists a probability distribution $\rho$ on $\R^{K\times M\times p}$ and $a>0$ such that $$\inf_{\hat{\pi}:R^{K\times M\times p}\to (\mathcal{S}_{K})^{M}}\sup_{\pi\in(\mathcal{S}_{K})^{M}}\P_{\rho,\pi}(err(\hat{\pi}, \pi)\neq 0)-\rho(\pa{\R^{p}}^K\setminus\Theta_{\bar{\Delta}})\geq a\enspace.$$
Given an estimator $\hat{\pi}:\R^{K\times M\times p}\to (\mathcal{S}_{K})^{M} $, the previous hypothesis directly implies that there exists $\pi\in(\mathcal{S}_{K})^{M}$ such that $\P_{\rho,\pi}(err(\hat{\pi}, \pi)\neq 0)-\rho(\pa{\R^{p}}^K\setminus\Theta_{\bar{\Delta}})\geq a.$ 

By definition, $\P_{\rho,\pi}(err(\hat{\pi}, \pi)\neq 0)=\int \P_{\mu, \pi}(err(\hat{\pi}, \pi)\neq 0)d\rho(\mu).$ The quantity $\P_{\mu, \pi}(err(\hat{\pi}, \pi)\neq 0)$ being bounded by $1$, we have $\P_{\rho,\pi}(err(\hat{\pi}, \pi)\neq 0)\leq \int_{\Theta_{\bar{\Delta}}} \P_{\mu, \pi}(err(\hat{\pi}, \pi)\neq 0)d\rho(\mu)+\rho((\R^{p})^{K}\setminus \Theta_{\bar{\Delta}}).$ Therefore, $\int_{\Theta_{\bar{\Delta}}} \P_{\mu, \pi}(err(\hat{\pi}, \pi)\neq 0)d\rho(\mu)\geq a.$ This implies the existence of $\mu\in\Theta_{\bar{\Delta}}$ such that $\P_{\mu, \pi}(\hat{\pi}\neq \pi)\geq a$. This being true for all estimator $\hat{\pi}$, we get the following inequality that concludes the Lemma $$\inf_{\hat{\pi}:\R^{K\times M\times p}\to (\mathcal{S}_{K})^{M}}\sup_{\mu\in\Theta_{\bar{\Delta}}}\sup_{\pi\in(\mathcal{S}_{K})^{M}}\P_{\mu,\pi}(err(\hat{\pi}, \pi)\neq 0)\geq a\enspace.$$

\subsection{Proof of Theorem \ref{thm:lowerboundinformationalfeature}; Partial Recovery}
\label{prf:lowerboundpartial}

In this section, we prove the second point of Theorem \ref{thm:lowerboundinformationalfeature}. Let us suppose that $p\geq c \log(K)$ and $K\geq K_{0}$, with $c$ and $K_{0}$ two numerical constants that we will choose large enough later. Without loss of generality, we suppose throughout this proof that $\sigma=1$.

To prove our result, we will apply the following lemma that we prove in Section \ref{prf:numberpermutations} together with a reduction lemma.

\begin{lem}\label{lem:numberpermutations}
    Suppose that $K\geq K_0$ where $K_0$ is a numerical constant large enough. There exists $V$ a set of $M$-tuple of permutations such that the following properties hold.
    \begin{itemize}
        \item For all $\pi\in V$ and $M\leq \lceil \frac{M}{2}\rceil$, $\pi_{m}=id$,
        \item There exists a numerical constant $a$ such that for any two different $M$-tuple of permutations $\pi$ and $\pi'$ in $V$, $\frac{1}{MK}\sum_{m=1}^{M}\sum_{k=1}^{K}\1_{\pi'_{m}(k)\neq \pi_{m}(k)}\geq a$,
        \item There exists a numerical constant $c$ such that $\log(|S|)\geq cMK\log(K)$.
    \end{itemize}
\end{lem}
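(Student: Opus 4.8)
\textbf{Plan of proof for Lemma~\ref{lem:numberpermutations}.}
The plan is to construct the set $V$ by a probabilistic / greedy packing argument, working only with the "second half" of the datasets. Concretely, fix the permutations $\pi_m=id$ for all $m\leq \lceil M/2\rceil$, and restrict attention to the remaining $M':=M-\lceil M/2\rceil\geq M/3$ datasets, on which we will choose the $\pi_m$ freely in $\cS_K$. Thus an element of $V$ is really an $M'$-tuple $(\pi_m)_{m>\lceil M/2\rceil}\in(\cS_K)^{M'}$, and the normalized Hamming distance between two such tuples $\pi,\pi'$ is
\[
d(\pi,\pi'):=\frac{1}{MK}\sum_{m=1}^{M}\sum_{k=1}^{K}\1_{\pi_m(k)\neq\pi'_m(k)}=\frac{1}{MK}\sum_{m>\lceil M/2\rceil}\sum_{k=1}^{K}\1_{\pi_m(k)\neq\pi'_m(k)}\enspace.
\]
First I would recall the classical fact that a uniformly random permutation of $[K]$ has, with high probability, at least $(1-\eta)K$ non-fixed-points for any small $\eta$; more precisely, by concentration of the number of fixed points of a random permutation (which has mean $1$ and variance $1$), $\P[\#\{k:\sigma(k)=k\}\geq \eta K]\leq (\eta K)^{-1}$, so $\P[\sum_k\1_{\sigma(k)\neq k}\leq(1-\eta)K]\to 0$. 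More relevantly, for two independent uniform permutations $\sigma,\sigma'$, the permutation $\sigma^{-1}\sigma'$ is again uniform, so $\sum_k\1_{\sigma(k)\neq\sigma'(k)}$ has the same law, and $\P[\sum_k\1_{\sigma(k)\neq\sigma'(k)}\leq K/2]\leq 2/K$ (say).

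Next I would run the packing argument. Draw $N$ i.i.d.\ uniform $M'$-tuples $\pi^{(1)},\dots,\pi^{(N)}$ from $(\cS_K)^{M'}$. For a fixed pair $i\neq j$, by independence across the $M'$ coordinates, $MK\,d(\pi^{(i)},\pi^{(j)})=\sum_{m>\lceil M/2\rceil}Z_m$ where the $Z_m$ are i.i.d., $Z_m=\sum_k\1_{\pi^{(i)}_m(k)\neq\pi^{(j)}_m(k)}$, each with mean $\geq K(1-1/K)$ and taking values in $[0,K]$. A Hoeffding bound gives $\P[\sum_m Z_m\leq \tfrac12 M'K]\leq\exp(-cM'K)\leq\exp(-c'MK)$ for a numerical constant $c'$. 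Hence the probability that some pair fails the separation $d(\pi^{(i)},\pi^{(j)})\geq a$ with $a:=M'/(6M)\geq 1/18$ is at most $N^2\exp(-c'MK)$. Choosing $N=\lfloor\exp(c'MK/3)\rfloor$ makes this $<1$, so with positive probability all pairs are well-separated; fix such a realization and let $V$ be the resulting set, of size $N$. Then $\log|V|\geq c'MK/3 - 1\geq c\, MK\geq c\,MK\log(K)/\log(K)$ — and since we actually want the bound $\log|V|\gtrsim MK\log(K)$, I would instead take the $\pi^{(i)}$ to be a random subset of a fixed maximal code, or equivalently replace the crude union bound: the number of $M'$-tuples is $(K!)^{M'}$, so a standard Gilbert–Varshamov volume argument (the ball of radius $aMK$ around a point contains at most $\exp(H(a)MK+aMK\log K)\cdot(K!)^{\lceil M/2\rceil}$... ) — more simply, greedily pick tuples at pairwise distance $\geq a$; the number picked is at least $(K!)^{M'}$ divided by the size of a Hamming ball of radius $aM'K$ in $[K]^{M'K}$, which is at most $\binom{M'K}{aM'K}K^{aM'K}\leq\exp(M'K[H(a)+a\log K])$. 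For $a$ a small enough numerical constant, $H(a)+a\log K\leq \tfrac12\log K$ once $K\geq K_0$, so the count is at least $(K!)^{M'}\exp(-\tfrac12 M'K\log K)\geq\exp(\tfrac12 M'K\log K)\geq\exp(cMK\log K)$, using $\log(K!)\geq K\log K - K\geq \tfrac34 K\log K$ for $K\geq K_0$. This gives the third bullet.

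The main obstacle — and the only place any care is needed — is getting the constants to line up so that the volume of the Hamming ball of radius $aM'K$ is genuinely dominated by $(K!)^{M'}$: this forces $a$ to be a small absolute constant (so that the entropy term $H(a)M'K$ and the term $aM'K\log K$ together stay below, say, $\tfrac12 M'K\log K$, which works for all $K\geq K_0$ with $K_0$ absolute). Once $a$ is pinned down, the three bullets follow immediately: the first by construction (we froze the first $\lceil M/2\rceil$ coordinates to $id$), the second is the defining property of the greedy packing, and the third is the volume lower bound just sketched. I would then note that applying Fano's Lemma~\ref{lem:fano} to this $V$, together with the Kullback–Leibler bound $KL(\P_{\rho,\pi},\P_{\rho,id})\lesssim \bar\Delta^4 M/p$ obtained exactly as in the perfect-recovery case (Lemma~\ref{lem:calculcompliquéexactrecovery}) but now summed over the $M'$ free datasets, and using the reduction Lemma~\ref{lem:reductionfano} to pass from the prior $\rho$ on the hypercube back to $\Theta_{\bar\Delta}$, yields $\E_{\mu,\pi}[err(\hat\pi,\pi^*)]\geq C$ whenever $\bar\Delta^2\leq c'(\log K+\sqrt{(p/M)\log K})$, as claimed.
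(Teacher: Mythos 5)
Your construction is correct but follows a different route from the paper. The paper first cites Lemma 16 of \cite{Collier16} to obtain a pre-packed set of $\ell\geq(K/24)^{K/6}$ permutations $\psi^{(1)},\ldots,\psi^{(\ell)}$ pairwise differing in $\geq 3K/8$ positions, freezes the first $\lceil M/2\rceil$ coordinates to $id$, forms $\overline{V}=\{\psi^{(1)},\ldots,\psi^{(\ell)}\}^{M'}$ on the remaining $M'=\lfloor M/2\rfloor$ coordinates, and then runs a \emph{second}, coarser packing over $\overline{V}$ in the ``number of differing slots'' metric (keeping tuples whose $\psi$-labels disagree in $\geq M/4$ slots). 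You instead run a single Gilbert--Varshamov volume argument directly on $(\cS_K)^{M'}$ under the unnormalized Hamming metric on the $M'K$ entries: the space has $(K!)^{M'}$ points, a Hamming ball of radius $r$ has at most $\binom{M'K}{r}(K-1)^r$ points of the ambient product space $[K]^{M'K}$ (an upper bound on the ball intersected with permutation-tuples), and for $r=aMK$ with $a$ a small absolute constant and $K\geq K_0$ the entropy term $M'K\,H(aM/M')+aMK\log K$ is at most $\tfrac12 M'K\log K\ll M'\log K!$, giving $\log|V|\gtrsim M'K\log K\gtrsim MK\log K$. Both arguments are sound; yours is more self-contained (no external packing of $\cS_K$ needed, no two-level decomposition) while the paper's is shorter on the page because it delegates the within-$\cS_K$ packing to a cited lemma and only needs a cruder second-level counting. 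Two small points worth tidying: your preliminary probabilistic union-bound digression (which, as you note, only yields $\log|V|\gtrsim MK$) should be cut since the final argument does not use it; and the ball radius in your GV bound should be $aMK$ rather than $aM'K$ to match the normalization in the second bullet of the lemma — this only rescales $a$ by a factor in $[2,3]$ since $M'\in[M/3,M/2]$, so nothing breaks, but the constants should be made consistent.
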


In the following, we suppose that $K_{0}\geq 4$ and we recall that we consider $K\geq K_0$. Here is the reduction lemma that we will apply in this proof. This lemma plays the same role in this proof as does Lemma \ref{lem:reductionfano} in Section \ref{prf:lowerboundexact}.

\begin{lem}\label{lem:reductionfano2}
    Suppose that there exists a probability distribution $\rho$ on $(\R^{p})^K$ and $C>0$ satisfying $$\inf_{\hat{\pi}:\R^{K\times M\times p}\to (\mathcal{S}_{K})^{M}}\sup_{\pi\in(\mathcal{S}_{K})^{M}}\E_{\rho,\pi}\cro{err(\hat{\pi},\pi)}-\rho((\R^{p})^{K}\setminus \Theta_{\bar{\Delta}})\geq C\enspace.$$ Then, we have $$\inf_{\hat{\pi}:\R^{K\times M\times p}\to (\mathcal{S}_{K})^{M}}\sup_{\mu\in\Theta_{\bar{\Delta}}}\sup_{\pi\in(\mathcal{S}_{K})^{M}}\E_{\mu,\pi}\cro{err(\hat{\pi},\pi)}\geq C\enspace.$$
\end{lem}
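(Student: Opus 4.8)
\textbf{Proof plan for Lemma~\ref{lem:reductionfano2}.}

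The plan is to mirror exactly the argument used for Lemma~\ref{lem:reductionfano}, replacing the $0/1$ loss $\1\{err(\hat\pi,\pi)\neq 0\}$ by the bounded loss $err(\hat\pi,\pi)\in[0,1]$. First I would fix an arbitrary estimator $\hat\pi:\R^{K\times M\times p}\to(\mathcal{S}_K)^M$. By the hypothesis of the lemma, there exists some $\pi\in(\mathcal{S}_K)^M$ such that
\[
\E_{\rho,\pi}\cro{err(\hat\pi,\pi)}-\rho\pa{(\R^{p})^{K}\setminus\Theta_{\bar\Delta}}\geq C\enspace.
\]
Now I would unfold the mixed expectation $\E_{\rho,\pi}\cro{err(\hat\pi,\pi)}=\int \E_{\mu,\pi}\cro{err(\hat\pi,\pi)}\,d\rho(\mu)$, split the integral over $\Theta_{\bar\Delta}$ and its complement, and use that $err(\hat\pi,\pi)\leq 1$ pointwise (which is clear from definition~\eqref{eq:errorperm}, a normalized sum of indicators). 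This yields
\[
\int_{\Theta_{\bar\Delta}}\E_{\mu,\pi}\cro{err(\hat\pi,\pi)}\,d\rho(\mu)\;\geq\;\E_{\rho,\pi}\cro{err(\hat\pi,\pi)}-\rho\pa{(\R^{p})^{K}\setminus\Theta_{\bar\Delta}}\;\geq\;C\enspace.
\]
Since $\rho$ is a probability measure, $\rho(\Theta_{\bar\Delta})\leq 1$, so the average of $\E_{\mu,\pi}\cro{err(\hat\pi,\pi)}$ over $\mu\in\Theta_{\bar\Delta}$ (with respect to $\rho$ restricted and renormalized, or simply the integral) being at least $C$ forces the existence of at least one $\mu^\star\in\Theta_{\bar\Delta}$ with $\E_{\mu^\star,\pi}\cro{err(\hat\pi,\pi)}\geq C$. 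Hence for this $\hat\pi$ we obtain $\sup_{\mu\in\Theta_{\bar\Delta}}\sup_{\pi\in(\mathcal{S}_K)^M}\E_{\mu,\pi}\cro{err(\hat\pi,\pi)}\geq C$.

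Finally, since $\hat\pi$ was arbitrary, taking the infimum over all estimators $\hat\pi$ gives
\[
\inf_{\hat\pi:\R^{K\times M\times p}\to(\mathcal{S}_K)^M}\sup_{\mu\in\Theta_{\bar\Delta}}\sup_{\pi\in(\mathcal{S}_K)^M}\E_{\mu,\pi}\cro{err(\hat\pi,\pi)}\;\geq\;C\enspace,
\]
which is the claim. The argument is entirely elementary; there is no genuine obstacle. The only point requiring a tiny bit of care is the step where one passes from ``the $\rho$-average over $\Theta_{\bar\Delta}$ is $\geq C$'' to ``some $\mu^\star$ realizes a value $\geq C$'': this is just the observation that a (sub-probability) average cannot exceed the supremum of the integrand, so at least one point attains at least the average. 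One should also note that the supremum over $\mu\in\Theta_{\bar\Delta}$ and $\pi\in(\mathcal{S}_K)^M$ could be achieved in a limiting sense, but since we only need a lower bound of the form ``$\geq C$'' it suffices to exhibit $(\mu^\star,\pi)$ with value $\geq C$, which we did. This completes the proof.
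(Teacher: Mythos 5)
Your proof is correct and follows the same line of reasoning as the paper's own proof of Lemma~\ref{lem:reductionfano2} (which itself mirrors Lemma~\ref{lem:reductionfano}): fix an arbitrary estimator, pick a $\pi$ witnessing the hypothesis, decompose the $\rho$-mixture and discard the complement of $\Theta_{\bar\Delta}$ using $err\leq 1$, conclude $\sup_{\mu\in\Theta_{\bar\Delta}}\E_{\mu,\pi}[err]\geq C$, then take the infimum over estimators. The only cosmetic caveat is that you cannot in general exhibit a single $\mu^\star$ attaining $\geq C$; you should phrase the step as a bound on the supremum (via $\int_{\Theta_{\bar\Delta}}f\,d\rho\leq\rho(\Theta_{\bar\Delta})\sup_{\Theta_{\bar\Delta}}f\leq\sup_{\Theta_{\bar\Delta}}f$), a wrinkle the paper's proof shares and which does not affect the conclusion.
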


We will also need the following lemma which is a consequence of Fano's lemma. We refer to Section \ref{sec:prfFano2} for its proof.

\begin{lem}\label{lem:Fano2}
    Let $\rho$ be a probability distribution on $(\R^{p})^{K}$. For any finite set $A$ of $(\mathcal{S}_{K})^{M}$, we have the following inequality $$\inf_{\hat{\pi}:\R^{K\times M\times p}\to (\mathcal{S}_{K})^{M}}\frac{2}{|A|}\sum_{\pi\in A}\E_{\rho,\pi}\cro{err(\hat{\pi},\pi)}\geq \min_{\pi\neq \pi'\in A}err(\pi,\pi')\pa{1-\frac{1+\frac{1}{|A|}\sup_{\pi\in A}KL(\P_{\rho,\pi},\P_{\rho,id})}{\log(|A|)}}\enspace.$$
\end{lem}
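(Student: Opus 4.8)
The plan is to turn the estimation problem over the finite family $A$ into a multiple testing problem via a nearest-neighbour decoding argument, and then to invoke Fano's Lemma (Lemma~\ref{lem:fano}).

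The first ingredient is that $err(\cdot,\cdot)$ is a pseudometric on $(\mathcal{S}_K)^M$: writing $d(\sigma,\tau)=\frac{1}{KM}\sum_{k,m}\1_{\sigma_m(k)\neq\tau_m(k)}$ for the normalized Hamming distance, the left-composition action of $\mathcal{S}_K$ on $(\mathcal{S}_K)^M$ preserves $d$, so that $err(\pi,\pi')=\min_{\psi\in\mathcal{S}_K}d(\psi\cdot\pi,\pi')$ is symmetric and, upon bounding $d(\psi_2\psi_1\cdot\pi,\pi'')\le d(\psi_1\cdot\pi,\pi')+d(\psi_2\cdot\pi',\pi'')$ and minimizing both sides over $\psi_1,\psi_2$, also obeys the triangle inequality. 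Set $\delta:=\min_{\pi\neq\pi'\in A}err(\pi,\pi')$. Now fix any estimator $\hat\pi:\R^{K\times M\times p}\to(\mathcal{S}_K)^M$ and define the test $\hat J(Y)\in A$ as a minimizer of $\pi'\mapsto err(\hat\pi(Y),\pi')$ over $A$ (well defined and measurable since $A$ is finite). The key point is that, for $\pi\in A$, on the event $\{\hat J(Y)\neq\pi\}$, writing $\pi':=\hat J(Y)$, the defining property of $\hat J$ together with the triangle inequality gives $\delta\le err(\pi,\pi')\le err(\pi,\hat\pi(Y))+err(\hat\pi(Y),\pi')\le 2\,err(\hat\pi(Y),\pi)$; hence $err(\hat\pi,\pi)\ge\tfrac{\delta}{2}\1_{\hat J\neq\pi}$ pointwise, and taking expectations under $\P_{\rho,\pi}$ yields $\E_{\rho,\pi}[err(\hat\pi,\pi)]\ge\tfrac{\delta}{2}\,\P_{\rho,\pi}(\hat J\neq\pi)$.

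It then remains to average over $\pi\in A$ and apply Lemma~\ref{lem:fano} to the family $(\P_{\rho,\pi})_{\pi\in A}$ with reference distribution $\P_{\rho,id}$ (which is legitimate, since $\P_{\rho,\pi}$ and $\P_{\rho,id}$ are mutually absolutely continuous Gaussian mixtures): this lower-bounds $\frac{1}{|A|}\sum_{\pi\in A}\P_{\rho,\pi}(\hat J\neq\pi)$ by $1-\big(1+\frac{1}{|A|}\sum_{\pi\in A}KL(\P_{\rho,\pi},\P_{\rho,id})\big)/\log(|A|)$. Combining this with the previous bound, multiplying through by $2$, and taking the infimum over $\hat\pi$ gives the claimed inequality, with $\frac{1}{|A|}\sum_{\pi\in A}KL(\P_{\rho,\pi},\P_{\rho,id})\le\sup_{\pi\in A}KL(\P_{\rho,\pi},\P_{\rho,id})$ playing the role of the divergence term. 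I do not expect a genuine obstacle here: the only points requiring care are the verification that $err$ is a bona fide pseudometric---so that the factor-$2$ nearest-neighbour reduction is valid even for estimators $\hat\pi$ that need not take values in $A$---and the bookkeeping of the constant $2$; the rest is a direct application of Fano's inequality.
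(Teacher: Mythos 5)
Your proof is correct and takes essentially the same route as the paper: nearest-neighbour decoding over $A$, the triangle inequality for $err$ to get the factor $2$, and Fano's Lemma with reference distribution $\P_{\rho,id}$. The only addition is your careful verification that $err$ is a pseudometric, which the paper takes for granted; note also that your Fano term $\frac{1}{|A|}\sum_{\pi\in A}KL(\P_{\rho,\pi},\P_{\rho,id})$ matches what the paper's own proof derives, so the $\sup$ in the lemma statement appears to be a typo for $\sum$.
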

As in Section \ref{prf:lowerboundexact}, we distinguish two cases. In the first one, we suppose that there exists a constant $c_{1}$ that we will choose small enough such that $\bar{\Delta}^{2}\leq c_{1}\log(K)$. In the second one, we will suppose that $c_{1}\log(K)\leq \bar{\Delta}^{2}\leq c_{2}\sqrt{\frac{p}{M}\log(K)}$, for $c_{2}$ a numerical constant that we will also choose small enough.

\subsection{Case $\bar{\Delta}^{2}\leq c_{1}\log(K)$}

In this section, we suppose that $\bar{\Delta}^{2}\leq c_{1}\log(K)$. We suppose that the constant $c$ such that $p\geq c\log(K)$ is large enough for having $\frac{p}{4}\log(2)\geq \log(K)$. Our choice of the centers $\mu_{1},\ldots, \mu_{K}$ relies on the following lemma, proved in Section \ref{sec:prfchoicepoints}.

\begin{lem}\label{lem:choicepoints}
    If $\frac{p}{4}\log(2)\geq \log(K)$, there exists $\mu_{1},\ldots ,\mu_{K}$ in $\R^{p}$ such that $\frac{1}{2}\min_{k\neq l}\|\mu_{k}-\mu_{l}\|^2\geq \bar{\Delta}^2$ and $\frac{1}{2}\max_{k\neq l}\|\mu_{k}-\mu_{l}\|^2\leq 4\bar{\Delta}^2$.
\end{lem}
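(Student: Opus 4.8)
\textbf{Plan for the proof of Lemma~\ref{lem:choicepoints}.}
The plan is to exhibit the points $\mu_1,\dots,\mu_K$ probabilistically, using a scaled Bernoulli construction on the hypercube, and then to verify the two pairwise-distance constraints by a union bound. Concretely, I would set $\mu_k = \frac{\sqrt{2}\,\bar\Delta}{\sqrt{p}}\,\sigma^{(k)}$ where $\sigma^{(1)},\dots,\sigma^{(K)}$ are i.i.d.\ uniform on $\{-1,+1\}^p$. With this normalisation, for a fixed pair $k\neq l$ we have $\tfrac12\|\mu_k-\mu_l\|^2 = \tfrac{\bar\Delta^2}{p}\,\|\sigma^{(k)}-\sigma^{(l)}\|^2 = \tfrac{4\bar\Delta^2}{p}\,H_{kl}$, where $H_{kl}=|\{d\in[p]: \sigma^{(k)}_d\neq\sigma^{(l)}_d\}|$ is a $\mathrm{Bin}(p,1/2)$ random variable. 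So the content of the lemma reduces to showing that, with positive probability, \emph{all} $\binom{K}{2}$ Hamming distances $H_{kl}$ lie in the window $[p/8,\,p]$; the upper constraint $H_{kl}\le p$ is deterministic, so really only the lower tail matters.

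The main step is therefore a standard Chernoff/Hoeffding bound: for a single pair, $\P[H_{kl} < p/8] = \P[H_{kl} - p/2 < -3p/8] \le \exp(-2(3/8)^2 p) = \exp(-9p/32)$. Since $\tfrac{p}{4}\log 2 \ge \log K$ by hypothesis, we have $p \ge 4\log_2 K \ge 6\log K$ (using $\log 2 \ge 2/3$... more carefully, $p\log 2 \ge 4\log K$ gives $p \ge 4\log K/\log 2 \ge 5.7\log K$), hence $\exp(-9p/32) \le K^{-9\cdot 5.7/32} \le K^{-1.6}$, and a union bound over the $\binom{K}{2}\le K^2/2$ pairs gives failure probability at most $\tfrac12 K^{2}\cdot K^{-1.6} = \tfrac12 K^{0.4}$ — which is \emph{not} $<1$. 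So the naive constants do not quite close, and the real work is to be slightly more careful: either take the window $[\eta p, p]$ with $\eta$ smaller (e.g.\ $\eta$ chosen so that $\tfrac12\|\mu_k-\mu_l\|^2 = \tfrac{4\bar\Delta^2}{p}H_{kl}\ge 4\eta\bar\Delta^2\ge \bar\Delta^2$, i.e.\ any $\eta\le 1/4$ works for the lower bound $\bar\Delta^2$), using $\P[H_{kl}<\eta p]\le \exp(-2(1/2-\eta)^2 p)$ and noting that as $\eta\to 0$ the exponent tends to $e^{-p/2}$, which against $p\ge (4/\log 2)\log K$ gives failure probability $\le \tfrac12 K^2 e^{-p/2}\le \tfrac12 K^{2 - 2/\log 2}<\tfrac12 K^{2-2.88}<1$; or, equivalently, keep track of the exact constant $\tfrac{p}{4}\log 2\ge \log K$ and choose the threshold $\eta p$ for which the binomial lower-tail bound $\P[\mathrm{Bin}(p,1/2)\le \eta p]\le 2^{-p(1-H_b(\eta))}$ (with $H_b$ the binary entropy) beats $K^{-2}$. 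Since $1-H_b(\eta)\to 1$ as $\eta\to 0$, for $\eta$ small enough $1-H_b(\eta)\ge 1/2$, and then $2^{-p/2}\le 2^{-2\log K/\log 2} = K^{-2}$, so the union bound over $\binom K2$ pairs leaves positive probability.

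Thus the argument is: fix such an $\eta\in(0,1/4]$ (a universal constant), take $\mu_k$ as above; with probability at least $1-\binom K2 2^{-p(1-H_b(\eta))}>0$ (for $K\ge$ some absolute constant, and using $\tfrac p4\log2\ge\log K$), every pair satisfies $\eta p\le H_{kl}\le p$, hence $\bar\Delta^2 \le \tfrac12\|\mu_k-\mu_l\|^2 \le 4\bar\Delta^2$ for all $k\neq l$. Since the event has positive probability, such a deterministic configuration $\mu_1,\dots,\mu_K$ exists, which is exactly the claim. The only genuinely delicate point — and the thing to get right — is bookkeeping the constants so that the union bound over the quadratically many pairs closes under the given hypothesis $\tfrac p4\log 2\ge\log K$; everything else is routine. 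I would present it with the explicit binary-entropy tail bound since it makes the constant-chasing transparent, but one can equally use Hoeffding with a concrete small $\eta$ and the slightly stronger consequence $p\ge (4/\log 2)\log K$ of the hypothesis.
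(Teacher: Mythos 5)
Your overall construction — rescaling random $\pm 1$ vectors so that the squared distance is proportional to the Hamming distance — is the same as the paper's, but your argument for existence is probabilistic (first moment + union bound over random codewords) while the paper's is a deterministic Gilbert--Varshamov packing (a maximal subset of the rescaled hypercube at pairwise Hamming distance at least $p/4$, with size bounded from below by a covering argument). Unfortunately, your probabilistic version does not go through, and the error is concrete.

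The inequality on $\eta$ is reversed. You write that $\tfrac12\|\mu_k-\mu_l\|^2 = \tfrac{4\bar\Delta^2}{p}H_{kl}\ge 4\eta\bar\Delta^2 \ge \bar\Delta^2$ holds for ``any $\eta\le 1/4$'', but $4\eta\bar\Delta^2\ge\bar\Delta^2$ forces $\eta\ge 1/4$, not $\eta\le 1/4$. The lower separation constraint requires \emph{more} disagreements ($H_{kl}\ge p/4$), not fewer. Your whole strategy of shrinking $\eta$ toward $0$ to improve the Hoeffding exponent is therefore pointed in the wrong direction: every $\eta<1/4$ is forbidden by the very constraint you need to verify, and increasing $\eta$ past $1/4$ only makes the lower-tail bound worse. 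At the mandatory threshold $\eta = 1/4$, Hoeffding gives $\mathbb{P}[H_{kl}<p/4]\le \exp(-p/8)$. Under the hypothesis $\tfrac{p}{4}\log 2\ge\log K$, i.e.\ $p\ge 4\log_2 K$, this is $\exp(-p/8)\le K^{-1/(2\log 2)}\approx K^{-0.72}$, so the union bound over $\binom{K}{2}$ pairs leaves failure probability of order $K^{1.28}$, which exceeds $1$ for all $K\ge 2$. Rescaling the hypercube by a different constant does not help either: if you set $\mu_k = c\,\sigma^{(k)}$ then the two constraints become $\alpha p/4\le H_{kl}\le \alpha p$ for $\alpha = \bar\Delta^2/(2c^2)\cdot 4/p$, and balancing the two Hoeffding tails still produces a per-pair exponent that is too small to beat $K^{-2}$ under $p\ge 4\log_2 K$. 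In short, the first-moment/union-bound route with i.i.d.\ random sign vectors simply does not have enough slack at this value of $p$; the paper avoids this by taking a \emph{maximal} packing $H\subset\{\pm 1\}^p$ at minimum distance $p/4$ and using the covering property of maximality to bound $|H|$ from below, which is a strictly stronger existence statement than what a random choice achieves with positive probability.
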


We consider $\mu=(\mu_{1},\ldots ,\mu_{K})$ in $(\R^{p})^{K}$ that satisfies the conditions of this lemma. Given $\pi$ taken in the set $V$ defined in Lemma \ref{lem:numberpermutations}, let us compute $KL(\P_{\mu,\pi},\P_{\mu,id})$. We denote $\P_{\mu,\pi;k,m}$ the marginal distribution of $Y_{k}^{(m)}$ under the joint law $\P_{\mu,\pi}$. By independance of all the features, we have that $$KL(\P_{\mu,\pi},\P_{\mu,id})=\sum_{k=1}^{K}\sum_{m=1}^{M}KL(\P_{\mu,\pi;k,m},\P_{\mu,id;k,m})\enspace.$$ Given $k\in[K]$ and $m\in[M]$, $KL(\P_{\mu,\pi;k,m},\P_{\mu,id;k,m})=\frac{\|\mu_{k}-\mu_{\pi_{m}(k)}\|^{2}}{2}$. Hence, we have $$KL(\P_{\mu,\pi},\P_{\mu,id})\leq 4KM \bar{\Delta}^{2}\leq 4c_{1} KM\log(K)\enspace.$$ Combining this with Lemma \ref{lem:Fano2} and Lemma \ref{lem:numberpermutations} leads us to $$\inf_{\hat{\pi}:\R^{K\times M\times p}\to (\mathcal{S}_{K})^{M}}\frac{2}{|V|}\sum_{\pi\in V}\E_{\mu,\pi}\cro{err(\hat{\pi},\pi)}\geq a\pa{1-\frac{1+2c_{1}^{2}KM\log(K)}{cKM\log(K)}}\enspace,$$ where $a$ and $c$ are positive numerical constants. The quantity $a\pa{1-\frac{1+2c_{1}^{2}KM\log(K)}{cM\log(K)}}$ being larger than $\frac{a}{2}$ if $c_{1}$ is small enough and $K_{0}$ large enough, we conclude that there exists a constant $C$ such that $$\inf_{\hat{\pi}:\R^{K\times M\times p}\to(\mathcal{S}_{K})^{M}}\sup_{\mu\in\Theta_{\bar{\Delta}}}\sup_{\pi\in(\mathcal{S}_{K})^{M}}\E_{\mu,\pi}(err(\hat{\pi},\pi))\geq C\enspace.$$

\subsection{Case $c_{1}\log(K)\leq \bar{\Delta}^{2}\leq c_{2}\sqrt{\frac{p}{M}\log(K)}$}

In this section, we suppose that $c_{1}\log(K)\leq \bar{\Delta}^{2}\leq c_{2}\sqrt{\frac{p}{M}\log(K)}$ with $c_{2}$ a numerical constant that we will suppose small enough compared $c_1$ --$c_1$ now being considered as fixed. We still suppose that $K\geq K_0$, for $K_0$ a large enough numerical constant. As in Section \ref{prf:lowerboundexact}, we denote $\rho$ the uniform distribution on $\mathcal{E}=\{\eps, -\eps\}^{pK}$, where $\eps=2\sqrt{\frac{1}{p}}\bar{\Delta}$.

\begin{lem}\label{lem:calculcompliquépartialrecovery}
We suppose that $c_2$ is small with respect to $c_1$ and that $c_{1}\log(K)\leq \bar{\Delta}^{2}\leq c_{2}\sqrt{\frac{p}{M}\log(K)}$. Then, there exists a numerical constant $c>0$ such that, for all $\pi\in V$, we have $$KL\pa{\P_{\rho, \pi},\P_{\rho,id}}\leq cc_2^2KM\log(K)$$
\end{lem}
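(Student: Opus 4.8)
The plan is to adapt the Kullback–Leibler computation from the exact-recovery analysis (Lemma~\ref{lem:calculcompliquéexactrecovery}) to the partial-recovery prior. The set $V$ from Lemma~\ref{lem:numberpermutations} consists of $M$-tuples of permutations that agree with the identity on the first $\lceil M/2\rceil$ samples, so the only datasets that carry any discriminating information are those with $m>\lceil M/2\rceil$. By symmetry it suffices to bound $KL(\P_{\rho,\pi},\P_{\rho,id})$ for a single $\pi\in V$, and since the prior $\rho$ makes the features $d\in[p]$ i.i.d., the divergence decouples as $p$ times a one-dimensional contribution. For a fixed feature coordinate, one introduces (as in the proof of Lemma~\ref{lem:calculcompliquéexactrecovery}) a reference prior $\rho'$ that zeroes out the means of the labels whose role differs between $\pi$ and the identity, uses the fact that $\P_{\rho',\pi}=\P_{\rho',id}$, and writes the likelihood ratio $\tfrac{d\P_{\rho,\pi}}{d\P_{\rho,id}}$ as a ratio of products of $\cosh$ terms in the relevant partial sums of the $Y^{(m)}_{k,d}$.

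The key steps, in order: (i) reduce to one $\pi\in V$ by symmetry and to a single feature by the product structure; (ii) identify, for that $\pi$, the set of labels $k$ whose preimages move across datasets — for the $V$ built by disjoint transpositions this is a controlled collection of pairs, each pair contributing a bounded number of $\cosh$ factors; (iii) reuse the Gaussian conditional laws of the partial sums $\sum_m Y^{(m)}_{k,d}$ (means of the form $M\mu_{k,d}$, $(M-1)\mu_{k,d}+\mu_{k',d}$, variance $M$) exactly as in Section~\ref{prf:calculcompliquéexactrecovery}; (iv) apply the second-order Taylor expansions $|\log\cosh(x+y)-\log\cosh(x)-\tanh(x)y|\le y^2$ and $|\tanh(y)-y|\le 2y^2$ to extract the leading term, which is $O(pM\eps^4(1+M\eps^2))$ per moved pair; (v) multiply by the number of moved pairs, which is $O(MK)$ for the set $V$, and substitute $\eps^2=\tfrac{4}{p}\bar\Delta^2$ to obtain a bound of order $MK\bar\Delta^4/p\,(1+M\bar\Delta^2/p)$; (vi) use the hypothesis $c_1\log(K)\le\bar\Delta^2\le c_2\sqrt{(p/M)\log(K)}$ to show $M\bar\Delta^4/p\le c_2^2\log(K)$ and $M\bar\Delta^2/p\le c_2^2/c_1\le 1$ (for $c_2\le\sqrt{c_1}$), yielding $KL(\P_{\rho,\pi},\P_{\rho,id})\le c\,c_2^2 MK\log(K)$.

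The main obstacle is bookkeeping: one must verify that for the specific family $V$ produced by Lemma~\ref{lem:numberpermutations} the number of ``moved'' label-orbits, and hence the number of independent $\cosh$-blocks in the likelihood ratio, is indeed $O(MK)$ rather than something larger, and that the blocks are genuinely independent across features and across orbits so that the single-feature, single-pair estimate can be summed without cross terms. Once the combinatorial structure of $V$ is pinned down, every analytic estimate is a verbatim copy of the computation already carried out for exact recovery, with $\log(KM)$ replaced by $\log(K)$ and a global factor $MK$ replacing the factor $2$; the only genuinely new input is matching this $MK$ factor against the lower bound $\log|V|\ge c\,MK\log(K)$ so that Fano's inequality (via Lemma~\ref{lem:Fano2}) still gives a constant lower bound on the expected error. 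After establishing Lemma~\ref{lem:calculcompliquépartialrecovery}, combining it with Lemma~\ref{lem:Fano2} and Lemma~\ref{lem:numberpermutations} gives $\inf_{\hat\pi}\tfrac{2}{|V|}\sum_{\pi\in V}\E_{\rho,\pi}[err(\hat\pi,\pi)]\ge a(1-(1+c\,c_2^2 MK\log K)/(c'MK\log K))\ge a/2$ for $c_2$ small and $K_0$ large, and then Lemma~\ref{lem:reductionfano2} together with the bound $\rho((\R^p)^K\setminus\Theta_{\bar\Delta})\le \tfrac12 K^2 e^{-p/8}\le a/4$ (using $p\ge (c_1^2/c_2^2)\log K$) transfers this to the minimax statement over $\Theta_{\bar\Delta}$, completing the proof of the partial-recovery part of Theorem~\ref{thm:lowerboundinformationalfeature}.
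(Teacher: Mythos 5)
Your high-level skeleton—reduce via the product structure, express the likelihood ratio as a ratio of $\cosh$-factors, Taylor-expand, and match the resulting bound against the hypothesis $\bar\Delta^2\le c_2\sqrt{(p/M)\log K}$—is the right plan. But there are two genuine gaps that would derail the execution.

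First, your bookkeeping is premised on $V$ being built from ``disjoint transpositions,'' so that the likelihood ratio decomposes into $O(MK)$ independent two-label blocks. That is not the structure of $V$ in Lemma~\ref{lem:numberpermutations}: the $\psi^{(1)},\dots,\psi^{(\ell)}$ that populate the last $\lfloor M/2\rfloor$ coordinates come from Collier--Dalalyan's Lemma~16 and are arbitrary permutations with large mutual Hamming distance, not transpositions. Consequently, for a fixed label $k$ the ``shifted'' partial sum $\sum_m Y^{(m)}_{k,d}$ has conditional mean $\sum_{k'} m_{k'}\mu_{k',d}$ where $m_{k'}=|\{m:\pi_m(k)=k'\}|$ is an arbitrary composition of $M$, not a mean of the form $(M-1)\mu_{k,d}+\mu_{k',d}$. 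There is no notion of ``moved pairs'' to count, and the decomposition into independent blocks you envision does not exist.

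Second, and relatedly, the Taylor device you propose to reuse from the exact-recovery proof, $|\log\cosh(x+y)-\log\cosh(x)-\tanh(x)y|\le y^2$, only pays off when $y$ is a small perturbation around the ``correct'' argument $x$; in the exact-recovery case $y=\eps\mu_{k',1}$ is a single coordinate of one mean. Here the shifted argument differs from $\eps M\mu_{k,1}$ by $\eps\big(\sum_{k'} m_{k'}\mu_{k',1}-M\mu_{k,1}\big)$, which is not small when $m_k$ is small, so that expansion does not give a useful control of cross-terms. The paper's proof instead derives and applies the two-sided polynomial bound $\tfrac{x^2}{2}-\tfrac{x^4}{12}\le\log\cosh(x)\le\tfrac{x^2}{2}$ (its Equation~\eqref{eq:Taylor2}) and reduces everything to moments of $\sum_{k'} m_{k'}\mu_{k',1}$, which are bounded uniformly using $\sum_{k'} m_{k'}=M$ (so $\sum_{k'}m_{k'}^2\le M^2$, $\sum_{k'}m_{k'}^4\le M^4$, etc.). That is a genuinely different estimate, not a verbatim reuse of the exact-recovery computation. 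Finally, a small arithmetic slip: combining ``$O(MK)$ pairs'' with ``$O(pM\eps^4)$ per pair'' and $\eps^2=4\bar\Delta^2/p$ gives $O(KM^2\bar\Delta^4/p)$, not the $O(MK\bar\Delta^4/p)$ you wrote; the former is what matches the paper's leading term $pKM^2\eps^4$ and yields the target $c\,c_2^2 KM\log K$ after substituting the hypothesis.
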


We refer to Section \ref{prf:calculcompliquépartialrecovery} for a proof of this lemma. Combining Lemma \ref{lem:calculcompliquépartialrecovery} and Lemma \ref{lem:Fano2} implies that 
$$\inf_{\hat{\pi}:\R^{K\times M\times p}\to (\mathcal{S}_{K})^{M}}\frac{2}{|V|}\sum_{\pi\in V}\E_{\rho,\pi}\cro{err(\hat{\pi},\pi)}\geq \min_{\pi\neq \pi'\in V}err(\pi,\pi')\pa{1-\frac{1+c c_{2}^{2}KM\log(K)}{c'Km\log(K)}}\enspace,$$ 
where $c$ and $c'$ are two numerical constants. Given $\pi\neq \pi'\in V$, for $m\in[1,\lceil \frac{M}{2}\rceil]$, $\pi_{m}=\pi'_{m}$. Let $\psi$ be a permutation of $[K]$. Then, for $k\in[K]$, if $\psi(k)\neq k$, we get $\sum_{m=1}^{M}\1_{\psi(\pi_{m}(k))\neq \pi'_{m}(k)}\geq \lceil M/2\rceil \geq \sum_{m=1}^{M}\1_{\pi_{m}(k)\neq \pi'_{m}(k)}$. Hence, from the definition \eqref{eq:errorperm}, we have $err(\pi,\pi')=\frac{1}{MK}\sum_{i=1}^{K}\sum_{j=1}^{m}\1_{\pi_{j}(i)\neq \pi'_{j}(i)}$. From the definition of $V$ in Lemma \ref{lem:numberpermutations}, we deduce that $err(\pi,\pi')\geq a$, where $a$ is a numerical constant. Hence, $$\inf_{\hat{\pi}:\R^{K\times M\times p}\to (\mathcal{S}_{K})^{M}}\frac{2}{|V|}\sum_{\pi\in V}\E_{\rho,\pi}\cro{err(\hat{\pi},\pi)}\geq a\pa{1-\frac{1+cc_{2}^{2}Km\log(K)}{c'KM\log(K)}}\enspace.$$

This last quantity is larger than $\frac{a}{2}$ provided $c_{2}$ is small enough and $K_0$ large enough. We can conclude using Lemma \ref{lem:reductionfano2} together with the fact that $\rho(\pa{\R^{p}}^K\setminus \theta_{\Delta})\leq \frac{K(K-1)}{2}e^{\frac{-p}{8}} \leq \frac{a}{4}$ provided the constant $c$ such that $p\geq c\log(K)$ is large enough. We refer to Lemma 14 of \cite{Collier16} for the inequality $\rho(\R^{p}\setminus \theta_{\Delta})\leq \frac{K(K-1)}{2}e^{\frac{-p}{8}}$.

\subsection{Proof of Lemma \ref{lem:calculcompliquépartialrecovery}}\label{prf:calculcompliquépartialrecovery}

For $\pi\in V$, let us compute $KL(\P_{\rho, \pi},\P_{\rho,id})$. We recall that $KL(\P_{\rho, \pi},\P_{\rho,id})=\int \log\pa{\frac{d\P_{\rho,G}}{d\P_{\rho,\overline{G}}}}d\P_{\rho,G}$. We write $\P_{0,\pi}$, or equivalently $\P_{0,id}$, the distribution of $Y\in \R^{p\times [K]\times [M]}$ when $\mu_{1}=\ldots=\mu_{K}=0$ almost surely. Under this distribution, the entries of $Y$ are drawn independently according $\cN\pa{0,I_p}$. First, we will compute the quantity 
\begin{equation}\label{eq:vraisemblancepartial}
    \frac{d\P_{\rho,\pi}}{d\P_{\rho,id}}=\frac{\frac{d\P_{\rho,\pi}}{d\P_{0,\pi}}}{\frac{d\P_{\rho,id}}{d\P_{0,\pi}}}=\frac{\frac{d\P_{\rho,\pi}}{d\P_{0,\pi}}}{\frac{d\P_{\rho,id}}{d\P_{0,id}}}\enspace,
\end{equation} 
where the second equality comes from the fact that $\P_{0,\pi}=\P_{0,id}$. Given a probability distribution  $\P$ on some Euclidean space, which is absolutely continuous with respect to the Lebesgue measure, we write $d\P$ for the density of this distribution with respect to the Lebesgue measure.
  For the numerator in \eqref{eq:vraisemblancepartial}, we have
\begin{align*}
    \frac{d\P_{\rho,\pi}}{d\P_{0,\pi}}(Y)=&\frac{\E_{\rho}\cro{d\P_{\mu,\pi}(Y)}}{\E_{0}\cro{d\P_{\mu,\pi}(Y)}}\\
    =&\frac{\E_{\rho}\cro{\prod_{k\in[K]}\prod_{m\in [M]}\exp\pa{-\frac{1}{2}\|Y_{k}^{(m)}-\mu_{\pi_m(k)}\|^{2}}}}{\E_{0}\cro{\prod_{k\in[K]}\prod_{m\in [M]}\exp\pa{-\frac{1}{2}\|Y_{k}^{(m)}-\mu_{\pi_m(k)}\|^{2}}}}\\
    =&\frac{\E_{\rho}\cro{\prod_{d\in[p]}\prod_{k\in[K]}\prod_{m\in [M]}\exp\pa{-\frac{1}{2}\pa{Y_{k,d}^{(m)}-\mu_{\pi_m(k),d}}^{2}}}}{\prod_{d\in[p]}\prod_{k\in[K]}\prod_{m\in [M]}\exp\pa{-\frac{1}{2}\pa{Y_{k,d}^{(m)}}^2}}\enspace.
\end{align*}
Using the independence of the $\mu_{k,d}$'s under the law $\rho$, we get that 
\begin{align*}
     \frac{d\P_{\rho,\pi}}{d\P_{0,\pi}}(Y)=&\prod_{d\in[p]}\prod_{k\in[K]}\frac{\E_{\rho}\cro{\prod_{m\in [M]}\exp\pa{-\frac{1}{2}\pa{Y_{\pi^{-1}(k),d}^{(m)}-\mu_{k,d}}^{2}}}}{\prod_{m\in [M]}\exp\pa{-\frac{1}{2}\pa{Y_{\pi^{-1}(k),d}^{(m)}}^2}}\\
    =&\prod_{d\in[p]}\prod_{k\in[K]}\E_{\rho}\cro{\prod_{m\in [M]}\exp{\pa{-\frac{1}{2}\pa{(Y_{\pi^{-1}(k),d}^{(m)}-\mu_{k,d})^{2}-(Y_{\pi^{-1}(k),d}^{(m)})^{2}}}}}\\
    =&\prod_{d\in[p]}\prod_{k\in[K]}\E_{\rho}\cro{\prod_{m\in [M]}\exp{\pa{Y_{\pi^{-1}(k),d}^{(m)}\mu_{k,d}-\frac{\eps^{2}}{2}}}}\\
    =&\prod_{d\in[p]}\prod_{k\in[K]}e^{\frac{-M\eps^{2}}{2}}\cosh{\pa{\sum_{m\in [M]}\eps Y_{\pi^{-1}(k),d}^{(m)}}}\enspace.
\end{align*}
Similarly, we have
\begin{equation*}
    \frac{d\P_{\rho,id}}{d\P_{0,id}}(Y)=\prod_{d\in[p]}\prod_{k\in[K]}e^{\frac{-M\eps^{2}}{2}}\cosh{\pa{\sum_{m\in [M]}\eps Y_{k,d}^{(m)}}}\enspace.
\end{equation*}
Combining these two equalities in \eqref{eq:vraisemblancepartial}, we end up with
\begin{equation}\label{eq:vraisemblance2partial}
    \frac{d\P_{\rho,\pi}}{d\P_{\rho,id}}(Y)=\prod_{d=1}^{p}\prod_{k=1}^{K}\frac{\cosh{\pa{\sum_{m\in [M]}\eps Y_{\pi_m^{-1}(k), d}^{(m)}}}}{\cosh{\pa{\sum_{m\in [M]}\eps Y_{k,d}^{(m)}}}}\enspace.
\end{equation}
We denote $\phi$ the standard Gaussian density $\phi(x)=\frac{1}{\sqrt{2\pi}}e^{\frac{-x^{2}}{2}}$. Under the law $\P_{\rho, \pi}$, conditionally on $\mu_{1},\ldots \mu_{K}\sim\rho$, we have that: 
\begin{itemize}
    \item $\sum_{m\in [M]}\eps Y_{\pi_m^{-1}(k), d}^{(m)}\sim \mathcal{N}(M\mu_{k,d},M)$,
    \item $\sum_{m\in [M]}\eps Y_{k, d}^{(m)}\sim \mathcal{N}(\sum_{m\in[M]}\mu_{\pi_m(k),d}, M)$.
\end{itemize}
Plugging these two points, together with equality \eqref{eq:vraisemblance2partial}, in the definition of the Kullback-Leibler divergence leads to
\begin{align}\nonumber
    \frac{1}{p}KL(\P_{\rho, \pi},\P_{\rho,id})=&\sum_{k\in[K]}\E_{\rho}\cro{\int\log\cosh(\eps(M \mu_{k,1}+\sqrt{M}x))\phi(x)dx}\\
    &-\sum_{k\in[K]}\E_{\rho}\cro{\int\log\cosh(\eps(\sum_{m\in [M]} \mu_{\pi_{m}(k),1}+\sqrt{M}x))\phi(x)dx}\enspace.\label{eq:KLpartial}
\end{align}
By symmetry, it is sufficient to upper-bound the term corresponding to $k=1$ in the sum above. We call $S_1=S_{11}-S_{12}$ the corresponding terms with 
$S_1=S_{11}-S_{12}$, with 
\begin{align*}
    S_{11}&=\E_{\rho}\cro{\int\log\cosh(\eps(M \mu_{1,1}+\sqrt{M}x))\phi(x)dx},\\
     \text{and}\quad S_{12}&=\E_{\rho}\cro{\int\log\cosh(\eps(\sum_{m\in [M]} \mu_{\pi_{m}(1),1}+\sqrt{M}x))\phi(x)dx}.
\end{align*}
First, we upper-bound the term $S_{11}$. We will use the following inequality 
\begin{equation}\label{eq:Taylor2}
    \frac{x^{2}}{2}-\frac{x^4}{12}\leq  \log\cosh(x)\leq \frac{x^2}{2},\enspace \forall x\in\R\enspace.
\end{equation}
\begin{proof} [Proof of inequality \eqref{eq:Taylor2}]
    Let us first prove the upper-bound $\log\cosh(x)\leq \frac{x^{2}}{2}$. For $x\in\R$, $\cosh(x)=\sum_{t\in\N}\frac{x^{2t}}{(2t)!}\leq \sum_{t\in\N}\frac{x^{2t}}{t!2^{t}}=\exp\pa{\frac{t^{2}}{2}}$. Applying the logarithmic function leads us to $\log\cosh(x)\leq \frac{x^2}{2}$.

    Let us now prove the lower-bound $\log\cosh(x)\geq \frac{x^{2}}{2}-\frac{x^{4}}{12}$. To do so, we write, for $x\geq 0$, $f(x)=\tanh(x)$ and $g(x)=x-\frac{x^{3}}{3}$. We have $f'(x)=1-(f(x))^{2}$. Besides, $g'(x)=1-x^{2}\leq 1-(g(x))^{2}$. Together with the fact that $f(0)=g(0)=0$, this implies 
    $$f(x)\geq g(x),\enspace \forall x\geq 0\enspace.$$ 
    Integrating these function leads to $$\log\cosh(x)\geq \frac{x^{2}}{2}-\frac{x^{4}}{12},\enspace \forall x\geq 0\enspace.$$ By parity of these functions, this last inequality is satisfied for all $x\in\R$. 
\end{proof}
Inequality \eqref{eq:Taylor2}, together with the independence of $x$ and $\mu_{1,1}$, imply that 
\begin{equation}
    S_{11}\leq\frac{1}{2} \E_{\rho}\cro{\int(\eps(M\mu_{1,1}+\sqrt{M}x))^{2}\phi(x)dx}\leq \frac{1}{2}\eps^{2}\pa{M+M^{2}\eps^{2}}\enspace.
\end{equation}
We arrive at 
\begin{equation}\label{eq:S11}
    S_{11}\leq \frac{1}{2}M\eps^{2}(1+M \eps^{2})\enspace.
\end{equation}
Let us now lower-bound $S_{12}$. For $k\in [K]$, let us write $m_k=|\ac{m\in [M], \pi_m(1)=k}|$. Inequality \eqref{eq:Taylor2} induces
\begin{align*}
    S_{12}\geq&\frac{1}{2}\E_{\rho}\cro{\int\eps^2\bigg(\sum_{k\in[K]} m_{k}\mu_{k,1}+\sqrt{M}x\bigg)^{2}\phi(x)dx}\\
    &-\frac{1}{12}\E_{\rho}\cro{\int\eps^4\bigg(\sum_{k\in[K]} m_{k}\mu_{k,1}+\sqrt{M}x\bigg)^{4}\phi(x)dx}\\
    \geq & \frac{1}{2}\eps^{2}\pa{\sum_{k\in [K]}m_{k}^{2}\eps^{2}+M}
    -\frac{1}{12}\eps^{4}\E_{\rho}\left[\bigg(\sum_{k\in[K]}m_{k}\mu_{k,1}\bigg)^4\right]-\frac{1}{12}\eps^{4}3M^{2}\\
    &-\frac{1}{2}\eps^{4}M\E_{\rho}[(\sum_{k\in[K]}m_{k}\mu_{k,1})^2]\\
    \geq & \frac{1}{2}\eps^{2}\pa{\sum_{k\in [K]}m_{k}^{2}\eps^{2}+M}
    -\frac{1}{12}\eps^{8}\pa{\sum_{k\in[K]}m_{k}^{4}+6\pa{\sum_{k\in[K]}m_{k}^{2}}^{2}}-\frac{1}{4}\eps^{4}M^{2}\\
    &-\frac{1}{2}\eps^{6}M\sum_{k\in[K]}m_{k}^2\enspace.
\end{align*}
We end up with
\begin{equation}\label{eq:S12}
    S_{12}\geq \frac{1}{2}M\eps^{2}-\frac{1}{4}M^{2}\eps^{4}-\frac{1}{2}\eps^{6}M\sum_{k\in [K]}m_{k}^{2}-\frac{1}{12}\eps^{8}\pa{\sum_{k\in [K]}m_{k}^{4}+6\pa{\sum_{k\in [K]}m_{k}^{2}}^{2}}\enspace.
\end{equation}
Combining inequalities \eqref{eq:S11} and \eqref{eq:S12}, together with $M=\sum_{k\in [K]}m_{k}$, we get
\begin{align*}
    S_{1}\leq &\frac{1}{2}M\eps^{2}(1+M\eps^{2})-\frac{1}{2}M\eps^{2}
    +\frac{1}{4}M^{2}\eps^{4}+\frac{1}{2}\eps^{6}M\sum_{k\in [K]}m_{k}^{2}
    +\frac{1}{12}\eps^{8}\pa{\sum_{k\in[K]}m_{k}^{4}+6\pa{\sum_{k\in[K]}m_{k}^{2}}^{2}}\\
    \leq &\frac{3}{4}M^{2}\eps^{4}+\frac{7}{12}\eps^{8}M^{4}+\frac{1}{2}\eps^{6}M^{3}\\
    \leq &cM^2\eps^{4}\pa{1+\eps^{2}M+\eps^{4}M^2}\enspace,
\end{align*}
where $c$ is a numerical constant. Summing over $k\in[K]$ in \eqref{eq:KLpartial} leads to 
$$KL(\P_{\rho, \pi},\P_{\rho,id})\leq cpKM^2\eps^{4}\pa{1+\eps^{2}M+\eps^{4}M^2}\enspace.$$
From the definition of $\eps$, we deduce that $pM^2K\eps^{4}=\frac{16M^2K}{p}\Bar{\Delta}^{4}$. The hypothesis $\Bar{\Delta}^{2}\leq c_2\sqrt{\frac{p}{M}\log(K)}$ leads to $pMK\eps^{4}\leq 16c_{2}^{2}MK\log(K)$. Moreover, the hypothesis $c_{1}\log(K)\leq\Bar{\Delta}^{2}\leq c_2\sqrt{\frac{p}{M}\log(K)}$ implies $\eps^{2}M=\frac{4M}{p}\frac{\Bar{\Delta}^{4}}{\Bar{\Delta}^{2}}\leq 4\frac{c_{2}^{2}}{c_{1}}\leq 1$ provided that  $c_{2}$ is chosen small enough so that $c_2\leq \sqrt{c_1}$. Thus, there exists a numerical constant $c>0$ such that, when $c_2$ is small enough with respect to $c_1$, 
$$KL(\P_{\rho, \pi},\P_{\rho,id})\leq cc_{2}^2MK\log(K)\enspace.$$
This concludes the proof of the lemma.

\subsection{Proof of Lemma \ref{lem:numberpermutations}}\label{prf:numberpermutations}

To prove Lemma \ref{lem:numberpermutations}, we will use Lemma 16 from \cite{Collier16} that we recall here.
\begin{lem}
    For any $K\geq 4$. There exists $\psi^{(1)},\ldots ,\psi^{(\ell)}\in\mathcal{S}_{K}$ such that the following points hold: \begin{enumerate}
        \item $\ell\geq (\frac{K}{24})^{\frac{K}{6}}$, 
        \item For any $t\neq t'\in[\ell]$, $\frac{1}{K}\sum_{k\in[K]}\1_{\psi^{(t)}(k)\neq \psi^{(t')}(k)}\geq \frac{3}{8}$.
    \end{enumerate}
\end{lem}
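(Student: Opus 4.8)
The statement is a Gilbert--Varshamov packing bound for permutations, and I would prove it by a greedy (sphere-covering) argument. Equip $\mathcal{S}_K$ with the metric $\dist(\sigma,\tau):=\big|\{k\in[K]:\sigma(k)\neq\tau(k)\}\big|$, and set $d_0:=\lceil 3K/8\rceil$. Then property~(2) in the statement is exactly the requirement that $\psi^{(1)},\dots,\psi^{(\ell)}$ form a code of minimum $\dist$-distance at least $d_0$, since $\dist(\sigma,\tau)/K\ge 3/8$ if and only if $\dist(\sigma,\tau)\ge d_0$.

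First I would run the greedy construction: start from an arbitrary permutation and repeatedly adjoin any permutation whose $\dist$-distance to all previously selected ones is at least $d_0$, stopping when no such permutation remains; call the resulting list $\psi^{(1)},\dots,\psi^{(\ell)}$. Property~(2) holds by construction. At termination every $\sigma\in\mathcal{S}_K$ lies within distance $d_0-1$ of some $\psi^{(i)}$, so the balls $B(\psi^{(i)},d_0-1):=\{\sigma:\dist(\sigma,\psi^{(i)})\le d_0-1\}$ cover $\mathcal{S}_K$. Since $\tau\mapsto(\psi^{(i)})^{-1}\tau$ is a distance-preserving bijection from $B(\psi^{(i)},d_0-1)$ onto $B(\mathrm{id},d_0-1)$, all these balls have the same cardinality, and therefore
\[
\ell\ \ge\ \frac{K!}{\,\big|B(\mathrm{id},d_0-1)\big|\,}.
\]

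Next I would bound the ball size. A permutation at distance exactly $i$ from the identity is obtained by choosing the $i$ moved coordinates and then a fixed-point-free permutation of them, so $\big|B(\mathrm{id},j)\big|=\sum_{i=0}^{j}\binom{K}{i}D_i$, where $D_i$ is the number of derangements of $i$ points. Using the crude bound $D_i\le i!$ gives $\big|B(\mathrm{id},d_0-1)\big|\le\sum_{i=0}^{d_0-1}\frac{K!}{(K-i)!}\le d_0\,\frac{K!}{(K-d_0+1)!}$, since the summands increase with $i$. Combining with the previous display,
\[
\ell\ \ge\ \frac{(K-d_0+1)!}{d_0}.
\]

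It remains to check that this lower bound is at least $(K/24)^{K/6}$ for every $K\ge 4$. For $K\le 24$ the right-hand side is at most $1$, so the bound is trivial (the greedy list is non-empty). For $K\ge 25$ I would use $d_0\le 3K/8+1$, hence $K-d_0+1\ge 5K/8$ and $d_0\le K$, together with Stirling's formula applied to $(K-d_0+1)!$: since the exponent $5K/8$ of the dominant factorial term strictly exceeds the exponent $K/6$ of the target, the inequality holds with a wide margin (already $16!/10$ dwarfs $(25/24)^{25/6}$). This last numerical step is the only slightly delicate part, and the constant $24$ in the statement is deliberately loose enough that the crude ball bound $D_i\le i!$ still suffices; I would either carry out the Stirling estimate carefully, exploiting the $5K/8$ versus $K/6$ exponent gap, or verify a finite initial range of $K$ by direct computation. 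Everything else in the argument is routine.
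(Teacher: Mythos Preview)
The paper does not prove this lemma itself; it is quoted verbatim as Lemma~16 of \cite{Collier16} and used as a black box. Your Gilbert--Varshamov argument is correct and is the natural way to establish such a result: greedy packing in the Hamming metric on $\mathcal{S}_K$, translation-invariance (left multiplication) to reduce all balls to $B(\mathrm{id},d_0-1)$, and the crude volume bound $\binom{K}{i}D_i\le K!/(K-i)!$ summed over $i<d_0$. The only part you leave slightly informal is the final numerical inequality $(K-d_0+1)!/d_0\ge(K/24)^{K/6}$ for $K\ge 25$; your Stirling sketch, resting on the exponent gap $5/8>1/6$, goes through cleanly once written out (e.g.\ $n!\ge(n/e)^n$ with $n=K-d_0+1\ge 5K/8$ already gives a comfortable margin at $K=25$, and the gap only widens thereafter), so nothing of substance is missing.
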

If $M\leq 3$, we simply define $V$ of size $\ell$ made of $\pi$'s of the form $\pi_1=\pi_2=id$ and $\pi_3= \psi^{(l)}$ for some $l=1,\ldots, \ell$. Obviously, this set $V$ satisfies the desired properties of Lemma~\ref{lem:numberpermutations}.

Let us now consider the general case where $M\geq 4$. We construct $V$ as follows. Denote $\overline{V}$ the subset of $(\mathcal{S}_{K})^{M}$ made from the $M$-tuples $\pi$ satisfying the two conditions; \begin{enumerate}
    \item if $m\leq \lceil \frac{M}{2}\rceil$, $\pi_{m}=id$;
    \item if $m> \lceil \frac{M}{2}\rceil$, $\pi_{m}$ is one of the $\psi^{(1)},\ldots ,\psi^{(\ell)}$. 
\end{enumerate}
We take for $V$ a maximal family in $\overline{V}$ such that for $\pi$ and $\pi'$ in $V$, there are at least $\frac{M}{4}$ index such that $\pi_{m}\neq \pi'_{m}$. Since for $t\neq t'\in[\ell]$, $\frac{1}{K}\sum_{k\in[K]}\1_{\psi^{(t)}(k)\neq \psi^{(t')}(k)}\geq \frac{3}{8}$, we have that, for $\pi\neq\pi'\in V$, $\frac{1}{KM}\sum_{k\in [K]}\sum_{m\in [M]}\1\ac{\pi_m(k)\neq \pi'_m(k)}\geq \frac{3}{32}.$

It remains to lower-bound the cardinality of $V$. First, we know that $|\overline{V}|= \ell^{\lfloor\frac{M}{2}\rfloor}$. For $\pi\in V$, denote by $B_{\pi}$ the subset of $\overline{V}$ made from all the $M$-tuple $\pi'\in |\overline{V}|$ that have less than $\frac{M}{4}$ indices such that $\pi_{m}\neq \pi'_{m}$. The definition of $V$ implies that $\overline{V}\subset \bigcup_{\pi\in V}B_{\pi}$. A fortiori, since the cardinality of $B_{\pi}$ is constant over $V$, we have $\ell^{\lfloor\frac{M}{2}\rfloor}\leq |V||B_{\pi}|$ for any $\pi \in V$. 

Let us upper-bound $|B_{\pi}|$ for some $\pi \in V$. Any $\pi'\in B_{\pi}$ has at least $\frac{M}{4}$ indice such that $\pi_{m}=\pi'_{m}$ among the indices $m> \lceil\frac{M}{2}\rceil$. Fixing $\lceil\frac{M}{4}\rceil$ indices where the permutations are equal, there are at most $\ell^{\frac{M}{4}}$ possibilities for choosing an element in $\overline{V}$ that is equal to $\pi'$ on these index. Thus, we get that $|B_{\pi}|\leq \binom{\lfloor\frac{M}{2}\rfloor}{\lceil \frac{M}{4}\rceil}\ell^{\frac{M}{4}}$. Since $\binom{x}{y}\leq (xe/y)^y$. 
 We end up with \begin{align*}
    \log|V|\geq& \log(\ell^{\lfloor\frac{m}{2}\rfloor})-\log|B_{\pi}|\\
    \geq&\lfloor\frac{M}{2}\rfloor\log(\ell)-\frac{M}{4}\log(\ell)-\lceil \frac{M}{4}\rceil \log(e\frac{\lfloor\frac{M}{2}\rfloor}{\lceil \frac{M}{4}\rceil})\\
    \geq& \frac{KM}{18}\log(\frac{K}{24})-\frac{KM}{24}\log(\frac{K}{24})-\lceil \frac{M}{4}\rceil \log(\frac{e\lfloor\frac{M}{2}\rfloor}{\lceil \frac{M}{4}\rceil})\enspace.
\end{align*} 
When $K_{0}$ is large enough, these inequalities imply $\log|V|\gtrsim MK\log(K)$ whenever $K\geq K_0$. This concludes the proof of the Lemma.

\subsection{Proof of Lemma \ref{lem:reductionfano2}}\label{prf:reductionfano2}

We suppose that there exists a probability distribution $\rho$ on $(\R^{p})^{K}$ and $C>0$ satisfying $$\inf_{\hat{\pi}:\R^{K\times M\times p}\to (\mathcal{S}_{K})^{M}}\sup_{\pi\in(\mathcal{S}_{K})^{M}}\E_{\rho,\pi}\cro{err(\hat{\pi},\pi)}-\rho((\R^{p})^{K}\setminus \Theta_{\bar{\Delta}})\geq C\enspace.$$
Given $\hat{\pi}:(\R^{p})^{KM}\to (\mathcal{S}_{K})^{M}$, there exists a $M$-tuple of permutation $\pi$ such that $\E_{\rho,\pi}\cro{err(\hat{\pi}\neq \pi)}-\rho((\R^{p})^{K}\setminus \Theta_{\bar{\Delta}})\geq C$. Since $\E_{\rho,\pi}\cro{err(\hat{\pi}, \pi)}=\int_{(\R^{p})^{K}}\E_{\mu,\pi}\cro{err(\hat{\pi}, \pi)}d\rho(\mu)$ and $\E_{\mu,\pi}\cro{err(\hat{\pi}, \pi)}$ is upper bounded by $1$, we end up with $\int_{\Theta_{\Delta}}\E_{\mu,\pi}\cro{err(\hat{\pi}, \pi)}d\rho(\mu)\geq C$.

This implies the existence of $\mu\in\Theta_{\Delta}$ such that $\E_{\mu,\pi}\cro{err(\hat{\pi}, \pi)}\geq C$. A fortiori, $$\sup_{\mu\in\Theta_{\Delta}}\sup_{\pi\in(\mathcal{S}_K)^M}\E_{\mu,\pi}\cro{err(\hat{\pi}, \pi)}\geq C\enspace .$$ 
This last inequality being true for all estimator $\hat{\pi}$, we get the desired result $$\inf_{\hat{\pi}:\R^{K\times M\times p}\to (\mathcal{S}_{K})^{M}}\sup_{\mu\in\Theta_{\bar{\Delta}}}\sup_{\pi\in(\mathcal{S}_{K})^{M}}\E_{\mu,\pi}\cro{err(\hat{\pi},\pi)}\geq C\enspace.$$

\subsection{Proof of Lemma \ref{lem:Fano2}}\label{sec:prfFano2}

Let $A$ be a subset of $(\mathcal{S}_K)^M$. Denote $\pi^{(1)},\ldots ,\pi^{(|A|)}$ the elements of $A$. Given any estimator $\hat{\pi}:\R^{K\times M\times p}\to (\mathcal{S}_{K})^{M}$, we denote $\hat{j}$ an index that minimises $err(\hat{\pi},\pi^{(j)})$.

For any $j\in[1,|A|]$, using the definition of $\hat{j}$ together with the fact that the function $err$ satisfies the triangular inequality, we have \begin{align*}
    min_{j\neq j'}err(\pi^{(j)},\pi^{(j')})\1_{\hat{j}\neq j}&\leq err(\pi^{(\hat{j})},\pi^{(j)})\\
    &\leq err(\hat{\pi},\pi^{(\hat{j})})+err(\hat{\pi},\pi^{(j)})\\
    &\leq 2err(\hat{\pi},\pi^{(j)})\enspace.
\end{align*}
Taking the expectation over this last inequality, we get, for $j\in[1,|A|]$, $$2\E_{\rho,\pi^{(j)}}\cro{err(\hat{\pi},\pi^{(j)})}\geq min_{\pi\neq \pi'\in A}err(\pi,\pi') \P_{\rho,\pi^{(j)}}\cro{\hat{j}\neq j}\enspace. $$ Summing over $\pi\in A$ leads to $$\frac{2}{|A|}\sum_{\pi\in A}\E_{\rho,\pi}\cro{err(\hat{\pi},\pi)}\geq \min_{\pi,\pi'\in A}err(\pi,\pi')\min_{\hat{\pi}:\R^{K\times M\times p}\to A}\frac{1}{|A|}\sum_{\pi\in A} \P_{\rho,\pi}\cro{\hat{\pi}\neq \pi}.$$Applying Lemma \ref{lem:fano}, we arrive at the desired conclusion $$\inf_{\hat{\pi}:\R^{K\times M\times p}\to (\mathcal{S}_{K})^{M}}\frac{2}{|A|}\sum_{\pi\in A}\E_{\rho,\pi}\cro{err(\hat{\pi},\pi)}\geq \min_{\pi,\pi'\in A}err(\pi,\pi')\pa{1-\frac{1+\frac{1}{|A|}\sum_{\pi\in A}KL(\P_{\rho,\pi},\P_{\rho,id})}{\log(|A|)}}\enspace.$$

\subsection{Proof of Lemma \ref{lem:choicepoints}}\label{sec:prfchoicepoints}

We suppose that $\frac{p}{4}\log(2)\geq \log(K)$ and we want to find vectors $\mu_{1},\ldots ,\mu_{K}$ in $\R^{p}$ such that $\frac{1}{2}\min_{k\neq l}\|\mu_{k}-\mu_{l}\|^2\geq \bar{\Delta}^2$ and $\frac{1}{2}\max_{k\neq l}\|\mu_{k}-\mu_{l}\|\leq 4\bar{\Delta}^2.$ Denote $\mathcal{H}=\bar{\Delta} \sqrt{\frac{2}{p}}\{-1,1\}^{p}$. There are $2^{p}$ elements in $\mathcal{H}$. Consider $H$ a maximal subset of $\mathcal{H}$ such that the following holds. For $\mu$ and $\mu'$ two distinct elements of $H$, there exist at least $p/4$ indices such that $\mu_{i}\neq \mu'_{i}$.

If $\mu$ and $\mu'$ are two distinct elements of $H$, we have from the definitions of $\mathcal{H}$ and $H$ that $$\bar{\Delta}^2\leq \frac{1}{2}\|\mu-\mu'\|^2\leq 4\bar{\Delta}^2\enspace.$$
It remains to lower-bound the cardinality of $H$ and to prove $|H|\geq K$. Given $\mu\in H$, denote $B_{\mu}$ the subset of $\mathcal{H}$ made of the elements $\mu'$ that have at least $\lfloor\frac{3p}{4}\rfloor$ indices such that $\mu_{i}=\mu'_{i}$. From the definition of $H$, we deduce that $\mathcal{H}\subseteq\bigcup_{\mu\in H}B_{\mu}$.

Let us upper-bound $|B_{\mu}|$ for any $\mu\in H$. Given a fixed set of index of cardinal $\lfloor\frac{3p}{4}\rfloor$, there are at most $2^{\frac{p}{4}}$ points in $\mathcal{H}$ that are equal to $\mu$ on these indices. Hence, the cardinal of $B_{\mu}$ is upper-bounded by $\binom{p}{\lfloor \frac{3p}{4}\rfloor}2^{\frac{p}{4}}$. 

Combining this with $\mathcal{H}\subseteq \bigcup_{\mu\in H}B_{\mu}$, we arrive at $\log(|H|)\geq p\log(2)-\log\binom{p}{\lfloor \frac{3p}{4}\rfloor}-\frac{p}{4}\log(2)$. We have  
\[
\log\binom{p}{\lfloor \frac{3p}{4}\rfloor}\leq \lceil \frac{p}{4}\rceil \log(ep/\lfloor \frac{3p}{4}\rfloor)\leq  \frac{p}{2}\log(2)
\ ,
\]
since $p\log(2)/4\geq \log(K)\geq \log(K_0)$ is large enough. Thus, we end up with $\log(|H|)\geq \frac{p}{4}\log(2)\geq \log(K)$. This concludes the proof.

\end{document}